\documentclass[11pt]{article}
\usepackage{multicol}
\usepackage{wrapfig}
\usepackage{lipsum}
\usepackage[colorlinks=true,linkcolor=black,citecolor=black,hyperfootnotes=false]{hyperref}
\topmargin 0.0cm
\oddsidemargin 0.2cm
\textwidth 16cm 
\textheight 21cm
\footskip 1.0cm

\usepackage{float}
\usepackage{amssymb}
\usepackage{stmaryrd}
\usepackage{mathalfa}
\usepackage{euscript}
\usepackage{extarrows}
\usepackage[normalem]{ulem}
\usepackage{amsmath}
\usepackage{amsthm}

\numberwithin{equation}{section}

\theoremstyle{plain}

\newtheorem{thm}[equation]{Theorem}

\newtheorem{lem}[equation]{Lemma}
\newtheorem{cor}[equation]{Corollary}

\newtheorem{prop}[equation]{Proposition}

\newtheorem*{claim*}{Claim}
\newtheorem*{thm*}{Theorem}
\newtheorem*{thmc*}{Coherence Theorem}

\theoremstyle{definition}
\newtheorem{defn}[equation]{Definition}
\newtheorem{informal-definition}[equation]{Informal Definition}

\newtheorem{rem}[equation]{Remark}

\newtheorem{example}[equation]{Example}

\allowdisplaybreaks
\usepackage{tabularx}
\usepackage{xcolor}
\usepackage{framed}
\usepackage{tikz}
\usetikzlibrary{decorations.markings}
\usetikzlibrary{shapes.geometric}
\usepackage{booktabs}
\usepackage{euscript}
\usepackage[normalem]{ulem}
\usepackage{array}
\usepackage{collectbox}
\newcommand{\bo}{{\raisebox{1pt}{\tiny\mbox{$\Box$}}}}
\usepackage{footnote}
\usepackage{makecell}
\pgfdeclarelayer{edgelayer}
\pgfdeclarelayer{nodelayer}
\pgfsetlayers{edgelayer,nodelayer,main}
\usepackage{array}
\usepackage{collectbox}
\usetikzlibrary{decorations.pathmorphing, patterns,shapes}
\newcommand{\xupdownarrow}[1]{%
  {\left\Updownarrow\vbox to #1{}\right.\kern-\nulldelimiterspace}
}

\makeatletter
\newcommand{\mybox}{%
    \collectbox{%
        \setlength{\fboxsep}{5pt}%
        \fbox{\BOXCONTENT}%
    }%
}
\makeatother
\def\tobar{\mathrel{\mkern3mu  \vcenter{\hbox{$\scriptscriptstyle+$}}%
                    \mkern-12mu{\to}}}

\usepackage{multirow}

\newcommand{\set}[1]{\{#1\}}

\newcommand{\pcat}[1]{\|#1\|}   
 \usetikzlibrary{calc}
\newcolumntype{P}[1]{>{\centering\arraybackslash}p{#1}}
\newcolumntype{M}[1]{>{\centering\arraybackslash}m{#1}}

\title{Categorified cyclic operads} 
\author
{Pierre-Louis Curien$^{1}$ and Jovana Obradovi\' c$^{2}$ \\
\small{\small$^{1}$ \em IRIF,  Universit\' e Paris Diderot and $\pi r^2$ team,  
 Inria, France}\\
\small{\small$^{2}$ \em Institute of Mathematics of the Czech Academy of Sciences}\\
{\small$^{1}$\url{curien@irif.fr}}\enspace\enspace\enspace
\small$^{2}$\url{obradovic@math.cas.cz}}

\date{}


\begin{document} 

\maketitle 
\begin{abstract}
\noindent   In this paper, we introduce a notion of categorified cyclic operad for   set-based cyclic operads with symmetries.  
Our categorification is obtained by  relaxing   defining axioms of cyclic operads   to isomorphisms and by formulating coherence conditions for these isomorphisms. The coherence theorem that we prove has the form ``all  diagrams of  canonical isomorphisms commute''.  Our coherence results come in two flavours, corresponding to the ``entries-only" and ``exchangeable-output" definitions of cyclic operads.  Our proof of  coherence in the entries-only style
 is of syntactic nature and   relies  on the coherence of categorified non-symmetric operads established by Do\v sen and Petri\' c. We obtain the coherence  in the exchangeable-output style   by ``lifting'' the equivalence between entries-only and exchangeable-output cyclic operads, set up by the second author. Finally,  we show  that a generalization
of the structure of profunctors  of B\' enabou provides an example of
categorified cyclic operad, and we exploit the coherence of categorified cyclic
operads in proving that the Feynman category for cyclic operads, due to Kaufmann and Ward, admits an odd version.
\end{abstract}
$^{\ast}$  The work on the final version of this paper was supported by
Praemium Academiae of M. Markl and RVO:67985840. The second author was additionally supported by the grant GA CR P201/12/G028.
\section*{Introduction}
 For the purposes of higher-dimensional category theory and homotopy theory, various concepts of categorification recently   emerged in operad theory, where at least three definitions of  categorified operads have been proposed. In \cite{street1}, Day and Street define  pseudo-operads   by categorifying the original ``monoidal'' definition of operads of Kelly \cite{kelly}, which leads to an algebraic, ``one-line'' characterization  of the form:  a pseudo-operad is a pseudo-monoid in a certain monoidal 2-category. In \cite{dp}, Do\v sen and Petri\' c introduce the notion of  weak Cat-operad by categorifying the  definition of non-symmetric operads with (biased) composition operations $\circ_i$, which leads  to an equational axiomatic definition, in the style of Mac Lane's definition of a monoidal category. In \cite{val},  Dehling and Vallette investigate  higher homotopy (symmetric) operads,  
by means of curved Koszul duality theory.
 \\[0.1cm]
\indent In this paper, we propose a categorification of the    notion of  cyclic operad with symmetries, following 
the  steps  of \cite{dp}, to which we add the categorification of the cyclic structure, and 
the action of the symmetric group, both with  strict and  (like in  \cite{val}) relaxed equivariance.   
We replace sets (of operations of the same arity) with categories, obtaining in this way the intermediate notion of cyclic operad enriched over ${\bf Cat}$, for which we then  relax defining axioms to isomorphisms and exhibit the conditions which make these isomorphisms coherent. The coherence theorem has the form ``all  diagrams made of canonical isomorphisms commute''. \\[0.1cm]
\indent   In the original approach of Getzler and Kapranov \cite[Theorem 2.2]{Getzler:1994pn},  cyclic operads are  seen as enrichments of  operads with simultaneous composition, determined by  adding to the action of permuting the inputs of an operation an  action of interchanging its output  with one of the inputs, in a suitably  compatible way. In \cite[Proposition 42]{opsprops}, Markl gave an adaptation of their definition, by considering underlying operads with partial composition. Both of these definitions are {\em skeletal}, meaning that  the labeling of inputs of operations comes from the {\em skeleton} ${\bf\Sigma}$ of the category ${\bf Bij}$ of finite sets and bijections. The non-skeletal variant of Markl's definition, obtained by passing from  ${\bf\Sigma}$ to ${\bf Bij}$,  has been given in \cite[Definition 3.16]{mo}.  We suggestively refer to these three definitions as the {\em exchangeable-output} definitions of cyclic operads.  The fact that two operations of a cyclic operad can  be composed along  inputs that ``used to be outputs" and outputs that ``used to be inputs"  leads to another point of view on cyclic operads, in  which an operation, instead of having inputs and an (exchangeable) output, now has only ``entries", and can be composed with another operation along any of them.  Such  {\em entries-only} definitions are \cite[Definition 3.2]{mo} (non-skeletal) and \cite[Definition 1.4]{djms} (skeletal).   \\[0.1cm]
\indent The categorified cyclic operads that we introduce are obtained by categorifying   the entries-only definition \cite[Definition 3.2]{mo}, which, thanks to the equivalence \cite[Theorem 2]{mo} of the two presentations of cyclic operads, leads to the appropriate categorification of the exchangeable-output definition  \cite[Definition 3.16]{mo} as well. \\[0.1cm]
\indent 
We relax the axioms in three stages. In the first stage, which makes the most fundamental part of the categorification,    the associativity and commutativity axioms 
of {\em non-unital} entries-only cyclic operads become the {\em sequential associator} and {\em commutator} isomorphisms, with instances $$\beta^{x,\underline{x};y,\underline{y}}_{f,g,h}:(f\, {_x\circ_{\underline{x}}}\, g)\, {_y\circ_{\underline{y}}}\, h\rightarrow f\, {_x\circ_{\underline{x}}}\, (g\, {_y\circ_{\underline{y}}}\, h)\quad \mbox{ and } \quad \gamma^{x,y}_{f,g}:f\, {_x\circ_{{y}}}\, g\rightarrow g\, {_y\circ_{{x}}}\, f,$$ respectively, while the equivariance axiom remains strict.
At first glance,  the coherence of this structure seems easily reducible to   the coherence of symmetric monoidal categories of Mac Lane (see \cite[Section XI.1]{maclane}): all diagrams made of   instances of  the sequential associator and commutator are required to commute. However, in the setting of cyclic operads, where the existence
of operations  is restricted, these instances do not  exist for all possible $f$, $g$ and $h$. As a consequence, the coherence conditions of symmetric monoidal categories  do
not solve our coherence problem. In particular, the hexagon of Mac Lane   is  not  well-defined in this setting.

The coherence conditions that we do take from Mac Lane are the pentagon and the requirement that the commutator isomorphism is involutive. Borrowing the terminology from \cite{dp}, we need two more {\em mixed} coherence conditions  (i.e., coherence conditions that involve both sequential associator and commutator), a hexagon (which is  {\em not} the hexagon of Mac Lane) and a decagon, as well as three more conditions which deal with the action of the symmetric group on   morphisms of categories of operations of the same arity.

The approach we take to treat the coherence problem is of syntactic, term-rewriting spirit,   as  in \cite{maclane} and \cite{dp}, and relies on the coherence result of \cite{dp}. The proof of the coherence theorem consists of three faithful reductions, each restricting the coherence problem to a smaller class of diagrams, in order to finally reach diagrams that correspond to diagrams of canonical isomorphisms of categorified non-symmetric skeletal operads, i.e., weak Cat-operads of \cite{dp}. Intuitively speaking, the first reduction excludes the action of the symmetric group, the second  removes ``cyclicity'', and the third replaces non-skeletality with skeletality.
\\[0.1cm]
\indent 
 In the second and third stages of categorification, we additionally relax equivariance, and we  take units into consideration and relax the unit law, respectively.
The coherence for this notion of categorified  cyclic operads, for which {\em all} the axioms are relaxed, follows easily: we modify the first reduction so that the relaxed equivariance is taken into account, and we add the reduction zero, which removes units. \\[0.1cm]
\indent 
Categorified cyclic operads with strict equivariance 
appear ``in nature''.  We provide an example where the operations are generalized profunctors.
On the other hand, our coherence theorem for  categorified 
cyclic operads for which the equivariance is also relaxed   is precisely the tool needed to prove   that the  Feynman category for cyclic operads, introduced by Kaufmann and Ward in \cite{kauf}, admits an odd version.  \\[0.1cm]
\indent For cyclic operads in  exchangeable-output style,  in the first stage of categorification, 
the two associativity axioms of the underlying operad ${\EuScript O}$ become the {\em sequential associator} and {\em parallel associator} isomorphisms, with instances $$\beta^{x,y}_{f,g,h}:(f\circ_x g)\circ_y h\rightarrow f\circ_x (g\circ_y h)\quad \mbox{ and } \quad \theta^{x,y}_{f,g,h}:(f\circ_x g)\circ_y h\rightarrow (f\circ_y h)\circ_x g, $$ respectively. Therefore, the operadic part of the obtained structure is   the non-skeletal and symmetric counterpart of a weak Cat-operad of \cite{dp}. However,  the correct notion of categorified exchangeable-output cyclic operad should be equivalent with its entries-only counterpart, which, by  ``categorifying'' the equivalence \cite[Theorem 2]{mo} between  the two presentations, implies that   
 an axiom of the extra structure (accounting for the input-output exchange) should also be relaxed. This leads to a third isomorphism, called the {\em exchange}, whose instances are $$\alpha^{z,x;v}_{f,g}:D_z(f\circ_x g)\rightarrow D_{zv}(g)\circ_v D_{xz}(f),$$ where $D_{z}(X):{\EuScript O}(X)\rightarrow {\EuScript O}(X)$ is the endofunctor that ``exchanges the input $z\in X$ with the output'', and $D_{zy}(X):{\EuScript O}(X)\rightarrow {\EuScript O}(X\backslash\{z\}\cup\{y\})$ is the functor that ``exchanges the input $z\in X$ with the output and then renames it to $y$''. Likewise, in order to further relax the equivariance and unit laws, one simply has to further adapt  \cite[Theorem 2]{mo}. \\[0.1cm]
\indent  By ``categorifying''    the equivalence between non-skeletal and skeletal operads, established in \cite[Theorem 1.61]{mss}, and extending it to the corresponding structures of categorified cyclic operads, the non-skeletal notions of   categorified cyclic operads that we introduce can be straightforwardly coerced to skeletal notions. In this way,   categorifications of \cite[Proposition 42]{opsprops} and  \cite[Definition 1.4]{djms}   are obtained.

\paragraph{Layout.}  In Section \ref{s1}, we recall the entries-only definition \cite[Definition 3.2]{mo}    and the\linebreak exchangeable-output definition  \cite[Definition 3.16]{mo} of cyclic operads, both without the structure of units.  In  Section \ref{s2}, we introduce the principal  categorification of   entries-only   cyclic operads,   by relaxing the associativity and commutativity axioms, while keeping the equivariance strict.  The largest part of the section is devoted to the proof of the coherence theorem of this notion of  categorified   entries-only   cyclic operads. In Section \ref{s3}, we further relax equivariance, add units and relax the unit law.   Section \ref{s4} shows alternative presentations (exchangeable-output, skeletal) of our categorified notion. Section \ref{s5} is devoted to  examples.

\paragraph{Notation and conventions.} {\em About finite sets and bijections.} In this paper, we shall use two different notions of union. In the category {\bf Set} of sets and functions, for   sets $X$ and $Y$, $X+Y$ will denote the coproduct (disjoint union) of $X$ and $Y$ (constructed in the usual way by tagging $X$ and $Y$, by, say, $1$ and $2$). If $\sigma:X\rightarrow X'$ and $\tau: Y\rightarrow Y'$ are bijections,  $\sigma+\tau:X+Y\rightarrow X'+Y'$ will denote the canonically defined bijection between the corresponding disjoint unions. In the category {\bf Bij} of   finite sets and bijections, we shall denote the {\em ordinary} union of {\em already disjoint} sets $X$ and $Y$ with $X\cup Y$. For a bijection $\sigma:X'\rightarrow X$ and  $Y\subseteq X$, we shall  denote with $\sigma|^{Y}$ the corestriction of $\sigma$ on $\sigma^{-1}(Y)$.  If $\sigma(x')=x$,  we shall denote with $\sigma[{y/x}]$ the bijection defined in the same way as $\sigma$, except that the pair $(x',x)\in\sigma$ is replaced with $(y,y)$.   If $\tau:Y'\rightarrow Y$ is a bijection such that $X'\cap Y'=X\cap Y=\emptyset$,  then $\sigma\cup \tau:X'\cup Y'\rightarrow X\cup Y$ denotes the bijection defined  as $\sigma$ on $X'$ and as $\tau$ on $Y'$.  If $\kappa:X\backslash\{x\}\cup \{x'\}\rightarrow X$ is identity on $X\backslash\{x\}$ and $\kappa(x')=x$, we say that $\kappa$ renames $x$ to $x'$ (notice the contravariant nature of this convention). If a bijection $\kappa:X\rightarrow X$ renames $x$ to $y$ and $y$ to $x$, we say that it exchanges $x$ and $y$. \\[0.1cm]
  {\em About categories and functors.} We shall denote the singleton category with ${\bf 1}$ and the category of Abelian groups with ${\bf Ab}$.  For a functor ${\EuScript C}:{\bf Bij}^{op}\rightarrow {\bf Set}$ (resp. ${\EuScript C}:{\bf Bij}^{op}\rightarrow {\bf Cat}$) and a bijection $\sigma:Y\rightarrow X$, we shall use the notation $(-)^{\sigma}$ for  ${\EuScript C}(\sigma)(-)$.
   
\section{Cyclic operads}\label{s1}
This section is a reminder on the two biased definitions of cyclic operads with symmetries. These are the definitions whose categorifications we introduce in the following   sections. 
\subsection{The entries-only definition}
We recall below   \cite[Definition 3.2]{mo}. We omit the structure of units.  
 \begin{defn}\label{entriesonly}
An  {\em entries-only cyclic operad} is a functor ${\EuScript C}:{\bf Bij}^{op}\rightarrow {\bf Set}$,    together with a family of functions
$${{_{x}\circ_{y}}}:{\EuScript C}(X)\times {\EuScript C}(Y)\rightarrow {\EuScript C}(X\backslash\{x\}\cup Y\backslash\{y\}) ,$$  called {\em partial compositions}, indexed by arbitrary non-empty finite sets $X$ and $Y$ and elements $x\in X$ and $y\in Y$, such that $X\backslash\{x\}\cap Y\backslash \{y\}=\emptyset $.
These data must satisfy the  axioms given below, where, for each of the equalities, it is assumed that both hand sides are well defined.  \\[0.2cm]
\hypertarget{(A1)}{{\em Sequential associativity.}} For $f\in {\EuScript C}(X)$, $g\in {\EuScript C}(Y)$ and $h\in {\EuScript C}(Z)$,  the following   equality holds:\\[0.25cm]
\indent \hypertarget{A1}{{\texttt{(A1)}}} $(f\, {_{x}\circ_{y}}\,\, g)\,\,{_{u}\circ_z}\, h =  f {_{x}\circ_{y}}\,\, (g\,\,{_{u}\circ_{z}}\, h)$, where $x\in X$, $y,u\in Y,$ $z\in Z$.\\[0.25cm]
\hypertarget{(CO)}{{\em Commutativity.}} For  $f\in{\EuScript C}(X)$, $g\in {\EuScript C}(Y)$, $x\in X$ and $y\in Y$, the following equality holds:\\[0.25cm]
\indent  {\texttt{(CO)}} $f\, {_x\circ_y} \,\, g=g\, {_y\circ_x} \,\, f$.\\[0.25cm]
\hypertarget{(EQ)}{{\em Equivariance.}} For bijections $\sigma_1:X'\rightarrow X$ and $\sigma_2:Y'\rightarrow Y$, and $f\in{\EuScript C}(X)$ and $g\in {\EuScript C}(Y)$, the following equality holds:\\[0.25cm] 
\indent  {\texttt{(EQ)}} $f^{\sigma_1}\,\,{_{{ {\sigma_1^{-1}}(x)}}\circ_{\sigma_2^{-1}(y)}}\,\, g^{\sigma_2}=(f {_x\circ_y} \,\, g)^{\sigma}$,
where    $\sigma=\sigma_1|^{X\backslash\{x\}}\cup \sigma_2|^{Y\backslash\{y\}}$.\\[0.25cm]
For a finite set $X$, the elements of  ${\EuScript C}(X)$ are called the   operations of ${\EuScript C}$ with   entries indexed by the elements of $X$. An entries-only cyclic operad ${\EuScript C}$ is  constant-free if ${\EuScript C}(\emptyset)={\EuScript C}(\{x\})=\emptyset$, for all singletons $\{x\}$.  \hfill$\square$
\end{defn}

 \subsection{The exchangeable-output definition}
We now recall  \cite[Definition 3.16]{mo}, again  leaving out   the structure of units. 
\begin{defn}\label{exoutput}
An {\em exchangeable-output   cyclic operad}  is an operad ${\EuScript O}:{\bf Bij}^{op}\rightarrow {\bf Set}$ (defined as in \cite[Definition 2.3]{mo}, with units omitted), enriched with actions  $$D_{x}:{\EuScript O}(X)\rightarrow {\EuScript O}(X), $$ defined for all  $x\in X$ and subject to the  axioms given below (with $f\in {\EuScript O}(X)$):
\\[0.1cm]
{\em Inverse.} For $x\in X$, \\[0.25cm]
\indent \hypertarget{[DIN]}{\texttt{[DIN]}} $D_{x}(D_{x}(f))=f$.\\[0.25cm]
{\em Equivariance.} For $x\in X$ and an arbitrary bijection $\sigma:Y\rightarrow X$, \\[0.25cm]
\indent \hypertarget{[DEQ]}{\texttt{[DEQ]}} $ D_x(f)^{\sigma} = D_{\sigma^{-1}(x)}(f^{\sigma})$.\\[0.25cm]
{\em Exchange.}
For $x,y\in X$ and a bijection $\sigma:X\rightarrow X$ that exchanges $x$ and $y$,\\[0.25cm]
\indent \hypertarget{[DEX]}{\texttt{[DEX]}}  $D_{x}(f)^{\sigma}=D_x(D_y(f)).$\\[0.25cm]
{\em Compatibility with operadic compositions.} For $g\in {\EuScript O}(Y)$, the following equality holds: \\[0.25cm]
\indent \hypertarget{[DC1]}{\texttt{[DC1]}}  $D_{y}(f\circ_{x}g)=D_{y}(f)\circ_{x}g$, where $y\in X\backslash\{x\}$, and  \\[0.25cm] 
\indent \hypertarget{[DC2]}{\texttt{[DC2]}}  $D_{y}(f\circ_{x}g)=D_{y}(g)^{\sigma_1}\circ_{v}D_{x}(f)^{\sigma_2}$, where $y\in Y$, $\sigma_1:Y\backslash\{y\}\cup\{v\}\rightarrow Y$ renames \linebreak 
\phantom{\indent \texttt{(iiii)}}$y$ to  $v$ and $\sigma_2:X\backslash\{x\}\cup\{y\}\rightarrow X$ renames $x$ to $y$.\\[0.25cm]
For a finite set $X$, the elements of  ${\EuScript O}(X)$ are called the {\em operations of} ${\EuScript O}$  with   inputs indexed by the elements of $X$.  An exchangeable-output cyclic operad ${\EuScript O}$ is  constant-free if ${\EuScript O}(\emptyset)=\emptyset$.  \hfill$\square$
\end{defn}
\section{Categorified   cyclic operads}\label{s2}
This section deals with our principal categorification of entries-only cyclic operads. The categorification is made by relaxing the axioms \hyperlink{(A1)}{\texttt{(A1)}} and \hyperlink{(CO)}{\texttt{(CO)}} of Definition \ref{entriesonly}. In this section, the axiom \hyperlink{(EQ)}{\texttt{(EQ)}} remains strict. In Section \ref{ajaja}, we introduce the categorified notion and exhibit important properties. in Section \ref{blibli}, we state the coherence theorem. Sections \ref{frst} through \ref{cohe} are dedicated to its proof.
\subsection{The definition and properties}\label{ajaja}
The quest for  coherence led us to the following definition.   Below, we use Latin letters  for operations of a categorified cyclic operad, and Greek letters for morphisms between them.

\begin{defn}\label{cat}
A {\em categorified entries-only cyclic operad} is a functor ${\EuScript C}:{\bf Bij}^{\it op}\rightarrow {\bf Cat}$, together with\\[-0.6cm]
\begin{itemize}
\item a family of bifunctors $$_x\circ _y:{\EuScript C}(X)\times{\EuScript C}(Y)\rightarrow {\EuScript C}(X\backslash\{x\}\cup Y\backslash\{y\}),$$   called {partial compositions}, indexed by arbitrary non-empty finite sets $X$ and $Y$ and elements $x\in X$ and $y\in Y$, such that $X\backslash\{x\}\cap  Y\backslash\{y\}=\emptyset$,   which are subject to the equivariance axiom \hyperlink{(EQ)}{\texttt{(EQ)}} of Definition \ref{entriesonly},
and
\item
two natural isomorphisms, $\beta$ and $\gamma$, called the {\em sequential associator} and the {\em  commutator}, whose respective components  $$\beta^{x,\underline{x};y,\underline{y}}_{f,g,h}:(f\,{_x\circ _{\underline{x}}}\,g)\,{_y\circ _{\underline{y}}}\,h\rightarrow f\,{_x\circ _{\underline{x}}}\,(g\,{_y\circ _{\underline{y}}}\,h)\mbox{\quad and\quad} \gamma^{x,y}_{f,g}:f\,{_x\circ _y}\,g\rightarrow g\,{_y\circ _x}\,f\,,$$ are natural in $f$, $g$ and $h$, and are subject to the following coherence conditions:

  - \hypertarget{b-pentagon}{{\tt ($\beta$-\texttt{pentagon})}}
\begin{center}
\begin{tikzpicture}
    \node (E) at (0,-0.6) {\small $((f\,{_x\circ _{\underline{x}}}\,g)\,{_y\circ _{\underline{y}}}\,h)\,{_z\circ _{\underline{z}}}\,k$};
    \node (F) at (-3.7,-2.6) {\small $(f\,{_x\circ _{\underline{x}}}\,(g\,{_y\circ _{\underline{y}}}\,h))\,{_z\circ _{\underline{z}}}\,k$};
    \node (A) at (3.7,-2.6) {\small $(f\,{_x\circ _{\underline{x}}}\,g)\,{_y\circ _{\underline{y}}}\,(h\,{_z\circ _{\underline{z}}}\,k)$};
    \node (Asubt) at (-2.85,-5) {\small $f\,{_x\circ _{\underline{x}}}\,((g\,{_y\circ _{\underline{y}}}\,h)\,{_z\circ _{\underline{z}}}\,k)$};
    \node (P4) at (2.85,-5) {\small $f\,{_x\circ _{\underline{x}}}\,(g\,{_y\circ _{\underline{y}}}\,(h\,{_z\circ _{\underline{z}}}\,k))$};
    \draw[->] (E)--(F) node [midway,above,xshift=-1cm,yshift=-0.1cm] {\scriptsize   $\beta^{x,\underline{x};y,\underline{y}}_{f,g,h}\,{_z\circ _{\underline{z}}}\, 1_k$};
    \draw[->] (E)--(A) node [midway,above,xshift=0.7cm,yshift=-0.1cm] {\scriptsize     $\beta^{y,\underline{y};z,\underline{z}}_{f{_x\circ _{\underline{x}}}g,h,k}$};
 \draw[->] (F)--(Asubt) node [midway,left] {\scriptsize     $\beta^{x,\underline{x};z,\underline{z}}_{f,g {_y\circ _{\underline{y}}} h,k}$};
    \draw[->] (A)--(P4) node [midway,right]  {\scriptsize   $\beta^{x,\underline{x};y,\underline{y}}_{f,g,h{_z\circ _{\underline{z}}}k}$};
    \draw[->] (Asubt)--(P4) node [midway,above] {\scriptsize     $1_f\,{_x\circ _{\underline{x}}}\,\beta^{y,\underline{y};z,\underline{z}}_{g,h,k}$};
   \end{tikzpicture}
\end{center}

  - \hypertarget{bg-hexagon}{{\tt ($\beta \gamma$-\texttt{hexagon})}}
 
\begin{center}
\begin{tikzpicture}
    \node (E) at (0,0) {\small $(f\,{_x\circ _{\underline{x}}}\,g)\,{_y\circ _{\underline{y}}}\,h$};
    \node (G) at (4.2,0) {\small $f\,{_x\circ _{\underline{x}}}\,(g\,{_y\circ _{\underline{y}}}\,h)$};
    \node (F) at (8.4,0) {\small $(g\,{_y\circ _{\underline{y}}}\,h)\,{_{\underline{x}}\circ _{{x}}}\,f$};
    \node (A) at (0,-2) {\small $(g\,{_{\underline{x}}\circ _{{x}}}\,f)\,{_y\circ _{\underline{y}}}\,h$};
    \node (Asubt) at (4.2,-2) {\small $h\,{_{{\underline{y}}}\circ _y}\,(g\,{_{\underline{x}}\circ _{{x}}}\,f)$};
    \node (P4) at (8.4,-2) {\small $(h\,{_{\underline{y}}\circ _{{y}}}\,g) \,{_{\underline{x}}\circ _{{x}}}\,f$};
    \draw[->] (E)--(G) node [midway,above] {\scriptsize     $\beta^{x,\underline{x};y,\underline{y}}_{f,g,h}$};
    \draw[->] (G)--(F) node [midway,above] {\scriptsize    $\gamma^{x,\underline{x}}_{f,g{_y\circ _{\underline{y}}}h}$};
    \draw[->] (E)--(A) node [midway,left] {\scriptsize    $\gamma^{x,\underline{x}}_{f,g}\,{_y\circ _{\underline{y}}}\,{1_h}$};
 \draw[->] (F)--(P4) node [midway,right] {\scriptsize    $\gamma^{y,\underline{y}}_{g,h}\,{_{\underline{x}}\circ _{{x}}}\,{1_f}$};
    \draw[->] (A)--(Asubt) node [midway,above]  {\scriptsize    $\gamma^{y,\underline{y}}_{g{_{\underline{x}}\circ _{{x}}}f,h}$};
    \draw[->] (P4)--(Asubt) node [midway,above] {\scriptsize    $\beta^{\underline{y},y;\underline{x},x}_{h,g,f}$};
   \end{tikzpicture}
\end{center}
 
 - \hypertarget{bg-decagon}{{\tt ($\beta \gamma$-{\texttt{decagon}})}}
 
\begin{center}
\begin{tikzpicture}
    \node (E) at (-6.3,0) {\small $((f\,{_x\circ _{\underline{x}}}\,g)\,{_y\circ _{\underline{y}}}\,h)\,{_z\circ _{\underline{z}}}\,k$};
 \node (E1) at (-3.3,1.6) {\small $(h\,{_{\underline{y}}\circ _y}\,(f\,{_x\circ _{\underline{x}}}\,g))\,{_z\circ _{\underline{z}}}\,k$};
 \node (E2) at (2,1.6) {\small $h\,{_{\underline{y}}\circ _y}\,((f\,{_x\circ _{\underline{x}}}\,g)\,{_z\circ _{\underline{z}}}\,k)$};
    \node (G) at (5,0) {\small $((f\,{_x\circ _{\underline{x}}}\,g)\,{_z\circ _{\underline{z}}}\,k)\,{_y\circ _{\underline{y}}}\,h$};
    \node (F) at (5,-1.7) {\small $(f\,{_x\circ _{\underline{x}}}\,(g\,{_z\circ _{\underline{z}}}\,k))\,{_y\circ _{\underline{y}}}\,h$};
    \node (A) at (-6.3,-1.7) {\small $(f\,{_x\circ _{\underline{x}}}\,(g\,{_y\circ _{\underline{y}}}\,h))\,{_z\circ _{\underline{z}}}\,k$};
    \node (Asubt) at (-6.3,-3.5) {\small $f\,{_x\circ _{\underline{x}}}\,((g\,{_y\circ _{\underline{y}}}\,h)\,{_z\circ _{\underline{z}}}\,k)$};
  \node (Asubt1) at (-3.3,-5.1) {\small $f\,{_x\circ _{\underline{x}}}\,((h\,{_{\underline{y}}\circ _y}\,g)\,{_z\circ _{\underline{z}}}\,k)$};
  \node (Asubt2) at (2,-5.1) {\small $f\,{_x\circ _{\underline{x}}}\,(h\,{_{\underline{y}}\circ _y}\,(g\,{_z\circ _{\underline{z}}}\,k))$};
    \node (P4) at (5,-3.5) {\small $f\,{_x\circ _{\underline{x}}}\,((g\,{_z\circ _{\underline{z}}}\,k)\,{_y\circ _{\underline{y}}}\,h)$};
    \draw[->] (E)--(E1) node [midway,above,yshift=-0.15cm,xshift=-0.9cm] {\scriptsize  $\gamma^{{y,{\underline{y}}}}_{f{_x\circ _{\underline{x}}}g,h}\,{_z\circ _{\underline{z}}}\, 1_k$};
   \draw[->] (E1)--(E2) node [midway,above] {\scriptsize $\beta^{\underline{y},y;z,\underline{z}}_{h,f{_x\circ _{\underline{x}}}g,k}$};
   \draw[->] (E2)--(G) node [midway,above,xshift=0.8cm,yshift=-0.15cm] {\scriptsize  $\gamma^{\underline{y},y}_{h,(f{_x\circ _{\underline{x}}}g){_z\circ _{\underline{z}}}k}$};
    \draw[->] (G)--(F) node [midway,right] {\scriptsize   $\beta^{x,\underline{x};z,\underline{z}}_{f,g,k}\,{_y\circ _{\underline{y}}}\,1_h$};
    \draw[->] (E)--(A) node [midway,left] {\scriptsize  $\beta^{x,\underline{x};y,\underline{y}}_{f,g,h}\,{_z\circ _{\underline{z}}}\,1_k$};
 \draw[->] (F)--(P4) node [midway,right] {\scriptsize  $\beta^{x,\underline{x};y,\underline{y}}_{f,g{_z\circ _{\underline{z}}}k,h}$};
    \draw[->] (A)--(Asubt) node [midway,left]  {\scriptsize  $\beta^{x,\underline{x};z,\underline{z}}_{f,g{_y\circ _{\underline{y}}}h,k}$};
    \draw[->] (Asubt)--(Asubt1) node [midway,below,yshift=0.15cm,xshift=-1.4cm] {\scriptsize $1_f\,{_x\circ _{\underline{x}}}\,(\gamma^{y,\underline{y}}_{g,h}\,{_z\circ _{\underline{z}}}\, 1_k)$};
    \draw[->] (Asubt1)--(Asubt2) node [midway,above] {\scriptsize  $1_f\,{_x\circ _{\underline{x}}}\, \beta^{\underline{y},y;z,\underline{z}}_{h,g,k}$};
   \draw[->] (Asubt2)--(P4) node [midway,below,xshift=1.1cm,yshift=0.15cm] {\scriptsize  $1_f \,{_x\circ _{\underline{x}}}\,  \gamma^{\underline{y},y}_{h,g{_z\circ _{\underline{z}}} k}$};
   \end{tikzpicture}
\end{center}

- \hypertarget{g-involution}{{\tt ($\gamma$-{\texttt{involution}})}}   $\gamma^{\underline{x},x}_{g,f}\circ\gamma^{x,\underline{x}}_{f,g}={1}_{f{_x\circ _{\underline{x}}}g}$, where $1_{(-)}$ denotes the identity morphism for $(-)$, \\[0.2cm] as well as the following conditions which involve the action of ${\EuScript C}(\sigma)$, where $\sigma:Y\rightarrow X$, on the morphisms of ${\EuScript C}(X)$:\\[0.2cm]
- \hypertarget{bs}{{\tt ($\beta\sigma$)}} if the equality $((f\,{_x\circ _{\underline{x}}}\,\, g)\,{_y\circ _{\underline{y}}}\, h)^{\sigma}=(f^{\sigma_1}\,{_{x'}\circ _{\underline{x}'}}\,\, g^{\sigma_2})\,{_{y'}\circ _{\underline{y}'}}\, h^{\sigma_3}$ holds  by   \hyperlink{(EQ)}{\texttt{(EQ)}}, then $$(\beta^{x,\underline{x};y,\underline{y}}_{f,g,k})^{\sigma}=\beta^{x'\!, \underline{x}';y'\!,\underline{y}'}_{f^{\sigma_1},g^{\sigma_2},h^{\sigma_3}},$$ 
- \hypertarget{gs}{{\tt ($\gamma\sigma$)}} if the equality  $(f\,{_x\circ _{\underline{x}}}\,\, g)^{\sigma}=f^{\sigma_1}\,{_{x'}\circ _{\underline{x}'}}\,\, g^{\sigma_2}$ holds  by  \hyperlink{(EQ)}{\texttt{(EQ)}}, then $$(\gamma^{x,\underline{x}}_{f,g})^{\sigma}=\gamma^{x'\!,\underline{x}'}_{f^{\sigma_1},g^{\sigma_2}},$$

- \hypertarget{(EQ-mor)}{{\tt(EQ-mor)}} if the equality $(f\,{_x\circ _{\underline{x}}}\,\, g)^{\sigma}=f^{\sigma_1}\,{_{x'}\circ _{\underline{x}'}}\,\, g^{\sigma_2}$ holds  by  \hyperlink{(EQ)}{\texttt{(EQ)}}, and if $\varphi:f\rightarrow f'$ and  \\ 
\phantom{-\,{\tt(EQ-mor)}}    $\psi:g\rightarrow g'$, then $$(\varphi \,{_{x}\circ_{\underline{x}}} \,\psi)^{\sigma}=\varphi^{\sigma_1}\,{_{x'}\circ_{\underline{x}'}}\,\psi^{\sigma_2}.$$
\end{itemize}
We lift some of the vocabulary of Section \ref{s1} to the categorified setting: an object of ${\EuScript C}(X)$ is an operation, the 
elements of $X$ are its   entries; a categorified entries-only cyclic operad ${\EuScript C}$ is constant-free if ${\EuScript C}(\emptyset)={\EuScript C}(\{x\})=\emptyset$ (the empty category), for all singletons $\{x\}$.    \hfill$\square$
\end{defn}

\begin{rem}\label{oj}
The nodes of the diagrams of Definition \ref{cat} can be viewed as formal expressions built over operations $f,g,\dots$ and their entries $x,\underline{x},y,\underline{y},\dots$. For each diagram, the rules  for assembling correctly these expressions are determined by the predicate ``is an entry of''. 
For example, in \hyperlink{bg-decagon}{\em{\tt ($\beta\gamma$-${\tt{decagon}}$)}}, the legitimacy of all the nodes in the diagram witnesses that $x$ is  entry of $f$, $\underline{x}$, $y$ and $z$ are entries  of $g$, $\underline{y}$ is the entry of $h$ and $\underline{z}$ is the entry of $k$. From the tree-wise perspective, these data can be encoded by the unrooted tree
\begin{center}
\begin{tikzpicture}
 \node (f) [circle,fill=none,draw=black,minimum size=4mm,inner sep=0.1mm]  at (-1.1,1.1) {$f$};
\node (g) [circle,fill=none,draw=black,minimum size=4mm,inner sep=0.1mm]  at (0,1.7) {$g$};
\node (k) [circle,fill=none,draw=black,minimum size=4mm,inner sep=0.1mm]  at (0,2.8) {$k$};
\node (h) [circle,fill=none,draw=black,minimum size=4mm,inner sep=0.1mm]  at (1.1,1.1) {$h$};
\node (i1) [label={[xshift=0.3cm, yshift=0.4cm,]{\footnotesize $\underline{x}$}},label={[xshift=0.0cm, yshift=0.3cm,]{\footnotesize ${x}$}},circle,fill=none,draw=none,minimum size=0mm,inner sep=0mm]  at (-0.625,0.65) {};
\node (i2) [label={[xshift=-0.3cm, yshift=0.425cm,]{\footnotesize $y$}},label={[xshift=0cm, yshift=0.175cm,]{\footnotesize $\underline{y}$}},circle,fill=none,draw=none,minimum size=0mm,inner sep=0mm]  at (0.625,0.65) {};
\node (i3) [label={[xshift=0cm, yshift=-0.4cm]{\footnotesize $z$}},label={[xshift=0cm, yshift=-0.15cm,]{\footnotesize $\underline{z}$}},circle,fill=none,draw=none,minimum size=0mm,inner sep=0mm]  at (0.12,2.3) {};
\draw (f)--(g);
\draw (k)--(g);
\draw (g)--(h);
\draw (0.58,1.475)--(0.49,1.33);
\draw (-0.58,1.475)--(-0.49,1.33);
\draw (-0.09,2.25)--(0.09,2.25);
\end{tikzpicture}
\end{center}
This tree also illustrates that the morphism, say, $\beta^{\underline{x},x;y,\underline{y}}_{g,f,h}$ does not exist (for these particular $f$, $g$ and $h$), since its codomain is not well-formed. This exemplifies the difference with the setting of symmetric monoidal categories, where an instance of the sequential associator exists for  any (ordered) triple  of objects. The trees corresponding to \hyperlink{b-pentagon}{{\tt ($\beta$-${\tt{pentagon}}$)}},  \hyperlink{bg-hexagon}{{\tt ($\beta\gamma$-${\tt{hexagon}}$)}} and \hyperlink{g-involution}{{\tt ($\gamma$-${\tt{involution}}$)}} are 
\begin{center}
\begin{tabular}{m{4.5cm} m{3.55cm} m{1.25cm} m{1.5cm}}
\begin{tikzpicture}
 \node (e) [circle,fill=none,draw=none,minimum size=4mm,inner sep=0.1mm]  at (-0.3,0) {};
 \node (f) [circle,fill=none,draw=black,minimum size=4mm,inner sep=0.1mm]  at (0,0) {$f$};
\node (g) [circle,fill=none,draw=black,minimum size=4mm,inner sep=0.1mm]  at (1.1,0) {$g$};
\node (k) [circle,fill=none,draw=black,minimum size=4mm,inner sep=0.1mm]  at (2.2,0) {$h$};
\node (h) [circle,fill=none,draw=black,minimum size=4mm,inner sep=0.1mm]  at (3.3,0) {$k$};
\node (i1) [label={[xshift=0.17cm, yshift=-0.1cm]{\footnotesize $\underline{x}$}},label={[xshift=-0.08cm, yshift=-0.035cm]{\footnotesize ${x}$}},circle,fill=none,draw=none,minimum size=0mm,inner sep=0mm]  at (0.5,0) {};
\node (i2) [label={[xshift=-0.08cm, yshift=-0.035cm]{\footnotesize $y$}},label={[xshift=0.17cm, yshift=-0.1cm]{\footnotesize $\underline{y}$}},circle,fill=none,draw=none,minimum size=0mm,inner sep=0mm]  at (1.6,0) {};
\node (i3) [label={[xshift=-0.08cm, yshift=-0.035cm]{\footnotesize $z$}},label={[xshift=0.17cm, yshift=-0.1cm]{\footnotesize $\underline{z}$}},circle,fill=none,draw=none,minimum size=0mm,inner sep=0mm]  at (2.7,0) {};
\draw (f)--(g);
\draw (g)--(k);
\draw (k)--(h);
\draw (0.55,-0.1)--(0.55,0.1);
\draw (1.65,-0.1)--(1.65,0.1);
\draw (2.75,-0.1)--(2.75,0.1);
\end{tikzpicture} &\begin{tikzpicture}
 \node (e) [circle,fill=none,draw=none,minimum size=4mm,inner sep=0.1mm]  at (-0.3,0) {};
 \node (f) [circle,fill=none,draw=black,minimum size=4mm,inner sep=0.1mm]  at (0,0) {$f$};
\node (g) [circle,fill=none,draw=black,minimum size=4mm,inner sep=0.1mm]  at (1.1,0) {$g$};
\node (k) [circle,fill=none,draw=black,minimum size=4mm,inner sep=0.1mm]  at (2.2,0) {$h$};
\node (i1) [label={[xshift=0.17cm, yshift=-0.1cm]{\footnotesize $\underline{x}$}},label={[xshift=-0.08cm, yshift=-0.035cm]{\footnotesize ${x}$}},circle,fill=none,draw=none,minimum size=0mm,inner sep=0mm]  at (0.5,0) {};
\node (i2) [label={[xshift=-0.08cm, yshift=-0.035cm]{\footnotesize $y$}},label={[xshift=0.17cm, yshift=-0.1cm]{\footnotesize $\underline{y}$}},circle,fill=none,draw=none,minimum size=0mm,inner sep=0mm]  at (1.6,0) {};
\draw (f)--(g);
\draw (g)--(k);
\draw (0.55,-0.1)--(0.55,0.1);
\draw (1.65,-0.1)--(1.65,0.1);
\end{tikzpicture}
& and & 
\begin{tikzpicture}
 \node (e) [circle,fill=none,draw=none,minimum size=4mm,inner sep=0.1mm]  at (-0.3,0) {};
 \node (f) [circle,fill=none,draw=black,minimum size=4mm,inner sep=0.1mm]  at (0,0) {$f$};
\node (g) [circle,fill=none,draw=black,minimum size=4mm,inner sep=0.1mm]  at (1.1,0) {$g$};
\node (i1) [label={[xshift=0.17cm, yshift=-0.1cm]{\footnotesize $\underline{x}$}},label={[xshift=-0.08cm, yshift=-0.035cm]{\footnotesize ${x}$}},circle,fill=none,draw=none,minimum size=0mm,inner sep=0mm]  at (0.5,0) {};
\draw (f)--(g);
\draw (0.55,-0.1)--(0.55,0.1);
\end{tikzpicture}
\end{tabular}
\end{center}
respectively.  In \S \ref{trrrre}, we shall introduce a formal tree-wise representation of  the operations of a categorified cyclic operads, based on this  intuition. Until then, we shall continue to omit the data about the ``origin of entries'' whenever possible.
\end{rem}

\begin{rem}\label{eqdis}
Observe that, for a categorified cyclic operad ${\EuScript C}$ and a finite set $X$, both the objects and the morphisms of ${\EuScript C}(X)$ enjoy  equivariance: at the level of objects, this is ensured by  \hyperlink{(EQ)}{\texttt{(EQ)}}, and at the level of morphisms, by \hyperlink{(EQ-mor)}{{\tt(EQ-mor)}}.
\end{rem}

In the remainder of the section, we shall work with a fixed categorified entries-only cyclic operad ${\EuScript C}$. In the remark  that follows, we list  the different kinds of equalities on objects and morphisms of ${\EuScript C}(X)$  which are  implicitly imposed by the structure of   ${\EuScript C}$.
 \begin{rem}\label{functoriality} For an arbitrary finite set $X$, the following equalities hold in  ${\EuScript C}(X)$:
\begin{itemize}
\item[1.] the categorical equations:
 $$\varphi\circ 1_f=\varphi=1_g\circ\varphi, \mbox{ for }   \varphi:f\rightarrow g, \quad \quad
 (\varphi\circ\phi)\circ\psi=\varphi\circ(\phi\circ\psi);$$
\item[2.] the equations imposed by the bifunctoriality of ${_x\circ_{\underline{x}}}$: 
$$1_{f} \, {_x\circ_{\underline{x}}}\, 1_g = 1_{f\, {_x\circ_{\underline{x}}}\,g},\quad \quad
(\varphi_2\circ\varphi_1)\, {_x\circ_{\underline{x}}}\, (\psi_2\circ\psi_1)=(\varphi_2 \, {_x\circ_{\underline{x}}}\,\psi_2)\circ(\varphi_1 \, {_x\circ_{\underline{x}}}\,\psi_1);$$
\item[3.] the naturality equations for $\beta$ and $\gamma$: 
\begin{itemize}
\item[a)] $\beta^{x,\underline{x};y,\underline{y}}_{{f}_2,{g}_2,{h}_2}\circ ((\varphi\,{_x\circ_{\underline{x}}}\,\phi)\,{_y\circ_{\underline{y}}}\,\psi)=(\varphi\,{_x\circ_{\underline{x}}}\,(\phi\,{_y\circ_{\underline{y}}}\,\psi)) \circ \beta^{x,\underline{x};y,\underline{y}}_{{f}_1,{g}_1,{h}_1}$, 
\item[b)] $\gamma^{x,y}_{f_2,g_2}\circ (\varphi\,{_x\circ_{y}}\,\phi)=(\phi\,{_y\circ_{x}}\,\varphi)\circ \gamma^{x,y}_{f_1,g_1}$; 
\end{itemize}
\item[4.] the equations imposed by the functoriality of ${\EuScript C}$: 
$${\EuScript C}(1_X)=1_{{\EuScript C}(X)},\quad\quad  (f^{\sigma})^{\tau}=f^{\sigma\circ\tau}, \quad\quad (\varphi^{\sigma})^{\tau}=\varphi^{\sigma\circ\tau};$$
\item[5.] the equations imposed by the functoriality of ${\EuScript C}(\sigma)$: 
$$1_f^{\sigma}=1_{f^{\sigma}}, \quad\quad (\varphi\circ\psi)^{\sigma}=\varphi^{\sigma}\circ\psi^{\sigma}.$$
\end{itemize}
\end{rem}
 
\subsubsection{Parallel associator in   ${\EuScript C}$}\label{pass}    
We   introduce an important abbreviation:  we define a natural isomorphism $\vartheta$, called {\em parallel associator}, by taking 
\begin{equation}\label{theta}\vartheta^{x,\underline{x};y,\underline{y}}_{f,g,h}\,= \,\gamma^{\underline{x},x}_{g,f{_y\circ _{\underline{y}}}h}\circ\beta^{\underline{x},x;y,\underline{y}}_{g,f,h}\circ(\gamma^{x,\underline{x}}_{f,g}\,{_y\circ _{\underline{y}}}\, 1_h)\,:\,(f\,{_x\circ _{\underline{x}}}\,g)\,{_y\circ _{\underline{y}}}\,h\,\longrightarrow\, (f\,{_y\circ _{\underline{y}}}\,h)\,{_x\circ _{\underline{x}}}\,g \end{equation}

\noindent
for its components.  Here are first observations about the natural isomorphism $\vartheta$.
 
\begin{rem}\label{thetaappears} The natural isomorphism $\vartheta$ appears  in \hyperlink{bg-hexagon}{\em{\tt ($\beta \gamma$-\texttt{hexagon})}} and \hyperlink{bg-decagon}{\em{\tt ($\beta \gamma$-{\texttt{decagon}})}}.
\begin{itemize}
\item[1.] An  isomorphism with the same source and target as $\vartheta^{x,\underline{x};y,\underline{y}}_{f,g,h}$ could be introduced as the composition 
$(\gamma^{\underline{y},y}_{h,f}\,{_x\circ _{\underline{x}}}\, 1_g)\circ({{\beta}^{\underline{y},y;x,\underline{x}}_{h,f,g}})^{-1}\circ \gamma^{y,\underline{y}}_{f{_x\circ _{\underline{x}}}g,h}$,
 which is as ``natural" as the composition \eqref{theta}. With this in mind,   \hyperlink{bg-hexagon}{\em{\tt ($\beta \gamma$-\texttt{hexagon})}} can be read as: the two  possible (and equally natural) definitions  of $\vartheta^{x,\underline{x};y,\underline{y}}_{f,g,h}$ are equal.
\item[2.]  By using  explicitly the abbreviations $\vartheta^{y,\underline{y};z,\underline{z}}_{f{_x\circ _{\underline{x}}}g,h,k}$ and $1_f\,{_x\circ _{\underline{x}}}\,\vartheta^{y,\underline{y};z,\underline{z}}_{g,h,k}$ for  the top  and  the bottom horizontal sequence of arrows of \hyperlink{bg-decagon}{\em{\tt ($\beta \gamma$-{\texttt{decagon}})}}, respectively,  \hyperlink{bg-decagon}{\em{\tt ($\beta \gamma$-{\texttt{decagon}})}} is turned into a hexagon (which corresponds to  the mixed hexagon of \cite{dp}). \end{itemize}
\end{rem}
\begin{lem}\label{thetainverse} The following equalities hold in ${\EuScript C}(X)$:
\begin{itemize}
\item[1.] \hypertarget{vt-involution}{{\em\texttt{($\vartheta$-involution)}}}\enspace $\vartheta^{y,\underline{y};x,\underline{x}}_{f,h,g}\circ\vartheta^{x,\underline{x};y,\underline{y}}_{f,g,h}=1_{(f{_x\circ _{\underline{x}}}\,g){_y\circ _{\underline{y}}}\,h}$,
\item[2.] \hypertarget{bvt-pentagon}{{\em\texttt{($\beta\vartheta$-pentagon)}}} \enspace $\vartheta^{y,\underline{y};x\underline{x}}_{f,h,g{_z\circ _{\underline{z}}}k}\circ \beta^{x,\underline{x};z,\underline{z}}_{f{_y\circ _{\underline{y}}}h,g,k}\circ (\vartheta^{x,\underline{x};y,\underline{y}}_{f,g,h}\,{_z\circ _{\underline{z}}}\, 1_k)=(\beta^{x,\underline{x};z,\underline{z}}_{f,g,k}\,{_y\circ _{\underline{y}}}\, 1_h) \circ \vartheta^{y,\underline{y};z,\underline{z}}_{f{_x\circ _{\underline{x}}}g,h,k}$,
\item[3.] \hypertarget{vt-hexagon}{{\em\texttt{($\vartheta$-hexagon)}}}  $$\vartheta^{x,\underline{x};y,\underline{y}}_{f\,{_z\circ _{\underline{z}}}\,k,g,h}\circ (\vartheta^{x,\underline{x};z,\underline{z}}_{f,g,k}\,{_y\circ _{\underline{y}}}\,1_h)\circ \vartheta^{y,\underline{y};z,\underline{z}}_{f{_x\circ _{\underline{x}}}g,h,k}=(\vartheta^{y,\underline{y};z,\underline{z}}_{f,h,k}\,{_x\circ _{\underline{x}}}\,1_g)\circ \vartheta^{x,\underline{x};z,\underline{z}}_{f{_y\circ _{\underline{y}}}h,g,k}\circ (\vartheta^{x,\underline{x};y,\underline{y}}_{f,g,h}\,{_z\circ _{\underline{z}}}\, 1_k).$$
\end{itemize}
\end{lem}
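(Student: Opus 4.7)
The three identities are proved in order; each relies on the axioms of Definition~\ref{cat} together with the routine categorical and bifunctorial laws collected in Remark~\ref{functoriality}.

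For \textbf{part 1}, the key lever is Remark~\ref{thetaappears}(1): by the $\beta\gamma$-hexagon, $\vartheta^{x,\underline{x};y,\underline{y}}_{f,g,h}$ admits a second, ``left-leaning'' presentation $(\gamma^{\underline{y},y}_{h,f}\,{_x\circ _{\underline{x}}}\,1_g)\circ(\beta^{\underline{y},y;x,\underline{x}}_{h,f,g})^{-1}\circ\gamma^{y,\underline{y}}_{f\,{_x\circ _{\underline{x}}}\,g,h}$. Relabelling this to produce $\vartheta^{y,\underline{y};x,\underline{x}}_{f,h,g}$ and composing with the defining form \eqref{theta} of $\vartheta^{x,\underline{x};y,\underline{y}}_{f,g,h}$ produces a six-arrow chain in which the inner $\gamma$-pair cancels by $\gamma$-involution, the adjacent $\beta^{-1}$ and $\beta$ cancel, and the outer pair of morphisms $(\gamma\,{_y\circ _{\underline{y}}}\,1_h)$ collapses to $1_{(f\,{_x\circ _{\underline{x}}}\,g)\,{_y\circ _{\underline{y}}}\,h}$ by bifunctoriality of $_y\circ_{\underline{y}}$ and a final application of $\gamma$-involution.

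For \textbf{part 2}, I use the reformulation in Remark~\ref{thetaappears}(2): the $\beta\gamma$-decagon can be rewritten as a mixed hexagon whose top path is the single arrow $\vartheta^{y,\underline{y};z,\underline{z}}_{f\,{_x\circ _{\underline{x}}}\,g,h,k}$ (by the definition of $\vartheta$) followed by two $\beta$-arrows, and whose bottom path is two $\beta$-arrows followed by $1_f\,{_x\circ _{\underline{x}}}\,\vartheta^{y,\underline{y};z,\underline{z}}_{g,h,k}$ (the latter by bifunctoriality of $_x\circ_{\underline{x}}$). The stated pentagon is then obtained from this hexagon by rewriting one side via the $\beta$-pentagon to remove one $\beta$-arrow, and substituting the left-leaning presentation from part~1 to recover the leading $\vartheta^{y,\underline{y};x,\underline{x}}_{f,h,g\,{_z\circ _{\underline{z}}}\,k}$; naturality of $\beta$ in its operation arguments is used to transport the cancelled $\beta$ across the remaining terms.

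\textbf{Part 3} expresses a braid-type relation among three $\vartheta$-swaps on the star tree with $f$ at the centre and branches $g$, $h$, $k$; it is the most combinatorially demanding of the three. My plan is to use the $\beta\vartheta$-pentagon established in part~2 as a rewriting rule which trades an outer $\vartheta$-arrow for a $\beta$ followed by a differently indexed $\vartheta$. Applying this to each of the two ``leading'' $\vartheta$-arrows on both sides of the claimed identity reduces them to compositions featuring the same ``central'' instance $\vartheta^{y,\underline{y};z,\underline{z}}_{f\,{_x\circ _{\underline{x}}}\,g,h,k}$ modulo intervening $\beta$-terms; the remaining equality of $\beta$-terms is then established by the $\beta$-pentagon combined with the naturality equations of Remark~\ref{functoriality}(3a) and the bifunctoriality equations of Remark~\ref{functoriality}(2). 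The main obstacle is the bookkeeping of indices in tracking the $\vartheta$- and $\beta$-terms through these reductions.
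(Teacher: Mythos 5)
Your treatment of \texttt{($\vartheta$-involution)} is correct and is essentially the paper's own argument: the second, ``left-leaning'' presentation of $\vartheta$ supplied by \texttt{($\beta\gamma$-hexagon)}, followed by cancellation of the inner $\gamma$-pair, of $\beta^{-1}$ against $\beta$, and of the outer $\gamma$-pair via bifunctoriality and \texttt{($\gamma$-involution)}. The problem is with parts 2 and 3, where you have in effect swapped the two key coherence axioms. For \texttt{($\beta\vartheta$-pentagon)}, the mixed hexagon of Remark~\ref{thetaappears}(2) is simply not available in the relevant configuration: in the decagon both $y$ and $z$ are entries of $g$, so its underlying unrooted tree is the star with centre $g$ and leaves $f,h,k$; whereas in the pentagon the arrow $\vartheta^{x,\underline{x};y,\underline{y}}_{f,g,h}$ forces $y\in{\it FV}(f)$ and the arrow $\beta^{x,\underline{x};z,\underline{z}}_{f\,{_y\circ_{\underline{y}}}h,g,k}$ forces $z\in{\it FV}(g)$, so the underlying tree is the path $h$--$f$--$g$--$k$. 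These trees are not isomorphic, hence no instance of the decagon exists with the pentagon's data; concretely, the arrow $\beta^{x,\underline{x};y,\underline{y}}_{f,g\,{_z\circ_{\underline{z}}}k,h}$ appearing in the mixed hexagon is ill-formed when $y\in{\it FV}(f)$. The correct outer diagram is an instance of the plain \texttt{($\beta$-pentagon)} on the chain $h$--$f$--$g$--$k$ (the one starting at $((h\,{_{\underline{y}}\circ_y}\,f)\,{_x\circ_{\underline{x}}}\,g)\,{_z\circ_{\underline{z}}}\,k$), connected to the inner pentagon by naturality squares for $\beta$ and by squares expressing the definition \eqref{theta} of $\vartheta$.

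For \texttt{($\vartheta$-hexagon)} the mismatch is reversed and the gap is fatal. Here all of $x,y,z$ are entries of $f$, so the underlying tree is the star with centre $f$ and leaves $g,h,k$; no instance of \texttt{($\beta\vartheta$-pentagon)} can be applied there, since the pentagon requires $z$ to be an entry of the leaf operation $g$. Moreover, even setting typing aside, a Yang--Baxter-type relation among three $\vartheta$'s is not a consequence of pentagon-type conditions together with naturality and bifunctoriality --- this is exactly why \texttt{($\beta\gamma$-decagon)} (Do\v sen and Petri\'c's mixed hexagon) is an independent axiom, and why the skeletal axiom 5(d) is listed separately from 5(c) in the weak Cat-operad axioms. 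The paper proves part 3 by the same enclosing technique as part 2, but with the outer diagram now being the decagon in its mixed-hexagon form; your plan never invokes the decagon for part 3, so the essential ingredient is missing.
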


\begin{proof}  
\hyperlink{vt-involution}{{\texttt{($\vartheta$-involution)}}} follows by the commutation of the (outer part of) diagram 
\begin{center}
\resizebox{13cm}{!}{\begin{tikzpicture}
    \node (1) at (-1,0) {\small $(f\,{_x\circ _{\underline{x}}}\,g)\,{_y\circ _{\underline{y}}}\,h$};
    \node (2) at (3,2.2) {\small $(g\,{_{\underline{x}}\circ _{x}}\,f)\,{_y\circ _{\underline{y}}}\,h$};
    \node (3) at (7,2.2) {\small $g\,{_{\underline{x}}\circ _{x}}\,(f\,{_y\circ _{\underline{y}}}\,h)$};
    \node (4) at (11,0) {\small $(f\,{_y\circ _{\underline{y}}}\,h)\,{_{x}\circ _{\underline{x}}}\,g$};
    \node (11) at (3,-2.2) {\small $h \,{_{\underline{y}}\circ _{y}}\, (f\,{_x\circ _{\underline{x}}}\,g)$};
    \node (12) at (7,-2.2) {\small $(h \,{_{\underline{y}}\circ _{y}}\, f)\,{_x\circ _{\underline{x}}}\,g$};
 \node (m1) at (3,-0) {\small $(g\,{_{\underline{x}}\circ _{x}}\,f)\,{_y\circ _{\underline{y}}}\,h$};
 \node (m2) at (7,-0) {\small $g\,{_{\underline{x}}\circ _{x}}\,(f\,{_y\circ _{\underline{y}}}\,h)$};
    \draw[->] (1)--(2) node [midway,above,xshift=-0.5cm] {\scriptsize   $\gamma^{x,\underline{x}}_{f,g}\,{_y\circ _{\underline{y}}}\,1_h$};
    \draw[->] (2)--(3) node [midway,above] {\scriptsize   $\beta^{\underline{x},x;y,\underline{y}}_{g,f,h}$};
    \draw[->] (3)--(4) node [midway,above,xshift=0.2cm] {\scriptsize $\gamma^{\underline{x},x}_{g,f{_y\circ _{\underline{y}}}h}$};
 \draw[->] (4)--(m2) node [midway,above,yshift=-0.05cm] {\scriptsize  $\gamma^{x,\underline{x}}_{f{_y\circ _{\underline{y}}}h,g}$};
 \draw[->] (m2)--(m1) node [midway,above,yshift=-0.05cm] {\scriptsize  $({\beta^{\underline{x},x;y,\underline{y}}_{g,f,h}})^{-1}$};
 \draw[->] (m1)--(1) node [midway,above,yshift=-0.05cm] {\scriptsize  $\gamma^{\underline{x},x}_{g,f}\,{_y\circ _{\underline{y}}}\, 1_h$};
 \draw[->] (4)--(12) node [midway,below,xshift=0.6cm] {\scriptsize  $\gamma^{y,\underline{y}}_{f,h}\,{_y\circ _{\underline{y}}}\, {1_g}$};
    \draw[->] (12)--(11) node [midway,above]  {\scriptsize   $\beta^{\underline{y},y;x,\underline{x}}_{h,f,g}$};
    \draw[->] (11)--(1) node [midway,below,xshift=-0.2cm] {\scriptsize  $\gamma^{\underline{y},y}_{h,f{_{x}\circ _{\underline{x}}}g}$};
   \end{tikzpicture}}
\end{center}

\noindent
in which the upper hexagon commutes by \hyperlink{g-involution}{{\tt ($\gamma$-{\texttt{involution}})}}  and the lower hexagon commutes as an instance of \hyperlink{bg-hexagon}{{\tt ($\beta \gamma$-\texttt{hexagon})}}. \\[0.1cm]
\indent For \hyperlink{bvt-pentagon}{\texttt{($\beta\vartheta$-pentagon)}}, consider the diagram 
\begin{center}\resizebox{15.5cm}{!}{
\begin{tikzpicture}
    \node (E) at (0,0.2) {\small $((f\,{_x\circ _{\underline{x}}}\,g)\,{_y\circ _{\underline{y}}}\,h)\,{_z\circ _{\underline{z}}}\,k$};
  \node (Er) at (0,4.15) {$(h\,{_{\underline{y}}\circ _{y}}\,(f\,{_x\circ _{\underline{x}}}\,g))\,{_z\circ _{\underline{z}}}\,k$};
    \node (F) at (-3,-2) {\small $((f\,{_y\circ _{\underline{y}}}\,h)\,{_x\circ _{\underline{x}}}\,g)\,{_z\circ _{\underline{z}}}\,k$};
    \node (A) at (3,-2) {\small $((f\,{_x\circ _{\underline{x}}}\,g)\,{_z\circ _{\underline{z}}}\,k)\,{_y\circ _{\underline{y}}}\,h$};
    \node (Asubt) at (-2.5,-4.2) {\small $(f\,{_y\circ _{\underline{y}}}\,h)\,{_x\circ _{\underline{x}}}\,(g\,{_z\circ _{\underline{z}}}\,k)$};
    \node (P4) at (2.5,-4.2) {\small $(f\,{_x\circ _{\underline{x}}}\,(g\,{_z\circ _{\underline{z}}}\,k))\,{_y\circ _{\underline{y}}}\,h$};
   \node (A') at (8,-1) {\small $h\,{_{\underline{y}}\circ _{y}}\,((f\,{_x\circ _{\underline{x}}}\,g)\,{_z\circ _{\underline{z}}}\,k)$};
  \node (F') at (-8,-1) {\small $((h\,{_{\underline{y}}\circ _{y}}\,f)\,{_x\circ _{\underline{x}}}\,g)\,{_z\circ _{\underline{z}}}\,k$};
  \node (Asubt') at (-6.6,-6.7) {\small $(h\,{_{\underline{y}}\circ _{y}}\,f)\,{_x\circ _{\underline{x}}}\,(g\,{_z\circ _{\underline{z}}}\,k)$};
\node (P4') at (6.6,-6.7) {\small $h\,{_{\underline{y}}\circ _{y}}\,(f\,{_x\circ _{\underline{x}}}\,(g\,{_z\circ _{\underline{z}}}\,k))$};
\draw[->] (A)--(A') node [midway,below,xshift=0.4cm] {\scriptsize  $\gamma^{y,\underline{y}}_{(f{_x\circ _{\underline{x}}}g){_z\circ _{\underline{z}}}k,h}$};
 \draw[->] (F)--(F') node [midway,below,xshift=-0.4cm] {\scriptsize  $(\gamma^{y,\underline{y}}_{f,h}\,{_x\circ _{\underline{x}}}\,1_g)\,{_z\circ _{\underline{z}}}\,1_k$};
 \draw[->] (E)--(Er) node [midway,left,yshift=-0.25cm] {\scriptsize   $\gamma^{y,\underline{y}}_{f{_x\circ _{\underline{x}}}g,h}\,{_z\circ _{\underline{z}}}\, 1_k$};
    \draw[->] (E)--(F) node [midway,above,xshift=-1cm,yshift=-0.15cm] {\scriptsize   $\vartheta^{x,\underline{x};y,\underline{y}}_{f,g,h}\,{_z\circ _{\underline{z}}}\, 1_k$};
    \draw[->] (E)--(A) node [midway,above,xshift=0.7cm,yshift=-0.15cm] {\scriptsize    $\vartheta^{y,\underline{y};z,\underline{z}}_{f{_x\circ _{\underline{x}}}g,h,k}$};
 \draw[->] (F)--(Asubt) node [midway,left] {\scriptsize    $\beta^{x,\underline{x};z,\underline{z}}_{f{_y\circ _{\underline{y}}}h,g,k}$};
    \draw[->] (A)--(P4) node [midway,right]  {\scriptsize     $\beta^{x,\underline{x};z,\underline{z}}_{f,g,k}\,{_y\circ _{\underline{y}}}\, 1_h$};
    \draw[->] (Asubt)--(P4) node [midway,above] {\scriptsize    $\vartheta^{y,\underline{y};x\underline{x}}_{f,h,g{_z\circ _{\underline{z}}}k}$};
 \draw[->] (Asubt)--(Asubt') node [midway,left,yshift=0.2cm,xshift=0.1cm] {\scriptsize  $\gamma^{y,\underline{y}}_{h,f} \,{_x\circ _{\underline{x}}}\, 1_{g{_z\circ _{\underline{z}}}k}$};
 \draw[->] (F')--(Asubt') node [midway,left] {\scriptsize  $\beta^{x,\underline{x};z,\underline{z}}_{h{_{\underline{y}}\circ _{y}}f,g,k}$};
\draw[->] (P4')--(P4) node [midway,right,yshift=0.08cm,xshift=-0.1cm] {\scriptsize  $\gamma^{\underline{y},y}_{h,f{_x\circ _{\underline{x}}}(g{_z\circ _{\underline{z}}}k)}$};
\draw[->] (Asubt')--(P4') node [midway,above] {\scriptsize  $\beta^{\underline{y},y;x,\underline{x}}_{h,f,g{_z\circ _{\underline{z}}}k}$};
\draw[->] (A')--(P4') node [midway,right] {\scriptsize  $1_h\,{_{\underline{y}}\circ _{y}}\,\beta^{x,\underline{x};z,\underline{z}}_{f,g,k}$};
 \draw[->] (Er)--(F') node [midway,left,xshift=-0.15cm] {\scriptsize  $\beta^{\underline{y},y;x,\underline{x}}_{h,f,g}\,{_z\circ _{\underline{z}}}\, 1_k$};
 \draw[->] (Er)--(A') node [midway,right,xshift=0.2cm] {\scriptsize  $\beta^{\underline{y},y;z,\underline{z}}_{h,f{_x\circ _{\underline{x}}}g,k}$};
   \end{tikzpicture}}
\end{center}
 whose ``inner" pentagon is \hyperlink{bvt-pentagon}{\texttt{($\beta\vartheta$-pentagon)}} and whose ``outer" pentagon commutes as an instance of  \hyperlink{b-pentagon}{{\tt ($\beta$-\texttt{pentagon})}}. The claim follows by the  commutations of all the diagrams ``between" the two pentagons (two naturality squares for $\beta$ and three squares expressing the definition of $\vartheta$).\\[0.1cm]
\indent We use an analogous diagram for proving \hyperlink{vt-hexagon}{{\texttt{($\vartheta$-hexagon)}}}. The ``inner" hexagon in the diagram
\begin{center}\resizebox{16.2cm}{!}{
\begin{tikzpicture}
    \node (E) at (-2.5,0) {\small $((f\,{_x\circ _{\underline{x}}}\,g)\,{_y\circ _{\underline{y}}}\,h)\,{_z\circ _{\underline{z}}}\,k$};
   \node (El) at (-6.5,2.4) {\small $((g\,{_{\underline{x}}\circ _{x}}\,f)\,{_y\circ _{\underline{y}}}\,h)\,{_z\circ _{\underline{z}}}\,k$};
   \node (Elh) at (6.5,2.4) {\small $(g\,{_{\underline{x}}\circ _{x}}\,(f\,{_y\circ _{\underline{y}}}\,h))\,{_z\circ _{\underline{z}}}\,k$};
    \node (Al) at (-8.2,-1.7) {\small $((g\,{_{\underline{x}}\circ _{x}}\,f)\,{_z\circ _{\underline{z}}}\,k)\,{_y\circ _{\underline{y}}}\,h$};
  \node (Ar2) at (8.2,-1.7) {\small $g\,{_{\underline{x}}\circ _{x}}\,((f\,{_y\circ _{\underline{y}}}\,h)\,{_z\circ _{\underline{z}}}\,k)$};
    \node (G) at (2.5,0) {\small $((f\,{_y\circ _{\underline{y}}}\,h)\,{_x\circ _{\underline{x}}}\,g)\,{_z\circ _{\underline{z}}}\,k$};
    \node (F) at (3,-1.7) {\small $((f\,{_y\circ _{\underline{y}}}\,h)\,{_z\circ _{\underline{z}}}\,k)\,{_x\circ _{\underline{x}}}\,g$};
    \node (A) at (-3,-1.7) {\small $((f\,{_x\circ _{\underline{x}}}\,g)\,{_z\circ _{\underline{z}}}\,k)\,{_y\circ _{\underline{y}}}\,h$};
    \node (Asubt) at (-2.5,-3.5) {\small $((f\,{_z\circ _{\underline{z}}}\,k)\,{_x\circ _{\underline{x}}}\,g)\,{_y\circ _{\underline{y}}}\,h$};
    \node (Asubtl) at (-6.5,-5.9) {\small $(g\,{_{\underline{x}}\circ _{x}}\,(f\,{_z\circ _{\underline{z}}}\,k))\,{_y\circ _{\underline{y}}}\,h$};
    \node (P4) at (2.5,-3.5) {\small $((f\,{_z\circ _{\underline{z}}}\,k)\,{_y\circ _{\underline{y}}}\,h)\,{_x\circ _{\underline{x}}}\,g$};
 \node (P4r) at (6.5,-5.9) {\small $g\,{_{\underline{x}}\circ _{x}}\,((f\,{_z\circ _{\underline{z}}}\,k)\,{_y\circ _{\underline{y}}}\,h)$};
    \draw[->] (E)--(G) node [midway,above] {\scriptsize     $\vartheta^{x,\underline{x};y,\underline{y}}_{f,g,h}\,{_z\circ _{\underline{z}}}\, 1_k$};
\draw[->] (E)--(El) node [midway,above,xshift=1.2cm,yshift=-0.1cm] {\scriptsize   $(\gamma^{x,\underline{x}}_{f,g}\,{_y\circ _{\underline{y}}}\, 1_h)\,{_z\circ _{\underline{z}}}\,  1_k$};
\draw[->] (El)--(Al) node [midway,left] {\scriptsize   $\vartheta^{y,\underline{y};z,\underline{z}}_{g{_{\underline{x}}\circ _{x}}f,h,k}$};
\draw[->] (El)--(Elh) node [midway,above] {\scriptsize   $\beta^{\underline{x},x;y,\underline{y}}_{g,f,h}\,{_z\circ _{\underline{z}}}\, 1_k$};
\draw[->] (Elh)--(G) node [midway,above,xshift=-0.95cm,yshift=-0.1cm] {\scriptsize   $\gamma^{\underline{x},x}_{g,f{_y\circ _{\underline{y}}}h}\,{_z\circ _{\underline{z}}}\,1_k$};
\draw[->] (Elh)--(Ar2) node [midway,right] {\scriptsize   $\beta^{\underline{x},x;z,\underline{z}}_{g,f{_y\circ _{\underline{y}}}h,k}$};
\draw[->] (A)--(Al) node [midway,above,sloped] {\scriptsize   $(\gamma^{x,\underline{x}}_{f,g}\,{_z\circ _{\underline{z}}}\,  1_k)\,{_y\circ _{\underline{y}}}\, 1_h$};
\draw[->] (Ar2)--(P4r) node [midway,right] {\scriptsize   $1_g\,{_{\underline{x}}\circ _{{x}}}\, \vartheta^{y,\underline{y};z,\underline{z}}_{f,h,k}$};
    \draw[->] (G)--(F) node [midway,right,yshift=-0.1cm] {\scriptsize      $\vartheta^{x,\underline{x};z,\underline{z}}_{f{_y\circ _{\underline{y}}}h,g,k}$};
    \draw[->] (E)--(A) node [midway,left,yshift=-0.1cm] {\scriptsize     $\vartheta^{y,\underline{y};z,\underline{z}}_{f{_x\circ _{\underline{x}}}g,h,k}$};
 \draw[->] (F)--(Ar2) node [midway,above] {\scriptsize  $\gamma^{x,\underline{x}}_{(f{_y\circ _{\underline{y}}}h){_z\circ _{\underline{z}}}k,g}$};
 \draw[->] (F)--(P4) node [midway,right,yshift=-0.1cm] {\scriptsize     $\vartheta^{y,\underline{y};z,\underline{z}}_{f,h,k}\,{_x\circ _{\underline{x}}}\,1_g$};
    \draw[->] (A)--(Asubt) node [midway,left,yshift=-0.1cm]  {\scriptsize     $\vartheta^{x,\underline{x};z,\underline{z}}_{f,g,k}\,{_y\circ _{\underline{y}}}\,1_h$};
    \draw[->] (Asubt)--(P4) node [midway,above] {\scriptsize    $\vartheta^{x,\underline{x};y,\underline{y}}_{f\,{_z\circ _{\underline{z}}}\,k,g,h}$};
    \draw[->] (Asubtl)--(Asubt) node [midway,above,xshift=-0.7cm] {\scriptsize  $\gamma^{\underline{x},x}_{g,f{_z\circ _{\underline{z}}}k}\,\,{_y\circ _{\underline{y}}}\, 1_h$};
   \draw[->] (Al)--(Asubtl) node [midway,left] {\scriptsize  $\beta^{\underline{x},x;z,\underline{z}}_{g,f,k}\,{_y\circ _{\underline{y}}}\,  1_h$};
    \draw[->] (P4)--(P4r) node [midway,above,xshift=0.6cm] {\scriptsize     $\gamma^{x,\underline{x}}_{(f{_z\circ _{\underline{z}}}k){_y\circ _{\underline{y}}}h,g}$};
 \draw[->] (Asubtl)--(P4r) node [midway,above] {\scriptsize    $\beta^{\underline{x},x;y,\underline{y}}_{g,f{_z\circ _{\underline{z}}}k,h}$};
   \end{tikzpicture}}
\end{center}  is  \hyperlink{vt-hexagon}{{\texttt{($\vartheta$-hexagon)}}}, the ``outer" hexagon is obtained from \hyperlink{bg-decagon}{\em{\tt ($\beta \gamma$-{\texttt{decagon}})}} by using explicitly the abbreviations for the parallel associator (see Remark \ref{thetaappears}(2)), and the claim follows by  the commutations of all the diagrams ``between" the two hexagons (the four naturality squares for $\vartheta$ and two squares which express  the definition of $\vartheta$).
\end{proof}
 \subsection{Canonical diagrams and the coherence theorem}\label{blibli}
The coherence theorem that we shall prove has the form:   {\em all diagrams of canonical arrows commute in ${\EuScript C}(X)$.}
In order to formulate it rigorously, we first specify  what  a diagram of canonical arrows  is exactly, by means of a syntax. Denoting with $\underline{\EuScript C}$   the underlying ({\bf Set}-valued) functor of ${\EuScript C}$, we essentially describe  the free categorified entries-only cyclic operad built over $\underline{\EuScript C}$.
 However, since the purpose of the syntax is solely to distinguish the canonical arrows of ${\EuScript C}(X)$, the formalism will be left without any equations.
 \subsubsection{The syntax ${\tt Free}_{\underline{\EuScript C}}$}
 Let \begin{equation}\label{pp}P_{\underline{\EuScript C}}=\{a\,|\, a\in\underline{\EuScript C}(X) \mbox{ for some finite set } X \}\end{equation} be the collection of {\em parameters} of $\underline{\EuScript C}$,  let $V$ range over {\em variables}   $x,y,z,\underline{x},\underline{y},\underline{z},\dots $, and let  $\Sigma$ range over 
  bijections of finite sets. \\[0.1cm]
\indent Our syntax $\tt{Free}_{\underline{\EuScript C}}$ 
contains two kinds of typed expressions, the {\em object terms} and the {\em arrow terms}  (as all the other formal systems that we shall introduce in the remaining of the section). \\[0.1cm]
\indent The syntax of object terms is obtained from raw (i.e., not  yet typed) object terms
 \begin{center}\vspace{0.1cm}\mybox{
$\displaystyle {\cal W}::= \underline{a} \,\,|\,\, ({\cal W}{_x\bo_y}{\cal W})\,\,|\,\, {\cal W}^{\sigma}$ }\vspace{0.1cm}
\end{center}
where $a\in P_{\underline{\EuScript C}}$, $x,y\in V$, and $\sigma\in\Sigma$, by typing them as ${\cal W}:X$, where $X$ ranges over finite sets. The assignment of types is done by the following  rules:  
 \begin{center} 
\mybox{  $\displaystyle \frac{a\in {\EuScript C}(X)}{\underline{a}:X}\quad \quad\frac{{\cal W}_1\!:\!X\quad {\cal W}_2\!:\!Y\quad x\in X\enspace y\in Y\quad X\backslash\{x\}\cap Y\backslash\{y\}=\emptyset}{{\cal W}_1\,{_x\bo_y}\,{\cal W}_2:X\backslash\{x\}\cup Y\backslash\{y\}}\quad \quad\frac{{\cal W}\!:\!X\quad \sigma:Y\rightarrow X}{{\cal W}^{\sigma}:Y}$  } \end{center} \vspace{0.02cm}

\begin{rem}
The notation $_x\bo_y$ (rather then $_x\circ_y$) for the syntax of partial composition operations  is chosen merely to avoid confusion with the symbol $\circ$, used to denote the (usual) composition of morphisms in a category.
\end{rem}
 To the syntax of object terms we   add the syntax  of   arrow terms, obtained from raw arrow terms   
 \begin{center}\vspace{0.1cm}\mybox{
$\displaystyle \Phi::=\left\{\begin{array}{l}
1_{\cal W}\,\,|\,\, \beta^{x,\underline{x};y,\underline{y}}_{{\cal W}_1,{\cal W}_2,{\cal W}_3} \,\,|\,\, {\beta_{{\cal W}_1,{\cal W}_2,{\cal W}_3}^{x,\underline{x};y,\underline{y}\enspace ^{_{-1}}}}\,\,|\,\, \gamma^{x,y}_{{\cal W}_1,{\cal W}_2}\\[0.25cm]  
 {\varepsilon_1}_{\underline{a}}^{\sigma}\,\,|\,\, {\varepsilon_1}_{\underline{a}}^{\sigma \, ^ {_{-1}}}\,\,|\,\, {{\varepsilon}_2}_{\cal W} \,\,|\,\, {{\varepsilon}_2}^{_{-1}}_{\cal W} \,\,|\,\, {\varepsilon_3}_{\cal W}^{\sigma,\tau}\,\,|\,\,{\varepsilon_3}_{\cal W}^{\sigma,\tau\,^{_{-1}}}\,\,|\,\,{\varepsilon_4}^{x,y;x',y'}_{{\cal W}_1,{\cal W}_2;\sigma}\,\,|\,\, {\varepsilon_4}^{x,y;x',y'\,^{_{-1}}}_{{\cal W}_1,{\cal W}_2;\sigma}\\[0.25cm]
\Phi\circ\Phi\,\,|\,\, \Phi \,{_x\bo_y}\,  \Phi\,\,|\,\, \Phi^{\sigma},\end{array} \right. $}\vspace{0.1cm}\end{center} by assigning them types in the form of ordered pairs $({\cal W}_1,{\cal W}_2)$ of object terms, denoted by ${\cal W}_1\rightarrow {\cal W}_2$,   as follows: 
\begin{center}\mybox{\begin{tabular}{c}
{\small $\displaystyle \frac{}{1_{\cal W}:{\cal W}\rightarrow {\cal W}}$} \\[0.6cm] {\small $\displaystyle\frac{}{\beta^{x,\underline{x};y,\underline{y}}_{{\cal W}_1,{\cal W}_2,{\cal W}_3}:({\cal W}_1 {_x\bo_{\underline{x}}}{\cal W}_2){_y\bo_{\underline{y}}}{\cal W}_3\rightarrow {\cal W}_1 {_x\bo_{\underline{x}}}({\cal W}_2{_y\bo_{\underline{y}}}{\cal W}_3)}$} \\[0.6cm]

{\small $\displaystyle\frac{}{{\beta_{{\cal W}_1,{\cal W}_2,{\cal W}_3}^{x,\underline{x};y,\underline{y}\enspace ^{_{-1}}}}:{\cal W}_1 {_x\bo_{\underline{x}}}({\cal W}_2{_y\bo_{\underline{y}}}{\cal W}_3)\rightarrow ({\cal W}_1 {_x\bo_{\underline{x}}}{\cal W}_2){_y\bo_{\underline{y}}}{\cal W}_3}$}\\[0.6cm]

{\small $\displaystyle\frac{}{\gamma^{x,y}_{{\cal W}_1,{\cal W}_2}:{\cal W}_1 \,{_x\bo_y}\,{\cal W}_2\rightarrow{\cal W}_2\,{_y\bo_x}\,{\cal W}_1}$}\\[0.6cm]

{\small $\displaystyle\frac{}{{\varepsilon_1}^{\sigma}_{\underline{a}}:\underline{a}^{\sigma}\rightarrow \underline{a^{\sigma}}}\quad\quad \frac{}{{\varepsilon_1}^{\sigma\, ^ {_{-1}}}_{\underline{a}}:\underline{a^{\sigma}}\rightarrow \underline{a}^{\sigma}}$} \quad\quad {\small  $\displaystyle\frac{}{{{\varepsilon}_2}_{\cal W}:{\cal W}^{{\it id}_X}\rightarrow {\cal W}}\quad\quad \frac{}{{{\varepsilon}_2}^{_{-1}}_{\cal W}:{\cal W}\rightarrow {\cal W}^{{\it id}_X}}$}\\[0.6cm]

{\small $\displaystyle\frac{}{{\varepsilon_3}_{\cal W}^{\sigma,\tau}:({\cal W}^{\sigma})^{\tau}\rightarrow {\cal W}^{\sigma\circ\tau}}\quad\quad \frac{}{{\varepsilon_3}_{\cal W}^{\sigma,\tau \, ^ {_{-1}}}:{\cal W}^{\sigma\circ\tau}\rightarrow ({\cal W}^{\sigma})^{\tau}}$}\\[0.65cm]

{\small $\displaystyle\frac{\substack{\sigma\,:\,Z\,\rightarrow\, X\backslash\{x\}\cup Y\backslash\{y\}  \\[0.05cm] \sigma_1:\,\sigma^{-1}[X\backslash\{x\}]\cup \{x'\}\,\rightarrow\, X\enspace\enspace  \sigma_1|^{X\backslash\{x\}}=\sigma|^{X\backslash\{x\}}\enspace\enspace \sigma_1(x')=x \\[0.05cm] \sigma_2\,:\,\sigma^{-1}[Y\backslash\{y\}]\cup \{y'\}\,\rightarrow\, Y\enspace\enspace  \sigma_2|^{Y\backslash\{y\}}=\sigma|^{Y\backslash\{y\}}\enspace\enspace \sigma_2(y')=y\\[0.1cm]} }{{\varepsilon_4}^{x,y;x',y'}_{{\cal W}_1,{\cal W}_2;\sigma}:({\cal W}_1\,{_x\bo_y}\,{\cal W}_2)^{\sigma}\rightarrow {\cal W}_1^{\sigma_1} \,{_{x'}\bo_{y'}}\,{\cal W}_2^{\sigma_2}}$}\\[0.65cm]

{\small $\displaystyle\frac{\substack{ \sigma_1\,:\,X'\,\rightarrow\, X \enspace\enspace \sigma_1(x')=x \\[0.05cm] \sigma_2\,:\,Y'\,\rightarrow\, Y \enspace\enspace \sigma_2(y')=y\\[0.05cm] \sigma\,:\,X'\backslash\{x'\}\cup Y'\backslash\{y'\}\,\rightarrow\, X\backslash\{x\}\cup Y\backslash\{y\} \enspace \sigma=\sigma_1|^{X'\backslash\{x'\}}\cup \sigma_2|^{Y'\backslash\{y'\}}  \\[0.1cm]} }{{\varepsilon_4}^{x,y;x',y' \, ^ {_{-1}}}_{{\cal W}_1,{\cal W}_2;\sigma}:{\cal W}_1^{\sigma_1} \,{_{x'}\bo_{y'}}\,{\cal W}_2^{\sigma_2}\rightarrow ({\cal W}_1\,{_x\bo_y}\,{\cal W}_2)^{\sigma}}$}\\[0.65cm]

{\small $\displaystyle\frac{\Phi_1:{\cal W}_1\rightarrow {\cal W}_2\quad \Phi_2:{\cal W}_2\rightarrow {\cal W}_3}{\Phi_2\circ\Phi_1:{\cal W}_1\rightarrow {\cal W}_3}$ \quad\quad $\displaystyle\frac{\Phi_1:{\cal W}_1\rightarrow {\cal W}'_1\quad \Phi_2:{\cal W}_2\rightarrow {\cal W}'_2}{\Phi_1 \, {_x\bo_y}\, \Phi_2:{\cal W}_1\, {_x\bo_y}\,{\cal W}_2\rightarrow {\cal W}'_1 \,{_x\bo_y}\,{\cal W}'_2}$ }\\[0.65cm]

{\small   $\displaystyle\frac{\Phi:{\cal W}_1\rightarrow {\cal W}_2 \quad  \sigma:Y\rightarrow X}{\Phi^{\sigma}:{\cal W}_1^{\sigma}\rightarrow {\cal W}_2^{\sigma}}$}
\end{tabular}
 }
\end{center}
\noindent where it is also (implicitly) assumed that all the object terms that appear in the types of the arrow terms are well-formed. Given an arrow term $\Phi:{\cal U}\rightarrow {\cal V}$, we   call the object term ${\cal U}$ {\em the source of } $\Phi$ and the object term ${\cal V}$ {\em the target of } $\Phi$.  
\begin{rem}
Observe that, for all well-typed arrow terms $\Phi:{\cal U}\rightarrow {\cal V}$ of $ {\tt{Free}}_{\underline{\EuScript C}}$, the object terms ${\cal U}$ and ${\cal V}$ have the same type.\end{rem}
\begin{rem}
\label{indices} Notice that the type of an  arrow term $\Phi$ of ${\tt Free}_{\underline{\EuScript C}}$ is determined completely by $\Phi$ only, that is, by the {main symbol of} $\Phi$, the {indices of} $\Phi$ and their {order of appearance in} $\Phi$. For example, if $\Phi=\gamma^{x,y}_{{\cal W}_1,{\cal W}_2}$, then the main symbol $\gamma$ of $\Phi$, together the indices ${\cal W}_1,{\cal W}_2$ (the subscript of $\gamma$) and $x,y$ (the superscript of  $\gamma$), appearing in this particular order in the specification of $\Phi$, unambiguously determine ${\cal W}_1 \, _x\bo_y\, {\cal W}_2$ as the source of $\Phi$. This allows us to speak about the source and the target of $\Phi$ without having to explicitly declare what those object terms are. This property of arrow terms will hold for all syntactic systems that we shall consider in the remaining of the paper. 
\end{rem}
\indent The collection of object terms of type $X$, together with the collection of arrow terms  whose source and target have type $X$, will be denoted by    $ {\tt{Free}}_{\underline{\EuScript C}}(X)$. We shall use the same convention for the other syntaxes that will be introduced in the sequel.

\subsubsection{The interpretation of ${\tt{Free}}_{\underline{\EuScript C}}$ in  ${\EuScript C}$}
The semantics of  $\tt{Free}_{\underline{\EuScript C}}$ in  ${\EuScript C}$ is what distinguishes canonical arrows (or $\beta\gamma\sigma$-arrows) of ${\EuScript C}(X)$: they will be precisely the interpretations of the arrow terms of ${\tt{Free}_{\underline{\EuScript C}}}(X)$. \\[0.1cm]
\indent  The interpretation function $$[[-]]_X:{\tt Free}_{\underline{\EuScript C}}(X)\rightarrow {\EuScript C}(X)$$ is defined recursively in the obvious way: it maps $\underline{a}$ to $a$, ${\cal W}_1\,{_x\bo_y}\,{\cal W}_2$ to $[[{\cal W}_1]]_{X_1}\,{_x\circ_y}\,[[{\cal W}_2]]_{X_2}$, etc., and the $\varepsilon_i$'s and their inverses to ${\it id}$ (reflecting the fact that the axiom \hyperlink{(EQ)}{\texttt{(EQ)}} remains strict in the transition from Definition \ref{entriesonly} to Definition \ref{cat}).

\begin{lem}\label{int1} The interpretation function $[[-]]_X:{\tt Free}_{\underline{\EuScript C}}(X)\rightarrow {\EuScript C}(X)$ is well-defined, in the sense that, for an arrow term $\Phi:{\cal U}\rightarrow {\cal V}$ of ${\tt Free}_{\underline{\EuScript C}}(X)$, we have that  $[[\Phi]]_X$ is a morphism from $[[{\cal U}]]_X$ to  $[[{\cal V}]]_X$.
\end{lem}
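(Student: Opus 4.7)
The plan is to proceed by structural induction on the arrow term $\Phi$. There are twelve base cases ($1_{\cal W}$, $\beta$, $\beta^{-1}$, $\gamma$, and the eight variants of ${\varepsilon_1},{\varepsilon_2},{\varepsilon_3},{\varepsilon_4}$) and three inductive cases (sequential composition, partial composition, and the action of a bijection).

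The cases $1_{\cal W}$, $\beta$, $\beta^{-1}$ and $\gamma$ are immediate: each is interpreted as the corresponding structural morphism of ${\EuScript C}$, whose source and target, as prescribed by Definition \ref{cat}, match exactly the object-terms listed in the typing rules of ${\tt Free}_{\underline{\EuScript C}}$ once the interpretation function is pushed through. The $\varepsilon_i$ constants and their inverses are all interpreted as identity morphisms, so for each of them the task is to verify that the interpretations of the source and target are \emph{literally equal} objects of ${\EuScript C}(X)$. For $\varepsilon_1$ this is tautological, since $[[\underline{a}^\sigma]]_X=a^\sigma=[[\underline{a^\sigma}]]_X$. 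For $\varepsilon_2$ and $\varepsilon_3$, the equalities $[[{\cal W}^{{\it id}_X}]]_X=[[{\cal W}]]_X$ and $[[({\cal W}^\sigma)^\tau]]_X=[[{\cal W}^{\sigma\circ\tau}]]_X$ follow from the (contravariant) functoriality of ${\EuScript C}:{\bf Bij}^{op}\rightarrow {\bf Cat}$, as recorded in item 4 of Remark \ref{functoriality}.

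The central case is $\varepsilon_4$: one must show that $[[({\cal W}_1\,{_x\bo_y}\,{\cal W}_2)^\sigma]]_X$ equals $[[{\cal W}_1^{\sigma_1}\,{_{x'}\bo_{y'}}\,{\cal W}_2^{\sigma_2}]]_X$. This is exactly the content of the equivariance axiom \hyperlink{(EQ)}{\texttt{(EQ)}} of Definition \ref{cat}: the side-conditions imposed in the typing rule for $\varepsilon_4$ (namely $\sigma_1|^{X\backslash\{x\}}=\sigma|^{\sigma^{-1}[X\backslash\{x\}]}$, $\sigma_2|^{Y\backslash\{y\}}=\sigma|^{\sigma^{-1}[Y\backslash\{y\}]}$, $\sigma_1(x')=x$ and $\sigma_2(y')=y$) are precisely what is needed so that $\sigma$ decomposes as $\sigma_1|^{X\backslash\{x\}}\cup \sigma_2|^{Y\backslash\{y\}}$ in the sense of \hyperlink{(EQ)}{\texttt{(EQ)}}, making that axiom applicable to the interpretations.

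Finally, the three inductive steps are routine. For $\Phi_2\circ\Phi_1$, the typing rule ensures that the target of $\Phi_1$ coincides with the source of $\Phi_2$, so the inductive hypothesis gives two composable morphisms in ${\EuScript C}(X)$, and their categorical composite has the required source and target. For $\Phi_1\,{_x\bo_y}\,\Phi_2$, one invokes the bifunctoriality of ${_x\circ_y}$ on morphisms (item 2 of Remark \ref{functoriality}) applied to the morphisms supplied by the inductive hypothesis. For $\Phi^\sigma$, the functor ${\EuScript C}(\sigma):{\EuScript C}(X)\rightarrow {\EuScript C}(Y)$ sends the morphism obtained by induction to $[[\Phi]]_X^\sigma$, whose source and target are, by functoriality of ${\EuScript C}(\sigma)$, exactly $[[{\cal U}^\sigma]]_Y$ and $[[{\cal V}^\sigma]]_Y$. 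The only case that requires any structural axiom beyond bare functoriality is $\varepsilon_4$, and that is the one genuine place where the proof uses the cyclic-operadic equivariance.
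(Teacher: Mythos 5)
Your proof is correct and follows the same route as the paper, which simply states that the claim holds thanks to the axiom \hyperlink{(EQ)}{\texttt{(EQ)}}; your structural induction spells out the details the paper leaves implicit, and correctly identifies the $\varepsilon_4$ case as the one place where \hyperlink{(EQ)}{\texttt{(EQ)}} (rather than mere functoriality of ${\EuScript C}$ and of the bifunctors $_x\circ_y$) is actually needed.
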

\begin{proof} The claim holds thanks to the  axiom \hyperlink{(EQ)}{\texttt{(EQ)}} for ${\EuScript C}$.
\end{proof}
A canonical diagram in ${\EuScript C}(X)$ is a pair of parallel morphisms (i.e., morphisms that share the same source and  target) arising as interpretations of two arrow terms of the same type of ${\tt{Free}}_{\underline{\EuScript C}}$.  
\subsubsection{The coherence theorem}\label{coht}
We can now state precisely the coherence theorem for ${\EuScript C}$.  
\begin{thm}[Coherence Theorem] \label{coherence-theorem}
For any finite set $X$ and for any pair of arrow terms  $\Phi,\Psi:{\cal W}_1\rightarrow {\cal W}_2$ of the same type in  ${\tt Free}_{\underline{\EuScript C}}(X)$, the equality  $[[\Phi]]_X=[[\Psi]]_X$ holds in ${\EuScript C}(X)$.
\end{thm}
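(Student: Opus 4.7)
The plan is to prove coherence by a sequence of three faithful reductions that successively simplify the coherence problem, ultimately reducing it to the known coherence theorem for weak Cat-operads of Do\v sen and Petri\' c \cite{dp}. At each stage, I would introduce an auxiliary sub-syntax of ${\tt Free}_{\underline{\EuScript C}}$, equipped with its own interpretation function, and show that if two parallel arrow terms are equated in the simpler syntax then their images in ${\tt Free}_{\underline{\EuScript C}}$ are equated under $[[-]]_X$. The decisive content of each reduction is that ``honest'' canonical arrows in the richer calculus can always be rewritten, modulo the known equations, into canonical arrows in the poorer calculus with the same interpretation.

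The first reduction eliminates the symmetric-group action: using the coherence conditions \hyperlink{bs}{\texttt{($\beta\sigma$)}}, \hyperlink{gs}{\texttt{($\gamma\sigma$)}} and \hyperlink{(EQ-mor)}{\texttt{(EQ-mor)}}, together with the functoriality of $\EuScript C$ and $\EuScript C(\sigma)$ (Remark \ref{functoriality}), I would push every outer action $(-)^{\sigma}$ in an object term down to the parameters $\underline{a}$, and absorb all the $\varepsilon_i$ and their inverses into the terminal renaming of parameters. Since every $\varepsilon_i$ is interpreted as an identity, this yields a sub-syntax whose arrow terms involve only $1$, $\beta^{\pm 1}$, $\gamma$, $\circ$ and $_x\bo_y$, with the same interpretation in $\EuScript C(X)$. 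The coherence problem is thereby reduced to one for categorified cyclic operads in which $(-)^{\sigma}$ is applied only to parameters.

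The second reduction removes the cyclic structure. Using \hyperlink{g-involution}{\texttt{($\gamma$-involution)}}, I would orient each instance of $\gamma$ in one direction and interpret each object term as a rooted tree by choosing, for every unrooted tree built from $f,g,\ldots$ (cf.~Remark \ref{oj}), a canonical root. The abbreviation $\vartheta$ introduced in \eqref{theta} plays the role of the ``parallel associator'' of non-cyclic operads, and the equations of Lemma \ref{thetainverse} (which I proved above using exactly \hyperlink{b-pentagon}{\texttt{($\beta$-pentagon)}}, \hyperlink{bg-hexagon}{\texttt{($\beta\gamma$-hexagon)}}, \hyperlink{bg-decagon}{\texttt{($\beta\gamma$-decagon)}} and \hyperlink{g-involution}{\texttt{($\gamma$-involution)}}) would ensure that moves of the root are coherent: any composition of $\gamma$'s that returns to the same unrooted tree reduces to the identity, and any $\beta$ applied after a re-rooting can be transported across the re-rooting via \hyperlink{bvt-pentagon}{\texttt{($\beta\vartheta$-pentagon)}} and \hyperlink{vt-hexagon}{\texttt{($\vartheta$-hexagon)}}. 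The outcome is that each arrow term becomes equivalent to one expressed purely in $\beta^{\pm 1}$ and $\vartheta^{\pm 1}$ on rooted trees, i.e.~in the language of a non-skeletal non-symmetric (``symmetric'' in the commutative-operadic sense) operad. A third reduction then chooses, for each finite set, a linear ordering and transports everything along the corresponding bijection, turning the problem into one for a skeletal non-symmetric categorified operad, where the coherence theorem of \cite{dp} applies verbatim.

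The main obstacle will be the second reduction. In contrast to the symmetric-monoidal setting, the available instances of $\beta$ and $\gamma$ at a given node depend on which entries currently belong to which operation, so instances of \hyperlink{b-pentagon}{\texttt{($\beta$-pentagon)}} or of Mac Lane's hexagon are simply not available for arbitrary triples of objects (cf.~Remark \ref{oj}). One must therefore show that every formal re-rooting of a tree is realised by a composite of $\gamma$'s that is coherent in the strong sense, and that the auxiliary isomorphism $\vartheta$, although not primitive, behaves as if it were. Ensuring that the coherence equations \hyperlink{b-pentagon}{\texttt{($\beta$-pentagon)}}, \hyperlink{bg-hexagon}{\texttt{($\beta\gamma$-hexagon)}} and \hyperlink{bg-decagon}{\texttt{($\beta\gamma$-decagon)}} suffice to cover all well-formed re-routings of $\beta$'s and $\gamma$'s across unrooted trees--equivalently, that Lemma \ref{thetainverse} furnishes \emph{all} the identities needed to treat $\vartheta$ as a bona fide parallel associator--will be the technical core of the proof.
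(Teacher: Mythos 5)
Your proposal follows essentially the same route as the paper: three faithful reductions (eliminating the symmetric-group action via the $\varepsilon_i$-rewriting, passing from unrooted to rooted trees with $\vartheta$ serving as the parallel associator and \hyperlink{bg-hexagon}{{\tt ($\beta\gamma$-hexagon)}} plus Lemma \ref{thetainverse} carrying the re-rooting coherence, and finally skeletalising), terminating in the coherence theorem for weak Cat-operads of \cite{dp}. You also correctly identify the second reduction as the technical core, which matches Theorem \ref{t2} in the paper.
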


We prove this theorem in the remaining of Section 2. We make three faithful reductions, each restricting the coherence problem to a smaller class of diagrams, in such a way that the coherence problem is ultimately reduced to the coherence of weak Cat-operads of \cite{dp}.
\subsection{The first reduction: getting rid of symmetries}\label{frst}
Our first goal is to cut down the coherence problem of    ${\EuScript C}$ to the problem of commutation of all {\em diagrams of} $\beta \gamma$-{\em arrows} of ${\EuScript C}$. We introduce first the syntax of these diagrams.
\subsubsection{The syntax $\underline{\tt{Free}}_{\underline{\EuScript C}}$}
The syntax $\underline{\tt{Free}}_{\underline{\EuScript C}}$ is obtained by removing the  term constructor $(-)^{\sigma}$ from the list of raw object and raw arrow terms of ${\tt {Free}}_{\underline{\EuScript C}}$, as well as 
 the $\varepsilon_i$'s and their inverses, and by restricting the typing system of ${\tt {Free}}_{\underline{\EuScript C}}$ accordingly.
We call the syntax $\underline{\tt{Free}}_{\underline{\EuScript C}}$ the $\beta \gamma$-{\em reduction} of $\tt{Free}_{\underline{\EuScript C}}$. In passing from $\tt{Free}_{\underline{\EuScript C}}$ to $\underline{\tt{Free}}_{\underline{\EuScript C}}$, we shall switch from 
calligraphic to italic letters for object terms, and from uppercase Greek to lowercase Greek letters for arrow terms.
 
\begin{rem} Notice that, in the syntax $\underline{\tt{Free}}_{\underline{\EuScript C}}$, for an   arrow term $\varphi:U\rightarrow V$,  the parameters and variables that appear in $U$ are {\em exactly}  the parameters and variables that appear in $V$.
 
\end{rem}
\subsubsection{The interpretation of $\underline{\tt Free}_{\underline{\EuScript C}}$ in   ${\EuScript C}$}
The semantics of $\underline{\tt Free}_{\underline{\EuScript C}}$ in   ${\EuScript C}$ is what distinguishes   $\beta \gamma$-{\em arrows} of ${\EuScript C}(X)$ among $\beta \gamma\sigma$-{arrows}   of ${\EuScript C}(X)$. The interpretation function $$[-]_{X}:\underline{\tt Free}_{\underline{\EuScript C}}(X)\rightarrow {\EuScript C}(X)$$ is defined simply as the appropriate restriction of the interpretation function $[[-]]_{X}$.

\begin{lem}\label{wd2}
The interpretation function $[-]_{X}:\underline{\tt Free}_{\underline{\EuScript C}}(X)\rightarrow {\EuScript C}(X)$ is well-defined, in the sense that, for an arrow term $\varphi:U\rightarrow W$ of $\underline{\tt Free}_{\underline{\EuScript C}}(X)$, we have that $[\varphi]_X$ is a morphism from $[U]_X$ to $[V]_{X}$.
 \end{lem}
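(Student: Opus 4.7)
The plan is to observe that $\underline{\tt Free}_{\underline{\EuScript C}}$ is, by construction, a sub-syntax of ${\tt Free}_{\underline{\EuScript C}}$ obtained by deleting the raw constructor $(-)^{\sigma}$ together with all the $\varepsilon_i$ arrow terms and their formal inverses, and that the interpretation function $[-]_X$ is by definition the restriction of $[[-]]_X$ to this sub-syntax. The statement is therefore a direct corollary of Lemma \ref{int1}: every arrow term of $\underline{\tt Free}_{\underline{\EuScript C}}(X)$ is also an arrow term of ${\tt Free}_{\underline{\EuScript C}}(X)$ with the same source and target, and its interpretation is unchanged by the restriction.

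For completeness, one can give a direct proof by structural induction on $\varphi$, which is considerably shorter than the induction underlying Lemma \ref{int1} since only a handful of clauses remain. The base cases $1_{\cal W}$, $\beta^{x,\underline{x};y,\underline{y}}_{{\cal W}_1,{\cal W}_2,{\cal W}_3}$, its formal inverse, and $\gamma^{x,y}_{{\cal W}_1,{\cal W}_2}$ are handled by invoking Definition \ref{cat} directly: the interpretations are components of the natural isomorphisms $\beta$, $\gamma$ (or identity morphisms) provided by the structure of ${\EuScript C}$, whose source and target objects match precisely the parenthesisations prescribed by the typing rules for raw $\beta\gamma$-arrows. The inductive case $\Phi_2\circ \Phi_1$ is handled by transitivity of morphisms in the category ${\EuScript C}(X)$, and the inductive case $\Phi_1\,{_x\bo_y}\,\Phi_2$ by bifunctoriality of the partial composition operation ${_x\circ _y}$ of ${\EuScript C}$, which sends a pair of composable morphisms to a morphism with the expected source and target.

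The point of stating this lemma separately from Lemma \ref{int1} — and the reason there is no genuine obstacle in the argument — is that the axiom \hyperlink{(EQ)}{\texttt{(EQ)}} is not invoked anywhere, since it was needed only in the $\varepsilon_i$ clauses of ${\tt Free}_{\underline{\EuScript C}}$, which are now absent. The syntactic restriction thus makes $\underline{\tt Free}_{\underline{\EuScript C}}$ a clean setting in which to isolate the $\beta\gamma$-arrows of ${\EuScript C}(X)$ from the action of the symmetric group, which is exactly what the first reduction aims to achieve.
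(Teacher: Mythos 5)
Your proposal is correct and matches the paper's (implicit) reasoning: the paper gives no proof for this lemma, treating it as immediate from the definition of $[-]_X$ as the restriction of $[[-]]_X$ to the sub-syntax $\underline{\tt Free}_{\underline{\EuScript C}}$ together with Lemma \ref{int1}. Your added direct induction and the observation that \texttt{(EQ)} is no longer needed are accurate but not required.
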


\subsubsection{An auxiliary typing system for the raw arrow terms of $\underline{\tt Free}_{\underline{\EuScript C}}$}\label{aux}
As will become clear in Remark \ref{why-auxiliary-type} below, in order to pass from ${\tt Free}_{\underline{\EuScript C}}$ to $\underline{\tt Free}_{\underline{\EuScript C}}$, we shall need  an intermediate, slightly  more permissive typing system {\uwave{\tt Free}}$_{{\underline{\EuScript C}}}$  for the raw arrow terms of $\underline{\tt Free}_{\underline{\EuScript C}}$. 
It is the same as $\underline{\tt Free}_{\underline{\EuScript C}}$, except for the composition rule for arrow terms, where we add a degree of freedom by allowing the composition not only along the {\em same} typed object term, but also along  $\alpha${\em -equivalent ones}\footnote{The terminology ``$\alpha$-equivalence'' comes from $\lambda$-calculus, where it is used to formalize  the intuition that the names of bound variables do  not matter: the
function $f(x)$ is the same as the function $f(y)$.}. Intuitively, two object terms are said to be $\alpha$-equivalent if they have the same type, say $X$, and one can be obtained from the other one only by renaming the variables that do not appear in $X$ (i.e. that get ``used up''  in the construction of  those object terms by the syntax encoding partial composition operations). Thereby, $\alpha$-equivalence formalizes the instances of the axiom \hyperlink{(EQ)}{\texttt{(EQ)}} (see Remark \ref{eqdis}) for which the ``outer'' bijection is the identity.

\indent In order to rigorously define $\alpha$-equivalence on object terms of $\underline{\tt Free}_{\underline{\EuScript C}}$, we  
introduce some terminology. For a parameter $a\in \underline{\EuScript C}(X)$ of $P_{\underline{\EuScript C}}$, we say that $X$ is the set of {\em free variables} of $a$, and we write ${\it FV}(a)=X$.  For an   object term $W:Y$, we shall denote with $P_{\underline{\EuScript C}}(W)$ the set of all parameters of $P_{\underline{\EuScript C}}$ that appear in $W$.  
The $\alpha$-equivalence  $\equiv$ on  object terms of  $\underline{\tt{Free}}_{\underline{\EuScript C}}$ is the smallest congruence (with respect to $_x\bo_y$) generated by the rule
\begin{center}\mybox{\begin{tabular}{c}
$\displaystyle\frac{\substack{ W_1:X\enspace W_2:Y \enspace  x\in X \enspace    y\in Y  \enspace   X\backslash\{x\}\cap Y\backslash\{y\}=\emptyset \enspace x',y'\not\in X\backslash\{x\}\cup Y\backslash\{y\} \enspace x'\neq y' \\[0.1cm] a\in  P_{\underline{\EuScript C}}(W_1) \enspace\enspace {\it{FV}}(a)=X_1 \enspace\enspace x\in X_1\cap X \\[0.1cm] b\in  P_{\underline{\EuScript C}}(W_2) \enspace\enspace {\it{FV}}(b)=Y_1 \enspace\enspace y\in Y_1\cap Y \\[0.1cm] \tau_1:X_1\backslash\{x\}\cup \{x'\}\rightarrow X_1\enspace\enspace \tau_1|_{X_1\!\backslash\{x\}}={\it id}_{X_1\!\backslash\{x\}} \enspace\enspace \tau_1(x')=x \\[0.1cm]  \tau_2:Y_1\backslash\{y\}\cup \{y'\}\rightarrow Y_1\enspace\enspace \tau_2|_{Y_1\!\backslash\{y\}}={\it id}_{Y_1\!\backslash\{y\}} \enspace\enspace \tau_2(y')=y \vspace{0.1cm}}}{\vspace{0.3cm}{W}_1\,{_x\bo_y}\,{W}_2\equiv{W}_1[\underline{a^{\tau_1}}/\underline{a}] \,{_{x'}\bo_{y'}}\,{W}_2[\underline{b^{\tau_2}}/\underline{b}]}$  \end{tabular}}
\end{center}
 where ${W}_1[\underline{a^{\tau_1}}/\underline{a}]$  (resp. ${W}_2[\underline{b^{\tau_2}}/\underline{b}]$) denotes the result of the  substitution of the parameter $\underline{a^{\tau_1}}$ (resp. $\underline{b^{\tau_2}}$)  for the parameter $\underline{a}$ (resp. $\underline{b}$)   in   $W_1$ (resp. $W_2$).
 \begin{example}\label{gggg}  
Returning to the syntax ${\tt{Free}}_{\underline{\EuScript C}}$, which encompasses terms of the form ${\cal W}^{\sigma}$,  observe that, fixing $a=[[\underline{a}]]_X$ and $b=[[\underline{b}]]_Y$,  by  \hyperlink{(EQ)}{\texttt{(EQ)}}, we have 
$$\begin{array}{rcl}
[[\underline{a}\,{_x\bo_y}\,\underline{b}]]_{X\backslash\{x\}\cup Y\backslash\{y\}}&=&a \,{_x\circ_y}\, b\\[0.15cm]
&=&(a \,{_x\circ_y}\, b)^{{\it id}_{X\backslash\{x\}\cup Y\backslash\{y\}}}\\[0.15cm]
&=&a^{\tau_1} \,{_{x'}\circ_{y'}}\, b^{\tau_2}\\[0.15cm]
&=&[[\underline{a^{\tau_1}}\,{_{x'}\bo_{y'}}\,\underline{b^{\tau_2}}]]_{X\backslash\{x\}\cup Y\backslash\{y\}},
\end{array}$$
where $\tau_1:X\backslash\{x\}\cup \{x'\}\rightarrow X$ and $\tau_2:Y\backslash\{y\}\cup \{y'\}\rightarrow Y$ are evident bijections. 
The first and the last object term  in this sequence of equalities of interpretations are  object terms of $\underline{\tt{Free}}_{\underline{\EuScript C}}$ and they are $\alpha$-equivalent.  \hfill$\square$ 
\end{example}

\indent  The substitution of parameters of object terms canonically induces substitution of parameters of arrow terms of $\underline{\tt Free}_{\underline{\EuScript C}}$.  For an arrow term $\varphi:U\rightarrow V$ of $\underline{{\tt Free}}_{{\underline{\EuScript C}}}$, $\underline{a}\in P_{\underline{\EuScript C}}(U)$ and $\underline{a^{\tau}}\not\in P_{\underline{\EuScript C}}(U)$, such that $U[\underline{a^{\tau}}/\underline{a}]$ (and thus also $V[\underline{a^{\tau}}/\underline{a}]$) is well-typed,  the arrow term $\varphi[\underline{a^{\tau}}/\underline{a}]:U[\underline{a^{\tau}}/\underline{a}]\rightarrow V[\underline{a^{\tau}}/\underline{a}]$  is defined straightforwardly by modifying the indices of $\varphi$ as dictated by  $U[\underline{a^{\tau}}/\underline{a}]$. 
\begin{example}\label{ex1} If $\varphi=\beta^{x,\underline{x};y,\underline{y}}_{W_1,W_2,W_3}$, where $x\in X_1$, $a\in {\underline{\EuScript C}}(X_1)$ and $\underline{a}\in P_{\underline{\EuScript C}}(W_1)$,  then  $$\beta^{x,\underline{x};y,\underline{y}}_{W_1,W_2,W_3}[\underline{a^{\tau}}/\underline{a}]=\beta^{x',\underline{x};y,\underline{y}}_{W_1[\underline{a^{\tau}}/\underline{a}],W_2,W_3},$$ where $x'=\tau^{-1}(x)$.  \hfill$\square$ 
\end{example}
We shall need the following property of the ``interpretation of   substitution''.

 \begin{lem}\label{intofsub} Let $W$ be an object term   of $\underline{{\tt Free}}_{{\underline{\EuScript C}}}(X)$ and let $x\in X$. Let $\underline{a}\in P_{\underline{\EuScript C}}(W)$ be such that $x\in {\it FV}(a)$, and suppose that $\tau: {\it FV}(a)\backslash\{x\}\cup\{x'\}\rightarrow {\it FV}(a)$ renames $x$ to $x'$. We then have 
$$[\,W[\underline{a^{\tau}}/\underline{a}]\,]=[\,W\,]^{\sigma},$$ where $\sigma:X\backslash\{x\}\cup\{x'\}\rightarrow X$ renames $x$ to $x'$. 
Additionally, for any arrow term $\varphi$ of $\underline{{\tt Free}}_{{\underline{\EuScript C}}}(X)$ that has $W$ as the source, we have 
$$[\,\varphi[\underline{a^{\tau}}/\underline{a}]\,]=[\,\varphi\,]^{\sigma}.$$ 
\end{lem}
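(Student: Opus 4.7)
The plan is to prove both claims by structural induction on $W$ (for the object part) and then on $\varphi$ (for the arrow part), using the equivariance axiom \hyperlink{(EQ)}{\texttt{(EQ)}} to migrate the local renaming $\tau$ (which acts on the variables of the single parameter $a$) out to a global renaming $\sigma$ (acting on the free variables of the whole term). The key observation that makes the induction work is that a variable $x\in X$ in the type of $W$ appears as a free variable of exactly one parameter in $W$, since the $\,{_y\bo_{\underline{y}}}\,$ constructor consumes the shared variables. So the parameter $\underline{a}$ with $x\in{\it FV}(a)$ is uniquely determined by $x$.

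\emph{Object case.} If $W=\underline{a}$, then necessarily $X={\it FV}(a)$, $\sigma=\tau$, and both sides compute to $a^{\tau}$. If $W=W_1\,{_y\bo_{\underline{y}}}\,W_2$ with $W_i:X_i$, assume without loss of generality that $\underline{a}\in P_{\underline{\EuScript C}}(W_1)$. Since $x\in X=X_1\backslash\{y\}\cup X_2\backslash\{\underline{y}\}$, we have $x\in X_1$ and $x\neq y$. By induction, $[\,W_1[\underline{a^{\tau}}/\underline{a}]\,]=[\,W_1\,]^{\sigma_1}$ for the renaming $\sigma_1:X_1\backslash\{x\}\cup\{x'\}\to X_1$ of $x$ to $x'$. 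Setting $\sigma_2={\it id}_{X_2}$ and noting that $\sigma_1^{-1}(y)=y$ and $\sigma_2^{-1}(\underline{y})=\underline{y}$, the axiom \hyperlink{(EQ)}{\texttt{(EQ)}} gives
\[
[\,W_1\,]^{\sigma_1}\,{_y\circ_{\underline{y}}}\,[\,W_2\,]
=\bigl([\,W_1\,]\,{_y\circ_{\underline{y}}}\,[\,W_2\,]\bigr)^{\sigma},
\]
where $\sigma=\sigma_1|^{X_1\backslash\{y\}}\cup{\it id}|^{X_2\backslash\{\underline{y}\}}:X\backslash\{x\}\cup\{x'\}\to X$ is precisely the renaming of $x$ to $x'$ demanded by the statement.

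\emph{Arrow case.} We induct on the structure of $\varphi$ with $W_s(\varphi)=W$. When $\varphi=1_W$, the claim reduces to the functoriality equation $1_W^{\sigma}=1_{W^{\sigma}}$ listed in Remark \ref{functoriality}(5), together with the object case. When $\varphi$ is one of the base arrow-term constructors $\beta^{\pm1}$ or $\gamma$, the substitution only relabels the indices in the prescribed way (compare Example \ref{ex1}), and the conditions \hyperlink{bs}{\texttt{($\beta\sigma$)}} and \hyperlink{gs}{\texttt{($\gamma\sigma$)}} applied to the bijections produced by the object case yield the equality. For the composite constructor $\Phi_2\circ\Phi_1$, functoriality of ${\EuScript C}(\sigma)$ in the form $(\varphi\circ\psi)^{\sigma}=\varphi^{\sigma}\circ\psi^{\sigma}$ reduces the claim to both inductive hypotheses. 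For the parallel composition $\Phi_1\,{_y\bo_{\underline{y}}}\,\Phi_2$, the condition \hyperlink{(EQ-mor)}{\texttt{(EQ-mor)}} plays exactly the role that \hyperlink{(EQ)}{\texttt{(EQ)}} played in the object case, combining the renaming on one factor (given by the inductive hypothesis) with the identity on the other to produce the global renaming $\sigma$ on the whole parallel composition.

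The main obstacle is purely bookkeeping: keeping track of how the local bijection $\tau$ on ${\it FV}(a)$ assembles into the global bijection $\sigma$ on $X$ in the inductive step, and verifying that the hypotheses of \hyperlink{bs}{\texttt{($\beta\sigma$)}}, \hyperlink{gs}{\texttt{($\gamma\sigma$)}}, \hyperlink{(EQ-mor)}{\texttt{(EQ-mor)}} are met (namely that the claimed factorisation of $\sigma$ on the source object term is one of those licensed by \hyperlink{(EQ)}{\texttt{(EQ)}}). This is automatic from the structure of $W$ and the uniqueness of the parameter containing $x$.
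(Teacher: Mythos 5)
Your proof is correct and follows exactly the route the paper takes: the paper's own proof is the one-line remark ``by easy inductions, thanks to \hyperlink{(EQ)}{\texttt{(EQ)}}, \hyperlink{bs}{{\tt ($\beta\sigma$)}}, \hyperlink{gs}{\tt{($\gamma\sigma$)}}, \hyperlink{(EQ-mor)}{{\tt(EQ-mor)}} and Remark \ref{functoriality}(5)'', and you have simply written out those same inductions with the same ingredients in the same roles.
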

\begin{proof} By easy inductions, thanks to  \hyperlink{(EQ)}{\texttt{(EQ)}},  \hyperlink{bs}{{\tt ($\beta\sigma$)}}, \hyperlink{gs}{\tt{($\gamma\sigma$)}}, \hyperlink{(EQ-mor)}{{\tt(EQ-mor)}} and Remark \ref{functoriality}(5).
\end{proof}

 \begin{lem}\label{int}
If $W_1\equiv W_2$, then $[W_1]_X=[W_2]_X$.
\end{lem}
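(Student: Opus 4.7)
The plan is to proceed by induction on the derivation of $W_1\equiv W_2$, following the inductive structure of the congruence generated by the single renaming rule. There are five kinds of cases: reflexivity, symmetry, transitivity, congruence under $\,{_x\bo_y}\,$, and the generating rule itself. The first three are immediate from the corresponding properties of equality in $\EuScript{C}(X)$. The congruence step reads: if $W\equiv W'$ and $V$ is another object term, then $W\,{_z\bo_w}\,V\equiv W'\,{_z\bo_w}\,V$ (and symmetrically). By the induction hypothesis $[W]=[W']$, so applying the bifunctor $\,{_z\circ_w}\,$ gives $[W]\,{_z\circ_w}\,[V]=[W']\,{_z\circ_w}\,[V]$, and analogously on the other side.

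The only substantive case is the generating rule, which is a direct generalization of Example \ref{gggg}. There we have
\[
W_1\,{_x\bo_y}\,W_2\;\equiv\;W_1[\underline{a^{\tau_1}}/\underline{a}]\,{_{x'}\bo_{y'}}\,W_2[\underline{b^{\tau_2}}/\underline{b}],
\]
with $\tau_1:X_1\backslash\{x\}\cup\{x'\}\to X_1$ renaming $x$ to $x'$ inside ${\it FV}(a)$, and $\tau_2$ renaming $y$ to $y'$ inside ${\it FV}(b)$. Applying Lemma \ref{intofsub} twice rewrites the interpretations of the two substituted subterms as
\[
[\,W_1[\underline{a^{\tau_1}}/\underline{a}]\,]=[W_1]^{\sigma_1},\qquad [\,W_2[\underline{b^{\tau_2}}/\underline{b}]\,]=[W_2]^{\sigma_2},
\]
where $\sigma_1:X\backslash\{x\}\cup\{x'\}\to X$ and $\sigma_2:Y\backslash\{y\}\cup\{y'\}\to Y$ are the top-level renaming bijections. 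Then the equivariance axiom \hyperlink{(EQ)}{\texttt{(EQ)}} gives
\[
[W_1]^{\sigma_1}\,{_{\sigma_1^{-1}(x)}\circ_{\sigma_2^{-1}(y)}}\,[W_2]^{\sigma_2}\;=\;([W_1]\,{_x\circ_y}\,[W_2])^{\sigma},
\]
with $\sigma=\sigma_1|^{X\backslash\{x\}}\cup\sigma_2|^{Y\backslash\{y\}}$.

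To conclude, I would observe that, by the very shape of $\sigma_1$ (identity on $X\backslash\{x\}$ and sending $x'$ to $x$), the corestriction $\sigma_1|^{X\backslash\{x\}}$ is the identity on $X\backslash\{x\}$, and likewise for $\sigma_2$; hence $\sigma={\it id}_{X\backslash\{x\}\cup Y\backslash\{y\}}$. The right-hand side then collapses to $[W_1]\,{_x\circ_y}\,[W_2]$ by functoriality of $\EuScript{C}$, while $\sigma_1^{-1}(x)=x'$ and $\sigma_2^{-1}(y)=y'$ recast the left-hand side as $[W_1]^{\sigma_1}\,{_{x'}\circ_{y'}}\,[W_2]^{\sigma_2}$, which is precisely the interpretation of the right-hand object term. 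Put together, this reproduces the four-step chain of equalities displayed in Example \ref{gggg}. The only (modest) obstacle is the index bookkeeping for the renaming bijections and verifying the identification of $\sigma$ with the identity; once this is done the argument is purely a reduction to \hyperlink{(EQ)}{\texttt{(EQ)}}.
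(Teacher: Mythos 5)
Your proof is correct and follows the same route as the paper: the paper's proof is exactly "by induction on the proof of $W_1\equiv W_2$ and Lemma \ref{intofsub}", and your handling of the generating rule — Lemma \ref{intofsub} to push the substitution out as a renaming $\sigma_i$, then \hyperlink{(EQ)}{\texttt{(EQ)}} with $\sigma={\it id}$ as in Example \ref{gggg} — is precisely the intended argument. The index bookkeeping you flag checks out, since each $\sigma_i$ is the identity away from the composed entry, so its corestriction to $X\backslash\{x\}$ (resp. $Y\backslash\{y\}$) is the identity.
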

\begin{proof}
By induction on the proof of $W_1\equiv W_2$ and Lemma \ref{intofsub}.
\end{proof}
\indent We now specify the syntax  {\uwave{\tt Free}}$_{{\underline{\EuScript C}}}$. The object terms and the   raw arrow terms of  {\uwave{\tt Free}}$_{{\underline{\EuScript C}}}$ are exactly the object terms and the raw arrow terms of $\underline{\tt Free}_{\underline{\EuScript C}}$. The type of an arrow term $\varphi$  of {\uwave{\tt Free}}$_{{\underline{\EuScript C}}}$ is again
a  pair of    object terms, which we shall denote with $\vdash \varphi:U\rightarrow V$. The typing rules for arrow terms are the same as the typing rules for arrow terms of $\underline{\tt Free}_{\underline{\EuScript C}}$, except for the composition rule, for which we now set:
\begin{center}
\mybox{
\begin{tabular}{c}
{\small $\displaystyle\frac{\vdash \varphi_1:W_1\rightarrow W_2\quad \vdash\varphi_2:W'_2\rightarrow W_3\quad W_2\equiv W'_2}{\vdash\varphi_2\circ\varphi_1:W_1\rightarrow W_3}$} 
\end{tabular}
}
\end{center}
\indent The interpretation  of {\uwave{\tt Free}}$_{{\underline{\EuScript C}}}(X)$ in ${\EuScript C}(X)$, is defined (and denoted) exactly as the interpretation $[-]_X$.  In particular, the interpretation of the ``relaxed'' composition is defined by $[\varphi_2\circ\varphi_1]_X=[\varphi_2]_X\circ [\varphi_1]_X$. The following lemma is a direct consequence of Lemma \ref{int}.

\begin{lem}
The interpretation function $[-]_X:$\,{\uwave{\tt Free}}$_{{\underline{\EuScript C}}}(X)\rightarrow {\EuScript C}(X)$ is well-defined. 
\end{lem}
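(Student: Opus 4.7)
The plan is to proceed by straightforward induction on the derivation of $\vdash \varphi : U \to V$ in {\uwave{\tt Free}}$_{\underline{\EuScript C}}(X)$. Since the object terms and the raw arrow terms of {\uwave{\tt Free}}$_{\underline{\EuScript C}}$ coincide with those of $\underline{\tt Free}_{\underline{\EuScript C}}$, and the typing rules differ only in the composition clause, every case of the induction that is not a composition reduces immediately to the corresponding case in the proof of Lemma \ref{wd2}: the constructors $1_W$, $\beta$, $\beta^{-1}$, $\gamma$, as well as the closure rules for $_x\bo_y$, inherit their types (and therefore the types of their interpretations) verbatim from $\underline{\tt Free}_{\underline{\EuScript C}}$.

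The only genuinely new situation is the relaxed composition rule. Suppose $\varphi = \varphi_2 \circ \varphi_1$ is derived from premises $\vdash \varphi_1 : W_1 \to W_2$ and $\vdash \varphi_2 : W_2' \to W_3$ together with a proof of $W_2 \equiv W_2'$. By the inductive hypothesis, $[\varphi_1]_X : [W_1]_X \to [W_2]_X$ and $[\varphi_2]_X : [W_2']_X \to [W_3]_X$ are bona fide morphisms in ${\EuScript C}(X)$. Applying Lemma \ref{int} to the derivation $W_2 \equiv W_2'$ yields the equality $[W_2]_X = [W_2']_X$ in ${\EuScript C}(X)$. Consequently, the composite $[\varphi_2]_X \circ [\varphi_1]_X$ is a well-defined morphism from $[W_1]_X$ to $[W_3]_X$, which is precisely what the definition $[\varphi_2 \circ \varphi_1]_X := [\varphi_2]_X \circ [\varphi_1]_X$ assigns to $\varphi$.

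There is essentially no obstacle: the substantive content has already been packed into Lemma \ref{int}, which itself reduces (via Lemma \ref{intofsub}) to the strict equivariance axiom \hyperlink{(EQ)}{\texttt{(EQ)}} together with the compatibility conditions \hyperlink{bs}{\texttt{($\beta\sigma$)}}, \hyperlink{gs}{\texttt{($\gamma\sigma$)}} and \hyperlink{(EQ-mor)}{\texttt{(EQ-mor)}} governing the action of ${\EuScript C}(\sigma)$ on the structural isomorphisms and on morphisms. Thus the proof reduces to invoking Lemma \ref{wd2} in every non-composition case and Lemma \ref{int} in the composition case.
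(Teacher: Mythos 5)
Your proposal is correct and follows exactly the paper's route: the paper dispatches this lemma as ``a direct consequence of Lemma \ref{int}'', which is precisely the point you isolate for the relaxed composition case (the equality $[W_2]_X=[W'_2]_X$ guaranteeing that the composite $[\varphi_2]_X\circ[\varphi_1]_X$ is well-defined), all other cases being inherited from Lemma \ref{wd2}. You merely spell out the induction that the paper leaves implicit.
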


The construction from the following lemma shows the transition from {\uwave{\tt Free}}$_{{\underline{\EuScript C}}}$  to $\underline{\tt Free}_{\underline{\EuScript C}}$, which will be an important step of the first reduction.

\begin{lem}\label{construction}
If $\vdash \varphi:U\rightarrow V$ is an arrow term of {\uwave{\tt Free}}$_{{\underline{\EuScript C}}}(X)$   and if $U\equiv U'$, then there exists an arrow term $\varphi^{U'}:U'\rightarrow V'$ of $\underline{\tt Free}_{\underline{\EuScript C}}(X)$, such that $$V\equiv V'\quad\quad\mbox{ and }\quad\quad [\varphi]_X=[\varphi^{U'}]_{X}.$$
\end{lem}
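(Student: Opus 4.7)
The plan is to prove this by induction on the structure of the arrow term $\varphi$ of \uwave{\tt Free}$_{\underline{\EuScript C}}$. The guiding observation is that since $\equiv$ is the smallest congruence with respect to $_x\bo_y$, an equivalence $U\equiv U'$ forces $U$ and $U'$ to share the same tree shape (same pattern of $_x\bo_y$ connectives), differing only by renaming of parameters and corresponding composition variables at matching positions. This lets me read off, from $U'$, the renamed indices needed to rebuild each structural constructor of $\varphi$.

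For the base cases ($\varphi$ one of $1_W$, $\beta^{x,\underline{x};y,\underline{y}}_{W_1,W_2,W_3}$, its inverse, or $\gamma^{x,y}_{W_1,W_2}$), I decompose $U'$ according to the shape of $U$ and define $\varphi^{U'}$ as the same constructor with the indices dictated by $U'$. By the same renaming, $W_t(\varphi^{U'})\equiv V$ is immediate from a single step of congruence. The interpretation equality $[\varphi]_X=[\varphi^{U'}]_X$ follows from Lemma~\ref{intofsub}, which converts the parameter renamings hidden in $\equiv$ into the action of a bijection $\sigma$ on the interpretations, together with the axioms \hyperlink{bs}{{\tt($\beta\sigma$)}} and \hyperlink{gs}{{\tt($\gamma\sigma$)}}, which say that $\beta$ and $\gamma$ commute with such actions; for the identity case I invoke Lemma~\ref{int} directly.

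For the sequential composition $\varphi=\varphi_2\circ\varphi_1$ with $\vdash\varphi_1:W_1\rightarrow W_2$, $\vdash\varphi_2:W_2'\rightarrow W_3$ and $W_2\equiv W_2'$, I first apply the induction hypothesis to $\varphi_1$ using $U=W_1\equiv U'$ to obtain $\varphi_1^{U'}:U'\rightarrow W_t(\varphi_1^{U'})$ in $\underline{\tt Free}_{\underline{\EuScript C}}(X)$ with $W_t(\varphi_1^{U'})\equiv W_2$, hence by transitivity $W_t(\varphi_1^{U'})\equiv W_2'$. I then apply the induction hypothesis to $\varphi_2$ with $W_2'\equiv W_t(\varphi_1^{U'})$, obtaining $\varphi_2^{W_t(\varphi_1^{U'})}:W_t(\varphi_1^{U'})\rightarrow W_t(\varphi_2^{W_t(\varphi_1^{U'})})$ in $\underline{\tt Free}_{\underline{\EuScript C}}(X)$ with $W_t(\varphi_2^{W_t(\varphi_1^{U'})})\equiv W_3 = V$. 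Now $\varphi^{U'}:=\varphi_2^{W_t(\varphi_1^{U'})}\circ\varphi_1^{U'}$ composes along a \emph{strict} equality of object terms, so it is a bona fide arrow term of $\underline{\tt Free}_{\underline{\EuScript C}}$; this is precisely the step that eliminates the $\alpha$-relaxation allowed in \uwave{\tt Free}$_{\underline{\EuScript C}}$. The interpretation identity then follows from the two applications of the induction hypothesis and functoriality of $\circ$. The parallel case $\varphi=\varphi_1\,{_x\bo_y}\,\varphi_2$ is handled analogously: the congruence forces $U'=U_1'\,{_{x'}\bo_{y'}}\,U_2'$, the induction hypothesis is applied to each factor, and the results are recombined, invoking \hyperlink{(EQ-mor)}{\tt(EQ-mor)} to identify the interpretations.

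The main obstacle is the bookkeeping in the base cases for $\beta$ and $\gamma$: an arbitrary derivation of $U\equiv U'$ may apply the generating rule to several parameters at different depths, and I need to verify that the renaming bijection $\sigma$ produced by Lemma~\ref{intofsub} is precisely the one on which \hyperlink{bs}{{\tt($\beta\sigma$)}} or \hyperlink{gs}{{\tt($\gamma\sigma$)}} applies. A secondary care point is ensuring that composition in $\underline{\tt Free}_{\underline{\EuScript C}}$ is indeed well-typed in the inductive step, which is where the transitivity of $\equiv$ together with the choice to re-synthesize $\varphi_2^{W_t(\varphi_1^{U'})}$ at the new source is essential.
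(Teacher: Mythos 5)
Your proof follows the paper's argument essentially step for step: the same induction on the structure of $\varphi$, the same treatment of the base cases via \hyperlink{bs}{{\tt($\beta\sigma$)}}, \hyperlink{gs}{{\tt($\gamma\sigma$)}} and Lemma~\ref{intofsub}, and the same two-stage use of the induction hypothesis (with transitivity of $\equiv$) to turn the relaxed composition of {\uwave{\tt Free}}$_{\underline{\EuScript C}}$ into a strict one.

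The one place where your write-up is too quick is the parallel case $\varphi=\varphi_1\,{_x\bo_y}\,\varphi_2$. When the derivation of $U\equiv U'$ applies the generating rule at the top-level occurrence of $_x\bo_y$, you get $U'=U_1'\,{_{x'}\bo_{y'}}\,U_2'$ with $U_1[\underline{a^{\tau_1}}/\underline{a}]\equiv U_1'$, and if $x'\neq x$ then $U_1$ and $U_1'$ have \emph{different types} (the free variable $x$ of $U_1$ has been renamed to $x'$), so $U_1\not\equiv U_1'$ and the induction hypothesis cannot be ``applied to each factor'' as you state. The paper's fix is to apply the induction hypothesis to $\varphi_i$ with the pulled-back term $U_i'[\underline{a_i}/\underline{a_i^{\tau_i}}]$ (which \emph{is} $\equiv U_i$), and then to push the parameter substitution onto the resulting arrow term before recombining with $_{x'}\bo_{y'}$, invoking Lemma~\ref{int} for the interpretations. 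You already have the substitution-on-arrow-terms machinery in hand from the base cases, so this is a mechanical repair, but as written the inductive step does not go through in that subcase; the other subcase, where only $U_1\equiv U_1'$ and $U_2\equiv U_2'$ hold with the top-level indices unchanged, is handled correctly by your recombination.
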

\begin{proof} Let $W_t(\psi)$ denote the target of an arrow term $\psi$ in $\underline{\tt Free}_{\underline{\EuScript C}}$.  The proof goes by induction on the structure of $\varphi$.\\[-0.6cm]
\begin{itemize}
\item If $\varphi=1_U$, then $\varphi^{U'}=1_{U'}$. We conclude by  \hyperlink{(EQ)}{\texttt{(EQ)}} and Remark \ref{functoriality}(5),
for $\sigma={\it id}_X$.\\[-0.6cm]
\item Suppose that $\varphi=\beta^{x,\underline{x};y,\underline{y}}_{W_1,W_2,W_3}$. The source of $\varphi$ is then $U=(W_1\,{_x\bo_{\underline{x}}}\, W_2)\,{_y\bo_{\underline{y}}}\, W_3$. If the parameters $a_1\in {P}_{\underline{\EuScript C}}(W_1)$, $a_{21},a_{22}\in {P}_{\underline{\EuScript C}}(W_2)$  and $a_{3}\in {P}_{\underline{\EuScript C}}(W_3)$  are such that $x\in {\it FV}(a_1),$ $\underline{x},y\in {\it FV}(a_2)$  and $\underline{y}\in {\it FV}(a_3)$,
then  $U'=(W'_1 {_{x'}\bo_{\underline{x}'}}W'_2){_{y'}\bo_{\underline{y}'}}W'_3$, where 
 $$W_1[\underline{a^{\tau_1}_1}/\underline{a_1}]\equiv W'_1,\enspace W_2[\underline{a_{21}^{\tau_{21}}}/\underline{a_{21}}][\underline{a_{22}^{\tau_{22}}}/\underline{a_{22}}]\equiv W'_2 \enspace \mbox{ and } \enspace W_3[\underline{a_3^{\tau_3}}/\underline{a_3}]\equiv W'_3  $$ and  $\tau_1$, $\tau_{21}$, $\tau_{22}$ and $\tau_3$ rename $x$ to $x'$, $\underline{x}$ to $\underline{x}'$, $y$ to $y'$ and $\underline{y}$ to $\underline{y}'$.      We set $$\varphi^{U'}=\beta^{x',\underline{x}';y',\underline{y}'}_{W'_1,W'_2,W'_3}. $$ We conclude by  \hyperlink{(EQ)}{\texttt{(EQ)}} and \hyperlink{bs}{{\tt{($\beta\sigma$)}}}, for $\sigma={\it id}_X$.\\[-0.6cm]
\item  If $\varphi=\beta^{x,\underline{x};y,\underline{y}\enspace ^{_{-1}}}_{W_1,W_2,W_3}$, we proceed analogously as in the previous case.  \\[-0.6cm]
\item Suppose that $\varphi=\gamma^{x,y}_{W_1,W_2}$. The source of $\varphi$ is then $U=W_1\,{_x\bo_{{y}}}\, W_2$. If the parameters 
$a_1\in {P}_{\underline{\EuScript C}}(W_1)$  and $a_{2}\in {P}_{\underline{\EuScript C}}(W_2)$  are such that $x\in {\it FV}(a_1)$ and $y\in {\it FV}(a_2)$,
then $U'=W'_1 {_{x'}\bo_{y'}}W'_2$, where $$W_1[\underline{a^{\tau_1}_1}/\underline{a_1}]\equiv W'_1\enspace \mbox{ and } \enspace W_2[\underline{a^{\tau_2}_2}/\underline{a_2}]\equiv W'_2$$ and $\tau_1$ and $\tau_{2}$  rename $x$ to $x'$ and $y$ to $y'$, respectively.  We set $$\varphi^{U'}=\gamma^{x',y'}_{W'_1,W'_2}$$ and conclude by   \hyperlink{(EQ)}{\texttt{(EQ)}} and \hyperlink{bg}{{\tt{($\gamma\sigma$)}}}, for $\sigma={\it id}_X$.\\[-0.6cm]
\item   Suppose that $\vdash\varphi_1:U\rightarrow W$, $\vdash\varphi_2:W'\rightarrow V$ and that  $W\equiv W'$, and let $\varphi=\varphi_2\circ\varphi_1$. By the induction hypothesis for $\varphi_1$ and $U'$, there exists an arrow term  $$\varphi_1^{U'}:U'\rightarrow W_t(\varphi_1^{U'}),$$ such that $W_t(\varphi_1^{U'})\equiv W$ and  $[\varphi_1]_X=[\varphi^{U'}_1]_{X}$. Since  $W\equiv W'$, by the transitivity of $\equiv$,  we get  $W_t(\varphi_1^{U'})\equiv W'$. By the induction hypothesis for $\varphi_2$ and $W_t(\varphi_1^{U'})$, there exists an arrow term $$\varphi_2^{W_t(\varphi_1^{U'})}:W_t(\varphi_1^{U'})\rightarrow W_t(\varphi_2^{W_t(\varphi_1^{U'})}),$$ such that $W_t(\varphi_2^{W_t(\varphi_1^{U'})})\equiv V$ and $[\varphi_2]_X=[\varphi_2^{W_t(\varphi_1^{U'})}]_X$.  We define $$\varphi^{U'}=\varphi_2^{W_t(\varphi_1^{U'})}\circ\varphi_1^{U'}.$$
\item   Suppose that $\vdash\varphi_1:U_1\rightarrow V_1$, $\vdash\varphi_2:U_2\rightarrow V_2$, and let $\varphi=\varphi_1\,{_x\bo_y}\,\varphi_2$. In this case, the source of $\varphi$ is $U=U_1\,{_x\bo_y}\, U_2$, and we have two possibilities for the shape of $U'$.
\begin{itemize} 
\item
$U'=U'_1\,{_{x'}\bo_{y'}}\, U'_2$, where, assuming that $a_1\in {P}_{\underline{\EuScript C}}(U_1)$ and $a_{2}\in {P}_{\underline{\EuScript C}}(U_2)$ are such that $x\in {\it FV}(a_1)$ and $y\in {\it FV}(a_2)$, $U_1[\underline{a^{\tau_1}_1}/\underline{a_1}]\equiv U'_1$  and $U_2[\underline{a^{\tau_2}_2}/\underline{a_2}]\equiv U'_2$. Since $\underline{a^{\tau_1}_1}\in P_{\underline{\EuScript C}}(U'_1)$ and $\underline{a^{\tau_2}_2}\in P_{\underline{\EuScript C}}(U'_2)$,
this means that, symmetrically, we have $U'_1[\underline{a_1}/\underline{a^{\tau_1}_1}]\equiv U_1$  and $U'_2[\underline{a_2}/\underline{a^{\tau_2}_2}]\equiv U_2$. By the induction hypothesis for $\varphi_1$ and $U'_1[\underline{a_1}/\underline{a^{\tau_1}_1}]$, as well as $\varphi_2$ and $U'_2[\underline{a_2}/\underline{a^{\tau_2}_2}]$, we get arrow terms $$\varphi_1^{U'_1[\underline{a_1}/\underline{a^{\tau_1}_1}]}:U'_1[\underline{a_1}/\underline{a^{\tau_1}_1}]\rightarrow W_t(\varphi_1^{U'_1[\underline{a_1}/\underline{a^{\tau_1}_1}]})$$ and  $$\varphi_2^{U'_2[\underline{a_2}/\underline{a^{\tau_2}_2}]}:U'_2[\underline{a_2}/\underline{a^{\tau_2}_2}]\rightarrow W_t(\varphi_2^{U'_2[\underline{a_2}/\underline{a^{\tau_2}_2}]}),$$ such that  $W_t(\varphi_1^{U'_1[\underline{a_1}/\underline{a^{\tau_1}_1}]})\equiv V_1$, $[\varphi_1]_X=[\varphi_1^{U'_1[\underline{a_1}/\underline{a^{\tau_1}_1}]}]_X$, $W_t(\varphi_2^{U'_2[\underline{a_2}/\underline{a^{\tau_2}_2}]})\equiv V_2$ and $[\varphi_2]_X=[\varphi_2^{U'_2[\underline{a_2}/\underline{a^{\tau_2}_2}]}]_X$. By means of substitution on arrow terms, we define $$\varphi^{U'}=\varphi_1^{U'_1[\underline{a_1}/\underline{a^{\tau_1}_1}]}[\underline{a^{\tau_1}_1}/\underline{a_1}]\,{_{x'}\bo_{y'}}\,\varphi_2^{U'_2[\underline{a_2}/\underline{a^{\tau_2}_2}]}[\underline{a^{\tau_2}_2}/\underline{a_2}].$$  

\item $U'=U'_1\,{_{x}\bo_{y}}\, U'_2$, where $U_1\equiv U'_1$ and $U_2\equiv U'_2$. We define $\varphi^{U'}=\varphi_1^{U'_1} \,{_{x}\bo_{y}}\,\varphi_2^{U'_2}$. 
\end{itemize}
We conclude by Lemma \ref{int}.
\end{itemize}
\vspace{-0.6cm}
\end{proof}
\subsubsection{The first reduction}\label{fr1}
We make the first reduction  in two steps. We first define a (non-deterministic) rewriting algorithm $\leadsto$ on  ${\tt Free}_{\underline{\EuScript C}}(X)$  with outputs in {\uwave{\tt Free}}$_{{\underline{\EuScript C}}}$, in such a way that the interpretation of a   term of ${\tt Free}_{\underline{\EuScript C}}$  matches the interpretations of (all) its ``normal forms'' relative to $\leadsto$. We then use Lemma \ref{construction} to move from {\uwave{\tt Free}}$_{{\underline{\EuScript C}}}$ to ${\underline{\tt Free}}_{{\underline{\EuScript C}}}$, while preserving the equality of interpretations from the first step.  This allows us to reduce the proof of the coherence theorem, which  concerns all $\beta\gamma\sigma$-diagrams, to the consideration of parallel $\beta\gamma$-arrows in ${\EuScript C}(X)$ only.\\[0.1cm]
\indent  We   first define the rewriting algorithm $\leadsto$ on   object terms of  ${\tt Free}_{\underline{\EuScript C}}$. The algorithm $\leadsto$ takes an object term ${\cal W}$ of ${\tt Free}_{\underline{\EuScript C}}$ and returns (non-deterministically) an object term $W$ of {\uwave{\tt Free}}$_{{\underline{\EuScript C}}}$, which we denote  by ${\cal W}\leadsto W$, in the way specified by the following rules:
\begin{center}
\mybox{ 
\begin{tabular}{c}

{ $\displaystyle \frac{}{\underline{a}\leadsto\underline{a}}$\quad\quad $\displaystyle\frac{{\cal W}_1\leadsto W_1\quad\quad {\cal W}_2\leadsto W_2}{{\cal W}_1\,{_x\bo_y}\,{\cal W}_2\leadsto W_1\,{_x\bo_y}\,W_2}$}\\[0.7cm] 
{$\displaystyle\frac{}{ \underline{a}^{\sigma}\leadsto\underline{a^{\sigma}}}$\quad\quad $\displaystyle\frac{{\cal W}\leadsto W}{{\cal W}^{{\it id}_X}\leadsto W}$ \quad\quad $\displaystyle\frac{{\cal W}^{\sigma\circ\tau}\leadsto W}{({\cal W}^{\sigma})^{\tau}\leadsto W}$}\\[0.7cm]
{ $\displaystyle\frac{\substack{\sigma\,:\,Z\,\rightarrow\, X\backslash\{x\}\cup Y\backslash\{y\}  \quad x',y'\not\in X\backslash\{x\}\cup Y\backslash\{y\}\quad x'\neq y'\\[0.05cm] \sigma_1:\,\sigma^{-1}[X\backslash\{x\}]\cup \{x'\}\,\rightarrow\, X\enspace\enspace  \sigma_1|^{X\backslash\{x\}}=\sigma|^{X\backslash\{x\}}\enspace\enspace \sigma_1(x')=x \\[0.05cm] \sigma_2\,:\,\sigma^{-1}[Y\backslash\{y\}]\cup \{y'\}\,\rightarrow\, Y\enspace\enspace  \sigma_2|^{Y\backslash\{y\}}=\sigma|^{Y\backslash\{y\}}\enspace\enspace \sigma_2(y')=y\\[0.1cm]{\mbox{$ {\cal W}_1^{\sigma_1}\leadsto W_1 \quad  {\cal W}_2^{\sigma_2}\leadsto W_2$}}}}{({\cal W}_1\,{_x\bo_y}\,{\cal W}_2)^{\sigma}\leadsto W_1\,{_{x'}\bo_{y'}}\,W_2}$}
\end{tabular}}
\end{center}

\noindent The formal system defined above  obviously has a termination property, in the sense  that for all object terms ${\cal W}$ of ${\tt Free}_{\underline{\EuScript C}}$ there exists an object term $W$ of {\uwave{\tt Free}}$_{{\underline{\EuScript C}}}$, such that ${\cal W}\leadsto W$.  Notice also that the last rule is non-deterministic, as it involves a choice of $x'$ and $y'$.     In what follows,  for an  object term ${\cal W}$ of  ${\tt Free}_{\underline{\EuScript C}}$, we shall say that the outputs of the algorithm $\leadsto$ applied on  ${\cal W}$ are
 {\em  normal forms of} ${\cal W}$. We shall denote the collection of all normal forms of ${\cal W}$ with ${\tt NF}({\cal W})$. 

\indent The formal system $({\tt Free}_{\underline{\EuScript C}},\leadsto)$  satisfies the following confluence-like property.
\begin{lem}\label{confluence}
If $W_1,W_2\in {\tt NF}({\cal W})$, then $W_1\equiv W_2$.
\end{lem}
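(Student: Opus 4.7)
The plan is to proceed by well-founded induction on the object term ${\cal W}$ of ${\tt Free}_{\underline{\EuScript C}}$ (measured, for instance, by the lexicographic pair consisting of the total size of ${\cal W}$ and the number of $(-)^\sigma$-markers occurring in it), with case analysis on the outermost constructor of ${\cal W}$ and, in the case ${\cal W}={\cal V}^\sigma$, with further case analysis on the shape of ${\cal V}$. Since every rule of the rewriting system $\leadsto$ is shape-driven at its left-hand side, the applicable rules can be enumerated mechanically in each case.

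The base case ${\cal W}=\underline{a}$ is immediate, and the case ${\cal W}={\cal W}_1\,{_x\bo_y}\,{\cal W}_2$ is routine: only one rule applies, and the induction hypothesis on ${\cal W}_1$ and ${\cal W}_2$, combined with the congruence of $\equiv$ under $_x\bo_y$, yields the conclusion. The case ${\cal W}={\cal V}^\sigma$ splits further. If ${\cal V}=\underline{a}$, the atomic-renaming rule yields $\underline{a^\sigma}$; the overlap with the identity-elimination rule (available only when $\sigma={\it id}_X$) is trivialized by $a^{{\it id}_X}=a$, so the two outputs literally coincide. If ${\cal V}=({\cal V}')^\tau$, the composition-of-bijections rule reduces the question to normalizations of ${\cal V}'^{\tau\circ\sigma}$, and the identity-elimination rule (when $\sigma={\it id}$) reduces it to ${\cal V}'^{\tau}={\cal V}'^{\tau\circ{\it id}}$; both descend to a strictly smaller term and the induction hypothesis applies directly.

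The main obstacle is the remaining sub-case ${\cal V}={\cal V}_1\,{_x\bo_y}\,{\cal V}_2$, where the distribution rule fires with a genuinely non-deterministic choice of fresh variables $x',y'$; this is the sole source of branching in the whole system. Given two outputs $W_1\,{_{x'}\bo_{y'}}\,W_2$ and $\widetilde{W}_1\,{_{x''}\bo_{y''}}\,\widetilde{W}_2$ of the algorithm, I have $W_i\in{\tt NF}({\cal V}_i^{\sigma_i})$ and $\widetilde{W}_i\in{\tt NF}({\cal V}_i^{\widetilde{\sigma}_i})$, where $\sigma_i$ and $\widetilde{\sigma}_i$ agree everywhere except at the designated positions $x'/x''$ and $y'/y''$. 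The key technical ingredient I expect to need is an auxiliary lemma stating that a ``leaf renaming'' of a parameter inside ${\cal V}_i$ commutes, up to $\alpha$-equivalence, with the normalization algorithm; this is provable by a direct induction that relies only on the atomic-renaming and composition-of-bijections rules. With this in hand, a chosen normal form of ${\cal V}_i^{\widetilde{\sigma}_i}$ can be transformed into one of ${\cal V}_i^{\sigma_i}$ by a parameter renaming, and combining this with the induction hypothesis applied componentwise, plus one application of the generating rule for $\equiv$ at the composition point, concludes the case. The overlap with the identity-elimination rule (take $x'=x$, $y'=y$, so $\sigma_i={\it id}$) is a diagonal instance of the same argument.
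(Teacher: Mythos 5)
Your proposal is correct and follows essentially the same route as the paper: the only genuine critical pair is the non-deterministic distribution rule on $({\cal V}_1\,{_x\bo_y}\,{\cal V}_2)^{\sigma}$, and both you and the paper resolve it via an auxiliary fact that renaming the fresh variables commutes with $\leadsto$ (the paper phrases this as ${\cal W}_1^{\tau_1}\leadsto W_1[\underline{a^{\kappa_1}}/\underline{a}]$), followed by the componentwise induction hypothesis, one application of the generating rule of $\equiv$, and congruence. The paper merely omits the routine cases and the explicit induction measure that you spell out.
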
 
\begin{proof}  Suppose that   $({\cal W}_1\,{_x\bo_y}\,{\cal W}_2)^{\sigma}\leadsto W_1\,{_{x'}\bo_{y'}}\,W_2$ is obtained from ${\cal W}_1^{\sigma_1}\leadsto W_1$ and  ${\cal W}_2^{\sigma_2}\leadsto W_2$, and   $({\cal W}_1\,{_x\bo_y}\,{\cal W}_2)^{\sigma}\leadsto W'_1\,{_{x''}\bo_{y''}}\,W'_2$  from ${\cal W}_1^{\tau_1}\leadsto W'_1$ and  ${\cal W}_2^{\tau_2}\leadsto W'_2$. Let $a\in P_{\underline{\EuScript C}}(W_1)$ and $b\in P_{\underline{\EuScript C}}(W_2)$ be such that ${\it FV}(a)=X_1$, ${\it FV}(b)=Y_1$, $x\in X_1$ and $y\in Y_1$, and let $\kappa_1:X_1\backslash\{x'\}\cup\{x''\}\rightarrow X_1$ be the  renaming of   $x'$ to $x''$ and $\kappa_2:Y_1\backslash\{y'\}\cup\{y''\}\rightarrow Y_1$ the renaming of  $y'$ to $y''$. It is then easy to show that ${\cal W}^{\tau_1}_1\leadsto {W}_1[\underline{a^{\kappa_1}}/\underline{a}]$ and ${\cal W}^{\tau_2}_2\leadsto {W}_2[\underline{a^{\kappa_2}}/\underline{a}]$. By the definition of $\equiv$ and the induction hypothesis for  ${\cal W}^{\tau_1}_1$ (which reduces to both $W'_1$ and ${W}_1[\underline{a^{\kappa_1}}/\underline{a}]$) and ${\cal W}^{\tau_2}_2$ (which reduces to both $W'_2$ and ${W}_2[\underline{b^{\kappa_2}}/\underline{b}]$), we  then have 
$$ W_1\,{_{x'}\bo_{y'}}\,W_2\equiv {W}_1[\underline{a^{\kappa_1}}/\underline{a}] \,{_{x''}\bo_{y''}}\,{W}_2[\underline{b^{\kappa_2}}/\underline{b}]={W}'_1 \,{_{x''}\bo_{y''}}\,{W}'_2.$$

\end{proof}

\begin{lem}\label{eq_int_obj}
For an arbitrary object term ${\cal W}:X$ of ${\tt Free}_{\underline{\EuScript C}}$ and an arbitrary $W\in {\tt NF}({\cal W})$, the equality $[[{\cal W}]]_X=[W]_X$ holds in ${\EuScript C}(X)$.
\end{lem}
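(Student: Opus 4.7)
The plan is to proceed by induction on the derivation of ${\cal W}\leadsto W$ (which is finite since $\leadsto$ terminates), with one case per rule. Throughout, the argument only uses the bifunctoriality of each $_x\circ_y$, the functoriality identities of ${\EuScript C}$ recorded in Remark \ref{functoriality}(4), the equivariance axiom \hyperlink{(EQ)}{\texttt{(EQ)}}, and the fact that the $\varepsilon_i$'s and their inverses are interpreted as identities in ${\EuScript C}$ (reflecting the strictness of \hyperlink{(EQ)}{\texttt{(EQ)}}).

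The two base cases ${\cal W}=\underline{a}$ and ${\cal W}=\underline{a}^{\sigma}$ are immediate: in both, $[[{\cal W}]]_X$ and $[W]_X$ evaluate directly to $a$ (respectively $a^{\sigma}$), using that $[[{\varepsilon_1}^{\sigma}_{\underline{a}}]]_X$ is the identity. The case ${\cal W}={\cal W}_1\,{_x\bo_y}\,{\cal W}_2$ with $W=W_1\,{_x\bo_y}\,W_2$ reduces at once to the inductive hypothesis for the two subterms, since both interpretations send $_x\bo_y$ to the bifunctor $_x\circ_y$. The two rules ${\cal W}^{{\it id}_X}\leadsto W$ and $({\cal W}^{\sigma})^{\tau}\leadsto W$ (the latter obtained from the premise ${\cal W}^{\sigma\circ\tau}\leadsto W$) are then discharged by the two functoriality identities of Remark \ref{functoriality}(4), combined with the interpretation of ${{\varepsilon}_2}_{\cal W}$ and ${\varepsilon_3}_{\cal W}^{\sigma,\tau}$ as identities.

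The heart of the proof is the last rule, $({\cal W}_1\,{_x\bo_y}\,{\cal W}_2)^{\sigma}\leadsto W_1\,{_{x'}\bo_{y'}}\,W_2$, with premises ${\cal W}_1^{\sigma_1}\leadsto W_1$ and ${\cal W}_2^{\sigma_2}\leadsto W_2$. The side conditions $\sigma_1(x')=x$, $\sigma_2(y')=y$ and $\sigma=\sigma_1|^{X\backslash\{x\}}\cup\sigma_2|^{Y\backslash\{y\}}$ imposed by the rule are precisely the hypotheses of \hyperlink{(EQ)}{\texttt{(EQ)}}, so applying \hyperlink{(EQ)}{\texttt{(EQ)}} to the interpretation $[[({\cal W}_1\,{_x\bo_y}\,{\cal W}_2)^{\sigma}]]_X$ yields $[[{\cal W}_1]]^{\sigma_1}\,{_{x'}\circ_{y'}}\,[[{\cal W}_2]]^{\sigma_2} = [[{\cal W}_1^{\sigma_1}]]_X\,{_{x'}\circ_{y'}}\,[[{\cal W}_2^{\sigma_2}]]_X$, which by the inductive hypothesis equals $[W_1]_X\,{_{x'}\circ_{y'}}\,[W_2]_X = [W]_X$ (the corresponding ${\varepsilon_4}^{x,y;x',y'}_{{\cal W}_1,{\cal W}_2;\sigma}$ being again interpreted as identity).

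I expect the main, though essentially clerical, obstacle to be matching the side conditions of this last rewriting rule against the premise of \hyperlink{(EQ)}{\texttt{(EQ)}}, and checking that no ambiguity arises from the non-deterministic choice of $x'$ and $y'$ in that rule; the latter concern is taken care of by Lemma \ref{confluence} together with Lemma \ref{int}, which show that different normal forms of ${\cal W}$ are $\alpha$-equivalent and hence have the same interpretation under $[-]_X$.
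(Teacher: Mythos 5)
Your proposal is correct and follows essentially the same route as the paper's proof: the same case analysis (one case per shape of ${\cal W}$, i.e.\ per rewriting rule), with the base cases immediate, the composite-under-$\sigma$ case discharged by \texttt{(EQ)}, and the ${\it id}_X$ and iterated-action cases discharged by the functoriality equations of Remark~\ref{functoriality}(4). The only minor difference is one of bookkeeping: the paper performs structural induction on ${\cal W}$ for an \emph{arbitrary} $W\in{\tt NF}({\cal W})$ and therefore must invoke Lemma~\ref{confluence} together with Lemma~\ref{int} in nearly every case to reconcile the chosen $W$ with the normal form reassembled from the subterms, whereas your induction on the derivation of ${\cal W}\leadsto W$ covers each normal form via its own derivation and so renders that appeal a mere safety remark rather than a needed step.
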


\begin{proof}
By induction on the structure of ${\cal W}$.\\[-0.6cm]
\begin{itemize}
\item If ${\cal W}=\underline{a}$, we trivially have $[[\underline{a}]]_X=a=[\underline{a}]_{X}$. \\[-0.6cm]
\item If ${\cal W}={\cal W}_1{_x\bo_y}{\cal W}_2$, where ${\cal W}_1:X$ and ${\cal W}_2:Y$, then, for any $W_1\in {\tt NF}({\cal W}_1)$ and $W_2\in {\tt NF}({\cal W}_2)$,    $W_1\,{_x\bo_y}\, W_2\in {\tt NF}({\cal W})$. Hence, by Lemma \ref{confluence}, we have that   $W\equiv W_1\,{_x\bo_y}\, W_2$. By the induction hypothesis for ${\cal W}_1$ and ${\cal W}_2$, we have that $[[{\cal W}_1]]_X=[W_1]_{X}$ and  $[[{\cal W}_2]]_Y=[W_2]_{Y}$, and, by Lemma \ref{int}, we get 
$$\begin{array}{rcl}
[[\,{\cal W}_1{_x\bo_y}{\cal W}_2\,]]_{X\backslash\{x\}\cup Y\backslash\{y\}}&=&[[{\cal W}_1]]_X\,{_x\circ_y}\,[[{\cal W}_2]]_Y\\[0.1cm]
&=&[W_1]_{X}\,{_x\circ_y}\,[W_2]_{Y}\\[0.1cm]
&=&[W_1\,{_x\bo_y}\,W_2]_{X\backslash\{x\}\cup Y\backslash\{y\}}\\[0.1cm]
&=&[W]_{X\backslash\{x\}\cup Y\backslash\{y\}}.\\[-0.2cm]
\end{array}$$  
\item Suppose that ${\cal W}={\cal V}^{\sigma}$, where ${\cal V}:X$ and $\sigma:Y\rightarrow X$. We proceed by case analysis relative to the shape of ${\cal V}$ (and $\sigma$).
\begin{itemize}
\item If ${\cal V}=\underline{a}$, for some $a\in P_{\underline{\EuScript C}}$, then $[[\underline{a}^{\sigma}]]_Y=[[\underline{a}]]_X^{\sigma}=[\underline{a}]_X^{\sigma}=a^{\sigma}=[\underline{a^{\sigma}}]_Y.$ 
 
\item If $\sigma={\it id}_X$, and if    $V\in {\tt NF}({\cal V})$, then $V\in  {\tt NF}({\cal W})$, and, by Lemma \ref{confluence}, we have that $W\equiv V$. By the induction hypothesis for ${\cal V}$ and Lemma \ref{int}, we get 
$$[[{\cal V}^{{\it id}_X}]]_X=[[{\cal V}]]^{{\it id}_X}_X=[[{\cal V}]]_X=[V]_{X}=[W]_{X}.$$
\item If ${\cal V}={\cal V}_1{_x\bo_y}{\cal V}_2$,  $V_1\in {\tt NF}({\cal V}^{\sigma_1}_1)$ and  $V_2\in {\tt NF}({\cal V}^{\sigma_2}_2)$, then $V_1 {_{x'}\bo_{y'}} V_2\in {\tt NF}({\cal W})$, and, by Lemma \ref{confluence}, $W\equiv V_1 {_{x'}\bo_{y'}} V_2$. By the induction hypothesis for ${\cal V}^{\sigma_1}_1$ and ${\cal V}^{\sigma_2}_2$, \hyperlink{(EQ)}{\texttt{(EQ)}} and Lemma \ref{int}, we get 
 $$ 
[[({\cal V}_1\,{_x\bo_y}\,{\cal V}_2)^{\sigma}]]_Y= [[{\cal V}^{\sigma_1}_1]]_{Y_1}\,{_{x'}\bo_{y'}}\,[[{\cal V}^{\sigma_2}_2]]_{Y_2}  = [V_1]_{Y_1}\,{_{x'}\bo_{y'}}\, [V_2]_{Y_2} = [V_1\,{_{x'}\bo_{y'}}\, V_2]_{Y}= [W]_{Y}.
 $$  
\item If ${\cal V}={\cal U}^{\tau}$, and if $U\in {\tt NF}({\cal U}^{\tau\circ\sigma})$, then $U\in {\tt NF}({\cal W})$, and, by Lemma \ref{confluence}, $W\equiv U$. By the induction hypothesis for ${\cal U}^{\tau\circ\sigma}$ and Lemma \ref{int}, we get$$
[[({\cal U}^{\tau})^{\sigma}]]_Y=([[{\cal U}]]^{\tau}_X)^{\sigma}=[[{\cal U}]]^{\tau\circ\sigma}_X=[[{\cal U}^{\tau\circ\sigma}]]_Y=[U]_Y=[W]_Y .$$
\end{itemize}
\end{itemize}
\vspace{-0.65cm}
\end{proof}
\indent We move on to the first reduction of   arrow terms of ${{\tt Free}}_{\underline{\EuScript C}}$. Our first step is to  define a (non-deterministic) rewriting algorithm $\leadsto$, which ``normalizes''  arrow terms  of ${{\tt Free}}_{\underline{\EuScript C}}$. In the table below, in the rules defining the reductions $$(\beta^{x,\underline{x};y,\underline{y}}_{{\cal W}_1,{\cal W}_2,{\cal W}_3})^{\sigma}\leadsto  \beta^{x',\underline{x}';y',\underline{y}'}_{W_1,{W_2},{W_3}}\quad\mbox{ and }\quad (\beta^{x,\underline{x};y,\underline{y}\enspace ^{_{-1}}}_{{\cal W}_1,{\cal W}_2,{\cal W}_3})^{\sigma}\leadsto  \beta^{x',\underline{x}';y',\underline{y}'\enspace ^{_{-1}}}_{W_1,{W_2},{W_3}},$$
it is assumed that 
\begin{itemize}
\item[$\diamond$] $\sigma:U\rightarrow X\backslash\{x\}\cup Y\backslash\{\underline{x},y\}\cup Z\backslash\{\underline{y}\}$,  
\item[$\diamond$] $\sigma_1: \sigma^{-1}[X\backslash\{x\}]\cup\{x'\}\rightarrow X$, \enspace $\sigma_1|^{X\backslash\{x\}}=\sigma|^{X\backslash\{x\}}$, \enspace $\sigma_1(x')=x$,
\item[$\diamond$] $\sigma_2: \sigma^{-1}[Y\backslash\{\underline{x},y\}]\cup\{\underline{x}',y'\}\rightarrow Y$, \enspace $\sigma_2|^{Y\backslash\{\underline{x},y\}}=\sigma|^{Y\backslash\{\underline{x},y\}}$, \enspace $\sigma_2(\underline{x}')=\underline{x}$, \enspace $\sigma_2(y')=y$, and
\item[$\diamond$] $\sigma_3: \sigma^{-1}[Z\backslash\{\underline{y}\}]\cup\{{\underline{y}}'\}\rightarrow Z$, \enspace $\sigma_3|^{Z\backslash\{\underline{y}\}}=\sigma|^{Z\backslash\{\underline{y}\}}$, \enspace $\sigma_3(\underline{y}')=\underline{y}$.
\end{itemize} 
Also, in the rule defining the reduction $$(\gamma^{x,y}_{{\cal W}_1,{\cal W}_2})^{\sigma}\leadsto \gamma^{x',y'}_{W_1,{W_2}},$$
it is assumed that 
\begin{itemize}
\item[$\diamond$] $\sigma:Z\rightarrow X\backslash\{x\}\cup Y\backslash\{y\}$,  
\item[$\diamond$]   $\sigma_1: \sigma^{-1}[X\backslash\{x\}]\cup\{x'\}\rightarrow X$, \enspace $\sigma_1|^{X\backslash\{x\}}=\sigma|^{X\backslash\{x\}}$, \enspace $\sigma_1(x')=x$, and
\item[$\diamond$] $\sigma_2: \sigma^{-1}[Y\backslash\{{y}\}]\cup\{{{y}}'\}\rightarrow Z$, \enspace $\sigma_2|^{Y\backslash\{{y}\}}=\sigma|^{Y\backslash\{{y}\}}$, \enspace $\sigma_2({y}')={y}$.
\end{itemize}

\begin{center}
\mybox{
\begin{tabular}{c}
{\small $\displaystyle\frac{ U\in {\tt NF}({\cal U})}{1_{\cal U}\leadsto 1_{U}}$}\\[0.55cm]
 {\small $\displaystyle\frac{W_i\in {\tt NF}({\cal W}_i)\enspace\enspace i\in\{1,2,3\}}{ \beta^{x,\underline{x};y,\underline{y}}_{{\cal W}_1,{\cal W}_2,{\cal W}_3}\leadsto  \beta^{x,\underline{x};y,\underline{y}}_{W_1,{W_2},{W_3}}}$\quad\quad $\displaystyle\frac{W_i\in {\tt NF}({\cal W}_i)\enspace\enspace i\in\{1,2,3\}}{ \beta^{x,\underline{x};y,\underline{y}\enspace ^{_{-1}}}_{{\cal W}_1,{\cal W}_2,{\cal W}_3}\leadsto  \beta^{x,\underline{x};y,\underline{y}\enspace ^{_{-1}}}_{W_1,{W_2},{W_3}}}$}\\[0.55cm]
{\small  $\displaystyle\frac{W_i\in {\tt NF}({\cal W}_i)\enspace\enspace i\in\{1,2\}}{ \gamma^{x,y}_{{\cal W}_1,{\cal W}_2}\leadsto \gamma^{x,y}_{W_1,{W_2}}} $}\\[0.55cm] 
{\small $\displaystyle\frac{}{{\varepsilon_1}_{\underline{a}}^{\sigma}\leadsto 1_{\underline{a^{\sigma}}}}$\quad\quad $\displaystyle\frac{}{{\varepsilon_1}_{\underline{a}}^{\sigma\,^{_{-1}}}\leadsto 1_{\underline{a^{\sigma}}}}$} \quad\quad {\small $\displaystyle \frac{W\in {\tt NF}({\cal W})}{{\varepsilon_2}_{\cal W}\leadsto 1_{W}}$\quad\quad $\displaystyle \frac{W\in {\tt NF}({\cal W})}{{\varepsilon_2}^{_{-1}}_{\cal W}\leadsto 1_{W}}$} \\[0.55cm]
{\small $\displaystyle\frac{W\in {\tt NF}({\cal W}^{\sigma\circ\tau})}{{\varepsilon_3}_{\cal W}^{\sigma,\tau}\leadsto 1_{W}}$\quad\quad $\displaystyle\frac{W\in {\tt NF}({\cal W}^{\sigma\circ\tau})}{{\varepsilon_3}_{\cal W}^{\sigma,\tau\,^{_{-1}}}\leadsto 1_{W}}$}\\[0.55cm]
{\small $\displaystyle\frac{W\in {\tt NF}(({\cal W}_1 {_x\bo_y }{\cal W}_2)^{\sigma})}{ {\varepsilon_4}^{x,y;x',y'}_{{\cal W}_1,{\cal W}_2;\sigma}\leadsto 1_W}$\quad\quad $\displaystyle\frac{W_1\in {\tt NF}({\cal W}_1^{\sigma_1})\quad W_2\in {\tt NF}({\cal W}_2^{\sigma_2})}{ {\varepsilon_4}^{x,y;x',y'\, ^{_{-1}}}_{{\cal W}_1,{\cal W}_2;\sigma_1,\sigma_2}\leadsto 1_W}$}\\[0.55cm]
{\small $\displaystyle\frac{\Phi_1\leadsto \varphi_1\quad \Phi_2\leadsto \varphi_2 }{\Phi_2\circ\Phi_1\leadsto \varphi_2\circ\varphi_1}$ \quad\quad $\displaystyle\frac{\Phi_1\leadsto \varphi_1\quad \Phi_2\leadsto \varphi_2 }{\Phi_1\,{_x\bo_y}\,\Phi_2\leadsto \varphi_1\,{_x\bo_y}\, \varphi_2 }$}\\[0.55cm] 
\small {$\displaystyle\frac{}{1_{\underline{a}}^{\sigma}\leadsto 1_{\underline{a^{\sigma}}}}$}\\[0.55cm]

\small{$\displaystyle\frac{W_i\in {\tt NF}({\cal W}^{\sigma_i}_i)}{(\beta^{x,\underline{x};y,\underline{y}}_{{\cal W}_1,{\cal W}_2,{\cal W}_3})^{\sigma}\leadsto  \beta^{x',\underline{x}';y',\underline{y}'}_{W_1,{W_2},{W_3}}}$\quad\enspace $\displaystyle\frac{W_i\in {\tt NF}({\cal W}^{\sigma_i}_i)}{(\beta^{x,\underline{x};y,\underline{y}\enspace ^{_{-1}}}_{{\cal W}_1,{\cal W}_2,{\cal W}_3})^{\sigma}\leadsto  \beta^{x',\underline{x}';y',\underline{y}'\enspace ^{_{-1}}}_{W_1,{W_2},{W_3}}}$}\\[0.65cm]

\small{$\displaystyle\frac{W_i\in {\tt NF}({\cal W}^{\sigma_i}_i)\enspace\enspace i\in\{1,2\}}{ (\gamma^{x,y}_{{\cal W}_1,{\cal W}_2})^{\sigma}\leadsto \gamma^{x',y'}_{W_1,{W_2}}}$}\\[0.55cm]
{\small $\displaystyle\frac{}{({\varepsilon_1}_{\underline{a}}^{\sigma})^{\kappa}\leadsto {1}_{\underline{a^{\sigma\circ\kappa}}}}$\quad\quad $\displaystyle\frac{}{({\varepsilon_1}_{\underline{a}}^{\sigma\,^{_{-1}}})^{\kappa}\leadsto {1}_{\underline{a^{\sigma\circ\kappa}}}}$}\quad\quad {\small $\displaystyle\frac{W\in {\tt NF}({\cal W}^{\kappa})}{({\varepsilon_2}_{\cal W})^{\kappa}\leadsto 1_{W}}$\quad\quad $\displaystyle\frac{W\in {\tt NF}({\cal W}^{\kappa})}{({\varepsilon_2}^{_{-1}}_{\cal W})^{\kappa}\leadsto 1_{W}}$} \\[0.55cm]
{\small $\displaystyle\frac{W\in {\tt NF}({\cal W}^{\sigma\circ\tau\circ\kappa})}{({\varepsilon_3}_{\cal W}^{\sigma,\tau})^{\kappa}\leadsto 1_{W}}$ \quad\quad $\displaystyle\frac{W\in {\tt NF}({\cal W}^{\sigma\circ\tau\circ\kappa})}{({\varepsilon_3}_{\cal W}^{\sigma,\tau\,^{_{-1}}})^{\kappa}\leadsto 1_{W}}$}\\[0.55cm] 
{\small $\displaystyle\frac{W\in {\tt NF}(({\cal W}_1 {_x\bo_y }{\cal W}_2)^{\sigma\circ\kappa})}{ ({\varepsilon_4}^{x,y;x',y'}_{{\cal W}_1,{\cal W}_2;\sigma})^{\kappa}\leadsto 1_W}$\quad\quad $\displaystyle\frac{W_1\in {\tt NF}({\cal W}_1 ^{\sigma_1\circ\kappa_1})\quad W_2\in {\tt NF}({\cal W}_2 ^{\sigma_2\circ\kappa_2})}{ ({\varepsilon_4}^{x,y;x',y'\,^{_{-1}}}_{{\cal W}_1,{\cal W}_2;\sigma})^{\kappa}\leadsto 1_W}$}\\[0.55cm]

{\small $\displaystyle\frac{\Phi\leadsto \varphi}{\Phi^{{\it id}_X}\leadsto \varphi}$ \quad\quad $\displaystyle\frac{\Phi^{\sigma\circ\tau}\leadsto \varphi}{(\Phi^{\sigma})^{\tau}\leadsto \varphi}$\quad\quad $\displaystyle\frac{\Phi_1^{\sigma_1}\leadsto \varphi_1\quad \Phi_2^{\sigma_2}\leadsto \varphi_2}{(\Phi_1\,{_x\bo_y}\,\Phi_2)^{\sigma}\leadsto \varphi_1 \,{_{x'}\bo_{y'}}\, \varphi_2}$\quad\quad $\displaystyle\frac{\Phi_1^{\sigma}\leadsto \varphi_1\quad \Phi_2^{\sigma}\leadsto \varphi_2}{(\Phi_2\circ\Phi_1)^{\sigma}\leadsto \varphi_2\circ\varphi_1}$}  
\end{tabular} }
\end{center}
We make   first observations about this rewriting algorithm.
\begin{rem} \label{why-auxiliary-type}
Notice that, if $\Phi:{\cal U}\rightarrow {\cal V}$ and if $\Phi\leadsto \varphi$, then $\vdash\varphi:U\rightarrow V$, for some $U\in {\tt NF}({\cal U})$ and $V\in {\tt NF}({\cal V})$.
Also, in the rule defining $(\Phi_2\circ\Phi_1)^{\sigma}\leadsto \varphi_2\circ\varphi_1$, the arrow  term $\varphi_2\circ\varphi_1$ is   not  well-typed  in  $\underline{{\tt Free}}_{\underline{\EuScript C}}$ in general (thus motivating the design of the intermediate system {\uwave{\tt Free}}$_{{\underline{\EuScript C}}}$).  
\end{rem}
\noindent As it was the case for the algorithm on object terms, this formal system is  terminating. 
Therefore, the algorithm gives us, for each arrow term $\Phi:{\cal U}\rightarrow {\cal V}$, the set ${\tt NF}(\Phi)$ of normal forms of $\Phi$, which are arrow terms of  {\uwave{\tt Free}}$_{{\underline{\EuScript C}}}$.  Here is the most important property of these normal forms.
\begin{lem}\label{kh}
For arbitrary arrow term $\Phi$ of ${\tt{Free}_{\underline{\EuScript C}}}(X)$ and     $\varphi\in {\tt NF}(\Phi)$, we have $[[\Phi]]_X=[\varphi]_X$.
\end{lem}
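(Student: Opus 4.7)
The plan is to proceed by structural induction on the arrow term $\Phi$, walking through the rules of the rewriting algorithm $\leadsto$ one at a time. The central observation is that the semantics sends every $\varepsilon_i$ (and its inverse) to an identity morphism, so the algorithm, which systematically replaces each $\varepsilon_i$-occurrence by a suitably typed $1_W$, cannot alter the interpretation. What remains to verify is that, after rewriting, the $\beta$ and $\gamma$ constructors carry indices that name the \emph{same} morphism of ${\EuScript C}(X)$ as before. Throughout, the key auxiliary ingredients are Lemma \ref{eq_int_obj} (to identify $[[{\cal W}]]$ with $[W]$ for any $W\in{\tt NF}({\cal W})$), the axiom \hyperlink{(EQ)}{\texttt{(EQ)}}, the coherence clauses \hyperlink{bs}{{\tt ($\beta\sigma$)}}, \hyperlink{gs}{{\tt ($\gamma\sigma$)}}, \hyperlink{(EQ-mor)}{{\tt (EQ-mor)}}, and the various functoriality equations of Remark \ref{functoriality}.

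For the base-case generators $1_{\cal W}$, $\beta^{\pm 1}$, $\gamma$, the rewriting merely replaces each indexing object term ${\cal W}_i$ by a normal form $W_i$; then Lemma \ref{eq_int_obj} gives $[[{\cal W}_i]]=[W_i]$, and the two interpretations agree on the nose by the respective definitions. For each of the $\varepsilon_i$'s and their inverses, the algorithm produces $1_W$ for some $W$ in the normal-form set of the appropriate object term; since $[[\varepsilon_i^{\pm 1}]]={\it id}$ by definition of $[[-]]$, and since Lemma \ref{eq_int_obj} identifies the underlying object with $[W]$, both sides are the identity on $[W]$.

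The inductive clauses for sequential composition $\Phi_2\circ\Phi_1$ and parallel composition $\Phi_1\,{_x\bo_y}\,\Phi_2$ are routine: bifunctoriality of $_x\circ_y$ and the induction hypotheses close them. The genuinely interesting block is the family of rules for $\Phi^{\sigma}$, handled by a secondary case analysis on the shape of $\Phi$. When $\Phi$ is a $\beta$ (resp.\ $\gamma$) constructor, the coherence clause \hyperlink{bs}{{\tt ($\beta\sigma$)}} (resp.\ \hyperlink{gs}{{\tt ($\gamma\sigma$)}}) rewrites $([[\beta]])^{\sigma}$ as a $\beta$-instance whose arguments are $[[{\cal W}_i]]^{\sigma_i}$, and these equal $[W_i]$ by Lemma \ref{eq_int_obj} applied to ${\cal W}_i^{\sigma_i}$. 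When $\Phi$ is an $\varepsilon_i$, one uses $[[\varepsilon_i]]={\it id}$ together with $1_f^{\sigma}=1_{f^{\sigma}}$ from Remark \ref{functoriality}(5). For $(\Phi_1\,{_x\bo_y}\,\Phi_2)^{\sigma}$, the clause \hyperlink{(EQ-mor)}{{\tt (EQ-mor)}} is precisely what permits $(-)^{\sigma}$ to commute past $\,{_x\bo_y}\,$ at the semantic level, so the induction hypothesis applies. For $(\Phi_2\circ\Phi_1)^{\sigma}$ and $(\Phi^{\sigma})^{\tau}$, the functoriality equations $(\varphi\circ\psi)^{\sigma}=\varphi^{\sigma}\circ\psi^{\sigma}$ and $(\varphi^{\sigma})^{\tau}=\varphi^{\sigma\circ\tau}$ of Remark \ref{functoriality}(4)--(5) conclude the argument.

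The main obstacle is the bookkeeping in the $\Phi^{\sigma}$ cases: the normal forms introduced by the object-term algorithm carry fresh variables $x',y',\dots$ chosen non-deterministically, and one must check that, for every admissible choice, the splitting of $\sigma$ into $\sigma_1,\sigma_2$ demanded by \hyperlink{bs}{{\tt ($\beta\sigma$)}} and \hyperlink{gs}{{\tt ($\gamma\sigma$)}} matches the splitting implicit in the algorithm's premises. Lemmas \ref{confluence} and \ref{int} dispose of the renaming freedom up to $\equiv$, but one has to be careful to align the restrictions $\sigma_i=\sigma|^{\cdots}$ required by the coherence conditions with the indexings selected in the premises $W_i\in{\tt NF}({\cal W}_i^{\sigma_i})$. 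Once this correspondence is set up explicitly, each case amounts to a short, unilluminating calculation.
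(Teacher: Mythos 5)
Your proof is correct and follows exactly the route the paper takes: the paper's own proof of Lemma \ref{kh} is simply ``by induction on the structure of $\Phi$ and Lemma \ref{eq_int_obj}'', and your case analysis (identities for the $\varepsilon_i$'s, Lemma \ref{eq_int_obj} for the re-indexed $\beta$ and $\gamma$ generators, and \hyperlink{bs}{{\tt ($\beta\sigma$)}}, \hyperlink{gs}{{\tt ($\gamma\sigma$)}}, \hyperlink{(EQ-mor)}{{\tt (EQ-mor)}} together with Remark \ref{functoriality} for the $\Phi^{\sigma}$ clauses) is precisely the content that induction unpacks to. You have merely made explicit the bookkeeping the paper leaves implicit.
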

\begin{proof}
By induction on the structure of $\Phi$ and Lemma \ref{eq_int_obj}.
\end{proof}
\subsubsection{The first reduction}
  Suppose that,  for all object terms ${\cal W}$ of ${{\tt Free}}_{\underline{\EuScript C}}$, a normal form ${\tt red_1}({\cal W})\in {\tt NF}({\cal W})$ in {\uwave{\tt Free}}$_{{\underline{\EuScript C}}}$ has been fixed, and that, {independently of that choice}, for all arrow terms $\Phi$ of ${{\tt Free}}_{\underline{\EuScript C}}$ a normal form ${\tt red_1}({\Phi})\in  {\tt NF}({\Phi})$ in {\uwave{\tt Free}}$_{{\underline{\EuScript C}}}$ has been fixed. \\[0.1cm]
\indent We define the { first reduction function} $\tt{Red_1}:{\tt Free}_{\underline{\EuScript C}}\rightarrow \underline{\tt Free}_{\underline{\EuScript C}}$ by
$${\tt{Red_1}}({\cal W})={\tt red_1}({\cal W})  \enspace\enspace\mbox{ and }\enspace \enspace{\tt{Red_1}}(\Phi)={\tt red_1}(\Phi)^{{\tt red_1}({\cal U})},$$ 
where $\Phi:{\cal U}\rightarrow {\cal V}$. Observe that, in the definition of ${\tt Red_1}(\Phi)$, we used the construction of Lemma \ref{construction}, which indeed turns ${\tt red_1}(\Phi)$ (which is an arrow term of {\uwave{\tt Free}}$_{{\underline{\EuScript C}}}$) into an arrow term of $\underline{\tt Free}_{\underline{\EuScript C}}$. Also, for an arrow term $\Phi:{\cal U}\rightarrow {\cal V}$ of ${\tt Free}_{\underline{\EuScript C}}$, we have that ${\tt Red}_1(\Phi)$ is the arrow term with source ${\tt Red}_1({\cal U})$ and target $V$, where, in general, $V\neq {\tt Red_1}({\cal V})$. However, the following important property holds.
\begin{lem}\label{vazno}
For any two arrow terms $\Phi,\Psi: {\cal U}\rightarrow {\cal V}$  of the same type in ${{\tt Free}}_{\underline{\EuScript C}}$, ${\tt Red_1}(\Phi)$ and ${\tt Red_1}(\Psi)$ are arrow terms  of the same type  in  $\underline{\tt Free}_{\underline{\EuScript C}}$.
\end{lem}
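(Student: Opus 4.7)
The source equality is immediate from the definition of ${\tt Red_1}$: both ${\tt Red_1}(\Phi)$ and ${\tt Red_1}(\Psi)$ have source ${\tt red_1}(\mathcal{U})$, so it remains to show that their targets coincide.

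The main step will be to prove the following equivariance property of the construction of Lemma \ref{construction}: for an arrow term $\varphi:U\rightarrow V$ of \,{\uwave{\tt Free}}$_{{\underline{\EuScript C}}}$ and any $U'\equiv U$, the target $W_t(\varphi^{U'})$ is the object term obtained from $V$ by the same chain of parameter substitutions $\underline{a^{\tau}}/\underline{a}$ (together with the induced variable renamings) that witnesses $U\equiv U'$. In particular, $W_t(\varphi^{U'})$ will depend only on $V$ and on the witnessing substitution, not on the internal structure of $\varphi$. I would prove this by induction on $\varphi$, following the case analysis of Lemma \ref{construction}: for $1_U$, $\beta^{\pm 1}$ and $\gamma$, the constructed arrow term has by design the substituted codomain; the composition case $\varphi=\varphi_2\circ\varphi_1$ propagates the renaming through the intermediate target via the induction hypothesis; and the two sub-cases of the bifunctor case $\varphi=\varphi_1\,{_x\bo_y}\,\varphi_2$ each reduce to the induction hypothesis after tracking the local substitutions $\underline{a_i^{\tau_i}}/\underline{a_i}$.

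Applying this property with $U':={\tt red_1}(\mathcal{U})$ to $\varphi:={\tt red_1}(\Phi):U_\Phi\rightarrow V_\Phi$ and to $\psi:={\tt red_1}(\Psi):U_\Psi\rightarrow V_\Psi$, we see that the two targets $V'_\Phi$ and $V'_\Psi$ are both pure object terms of $\underline{\tt Free}_{\underline{\EuScript C}}$ whose parameters and variables coincide, as multisets, with those of ${\tt red_1}(\mathcal{U})$. The rewriting algorithm $\leadsto$ for arrow terms systematically links the fresh-name choices made in the source and in the target of each atomic reduction (for instance, in the rule for $(\beta^{x,\underline{x};y,\underline{y}}_{{\cal W}_1,{\cal W}_2,{\cal W}_3})^{\sigma}$, the decomposition $\sigma_i$ of $\sigma$ and the chosen fresh names are shared between the normal-form source and the target of that atom), so the $\equiv$-witness between $U_\Phi$ and ${\tt red_1}(\mathcal{U})$ canonically extends to a $\equiv$-witness between $V_\Phi$ and a uniquely determined object term $V^*$. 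The same $V^*$ arises from the analogous analysis for $\Psi$, whence $V'_\Phi=V^*=V'_\Psi$.

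The hardest part will be verifying that the equivariance property pins down the target on the nose, and not merely up to $\equiv$. The delicate point is the bifunctor case of Lemma \ref{construction}, which distinguishes whether $U'$ inherits the outermost $\bo$-variables from $U$ or introduces fresh ones; in the latter sub-case one must carefully track the substitution $\underline{a^{\tau_i}_i}/\underline{a_i}$ inside $\varphi_i^{U'_i[\underline{a_i}/\underline{a^{\tau_i}_i}]}[\underline{a^{\tau_i}_i}/\underline{a_i}]$ in order to confirm that the produced target coincides, strictly, with the substitution-renamed codomain.
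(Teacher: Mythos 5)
Your proposal gets the source half right and correctly isolates the fact that the two targets carry exactly the parameters and variables of ${\tt red_1}({\cal U})$, but the engine you propose for the target half --- the ``equivariance property'' of the construction of Lemma \ref{construction} --- is false as stated, and the place where it fails is exactly where the real work lies. The claim that $W_t(\varphi^{U'})$ is obtained from $V$ by the substitution witnessing $U\equiv U'$, and hence ``depends only on $V$ and on the witnessing substitution, not on the internal structure of $\varphi$'', breaks down for composites: in {\uwave{\tt Free}}$_{{\underline{\EuScript C}}}$ a composite $\varphi_2\circ\varphi_1$ is typed along object terms that are only $\alpha$-equivalent (Remark \ref{why-auxiliary-type}), so even for $U'=U$ the construction re-sources $\varphi_2$ along $W_t(\varphi_1^{U})$ and thereby applies to $V$ the (possibly non-trivial) witness of the composition-point equivalence. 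The target of $\varphi^{U'}$ therefore depends on all the internal $\equiv$'s of $\varphi$, and since ${\tt red_1}(\Phi)$ and ${\tt red_1}(\Psi)$ are normalised independently, with independent fresh-name choices at their (generally different) composition points, the assertion that ``the same $V^*$ arises from the analogous analysis for $\Psi$'' is precisely the statement to be proved, not a consequence of your property.

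The paper closes this differently, and you already have the needed ingredients: by Lemma \ref{construction} and Lemma \ref{confluence} the two targets are $\equiv$; by Remark \ref{indices}, applied to the $\underline{\tt Free}_{\underline{\EuScript C}}$-arrow terms ${\tt Red_1}(\Phi)$ and ${\tt Red_1}(\Psi)$, each target has exactly the parameters and variables of the common source ${\tt red_1}({\cal U})$; and an induction on the derivation of the $\equiv$ then shows that every generating step must be the identity renaming, since a non-trivial step replaces a variable or a parameter by a fresh one, which by the same remark would force the two sources to differ. I recommend replacing the explicit substitution-tracking by this argument; if you insist on the computational route, you must strengthen the equivariance property so that it records the composite of the witnessing substitution with all composition-point witnesses, and you will still need Remark \ref{indices} to see that this composite acts trivially --- at which point you have reproduced the paper's proof.
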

\begin{proof}
That  ${\tt Red_1}(\Phi)$ and ${\tt Red_1}(\Psi)$  have the same source is clear by the definition. We prove the equality   $W_t({\tt Red_1}(\Phi))=W_t({\tt Red_1}(\Psi))$  by induction on the proof of   $W_t({\tt Red_1}(\Phi))\equiv W_t({\tt Red_1}(\Psi))$.  Suppose that $W_t({\tt Red_1}(\Phi))=W_1 \,{_x\bo_y}\, W_2$ and  $W_t({\tt Red_1}(\Psi))=W_1[\underline{a^{\tau_1}}/\underline{a}]\,{_{x'}\bo_{y'}}\, W_2[\underline{b^{\tau_2}}/\underline{b}]$.  If, moreover,  at least one of $\tau_1$ and $\tau_2$ is  not the identitiy, i.e., if, say, $x'\neq x$, then, by Remark \ref{indices}, it cannot be the case that  ${\tt Red_1}(\Phi)$ and ${\tt Red_1}(\Psi)$ have the same source. 
\end{proof}

The following proposition, which is essential for the proof of the coherence theorem,  is  simply a consequence of Lemma \ref{eq_int_obj} and Lemma \ref{kh}.
\begin{prop}\label{t1} For an arbitrary object term ${\cal W}$ and an arbitrary arrow term $\Phi$   of  ${{\tt Free}}_{\underline{\EuScript C}}$, the following equalities of interpretations hold:  
 $$[[{\cal W}]]_X=[{\tt{Red_1}}({\cal W})]_{X} \enspace \mbox{ and } \enspace [[\Phi]]_X=[{\tt{Red_1}}(\Phi)]_X .$$
\end{prop}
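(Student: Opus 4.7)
The plan is to unpack the definition of ${\tt Red}_1$ and assemble the equalities from Lemmas \ref{eq_int_obj}, \ref{kh}, and \ref{construction}. As the paper's own phrasing suggests, the proposition is essentially the juxtaposition of these preparatory results, so the proof should be short and mechanical.

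For the first equality, I would note that, by definition, ${\tt Red}_1({\cal W}) = {\tt red}_1({\cal W})$ and ${\tt red}_1({\cal W}) \in {\tt NF}({\cal W})$. Applying Lemma \ref{eq_int_obj} to this particular normal form immediately gives
$$[[{\cal W}]]_X \;=\; [{\tt red}_1({\cal W})]_X \;=\; [{\tt Red}_1({\cal W})]_X.$$

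For the second equality, let $\Phi : {\cal U} \rightarrow {\cal V}$ and recall that by definition ${\tt Red}_1(\Phi) = {\tt red}_1(\Phi)^{{\tt red}_1({\cal U})}$, where the superscript denotes the arrow term of $\underline{\tt Free}_{\underline{\EuScript C}}$ produced by the construction of Lemma \ref{construction}. First, Lemma \ref{kh} applied to ${\tt red}_1(\Phi) \in {\tt NF}(\Phi)$ yields $[[\Phi]]_X = [{\tt red}_1(\Phi)]_X$. Next, by Remark \ref{why-auxiliary-type} the source of ${\tt red}_1(\Phi)$ is itself some element of ${\tt NF}({\cal U})$, and by Lemma \ref{confluence} it is $\alpha$-equivalent to ${\tt red}_1({\cal U})$. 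This is precisely the hypothesis needed to apply Lemma \ref{construction}, which then gives $[{\tt red}_1(\Phi)]_X = [{\tt red}_1(\Phi)^{{\tt red}_1({\cal U})}]_X = [{\tt Red}_1(\Phi)]_X$. Chaining the two equalities completes the argument.

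I do not anticipate any serious obstacle: the rewriting algorithm on object and arrow terms, the intermediate typing system, and the construction of Lemma \ref{construction} have all been set up precisely so that this proposition emerges as a direct composition. The only verification worth singling out is that the hypothesis of Lemma \ref{construction} is met, and this reduces immediately to Lemma \ref{confluence} together with Remark \ref{why-auxiliary-type}. All the real work — in particular, the careful handling of equivariance through the axioms \hyperlink{(EQ)}{\texttt{(EQ)}}, \hyperlink{bs}{{\tt ($\beta\sigma$)}}, \hyperlink{gs}{{\tt ($\gamma\sigma$)}} and \hyperlink{(EQ-mor)}{{\tt (EQ-mor)}} — has already been carried out in the lemmas being invoked.
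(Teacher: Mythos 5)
Your proof is correct and follows the same route as the paper, which dispatches Proposition \ref{t1} as an immediate consequence of Lemma \ref{eq_int_obj} and Lemma \ref{kh}. You are in fact slightly more explicit than the paper in spelling out that the passage from ${\tt red}_1(\Phi)$ to ${\tt red}_1(\Phi)^{{\tt red}_1({\cal U})}$ preserves the interpretation via Lemma \ref{construction} (whose hypothesis is supplied by Remark \ref{why-auxiliary-type} and Lemma \ref{confluence}), a step the paper leaves implicit in the definition of ${\tt Red}_1$.
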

\subsection{The second reduction: getting rid of the cyclicity}
Our second goal is to cut down the problem of commutation of all $\beta \gamma$-diagrams of ${\EuScript C}(X)$ to the problem of  commutation of all $\beta \vartheta$-diagrams of ${\EuScript C}(X)$ (see Section \ref{pass}).    This reduction is based on a transition from unrooted to rooted trees, and is easier to ``visualize''  using a tree representation of our  syntax, which we introduce next.
\subsubsection{Unrooted trees }\label{section_trees}
We first recall  the formalism of unrooted trees, introduced in \cite[Section 1.2.1]{co} for the definition of the free cyclic operad   built over  $\underline{\EuScript C}$. We   omit the part of the syntax of unrooted trees which accounts for units. Also, as the purpose of the formalism is to provide a representation of the terms of $\underline{\tt{Free}}_{\underline{\EuScript C}}$, which do not involve symmetries,   the unrooted trees will {\em not} be quotiented by $\alpha$-equivalence, as it was the case in \cite[Section 1.2.1]{co}.\\[0.1cm]
\indent A {\em   corolla} is a term  $a(x,y,z,\dots) ,$  where $a\in\underline{\EuScript C}(X)$ and $X=\{x,y,z,\dots\}$. We call the elements of $X$ the {\em free variables} of $a(x,y,z,\dots)$, and we  write   ${\it FV}(a)=X$  to denote this set. \\
\indent  A {\em  graph} ${\cal G}$ is a non-empty, finite set  of corollas with mutually disjoint free variables, together with an involution $\sigma$ on the set $$V({\cal G})=\bigcup_{i=1}^{k}{\it FV}(a_i)$$  of all variables, or {\em half-edges}, occurring in ${\cal G}$. We write   $${\cal G}=\{a_1(x_1,\dots ,x_n), \dots, a_k(y_1,\dots y_m);\sigma\} . $$ 

\indent We shall denote  with ${\it Cor}({\cal G})$  the set of all corollas of ${\cal G}$, and we shall refer to a  corolla  by  its parameter. 
The set of edges ${\it Edge}({\cal G})$ of   ${\cal G}$ consists of unordered pairs $(x,y)$ of distinct half-edges such that $\sigma(x)=y$.  Next,  ${\it FV}({\cal G})$ will denote the set of free variables of ${\cal G}$, i.e., of the  fixpoints of $\sigma$.   Finally,  ${\it FCor}({\cal G})$ will denote the set  of corollas $f$ of ${\cal G}$ for which   ${\it FV}(f)\cap {\it FV}({\cal G})\neq \emptyset$. 
 
\indent  A  graph is an unrooted tree if it is connected and if it does not contain loops, multiple edges and cycles.  \\
\indent To give some intuition, here is an example.\begin{example}\label{ex2} The graph ${\cal G}=\{a(x_1,x_2,x_3,x_4,x_5),b(y_1,y_2,y_3,y_4);\tau\}$, where $\tau(x_1)=y_1$, $\tau(x_2)=y_2$ and $\tau$ is identity otherwise, is {\em not} an unrooted tree, since it has two edges between $a$ and $b$, which can be visualized as
\begin{center}
\begin{tikzpicture}
 \node (f) [circle,fill=none,draw=black,minimum size=4mm,inner sep=0mm]  at (-1,0) {$a$};
\node (g) [circle,fill=none,draw=black,minimum size=4mm,inner sep=0mm]  at (1,0) {$b$};
\node (a) [label={[xshift=-0.08cm, yshift=-0.29cm,]{\footnotesize $x_5$}},circle,fill=none,draw=none,minimum size=2mm,inner sep=0mm]  at (-1.8,0.7) {};
\node (b) [label={[xshift=-0.08cm,yshift=-0.37cm,]{\footnotesize $x_4$}},circle,fill=none,draw=none,minimum size=2mm,inner sep=0mm]  at (-2.2,0) {};
\node (c) [label={[xshift=-0.08cm, yshift=-0.37cm,]{\footnotesize $x_3$}},circle,fill=none,draw=none,minimum size=2mm,inner sep=0mm]  at (-1.8,-0.7) {};
\node (d) [label={[xshift=0.08cm, yshift=-0.33cm,]{\footnotesize $y_3$}},circle,fill=none,draw=none,minimum size=2mm,inner sep=0mm]  at (1.9,0.6) {};
\node (e) [label={[xshift=0.08cm, yshift=-0.33cm,]{\footnotesize $y_4$}},circle,fill=none,draw=none,minimum size=2mm,inner sep=0mm]  at (1.9,-0.6) {};
\node (i) [label={[xshift=-0.2cm, yshift=-0.17cm,]{\footnotesize $x_1$}},label={[xshift=0.2cm, yshift=-0.17cm,]{\footnotesize $y_1$}},circle,fill=none,draw=none,minimum size=0mm,inner sep=0mm]  at (0,0.6) {};
\node (j) [label={[xshift=-0.2cm, yshift=-0.4cm,]{\footnotesize $x_2$}},label={[xshift=0.2cm, yshift=-0.45
cm,]{\footnotesize $y_2$}},circle,fill=none,draw=none,minimum size=2mm,inner sep=0mm]  at (0,-0.6) {};
\path[out=-50,in=230] (f) edge (g);
\path[out=50,in=130] (f) edge (g);
\draw (f)--(a);
\draw (f)--(b);
\draw (f)--(c);
\draw (g)--(d);
\draw (g)--(e);
\draw (0,0.6)--(0,0.4);
\draw (0,-0.6)--(0,-0.4);
\end{tikzpicture}
\end{center}

The graph  ${\cal T}=\{a(x_1,x_2,x_3,x_4,x_5),b(y_1,y_2,y_3,y_4),c(z_1,z_2,z_3);\sigma\},$ where  $\sigma(x_5)=y_{2}$, $\sigma(y_{3})=z_1$ and $\sigma$ is identity otherwise, is an unrooted tree. It can be visualized as 
\begin{center}
\begin{tikzpicture}
 \node (f) [circle,fill=none,draw=black,minimum size=4mm,inner sep=0.1mm]  at (-1.15,0) {$a$};
\node (g) [circle,fill=none,draw=black,minimum size=4mm,inner sep=0.1mm]  at (0,1.7) {$b$};
\node (h) [circle,fill=none,draw=black,minimum size=4mm,inner sep=0.1mm]  at (1.15,0) {$c$};
\node (a) [label={[xshift=0cm, yshift=-0.27cm]{\footnotesize $x_1$}},circle,fill=none,draw=none,minimum size=2mm,inner sep=0mm]  at (-1.8,0.9) {};
\node (b) [label={[xshift=-0.05cm, yshift=-0.37cm,]{\footnotesize $x_2$}},circle,fill=none,draw=none,minimum size=2mm,inner sep=0mm]  at (-2.2,-0.3) {};
\node (c) [label={[xshift=-0.015cm, yshift=-0.37cm,]{\footnotesize $x_3$}},circle,fill=none,draw=none,minimum size=2mm,inner sep=0mm]  at (-1.1,-1.05) {};
\node (d) [label={[xshift=-0.1cm, yshift=-0.35cm,]{\footnotesize $y_1$}},circle,fill=none,draw=none,minimum size=2mm,inner sep=0mm]  at (-0.8,2.4) {};
\node (e) [label={[xshift=0.15cm, yshift=-0.35cm,]{\footnotesize $y_4$}},circle,fill=none,draw=none,minimum size=2mm,inner sep=0mm]  at (0.8,2.4) {};
\node (i) [label={[xshift=0.17cm, yshift=-0.27cm,]{\footnotesize $x_4$}},circle,fill=none,draw=none,minimum size=0mm,inner sep=0mm]  at (-0.15,-0.3) {};
\node (i1) [label={[xshift=-0.26cm, yshift=-0.125cm,]{\footnotesize $x_5$}},label={[xshift=0.0cm, yshift=0.25cm,]{\footnotesize $y_2$}},circle,fill=none,draw=none,minimum size=0mm,inner sep=0mm]  at (-0.625,0.65) {};
\node (i2) [label={[xshift=0.025cm, yshift=0.25cm,]{\footnotesize $y_3$}},label={[xshift=0.265cm, yshift=-0.12cm,]{\footnotesize $z_1$}},circle,fill=none,draw=none,minimum size=0mm,inner sep=0mm]  at (0.625,0.65) {};
\node (k) [label={[xshift=0.05cm, yshift=-0.4cm,]{\footnotesize $z_2$}},circle,fill=none,draw=none,minimum size=2mm,inner sep=0mm]  at (2.14,-0.4) {};
\node (l) [label={[xshift=-0.03cm, yshift=-0.38cm,]{\footnotesize $z_3$}},circle,fill=none,draw=none,minimum size=2mm,inner sep=0mm]  at (0.5,-0.87) {};
\draw (h)--(k);
\draw (h)--(l);
\draw (f)--(g);
\draw (f)--(i);
\draw (g)--(h);
\draw (f)--(a);
\draw (f)--(b);
\draw (f)--(c);
\draw (g)--(d);
\draw (g)--(e);
\draw (0.65,0.95)--(0.47,0.83);
\draw (-0.65,0.95)--(-0.47,0.83);
\end{tikzpicture} \\[-0.4cm] \hfill $\square$
\end{center}
 \end{example}
\indent Let ${\cal T}$, ${\cal T}_1$ and ${\cal T}_2$  be   unrooted trees with involutions $\sigma$, $\sigma_1$ and $\sigma_2$, respectively. We say that ${\cal T}_1$ and ${\cal T}_2$ make a decomposition of ${\cal T}$ if ${\it Cor}({\cal T}_1)\cap {\it Cor}({\cal T}_2)=\emptyset$, ${\it Cor}({\cal T}_1)\cup {\it Cor}({\cal T}_2)={\it Cor}({\cal T})$, and there exist $x\in {\it FV}({\cal T}_1)$ and $y\in {\it FV}({\cal T}_2)$ such that   
\[
    \sigma(z) = \left\{\begin{array}{ll}
        \sigma_1(z), & \text{if } z\in V({\cal T}_1)\backslash\{x\}\\
        \sigma_2(z), & \text{if } z\in V({\cal T}_2)\backslash\{y\}\\
        y, & \text{if } z=x \\
x, & \text{if } z=y.
        \end{array}  \right.
  \]
We write ${\cal T}=\{{\cal T}_1\,(xy)\,{\cal T}_2\}$ (or, equivalently, ${\cal T}=\{{\cal T}_2\,(yx)\,{\cal T}_1\}$).  \\
\indent The set of  {\em  subtrees}  of a tree ${\cal T}$ {with involution} ${\sigma}$ is obtained by the following recursive definiton:
\begin{itemize}
\item[$\diamond$] if $a(x_1,\dots,x_n)\in {\it Cor}({\cal T})$, then $\{a(x_1,\dots,x_n);{\it id}_{X}\}$, where $X=\{x_1,\dots,x_n\}$, is a subtree of ${\cal T}$, to which we refer simply as $a$,
\item[$\diamond$] if trees ${\cal T}_1$ and ${\cal T}_2$, with involutions $\sigma_1$ and $\sigma_2$, respectively, are subtrees of ${\cal T}$,
and if $x\in FV({\cal T}_1)$ and $y\in FV({\cal T}_2)$ are such that $\sigma(x)=y$, then $\{{\cal T}_2\,(yx)\,{\cal T}_1\}$ 
is a subtree of ${\cal T}$.
\end{itemize}

\indent We shall denote with   $\underline{\tt{T}}_{\underline{\EuScript C}}$ (resp. $\underline{\tt{T}}_{\underline{\EuScript C}}(X)$) the collection of  unrooted trees  whose corollas belong to $P_{\underline{\EuScript C}}$ (resp. whose corollas belong to $P_{\underline{\EuScript C}}$ and whose set of free variables is $X$). 
 \subsubsection{A tree-wise representation of the  terms of $\underline{\tt Free}_{\EuScript C}$ }\label{trrrre}

\indent We   introduce the syntax of parenthesized words generated by $P_{\underline{\EuScript C}}$, as 
\begin{center}
\mybox{
$w::=\underline{a}\,|\, (ww)$ }\end{center} where $a\in P_{\underline{\EuScript C}}$. We shall denote the collection of all terms obtained in this way by ${\tt PWords}_{\underline{\EuScript C}}$. \\[0.1cm]
\indent   For an unrooted tree ${\cal T}$, we next introduce the ${\cal T}$-{\em admissibility} predicate on ${\tt PWords}_{\underline{\EuScript C}}$. Intuitively, $w$  is ${\cal T}$-admissible if it represents a gradual composition of the corollas of  ${\cal T}$. Formally:
 
 \begin{itemize}
\item[$\diamond$] $\underline{a}$ is  ${\cal T}$-admissible if ${\it Cor}({\cal T})=\{a\}$,  and
\item[$\diamond$]  if ${\cal T}=\{{\cal T}_1\,(xy)\,{\cal T}_2\}$, ${w}_1$ is ${\cal T}_1$-admissible and ${w}_2$ is  ${\cal T}_2$-admissible,  then
 $({w}_1{w}_2)$  is ${\cal T}$-admissible.
\end{itemize}
We shall denote the set of all ${\cal T}$-admissible terms of ${\tt PWords}_{\underline{\EuScript C}}$ with $A({\cal T})$.
\begin{rem} 
Notice that, if ${w}$ is ${\cal T}$-admissible, then, since all the corollas of  ${\cal T}$ are mutually distinct,  ${w}$  does not contain repetitions of letters from  $P_{\underline{\EuScript C}}$. \\
\indent A parenthesized word   can   be admissible with respect to more than one unrooted tree. In the second clause   above, $(w_1w_2)$ is admissible with respect to {\em any} tree  formed by ${\cal T}_1$ and ${\cal T}_2$.
\end{rem}

\indent We introduce the syntax of {\em unrooted trees with grafting data induced by $\underline{\EuScript C}$}, denoted by $\underline{\tt T}^+_{\underline{\EuScript C}}$, as follows. The collection of object terms of $\underline{\tt T}^+_{\underline{\EuScript C}}$   is obtained by  combining the formalism $\underline{\tt T}_{\underline{\EuScript C}}$ and the syntax ${\tt PWords}_{\underline{\EuScript C}}$, by means of the ${\cal T}$-admissibility predicate: we take for object terms of $\underline{\tt T}^+_{\underline{\EuScript C}}$  all the pairs $({\cal T},{w})$, typed as 
\begin{center}
\mybox{
$\displaystyle\frac{{\cal T}\in \underline{\tt T}_{\underline{\EuScript C}}(X)\quad w\in {\tt PWords}_{\underline{\EuScript C}} \quad w\in A({\cal T})}{({\cal T},{w}):X}$ }
\end{center}
The raw arrow terms of $\underline{\tt T}^+_{\underline{\EuScript C}}$ are the following: 
\begin{center}\mybox{
$\varphi::=1_{({\cal T},w)}\,\,|\,\,\beta^{x,\underline{x};y,\underline{y}}_{({\cal T}_1,w_1),({\cal T}_2,w_2),({\cal T}_3,w_3)}\,\,|\,\,  {\beta_{({\cal T}_1,w_1),({\cal T}_2,w_2),({\cal T}_3,w_3)}^{x,\underline{x};y,\underline{y}\enspace ^{-1}}}\,\,|\,\, \gamma^{x,y}_{({\cal T}_1,w_1),({\cal T}_2,w_2)}\,\,|\,\, \varphi\circ\varphi\,\,|\,\, \varphi\,{_x\bo_y}\,\varphi$ }
\end{center}

\begin{lem}\label{delta} The object terms  of $\underline{\tt T}^+_{\underline{\EuScript C}}$ are in one-to-one type-preserving correspondence with the object  terms of $\underline{\tt{Free}}_{\underline{\EuScript C}}$.  
The raw arrow terms  of $\underline{\tt T}^+_{\underline{\EuScript C}}$ are in one-to-one correspondence with the raw arrow terms of $\underline{\tt{Free}}_{\underline{\EuScript C}}$.  
\end{lem}
\begin{proof} Straightforward (see Remark \ref{oj}).
\end{proof}

Let $\Delta:{\underline{\tt T}}^+_{\underline{\EuScript C}}\rightarrow \underline{\tt{Free}}_{\underline{\EuScript C}}$ denote the correspondence  from  Lemma \ref{delta}. The type of a raw arrow term $\varphi$ of $\underline{\tt T}^+_{\underline{\EuScript C}}$ is then determined as follows: if $\Delta(\varphi):{W}_1\rightarrow {W}_2$, then $\varphi:\Delta^{-1}({W}_1)\rightarrow \Delta^{-1}({W}_2)$.

\begin{rem}\label{oh}
Note that the source and the target object of each arrow term $\varphi$ of $\underline{\tt T}^+_{\underline{\EuScript C}}$ share the same underlying unrooted tree, i.e., that we always have $\varphi:({\cal T},w_1)\rightarrow({\cal T},w_2)$, for some ${\cal T},w_1,w_2$. For example, the type of the arrow term $\beta^{x,\underline{x};y,\underline{y}}_{({\cal T}_1,w_1),({\cal T}_2,w_2),({\cal T}_3,w_3)}$ is $$\beta^{x,\underline{x};y,\underline{y}}_{({\cal T}_1,w_1),({\cal T}_2,w_2),({\cal T}_3,w_3)}:({\cal T},(w_1w_2)w_3\,)\rightarrow ({\cal T},w_1(w_2w_3)),$$
where ${\cal T}=\{\{{\cal T}_1\,(x\underline x)\,{\cal T}_2\}\,(y\underline{y})\,{\cal T}_3\}$.
\end{rem}
 
We define the interpretation function $\lfloor-\rfloor_X:\underline{\tt T}^+_{\underline{\EuScript C}}(X)\rightarrow {\EuScript C}(X)$  to be the composition $[-]_{X}\circ\Delta_X$. The following lemma is an immediate consequence of the definition of $\lfloor-\rfloor_X$.
\begin{lem}\label{int_delta}
For   arbitrary object term $W$ and arrow term $\varphi$ of ${\underline{\tt{Free}}}_{\underline{\EuScript C}}(X)$, the  equalities   $$[W]_{X}=\lfloor \Delta_X^{-1}(W) \rfloor_X\enspace\mbox{ and }\enspace [\varphi]_{X}=\lfloor \Delta_X^{-1}(\varphi) \rfloor_X$$ hold in ${\EuScript C}(X)$.
\end{lem}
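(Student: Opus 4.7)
The proof is essentially a one-line unfolding of definitions, so my plan is simply to spell out why the equalities are immediate. By construction, the interpretation $\lfloor-\rfloor_X$ on $\underline{\tt T}^+_{\underline{\EuScript C}}(X)$ was defined precisely as the composite $[-]_X \circ \Delta_X$. Therefore, for any object term $W$ of $\underline{\tt{Free}}_{\underline{\EuScript C}}(X)$, one computes
\[
\lfloor \Delta_X^{-1}(W) \rfloor_X \;=\; [\,\Delta_X(\Delta_X^{-1}(W))\,]_X \;=\; [W]_X,
\]
where the last step uses that $\Delta_X$ is a bijection (Lemma \ref{delta}), so $\Delta_X \circ \Delta_X^{-1} = {\it id}_{\underline{\tt{Free}}_{\underline{\EuScript C}}(X)}$. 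The identical computation applies to an arrow term $\varphi$ of $\underline{\tt{Free}}_{\underline{\EuScript C}}(X)$, yielding $\lfloor \Delta_X^{-1}(\varphi) \rfloor_X = [\varphi]_X$.

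There is no real obstacle here: the lemma is designed to be an immediate sanity check which records, for later reference, that the tree-wise interpretation $\lfloor-\rfloor_X$ agrees with the syntactic interpretation $[-]_X$ once one transports back along the bijection $\Delta_X$. The actual content of the correspondence lies in Lemma \ref{delta}, which was already stated as ``straightforward'' because the object and arrow terms of $\underline{\tt T}^+_{\underline{\EuScript C}}(X)$ and of $\underline{\tt{Free}}_{\underline{\EuScript C}}(X)$ are built by essentially the same inductive clauses (this is the intuition already explained in Remark \ref{oj}, where each formal $\beta$-, $\gamma$- or composition term corresponds to a unique annotation of the unrooted tree encoding its source). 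Consequently, I would present the proof as a single paragraph invoking the definition of $\lfloor-\rfloor_X$ and the bijectivity of $\Delta_X$, without any induction or case analysis.
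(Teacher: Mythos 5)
Your proposal is correct and matches the paper exactly: the paper offers no separate proof, noting only that the lemma is ``an immediate consequence of the definition of $\lfloor-\rfloor_X$'', which is precisely your computation $\lfloor \Delta_X^{-1}(W)\rfloor_X=[\Delta_X(\Delta_X^{-1}(W))]_X=[W]_X$ using the bijectivity of $\Delta_X$ from Lemma~\ref{delta}. Nothing further is needed.
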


\subsubsection{``Rooting'' the syntax $\underline{\tt{T}}^+_{\underline{\EuScript C}}$}\label{rooting}
We now introduce the syntax of {\em rooted  trees with grafting data induced by $\underline{\EuScript C}$}, denoted by ${\tt r\underline{T}}^+_{\underline{\EuScript C}}$.\\[0.1cm]
\indent For a pair $({\cal T}, x)$, where ${\cal T}\in \underline{\tt{T}}_{\underline{\EuScript C}}(X)$ and $x\in X$,  we first introduce the $({\cal T}, x)$-{\em admissibility} predicate on ${\tt PWords}_{\underline{\EuScript C}}$:
\begin{itemize}
\item[$\diamond$] $\underline{a}$ is $({\cal T}, x)$-admissible if ${\it Cor}({\cal T})=\{a\}$, and
\item[$\diamond$] if  ${\cal T}=\{{\cal T}_1\,(zy)\,{\cal T}_2\}$,  $x\in {\it FV}({\cal T}_1)$ (resp. $x\in {\it FV}({\cal T}_2)$), $w_1$ is $({\cal T}_1,x)$-admissible (resp. $w_1$ is $({\cal T}_2,x)$-admissible) and $w_2$ is $({\cal T}_2,y)$-admissible (resp. $w_2$ is $({\cal T}_1,z)$-admissible), then $(w_1w_2)$  is $({\cal T},x)$-admissible. 
\end{itemize} 
We shall denote the set of all $({\cal T},x)$-admissible terms of ${\tt PWords}_{\underline{\EuScript C}}$ by $A({\cal T},x)$.\\[0.1cm] \indent Intuitively,  $w$ is $({\cal T}, x)$-admissible if it is ${\cal T}$-admissible and it is an   operadic word with respect to the  rooted tree determined by considering $x$ as the root of ${\cal T}$. As a matter of fact, $({\cal T},w)$ enjoys  the following normalization property, inherent to (formal terms which describe) {operadic} operations:  all $\beta^{-1}$-reduction sequences starting from   $({\cal T}, w)$ end with an object term $({\cal T},w')$, such that all  pairs of parentheses of $w'$ are   associated to the left. \\[0.1cm]
\indent The  object terms of ${\tt r\underline{T}}^+_{\underline{\EuScript C}}$ are  triplets $({\cal T},x,w)$, typed as  
\begin{center}
\mybox{
$\displaystyle\frac{{\cal T}\in {\underline{\tt T}}^+_{\underline{\EuScript C}}(X)\quad  \enspace x\in X\quad  \enspace w\in A({\cal T},x)}{({\cal T},x,w):X}$ }
\end{center}

\indent The class of arrow terms of ${\tt r\underline{T}}^+_{\underline{\EuScript C}}$ is obtained from raw terms  
 \begin{center}\mybox{
$\displaystyle \chi::=\left\{\begin{array}{l}
1_{({\cal T},x,w)}\,\,|\,\,\beta^{z;y}_{({\cal T}_1,x,w_1),({\cal T}_2,\underline{z},w_2),({\cal T}_3,{\underline{y}},w_3)}\,\,|\,\,  {\beta_{({\cal T}_1,x,w_1),({\cal T}_2,\underline{z},w_2),({\cal T}_3,{\underline{y}},w_3)}^{z;y\enspace ^{-1}}}   \\[0.4cm]  
 \theta^{z;y}_{({\cal T}_1,x,w_1),({\cal T}_2,\underline{z},w_2),({\cal T}_3,{\underline{y}},w_3)}\,\,|\,\, \chi\circ\chi\,\,|\,\, \chi\,{_x\bo_y}\,\chi\end{array} \right. $}\end{center}
by specifying typing rules such as:
\begin{center}
\mybox{
\begin{tabular}{c}

{\small $\displaystyle\frac{{\cal T}=\{\{{\cal T}_1\,(z\underline z)\,{\cal T}_2\}\,(y\underline{y})\,{\cal T}_3\}\quad y\in {\it FV}({\cal T}_1)\quad x\in X\cap {\it FV}({\cal T}_1)}{\theta^{z;y}_{({\cal T}_1,x,w_1),({\cal T}_2,\underline{z},w_2),({\cal T}_3,{\underline{y}},w_3)}:({\cal T},x,(w_1w_2)w_3\,)\rightarrow ({\cal T},x,(w_1w_3)w_2\,)}$}\\[0.7cm]

{\small $\displaystyle\frac{\chi_1:({\cal T}_1,x,w_1)\rightarrow ({\cal T}_1,x,{w'}_1)\enspace \chi_2:({\cal T}_2,y,w_2)\rightarrow ({\cal T}_2,y,{w'}_2)\enspace z\in{\it FV}({\cal T}_1) \enspace z\neq x}{\chi_1 \, {_z\bo_y}\, \chi_2: (\{{\cal T}_1\,(zy)\,{\cal T}_2\},x,(w_1w_2)) \rightarrow (\{{\cal T}_1\,(zy)\,{\cal T}_2\},x,({w'}_1{w'}_2))}$} 
\end{tabular}
}
\end{center}

\indent Notice that, for an object term  $({\cal T},x,w)$ of ${\tt r\underline{T}}^{+}_{\underline{\EuScript C}}(X)$, the choice of  $x\in X$   as the  root of   ${\cal T}$  determines the roots of all subtrees of ${\cal T}$, and, in particular, of all corollas of ${\cal T}$. In other words, this choice allows us to speak about the  inputs   and the output   of any subtree of ${\cal T}$.  Formally, for a subtree ${\cal S}$ of ${\cal T}$ and a variable $x\in {\it FV}({\cal T})$, we define 
\begin{itemize}
\item[$\diamond$]   ${\tt in}_{({\cal T},x)}({\cal T})={\it FV}({\cal T})\backslash\{x\}$ and ${\tt out}_{({\cal T},x)}({\cal T})=x$,
\item[$\diamond$] if  ${\cal S}\neq {\cal T}$, if $a\in {\it Cor}({\cal T})$ is such that $x\in {\it FV}(a)$,   if $c\in {\it Cor}({\cal S})$ is the corolla of ${\cal S}$ with the smallest distance from   $a$, and if $p$ is the sequence of half-edges from $c$ to $a$, then ${\tt in}_{({\cal T},x)}({\cal S})={\it FV}({\cal S})\backslash\{z\}$, where $z\in {\it FV}(c)\cap p$, and ${\tt out}_{({\cal T},x)}({\cal S})=z$.
\end{itemize}

\begin{example}\label{exj} For  the unrooted tree ${\cal T}$ from Example \ref{ex2}, the choice of, say,  $y_4\in X$,  turns ${\cal T}$ into a rooted tree, which can be visualized as 
\begin{center}
\begin{tikzpicture}
 \node (f) [circle,fill=none,draw=black,minimum size=4mm,inner sep=0.1mm]  at (-1.15,0) {$a$};
\node (g) [circle,fill=none,draw=black,minimum size=4mm,inner sep=0.1mm]  at (0,-1.5) {$b$};
\node (h) [circle,fill=none,draw=black,minimum size=4mm,inner sep=0.1mm]  at (1.15,0) {$c$};
\node (a) [label={[xshift=0cm, yshift=-0.3cm]{\footnotesize $x_1$}},rectangle,fill=none,draw=none,minimum size=2mm,inner sep=0mm]  at (-0.9,0.8) {};
\node (b) [label={[xshift=0cm, yshift=-0.3cm,]{\footnotesize $x_2$}},rectangle,fill=none,draw=none,minimum size=2mm,inner sep=0mm]  at (-1.4,0.8) {};
\node (c) [label={[xshift=0cm, yshift=-0.3cm,]{\footnotesize $x_3$}},rectangle,fill=none,draw=none,minimum size=2mm,inner sep=0mm]  at (-1.87,0.8) {};
\node (d) [label={[xshift=0cm, yshift=-0.3cm,]{\footnotesize $y_1$}},circle,fill=none,draw=none,minimum size=2mm,inner sep=0mm]  at (0,-0.5) {};
\node (e) [label={[xshift=0.25cm, yshift=-0.15cm,]{\footnotesize $y_4$}},circle,fill=none,draw=none,minimum size=2mm,inner sep=0mm]  at (0,-2.4) {};
\node (i) [label={[xshift=0.05cm, yshift=-0.11cm,]{\footnotesize $x_4$}},rectangle,fill=none,draw=none,minimum size=0mm,inner sep=0mm]  at (-0.45,0.7) {};
\node (i1) [label={[xshift=-0.225cm, yshift=-0.3cm,]{\footnotesize $x_5$}},label={[xshift=0.05cm, yshift=-0.6cm,]{\footnotesize $y_2$}},circle,fill=none,draw=none,minimum size=0mm,inner sep=0mm]  at (-0.625,-0.65) {};
\node (i2) [label={[xshift=0.025cm, yshift=-0.6cm,]{\footnotesize $y_3$}},label={[xshift=0.225cm, yshift=-0.3cm,]{\footnotesize $z_1$}},circle,fill=none,draw=none,minimum size=0mm,inner sep=0mm]  at (0.625,-0.65) {};
\node (k) [label={[xshift=0.0cm, yshift=-0.335cm,]{\footnotesize $z_2$}},rectangle,fill=none,draw=none,minimum size=2mm,inner sep=0mm]  at (1.85,0.8) {};
\node (l) [label={[xshift=0cm, yshift=-0.31cm,]{\footnotesize $z_3$}},rectangle,fill=none,draw=none,minimum size=2mm,inner sep=0mm]  at (0.5,0.8) {};
\draw (h)--(k);
\draw (h)--(l);
\draw (f)--(g);
\draw (f)--(i);
\draw (g)--(h);
\draw (f)--(a);
\draw (f)--(b);
\draw (f)--(c);
\draw (g)--(d);
\draw (g)--(e);
\draw (0.67,-0.8)--(0.49,-0.66);
\draw (-0.67,-0.8)--(-0.49,-0.66);
\end{tikzpicture}
\end{center}
We have  
$$\begin{array}{ll}
{\tt in}_{({\cal T},y_4)}(b)=\{y_1,y_2,y_3\} & {\tt out}_{({\cal T},y_4)}(b)=y_4\\
{\tt in}_{({\cal T},y_4)}(a)=\{x_1,x_2,x_3,x_4\} & {\tt out}_{({\cal T},y_4)}(a)= x_5\ \\ 
{\tt in}_{({\cal T},y_4)}(c)=\{z_2,z_3\} & {\tt out}_{({\cal T},y_4)}(c)=z_1\:.
\end{array}$$
 Observe that, among all paranthesized words admissible with respect to ${\cal T}$, only $(\underline{b}\underline{a})\underline{c}
$ and $(\underline{b}\underline{c})\underline{a}$ are operadic, relative to the choice of $y_4$ as the root of ${\cal T}$. \hfill$\square$
\end{example}
\subsubsection{The interpretation of  ${\tt r\underline{T}}^+_{\underline{\EuScript C}}$ in ${\EuScript C}$}\label{int3}
\indent  The interpretation function $$\lceil-\rceil_X:{\tt r\underline{T}}^+_{\underline{\EuScript C}}(X)\rightarrow {\EuScript C}(X)$$   
is defined as the composition $\lfloor-\rfloor_X\circ F_{\underline{\EuScript C}}(X)$, where  $F_{\underline{\EuScript C}}(X):{\tt r\underline{T}}^+_{\underline{\EuScript C}}(X)\rightarrow {\tt \underline{T}}^+_{\underline{\EuScript C}}(X)$ forgets the root of object terms of ${\tt r\underline{T}}^+_{\underline{\EuScript C}}(X)$ and sends the arrow term $\theta^{z;y}_{({\cal T}_1,x,w_1),({\cal T}_2,\underline{z},w_2),({\cal T}_3,{\underline{y}},w_3)}$ to the  composition $$\gamma^{\underline{x},x}_{({\cal T}_2,w_2),(\{{\cal T}_1 (y\underline{y}){\cal T}_3\},w_1w_3)}\circ\beta^{\underline{x},x;y,\underline{y}}_{({\cal T}_2,w_2),({\cal T}_1,w_1),({\cal T}_3,w_3)}\circ(\gamma^{x,\underline{x}}_{({\cal T}_1,w_1),({\cal T}_2,w_2)}\, {_y\bo_{\underline{y}}}\, 1_{({\cal T}_3,w_3)}),$$ while being defined in the canonical  way on the remaining arrow terms.

\begin{rem}   
Notice that  $\lceil\chi\rceil_X$ is an arrow in ${\EuScript C}(X)$ all of whose instances of the isomorphism $\gamma$  are  ``hidden" by using explicitly the abbreviation $\vartheta$. In other words, the semantics of arrow terms of ${\tt r\underline{T}}^+_{\underline{\EuScript C}}$ is what distinguishes $\beta\vartheta$-arrows of ${\EuScript C}(X)$.
\end{rem}
 
\subsubsection{The second reduction}
\indent We define the familly of {second reduction functions} $${\tt{Red_2}}(X,x):{\tt \underline{T}}^+_{\underline{\EuScript C}}(X)\rightarrow {\tt r\underline{T}}^+_{\underline{\EuScript C}}(X),$$ where $x\in X$, as follows. For the object terms of ${\tt \underline{T}}^+_{\underline{\EuScript C}}(X)$, we set
$${\tt{Red_2}}(X,x)(({\cal T},w))=({\cal T},x,w^{\cdot x}),$$ 
\noindent where  $w^{\cdot x}$ is the $({\cal T},x)$-admissible parenthesized word defined recursively as follows:
\begin{itemize}
\item[$\diamond$] if $w=\underline{a}$, then $w^{\cdot x}=\underline{a}$,
\item[$\diamond$] if ${\cal T}=\{{\cal T}_1\,(x_1x_2)\,{\cal T}_2\}$, $w=(w_1w_2)$, $w_i\in A({\cal T}_i)$ and $({\cal T}_i,w_i):X_i$ for $i=1,2$, then
\begin{itemize}
\item if $x\in X_1$, then $w^{\cdot x}=(w_1^{\cdot x}w_2^{\cdot x_2})$, 
\item  if $x\in X_2$, then  $w^{\cdot x}=(w_2^{\cdot x}w_1^{\cdot x_1})$.
\end{itemize}
\end{itemize}
\indent Moreover, the successive commutations which transform $w$ into the operadic word $w^{\cdot x}$  are   witnessed in ${\tt \underline{T}}^+_{\underline{\EuScript C}}$    by the arrow term
 $$\kappa_{({{\cal T}},w,x)}:({\cal  T},w)\rightarrow ({\cal T}, w^{\cdot x}),$$ 
 \noindent
 defined recursively as follows:
  \begin{itemize}
\item[$\diamond$] if $w=\underline{a}$, then  $\kappa_{({{\cal T}},w,x)}=1_{({\cal T},w)}$,
\item[$\diamond$] if ${\cal T}=\{{\cal T}_1(x_1x_2){\cal T}_2\}$, $w=(w_1w_2)$, $w_i\in A({\cal T}_i)$ and $({\cal T}_i,w_i):X_i$ for $i=1,2$,  
then
 \begin{itemize}
 \item if $x\in X_1$, then $\kappa_{({{\cal T}},w,x)}=\kappa_{({{\cal T}_1},w_1,x)}\,{_{x_1}\!\bo_{x_2}}\,\kappa_{({{\cal T}_2},w_2,x_2)}$,
 \item  if $x\in X_2$, then $\kappa_{({{\cal T}},w,x)}=(\kappa_{({{\cal T}_2},w_2,x)}\,{_{x_2}\!\bo_{x_1}}\,\kappa_{({{\cal  T}_1},w_1,x_1)})\circ \gamma^{x_1,x_2}_{({\cal T}_1,w_1),({\cal T}_2,w_2)}$.
 \end{itemize}
 \end{itemize} 
 \begin{rem}\label{notasection}
Note that the second reduction functions ${\tt{Red_2}}(X,x)$ are not sections of  the forgetful functions $F_{\underline{\EuScript C}}(X)$ introduced in Section \ref{int3}. 
\end{rem}

Before we   rigorously define the second reduction of arrow terms, we illustrate the idea behind it with a toy example. 
\begin{example}\label{ex3} Consider the object term $({\cal T}, (\underline{a}\,\underline{b})\underline{c}):X$, where  ${\cal T}$ is defined as in Example \ref{ex2}. The arrow term $$\beta^{x_i,y_{j_1};y_{j_2},z_l}_{({\cal T}_1,\underline{a}),({\cal T}_2,\underline{b}),({\cal T}_3,\underline{c})}:({\cal T}, (\underline{a}\,\underline{b})\underline{c})\rightarrow ({\cal T}, \underline{a} (\underline{b}\,\underline{c}))$$
where ${\cal T}_1$,  ${\cal T}_2$ and ${\cal T}_3$ are the subtrees of ${\cal T}$ determined by corollas $a$, $b$ and $c$, respectively, is well-typed and, by choosing $y_4\in X$ (as we did in Example \ref{exj}), we have  ${\tt Red_2}(X,y_{4})(({\cal T},(\underline{a}\,\underline{b})\underline{c}))=({\cal T},y_{4},(\underline{b}\,\underline{a})\underline{c})$  and ${\tt Red_2}(X,y_{4})(({\cal T},\underline{a}(\underline{b}\,\underline{c})))=({\cal T},y_{4},(\underline{b}\,\underline{c})\underline{a})$. For the two reductions of object terms, the arrow term $$\theta^{y_2;y_3}_{({\cal T}_2,y_4,\underline{b}),({\cal T}_1,x_5,\underline{a}),({\cal T}_3,z_1,\underline{c})}:({\cal T},y_{4},(\underline{b}\,\underline{a})\underline{c})\rightarrow ({\cal T},y_{4},(\underline{b}\,\underline{c})\underline{a})$$ is well-typed and it will be exactly the second reduction of $\beta^{x_i,y_{j_1};y_{j_2},z_l}_{({\cal T}_1,\underline{a}),({\cal T}_2,\underline{b}),({\cal T}_3,\underline{c})}$.\hfill $\square$
\end{example}
 
Formally,  for an arrow term $\varphi:({\cal T},u)\rightarrow ({\cal T},v)$ of ${\tt \underline{T}}^+_{\underline{\EuScript C}}(X)$,    $${\tt Red_2}(X,x)(\varphi):{\tt Red_2}(X,x)(({\cal T},u))\rightarrow {\tt Red_2}(X,x)(({\cal T},v))$$ is defined recursively as follows:
\begin{itemize}
\item[$\diamond$] ${\tt{Red_2}}(X,x)(1_{({\cal T},w)})=1_{{\tt Red}_2(X,x)(({\cal T},w))}$, 
\item[$\diamond$] if $\varphi=\beta^{z,\underline{z};y,\underline{y}}_{({\cal T}_1,w),({\cal T}_2,w_2),({\cal T}_3,w_3)}$, where $({\cal T}_1,w_1):X_i$, and 
\begin{itemize}
\item if $x\in X_1$, then ${\tt{Red_2}}(X,x)(\varphi)=\beta^{z;y}_{{\tt{Red_2}}(X_1,x)(({\cal T}_1,w_1)),{\tt{Red_2}}(X_2,\underline{z})(({\cal T}_2,w_2)),{\tt{Red_2}}(X_3,\underline{y})(({\cal T}_3,w_3))},$
\item if $x\in X_2$, then 
${\tt{Red_2}}(X,x)(\varphi)=\theta^{\underline{z};y}_{{\tt{Red_2}}(X_2,x)(({\cal T}_2,w_2)),{\tt{Red_2}}(X_1,z)(({\cal T}_1,w_1)),{\tt{Red_2}}(X_3,\underline{y})(({\cal T}_3,w_3))},$
\item if $x\in X_3$, then ${\tt{Red_2}}(X,x)(\varphi)=\beta^{\underline{y};\underline{z}\, ^{_{-1}}}_{{\tt{Red_2}}(X_3,x)(({\cal T}_3,w_3)),{\tt{Red_2}}(X_2,y)(({\cal T}_2,w_2)),{\tt{Red_2}}(X_1,z)(({\cal T}_1,w_1))},$
\end{itemize}
\item[$\diamond$] if $\varphi=\beta^{z,\underline{z};y,\underline{y}\,^{_{-1}}}_{({\cal T}_1,w),({\cal T}_2,w_2),({\cal T}_3,w_3)}$, where $({\cal T}_1,w_1):X_i$, and 
 \begin{itemize}
\item if $x\in X_1$, then ${\tt{Red_2}}(X,x)(\varphi)=\beta^{z;y\,^{_{-1}}}_{{\tt{Red_2}}(X_1,x)(({\cal T}_1,w_1)),{\tt{Red_2}}(X_2,\underline{z})(({\cal T}_2,w_2)),{\tt{Red_2}}(X_3,\underline{y})(({\cal T}_3,w_3))},$
\item if $x\in X_2$, then 
${\tt{Red_2}}(X,x)(\varphi)=\theta^{y;\underline{z}}_{{\tt{Red_2}}(X_2,x)(({\cal T}_2,w_2)),{\tt{Red_2}}(X_3,\underline{y})(({\cal T}_3,w_3)),{\tt{Red_2}}(X_1,z)(({\cal T}_1,w_1))},$
\item if $x\in X_3$, then ${\tt{Red_2}}(X,x)(\varphi)=\beta^{\underline{y};\underline{z}}_{{\tt{Red_2}}(X_3,x)(({\cal T}_3,w_3)),{\tt{Red_2}}(X_2,y)(({\cal T}_2,w_2)),{\tt{Red_2}}(X_1,z)(({\cal T}_1,w_1))},$
\end{itemize}
\item[$\diamond$] if $\varphi=\gamma^{z,y}_{({\cal T}_1,w_1),({\cal T}_2,w_2)}$,  then  ${\tt{Red_2}}(X,x)(\varphi)=1_{{\tt Red}_2(X,x)((\{{\cal T}_1(zy){\cal T}_2\},w_1w_2))}$
\item[$\diamond$] if $\varphi=\varphi_2\circ\varphi_1$, then ${\tt Red_2}(X,x)(\varphi)={\tt{Red_2}}(X,x)(\varphi_2)\circ {\tt{Red_2}}(X,x)(\varphi_1)$,
\item[$\diamond$] if $\varphi={\varphi_1}\,{_z\bo_y}\,{\varphi_2}$, where $\varphi_1:({\cal T}_1,w_1)\rightarrow ({\cal T}'_1,w'_1)$, $\varphi_2:({\cal T}_2,w_2)\rightarrow ({\cal T}'_2,w'_2)$ and $({\cal T}_i,w_i):X_i$,   then
\begin{itemize}
\item if $x\in X_1$, then ${\tt{Red_2}}(X,x)(\varphi)={\tt{Red_2}}(X_1,x)(\varphi_1)\,{_z\bo_y}\,{\tt{Red_2}}(X_2,y)(\varphi_2)$,
\item if $x\in X_2$, then ${\tt{Red_2}}(X,x)(\varphi)={\tt{Red_2}}(X_2,x)(\varphi_2)\,{_y\bo_z}\,{\tt{Red_2}}(X_1,z)(\varphi_1)$.
\end{itemize}
\end{itemize}

\indent  The following theorem is the core of  the coherence theorem.  Intuitively, it says that, once the action  of the symmetric group has been removed,  the coherence of non-skeletal cyclic operads can be reduced to the coherence of  non-skeletal operads\footnote{The syntax ${\tt \underline{T}}^+_{\underline{\EuScript C}}$ does {\em not} encode   canonical diagrams of
non-symmetric categorified cyclic operads: non-symmetric
cyclic operads  contain  cyclic actions, while ${\tt \underline{T}}^+_{\underline{\EuScript C}}$ does not encode  any action of the symmetric group. For  definitions of  non-symmetric
cyclic operads, see \cite[Section 3.2]{cgr} and \cite[Sections 1,2,3]{mm}.}. 
As it will be clear from its proof,    \hyperlink{bg-hexagon}{{\tt ($\beta \gamma$-\texttt{hexagon})}} is the key coherence condition that makes this reduction possible. 
\begin{thm}\label{t2}
For an arbitrary   arrow term $\varphi:({\cal T},u)\rightarrow ({\cal T},v)$ of ${\tt \underline{T}}^+_{\underline{\EuScript C}}$, the following equality of interpretations holds: $$ \lfloor\kappa_{({\cal T},v,x)}\rfloor_X\circ\lfloor \varphi \rfloor_X=\lceil{\tt Red}_2(X,x)(\varphi)\rceil_X \circ \lfloor\kappa_{({\cal T},u,x)}\rfloor_X.$$
\end{thm}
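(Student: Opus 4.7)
I would proceed by structural induction on $\varphi$, reading the equation as the commutativity of the square whose top edge is $\lfloor\varphi\rfloor_X$, whose bottom edge is $\lceil{\tt Red}_2(X,x)(\varphi)\rceil_X$, and whose vertical edges are the rooting morphisms $\lfloor\kappa_{({\cal T},u,x)}\rfloor_X$ and $\lfloor\kappa_{({\cal T},v,x)}\rfloor_X$. The identity case is immediate. The sequential composition case $\varphi=\varphi_2\circ\varphi_1$ is handled by pasting the two inductive squares vertically along their common $\kappa$-edge (associated to the middle object term).

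For the parallel composition $\varphi=\varphi_1\,{_z\bo_y}\,\varphi_2$ with $\varphi_i:({\cal T}_i,w_i)\to({\cal T}_i,w'_i)$, the proof splits on the location of $x$. When $x\in X_1$, both $\kappa$-morphisms factor as $\kappa_{(w_1,-)}\,{_z\bo_y}\,\kappa_{(w_2,-)}$, and the two inductive squares are pasted side by side using the bifunctoriality of $_z\circ_y$ (Remark \ref{functoriality}(2)). When $x\in X_2$, the $\kappa$-morphisms additionally carry a leading $\gamma^{z,y}$-precomposition, and combining the inductive squares requires, beyond bifunctoriality, the naturality of $\gamma$ (Remark \ref{functoriality}(3b)). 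The commutator case $\varphi=\gamma^{z,y}_{({\cal T}_1,w_1),({\cal T}_2,w_2)}$ is handled directly: since ${\tt Red}_2(X,x)(\gamma)$ is an identity, one unfolds the defining clauses of $\kappa_{((w_1w_2),x)}$ and $\kappa_{((w_2w_1),x)}$ and observes that they differ by a single factor $\gamma^{y,z}\circ\gamma^{z,y}$, which collapses to the identity by \hyperlink{g-involution}{{\tt($\gamma$-involution)}}.

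The heart of the argument is the associator case $\varphi=\beta^{z,\underline{z};y,\underline{y}}_{({\cal T}_1,w_1),({\cal T}_2,w_2),({\cal T}_3,w_3)}$, which splits into three subcases depending on whether $x\in X_1$, $X_2$ or $X_3$. When $x\in X_1$, both $u^{\cdot x}$ and $v^{\cdot x}$ preserve the left-to-right order of $u$ and $v$, so ${\tt Red}_2(\varphi)$ is again a $\beta$ and the square commutes by naturality of $\beta$ (Remark \ref{functoriality}(3a)) together with bifunctoriality. The main obstacle is the middle subcase $x\in X_2$: here ${\tt Red}_2(\varphi)$ is a $\theta$-term whose interpretation is, by \eqref{theta}, the composite $\vartheta=\gamma\circ\beta\circ(\gamma\,{_y\circ_{\underline{y}}}\,1)$, while the two $\kappa$-morphisms unfold into a long composite of $\beta$'s and $\gamma$'s that rearranges $(w_1w_2)w_3$ and $w_1(w_2w_3)$ into operadic form. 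After applying bifunctoriality to pull common factors out and inserting naturality squares for $\beta$, the remaining core identity is exactly an instance of \hyperlink{bg-hexagon}{{\tt($\beta\gamma$-hexagon)}}. The subcase $x\in X_3$ is symmetric (and produces a $\beta^{-1}$ on the bottom edge), and the case $\varphi=\beta^{-1}$ follows by inverting.
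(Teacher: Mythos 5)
Your overall strategy is the paper's: structural induction on $\varphi$, with the auxiliary cases (identity, composition, parallel composition, commutator) handled by pasting, bifunctoriality, naturality and \hyperlink{g-involution}{{\tt ($\gamma$-{\texttt{involution}})}} exactly as you describe, and the real work concentrated in the associator case, split by the origin of $x$.

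The one place where your account diverges from what actually happens is the location of \hyperlink{bg-hexagon}{{\tt ($\beta \gamma$-\texttt{hexagon})}}. In the subcase $x\in X_2$, after the naturality square for $\vartheta$ absorbs the $\kappa$'s, the residual identity is $\gamma^{z,\underline{z}}_{f_1,f_2{_y\circ_{\underline{y}}}f_3}\circ\beta^{z,\underline{z};y,\underline{y}}_{f_1,f_2,f_3}=\vartheta^{\underline{z},z;y,\underline{y}}_{f_2,f_1,f_3}\circ(\gamma^{z,\underline{z}}_{f_1,f_2}\,{_y\circ_{\underline{y}}}\,1_{f_3})$, and this follows from the \emph{definition} \eqref{theta} of $\vartheta$ together with \hyperlink{g-involution}{{\tt ($\gamma$-{\texttt{involution}})}} and bifunctoriality; no hexagon is needed there (it would be needed only if one used the alternative expression for $\vartheta$ of Remark \ref{thetaappears}(1)). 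The hexagon is genuinely indispensable in the subcase $x\in X_3$, which you dismiss as ``symmetric'': there ${\tt Red}_2$ sends $\beta$ to a $\beta^{-1}$, both $\kappa$'s begin with a top-level commutator, and the residual identity comparing $(\gamma^{y,\underline{y}}_{f_2,f_3}\,{_{\underline{z}}\circ_z}\,1_{f_1})\circ\gamma^{z,\underline{z}}_{f_1,f_2{_y\circ_{\underline{y}}}f_3}\circ\beta^{z,\underline{z};y,\underline{y}}_{f_1,f_2,f_3}$ with $\beta^{\underline{y},y;\underline{z},z\enspace{-1}}_{f_3,f_2,f_1}\circ(1_{f_3}\,{_{\underline{y}}\circ_y}\,\gamma^{z,\underline{z}}_{f_1,f_2})\circ\gamma^{y,\underline{y}}_{f_1{_z\circ_{\underline{z}}}f_2,f_3}$ is precisely an instance of \hyperlink{bg-hexagon}{{\tt ($\beta \gamma$-\texttt{hexagon})}}, up to one naturality square for $\gamma$. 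This does not derail your plan --- every ingredient you would need is already on your list --- but the subcase carrying the key coherence condition is $x\in X_3$, not $x\in X_2$, and it is not merely a mirror image of the latter.
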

\begin{proof} By the definition of   $\lfloor-\rfloor_X$,  the equality of interpretations  that we need to prove is \begin{equation}\label{je}[\Delta_X(\kappa_{({\cal T},v,x)})]_X \circ [\Delta_X({\varphi})]_X= \lceil {\tt{Red_2}}(X,x)(\varphi)\rceil_X\circ  [\Delta_X(\kappa_{({\cal T},u,x)})]_X .\end{equation} The proof goes by induction on the structure of ${\varphi}$. We only treat the most interesting case, i.e., 
$${\varphi}=\beta^{z,\underline{z};y,\underline{y}}_{({\cal T}_1,w_1),({\cal T}_2,w_2),({\cal T}_3,w_3)},$$ where $({\cal T}_i,w_i):X_i$.  We proceed by  case analysis relative to the ``origin'' of $x\in X$.
\begin{itemize}
\item  If $x\in X_1$, then  
 $$\begin{array}{l}{\kappa_{({\cal T},u,x)}}=(\kappa_{({\cal T}_1,w_1,x)}\,{_x\bo_{\underline{x}}}\, \kappa_{({\cal T}_2,w_2,\underline{z})})\,{_y\bo_{\underline{y}}}\, \kappa_{({\cal T}_3,w_3,\underline{y})}\\
  {\kappa_{({\cal T},v,x)}}=\kappa_{({\cal T}_1,w_1,x)}\,{_x\bo_{\underline{x}}}\, (\kappa_{({\cal T}_2,w_2,\underline{z})}\,{_y\bo_{\underline{y}}}\,\kappa_{({\cal T}_3,w_3,\underline{y})}).
  \end{array}$$ 
By introducing the abbreviations
{\footnotesize $$\begin{array}{lll}
\kappa_1=[\Delta_{X_1}(\kappa_{({\cal T}_1,w_1,x)})]_{X_1}& \kappa_2=[\Delta_{X_2}(\kappa_{({\cal T}_2,w_2,\underline{z})})]_{X_2}& \kappa_3=[\Delta_{X_3}(\kappa_{({\cal T}_3,w_3,\underline{y})})]_{X_3}\\[0.15cm]
f_1=[\Delta_{X_1}(({\cal T}_1,w_1))]_{X_1} & f_2=[\Delta_{X_2}(({\cal T}_2,w_2))]_{X_2} &  f_3=[\Delta_{X_3}(({\cal T}_3,w_3))]_{X_3}\\[0.1cm]
f^{\tiny{\bullet} }_1=\lceil {\tt Red_2}(X_1,x)(({\cal T}_1,w_1))\rceil_{X_1} & f^{\bullet}_2=\lceil {\tt Red_2}(X_2,\underline{z})({\cal T}_2,w_2)\rceil_{X_2} &  f^{\bullet}_3=\lceil {\tt Red_2}(X_3,\underline{y})({\cal T}_3,w_3))\rceil_{X_3}
\end{array}$$ }
\noindent
the equality \eqref{je}  translates to the equality  $$\beta^{z,\underline{z};y,\underline{y}}_{f_1^{\bullet },f_2^{\bullet  },f_3^{\bullet} }\circ ((\kappa_1\,{_z\circ_{\underline{z}}}\,\kappa_2)\,{_y\circ_{\underline{y}}}\, \kappa_3)=(\kappa_1\,{_z\circ_{\underline{z}}}\, (\kappa_2\,{_y\circ_{\underline{y}}}\,\kappa_3))\circ \beta^{z,\underline{z};y,\underline{y}}_{f_1,f_2,f_3},$$
which in turn holds by the  naturality of $\beta$.  
 \item If  $x\in X_2$, then     
 $$\begin{array}{l}{\kappa_{({\cal T},u,x)}}=(({\kappa_{({\cal T}_2,w_2,x)}}\,{_{\underline{z}}\bo_z}\, {\kappa_{({\cal T}_1,w_1,z)})}\,{_y\bo_{\underline{y}}}\,\kappa_{({\cal T}_3,w_3,\underline{y})})\circ (\gamma^{z,\underline{z}}_{({\cal T}_1,w_1),({\cal T}_2,w_2)}\,{_y\bo_{\underline{y}}}\, 1_{({\cal T}_3,w_3)})\\
 {\kappa_{({\cal T},v,x)}}=(({\kappa_{({\cal T}_2,w_2,x)}}\,{_y\bo_{\underline{y}}}\, {\kappa_{({\cal T}_3,w_3,\underline{y})}})\,{_{\underline{z}}\bo_z}\,{\kappa_{{({\cal T}_1,w_1,z)}}})\circ (\gamma^{z,\underline{z}}_{({\cal T}_1,w_1),(\{{\cal T}_2(y\underline{y}){\cal T}_3\},w_2w_3)})\:.
 \end{array}$$ 
By introducing the abbreviations
{\footnotesize $$\begin{array}{lll}
\kappa_1=[\Delta_{X_1}(\kappa_{({\cal T}_1,w_1,z)})]_{X_1}&\kappa_2=[\Delta_{X_2}(\kappa_{({\cal T}_2,w_2,x)})]_{X_2}& \kappa_3=[\Delta_{X_3}(\kappa_{({\cal T}_3,w_3,\underline{y})})]_{X_3}\\[0.1cm]
f_1=[\Delta_{X_1}(({\cal T}_1,w_1))]_{X_1} & f_2=[\Delta_{X_2}(({\cal T}_2,w_2))]_{X_2} &  f_3=[\Delta_{X_3}(({\cal T}_3,w_3))]_{X_3}\\[0.1cm]
f_1^{\bullet}=\lceil{\tt Red_2}(X_1,z)(({\cal T}_1,w_1))\rceil_{X_1}& f^{\bullet}_2=\lceil{\tt Red_2}(X_2,x)(({\cal T}_2,w_2))\rceil_{X_2}& f^{\bullet}_3=\lceil{\tt Red_3}(X_3,\underline{y})(({\cal T}_3,w_3))\rceil_{X_3}
\end{array}$$}
\noindent
the equality \eqref{je} can be read from the  (outer part of the) commuting diagram
\begin{center}
\begin{tikzpicture}
    \node (E) at (0,0) {\small $(f_1\,{_z\circ_{\underline{z}}}\,f_2)\,{_y\circ_{\underline{y}}}\,f_3$};
    \node (F) at (0,-2) {\small $(f_2\,{_{\underline{z}}\circ_{z}}\,f_1)\,{_y\circ_{\underline{y}}}\,f_3$};
    \node (A) at (7.5,-2) {\small $(f_2\,{_y\circ_{\underline{y}}}\,f_3)\,{_{\underline{z}}\circ_{z}}\,f_1$};
    \node (G) at (7.5,0) {\small $f_1\,{_z\circ_{\underline{z}}}\,(f_2\,{_y\circ_{\underline{y}}}\,f_3)$};
    \node (F1) at (0,-4) {\small $(f_2^{\bullet}\,{_{\underline{z}}\circ_{z}}\,f_1^{\bullet})\,{_y\circ_{\underline{y}}}\,f_3^{\bullet}$};
    \node (A1) at (7.5,-4) {\small $(f_2^{\bullet}\,{_y\circ_{\underline{y}}}\,f_3^{\bullet})\,{_{\underline{z}}\circ_{z}}\,f_1^{\bullet}$};
    \draw[->] (E)--(G) node [midway,above] {\footnotesize   $\beta^{z,\underline{z};y,\underline{y}}_{f_1,f_2,f_3}$};
    \draw[->] (G)--(A) node [midway,right] {\footnotesize    $\gamma^{z,\underline{z}}_{f_1,f_2\,{_y\circ_{\underline{y}}}\,f_3}$};
  \draw[->] (A)--(A1) node [midway,right] {\footnotesize     $(\kappa_2 \,{_{y}\circ_{\underline{y}}}\, \kappa_3)\,{_{\underline{z}}\circ_{z}}\,\kappa_1$};
    \draw[->] (E)--(F) node [midway,left] {\footnotesize   $\gamma^{z,\underline{z}}_{f_1,f_2} \,{_y\circ_{\underline{y}}}\,1_{f_3}$};
  \draw[->] (F)--(F1) node [midway,left] {\footnotesize    $(\kappa_2 \,{_{\underline{z}}\circ_z}\, \kappa_1)\,{_y\circ_{\underline{y}}}\,\kappa_3$};
  \draw[->] (F)--(A) node [midway,above] {\footnotesize    $\vartheta^{\underline{z},z;y,\underline{y}}_{f_2,f_1,f_3}$};
\draw[->] (F1)--(A1) node [midway,above] {\footnotesize    $\vartheta^{\underline{z},z;y,\underline{y}}_{f_2^{\bullet},f_1^{\bullet},f_3^{\bullet}}$};
   \end{tikzpicture}
\end{center}
in which the upper square commutes by the definition of the isomorphism $\vartheta$  (see \eqref{theta} on Page \pageref{theta}) and the lower square is a naturality diagram for $\vartheta$.
\item If $x\in X_3$, then  
$$\begin{array}{lll}{\kappa_{({\cal T},u,x)}} & = & ({\kappa_{({\cal T}_3,w_3,x)}}\,{_{\underline{y}}\bo_y}\,({\kappa_{({\cal T}_2,w_2,y)}}\,{_{\underline{z}}\bo_z}\,{\kappa_{({\cal T}_1,w_1,z)}}))\circ\\
&& (1_{({\cal T}_3,w_3)}\,{_{\underline{y}}\bo_y}\, \gamma^{z,\underline{z}}_{({\cal T}_1,w_1),({\cal T}_2,w_2)})\circ \gamma^{y,\underline{y}}_{(\{{\cal T}_1(z\underline{z}){\cal T}_2\},w_1w_2),({\cal T}_3,w_3)}\\[0.2cm]
{\kappa_{({\cal T},v,x)}} &= & (({\kappa_{({\cal T}_3,w_3,x)}}\,{_{\underline{y}}\bo_y}\, {\kappa_{({\cal T}_2,w_2,y)}})\,{_{\underline{z}}\bo_z}\, {\kappa_{({\cal T}_1,w_1,z)}})\circ 
\\ && (\gamma_{({\cal T}_2,w_2),({\cal T}_3,w_3)}\,{_{\underline{z}}\bo_z}\, 1_{({\cal T}_1,w_1)})\circ \gamma^{z,\underline{z}}_{({\cal T}_1,w_1),(\{{\cal T}_2(y\underline{y}){\cal T}_3\},w_2w_3)}\:.
\end{array}$$ 

By introducing the abbreviations
{\footnotesize $$\begin{array}{lll}
\kappa_1=[\Delta_{X_1}(\kappa_{({\cal T}_1,w_1,z)})]_{X_1}&\kappa_2=[\Delta_{X_2}(\kappa_{({\cal T}_2,w_2,y)})]_{X_2}& \kappa_3=[\Delta_{X_3}(\kappa_{({\cal T}_3,w_3,x)})]_{X_3}\\[0.1cm]
f_1=[\Delta_{X_1}(({\cal T}_1,w_1))]_{X_1} & f_2=[\Delta_{X_2}(({\cal T}_2,w_2))]_{X_2} &  f_3=[\Delta_{X_3}(({\cal T}_3,w_3))]_{X_3}\\[0.1cm]
f_1^{\bullet}=\lceil{\tt Red_2}(X_1,z)(({\cal T}_1,w_1))\rceil_{X_1}& f^{\bullet}_2=\lceil{\tt Red_2}(X_2,y)(({\cal T}_2,w_2))\rceil_{X_2}& f^{\bullet}_3=\lceil{\tt Red_2}(X_3,x)(({\cal T}_3,w_3))\rceil_{X_3}
\end{array}$$}

\noindent
the equality \eqref{je}   can be read from the  (outer part of the) commuting diagram
\begin{center}
\begin{tikzpicture}
    \node (E) at (0,0) {\small $(f_1\,{_z\circ_{\underline{z}}}\,f_2)\,{_y\circ_{\underline{y}}}\,f_3$};
    \node (F) at (0,-2) {\small $f_3\,{_{\underline{y}}\circ_y}\,(f_1\,{_z\circ_{\underline{z}}}\,f_2)$};
    \node (F2) at (0,-4) {\small $f_3\,{_{\underline{y}}\circ_y}\,(f_2\,{_{\underline{z}}\circ_z}\,f_1)$};
    \node (A) at (7.5,-2) {\small $(f_2\,{_y\circ_{\underline{y}}}\,f_3)\,{_{\underline{z}}\circ_z}\,f_1$};
    \node (A2) at (7.5,-4) {\small $(f_3\,{_{\underline{y}}\circ_y}\,f_2)\,{_{\underline{z}}\circ_z}\,f_1$};
    \node (G) at (7.5,0) {\small $f_1\,{_z\circ_{\underline{z}}}\,(f_2\,{_y\circ_{\underline{y}}}\,f_3)$};
    \node (F1) at (0,-6) {\small $f_3^{\bullet}\,{_{\underline{y}}\circ_y}\,(f_2^{\bullet}\,{_{\underline{z}}\circ_z}\,f_1^{\bullet})$};
    \node (A1) at (7.5,-6) {\small $(f_3^{\bullet}\,{_{\underline{y}}\circ_y}\,f_2^{\bullet})\,{_{\underline{z}}\circ_z}\,f_1^{\bullet}$};
    \draw[->] (E)--(G) node [midway,above] {\footnotesize   $\beta^{z,\underline{z};y,\underline{y}}_{f_1,f_2,f_3}$};
    \draw[->] (G)--(A) node [midway,right] {\footnotesize   $\gamma^{z,\underline{z}}_{f_1,f_2\,{_y\circ_{\underline{y}}}\,f_3}$};
  \draw[->] (A2)--(A1) node [midway,right] {\footnotesize $(\kappa_3\,{_{\underline{y}}\circ_y}\, \kappa_2)\,{_{\underline{z}}\circ_z}\,  \kappa_1$};
    \draw[->] (E)--(F) node [midway,left] {\footnotesize  $\gamma^{y,\underline{y}}_{f_1\,{_z\circ_{\underline{z}}}\,f_2,f_3}$};
  \draw[->] (F)--(F2) node [midway,left] {\footnotesize  $1_{f_3}\,{_{\underline{y}}\circ_y}\, \gamma^{z,\underline{z}}_{f_1,f_2}$};
  \draw[->] (A)--(A2) node [midway,right] {\footnotesize $\gamma^{y,\underline{y}}_{f_2,f_3}\,{_{\underline{z}}\circ_z}\,1_{f_1}$};
  \draw[->] (F2)--(F1) node [midway,left] {\footnotesize  $\kappa_3\,{_{\underline{y}}\circ_y}\, (\kappa_2\,{_{\underline{z}}\circ_z}\, \kappa_1)$};
  \draw[->] (F2)--(A2) node [midway,above] {\footnotesize $\beta^{\underline{y},y;\underline{z},z\enspace -1}_{f_3,f_2,f_1}$};
\draw[->] (F1)--(A1) node [midway,above] {\footnotesize  $\beta^{\underline{y},y;\underline{z},z \enspace -1}_{f^{\bullet}_3,f^{\bullet}_2,f^{\bullet}_1}$};
   \end{tikzpicture}
\end{center}
 in which the upper square commutes  as an instance of \hyperlink{bg-hexagon}{{\tt ($\beta \gamma$-\texttt{hexagon})}} and the bottom square commutes by the naturality of $\beta^{-1}$.
\end{itemize}

\end{proof}
The following result is a direct consequence of Theorem \ref{t2}.
\begin{cor}\label{roknucuse}
For     arrow terms $\varphi_1$ and $\varphi_2$ of the same type in ${\tt \underline{T}}_{\underline{\EuScript C}}^+(X)$, the equality  $\lfloor\varphi_1\rfloor_X=\lfloor\varphi_2\rfloor_X$ follows from the equality $\lceil {\tt Red_2}(X,x)(\varphi_1)\rceil_{X}=\lceil {\tt Red_2}(X,x)(\varphi_2)\rceil_{X}$.
\end{cor}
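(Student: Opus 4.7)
The plan is to apply Theorem~\ref{t2} to each of the two arrow terms separately and then cancel a common isomorphism. Concretely, since $\varphi_1,\varphi_2:({\cal T},u)\rightarrow ({\cal T},v)$ share the same source and target, the ``correction'' arrows $\kappa_{({\cal T},u,x)}$ and $\kappa_{({\cal T},v,x)}$ that appear in the statement of Theorem~\ref{t2} are literally the same for both arrow terms. So Theorem~\ref{t2} gives the two identities
\[
\lfloor\kappa_{({\cal T},v,x)}\rfloor_X\circ \lfloor\varphi_i\rfloor_X \;=\; \lceil{\tt Red}_2(X,x)(\varphi_i)\rceil_X \circ \lfloor\kappa_{({\cal T},u,x)}\rfloor_X,\qquad i=1,2.
\]
Using the hypothesis $\lceil{\tt Red}_2(X,x)(\varphi_1)\rceil_X=\lceil{\tt Red}_2(X,x)(\varphi_2)\rceil_X$, the right-hand sides coincide, and therefore so do the left-hand sides:
\[
\lfloor\kappa_{({\cal T},v,x)}\rfloor_X\circ \lfloor\varphi_1\rfloor_X \;=\; \lfloor\kappa_{({\cal T},v,x)}\rfloor_X\circ \lfloor\varphi_2\rfloor_X.
\]

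The only remaining step is to cancel $\lfloor\kappa_{({\cal T},v,x)}\rfloor_X$ on the left. For this I would invoke the recursive definition of $\kappa_{({\cal T},w,x)}$ given in \S\ref{section_trees}: it is built inductively from $1_{(-)}$, $\gamma^{-,-}_{-,-}$, the partial-composition bifunctors $_{-}\bo_{-}$, and ordinary composition $\circ$. Passing to interpretations via $\lfloor-\rfloor_X=[-]_X\circ\Delta_X$, the morphism $\lfloor\kappa_{({\cal T},v,x)}\rfloor_X$ is a composition of identities and components of the natural isomorphism $\gamma$, combined through bifunctoriality of the $_x\circ_y$'s and through vertical composition in ${\EuScript C}(X)$. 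Each building block is invertible in ${\EuScript C}(X)$ (identities trivially; $\gamma$-components by definition of a natural \emph{isomorphism}; and bifunctors preserve isomorphisms), so $\lfloor\kappa_{({\cal T},v,x)}\rfloor_X$ is an isomorphism.

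Precomposing the displayed equality with its inverse then yields $\lfloor\varphi_1\rfloor_X=\lfloor\varphi_2\rfloor_X$, which is the claim. There is no real obstacle here: the work has all been absorbed into Theorem~\ref{t2}, and what remains is bookkeeping plus the observation that the ``rooting bridge'' $\kappa$ is assembled entirely from isomorphisms and hence can be cancelled.
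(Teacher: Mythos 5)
Your proposal is correct and is exactly the argument the paper intends: the corollary is stated there as a ``direct consequence of Theorem \ref{t2}'', and your filling-in — applying the theorem to each $\varphi_i$, noting that equal types force the same $\kappa_{({\cal T},u,x)}$ and $\kappa_{({\cal T},v,x)}$, and cancelling $\lfloor\kappa_{({\cal T},v,x)}\rfloor_X$ because it is built from identities and components of the natural isomorphism $\gamma$ via bifunctors and composition — is the intended reasoning. (Only a cosmetic slip: the recursive definition of $\kappa_{({\cal T},w,x)}$ appears in the second-reduction subsection, not in \S\ref{section_trees}.)
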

\subsection{The third reduction: establishing skeletality} Intuitively, in the third reduction we pass from the non-skeletal  to   the skeletal operadic  framework. This will reduce the  problem of commutation of all $\beta\vartheta$-diagrams of ${\EuScript C}(X)$ to the problem of commutation of all diagrams of canonical arrows of  the skeletal non-symmetric categorified operad ${\EuScript O}_{\EuScript C}$, constructed from ${\EuScript C}$ in the appropriate way.
\subsubsection{The skeletal non-symmetric categorified operad ${\EuScript O}_{\EuScript C}$}
\indent Starting from   ${\EuScript C}$, we first define a skeletal non-symmetric categorified operad ${\EuScript O}_{\EuScript C}=\{{\EuScript O}_{\EuScript C}(n)\}_{n\in\mathbb{N}}$, i.e., a weak Cat-operad in the sense of \cite{dp}, as follows.
\begin{itemize}
\item The objects of the category ${\EuScript O}_{\EuScript C}(n)$ are quadruples $(X,x,\sigma,f)$, where $|X|=n+1$, $x\in X$, $f\in {\EuScript C}(X)$  and $\sigma:[n]\rightarrow X\backslash\{x\}$ is a bijection (inducing a total order on $X\backslash\{x\}$). 
\item The morphisms of ${\EuScript O}_{\EuScript C}(n)[(X,x,\sigma,f),(X,x,\sigma,g)]$ are quadruples $(X,x,\sigma,\varphi)$, such that $\varphi$ is a morphism of ${\EuScript C}(X)[f,g]$ (in particular,  ${\EuScript O}_{\EuScript C}(n)[(X,x,\sigma,f),(Y,y,\tau,g)]$ is empty for $(X,x,\sigma)\neq (Y,y,\tau)$). The identity morphism for   $(X,x,\sigma,f)$ is $(X,x,\sigma,1_{f})$.
The composition of morphisms  is canonically induced from the composition of morphisms in ${\EuScript C}(X)$.
\item The composition operation $\circ_i:{\EuScript O}_{\EuScript C}(n)\times {\EuScript O}_{\EuScript C}(m)\rightarrow {\EuScript O}_{\EuScript C}(n+m-1)$ on objects is defined by $$(X,x,\sigma_1,f)\circ_i (Y,y,\sigma_2,g)=(X\cup Y\backslash\{y\},x,\sigma,f\,{_{\sigma_1(i)\circ_{y}}}\,g),$$ and on morphisms by $$(X,x,\sigma_1,\varphi)\circ_i (Y,y,\sigma_2,\psi)=(X\cup Y\backslash\{y\},x,\sigma,\varphi\,{_{\sigma_1(i)\circ_{y}}}\,\psi),$$ where  $\sigma:[n+m-1]\rightarrow X\backslash\{x\}\cup Y\backslash\{y\}$ is a bijection defined by \begin{equation} \label{sigma_skeletal}
\sigma(j) = 
     \begin{cases}
    \sigma_1(j)  &\quad\text{for  } j\in\{1,\dots,i-1\}\\
     \sigma_2(j-i+1) &\quad\text{for  } j\in\{i,\dots, i+m-1\}  \\
   \sigma_1(j-m)  &\quad\text{for  } j\in \{i+m,\dots,n+m-1\}.
     \end{cases}
\end{equation}
 
\item For $\tilde{f}=(X,x,\sigma_1,f)$, $\tilde{g}=(Y,y,\sigma_2,g)$ and $\tilde{h}=(Z,z,\sigma_3,h)$, where $\sigma_1:[n]\rightarrow X\backslash\{x\}$, $\sigma_2:[m]\rightarrow Y\backslash\{y\}$ and $\sigma_3:[k]\rightarrow Z\backslash\{z\}$,  the components $$\beta^{\,i;j}_{\tilde{f},\tilde{g},\tilde{h}}:(\tilde{f}\circ_i \tilde{g})\circ_j \tilde{h}\rightarrow \tilde{f}\circ_i (\tilde{g}\circ_j \tilde{h}) \enspace \mbox{ and }\enspace  \theta^{\,i;k}_{\tilde{f},\tilde{g},\tilde{h}}:(\tilde{f}\circ_i \tilde{g})\circ_k \tilde{h}\rightarrow (\tilde{f}\circ_k \tilde{h})\circ_i \tilde{h}$$ of natural isomorphisms $\beta$ and $\theta$ are distinguished among the morphisms of ${\EuScript O}_{\EuScript C}(n)$ as the quadruples arising  from the appropriate components of $\beta$ and $\vartheta$ of ${\EuScript C}$, as follows: $$\beta^{\,i;j}_{\tilde{f},\tilde{g},\tilde{h}}=(X\cup Y\backslash\{y\}\cup Z\backslash\{z\},x,\sigma,\beta^{\sigma_1(i),y;\sigma_{2}(j),z}_{f,g,h})$$  and
 $$\theta^{\,i;k}_{\tilde{f},\tilde{g},\tilde{h}}=(X\cup Y\backslash\{y\}\cup Z\backslash\{z\},x,\sigma',\vartheta^{\sigma_1(i),y;\sigma_{1}(k),z}_{f,g,h}),$$ 
where $\sigma$ and $\sigma'$ are the bijections induced in the appropriate way from $\sigma_1,\sigma_2$ and $\sigma_3$.
\end{itemize}
We now show that   ${\EuScript O}_{\EuScript C}$ indeed verifies the  axioms of weak Cat-operads from \cite[Section 7]{dp}. 
\begin{lem}
For an arbitrary $n\in\mathbb{N}$, the following equations hold in ${\EuScript O}_{\EuScript C}(n)$: \\[-0.55cm]
 \begin{itemize}
\item[1.] the categorical equations:$$\varphi\circ 1_{\tilde{f}}=\varphi=1_{\tilde{g}}\circ\varphi, \mbox{ for } \varphi:\tilde{f}\rightarrow\tilde{g}, \quad\quad  (\varphi\circ\phi)\circ\psi=\varphi\circ(\phi\circ\psi);$$
\item[2.] the bifunctoriality equations:
$$ 1_{\tilde{f}} \circ_i 1_{\tilde{g}}=1_{\tilde{f}\circ_i \tilde{g}},\quad\quad (\varphi_2\circ\varphi_1)\circ_i (\psi_2\circ\psi_1)=(\varphi_2\circ_i \psi_2)\circ(\varphi_1\circ_i \psi_1);$$
\item[3.] the naturality equations:\\[-0.55cm]
\begin{itemize}
\item[a)] $\beta^{i;j}_{\tilde{f}_2,\tilde{g}_2,\tilde{h}_2}\circ ((\varphi\circ_i\phi)\circ_j\psi)=(\varphi\circ_i(\phi\circ_j\psi)) \circ \beta^{i;j}_{\tilde{f}_1,\tilde{g}_1,\tilde{h}_1}$,\\[-0.45cm]
\item[b)] $\theta^{i;j}_{\tilde{f}_2,\tilde{g}_2,\tilde{h}_2}\circ ((\varphi\circ_i\phi)\circ_j\psi)=((\varphi\circ_j\psi)\circ_i\phi) \circ \theta^{i;j}_{\tilde{f}_1,\tilde{g}_1,\tilde{h}_1}$;
\end{itemize}
\item[4.] the equations concerning inverse isomorphisms:\\[-0.55cm]
\begin{itemize}
\item[a)] $\beta^{i;j\enspace ^{_{-1}}}_{\tilde{f},\tilde{g},\tilde{h}}\circ\beta^{i;j}_{\tilde{f},\tilde{g},\tilde{h}}=1_{(\tilde{f}\circ_i\,\tilde{g})\circ_j\tilde{h}}$,\quad $\beta^{i;j}_{\tilde{f},\tilde{g},\tilde{h}}\circ\beta^{i;j\enspace ^{_{-1}}}_{\tilde{f},\tilde{g},\tilde{h}}=1_{\tilde{f}\circ_i(\tilde{g}\,\circ_j\tilde{h})}$,
\item[b)] $\theta^{j;i}_{\tilde{f},\tilde{h},\tilde{g}}\circ \theta^{i;j}_{\tilde{f},\tilde{g},\tilde{h}}=1_{(\tilde{f}\circ_i\,\tilde{g})\circ_j\tilde{h}}$;
\end{itemize}
\item[5.] the coherence conditions:\\[-0.55cm]
\begin{itemize}
\item[a)] $(1_{\tilde{f}}\circ_{i} \beta^{j;l}_{\tilde{g},\tilde{h},\tilde{k}})\circ\beta^{i;l}_{\tilde{f},\tilde{g}\circ\!_j\tilde{h},\tilde{k}}\circ (\beta^{i;j}_{\tilde{f},\tilde{g},\tilde{h}}\circ_l 1_{\tilde{k}})=\beta^{i;j}_{\tilde{f},\tilde{g},\tilde{h}\circ_l\tilde{k}}\circ\beta^{j;l}_{\tilde{f}\circ_i \tilde{g},\tilde{h},\tilde{k}}$,
\item[b)] $(1_{\tilde{f}}\circ_i\theta^{j;l}_{\tilde{g},\tilde{h},\tilde{k}})\circ\beta^{i;l}_{\tilde{f},\tilde{g}\circ_j\tilde{h},\tilde{k}}\circ(\beta^{i;j}_{\tilde{f},\tilde{g},\tilde{h}}\circ_{l}1_{k})=\beta^{i;j}_{\tilde{f},\tilde{g}\circ_l\tilde{h},\tilde{k}}\circ(\beta^{i;l}_{\tilde{f},\tilde{g},\tilde{k}}\circ_j 1_{\tilde{h}})\circ \theta^{j;l}_{\tilde{f}\circ_i \tilde{g},\tilde{h},\tilde{k}}$,
\item[c)] $\theta^{i;l}_{\tilde{f},\tilde{g}\circ_j{\tilde{h}},\tilde{k}}\circ(\beta^{i;j}_{\tilde{f},\tilde{g},\tilde{h}}\circ_l 1_{\tilde{k}})=\beta^{i;j}_{\tilde{f}\circ_l\tilde{k},\tilde{g},\tilde{h}}\circ(\theta^{i;l}_{\tilde{f},\tilde{g},\tilde{k}}\circ_{j}1_{\tilde{h}})\circ\theta^{j;l}_{\tilde{f}\circ_i \tilde{g},\tilde{h},\tilde{k}}$,
\item[d)] $\theta^{i;j}_{\tilde{f}\circ_l\tilde{k},\tilde{g},\tilde{h}}\circ(\theta^{i;l}_{\tilde{f},\tilde{g},\tilde{k}}\circ_{j}1_{\tilde{h}})\circ\theta^{j;l}_{\tilde{f}\circ_i \tilde{g},\tilde{h},\tilde{k}}=(\theta^{j;l}_{\tilde{f},\tilde{h},\tilde{k}}\circ_{i}1_{\tilde{g}})\circ\theta^{i;l}_{\tilde{f}\circ_j \tilde{h},\tilde{g},\tilde{k}}\circ(\theta^{i;j}_{\tilde{f},\tilde{g},\tilde{h}}\circ_l 1_{\tilde{k}})$.
\end{itemize}
\end{itemize}
\end{lem}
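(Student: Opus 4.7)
The plan is to observe that every equation in the list concerns morphisms of ${\EuScript O}_{\EuScript C}(n)$, and that morphisms in ${\EuScript O}_{\EuScript C}(n)$ are quadruples $(X,x,\sigma,\varphi)$ whose only ``live'' slot is the fourth one, since composition, the partial operations $\circ_i$, and the specified isomorphisms $\beta^{i;j}$ and $\theta^{i;j}$ are all defined componentwise in that slot (with the first three components determined by bookkeeping on orderings via \eqref{sigma_skeletal}). Consequently, for each listed equation the proof reduces to two tasks: (a) check that the first three components of the quadruples on the two sides match, so that the equation is even well-formed in ${\EuScript O}_{\EuScript C}(n)$, and (b) verify the corresponding equation on the fourth component in the category ${\EuScript C}(X)$ for the appropriate $X$.

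For 1, 2, and 3, task (b) is an immediate quotation of Remark \ref{functoriality}(1), (2), and (3)(a) respectively for the $\beta$-part; for the $\theta$-part of 3(b) one invokes naturality of $\vartheta$, which follows from the naturality of $\beta$ and $\gamma$ once $\vartheta$ is expanded according to \eqref{theta}. For 4(a), the inverse $\beta^{i;j\,^{-1}}$ in ${\EuScript O}_{\EuScript C}$ is by construction the quadruple carrying $\beta^{-1}$ in ${\EuScript C}$, so the equation is tautological. For 4(b), the fourth component gives exactly \hyperlink{vt-involution}{\texttt{($\vartheta$-involution)}} of Lemma \ref{thetainverse}(1). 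For 5(a), (b), (d) the fourth components give \hyperlink{b-pentagon}{\texttt{($\beta$-pentagon)}}, \hyperlink{bvt-pentagon}{\texttt{($\beta\vartheta$-pentagon)}} and \hyperlink{vt-hexagon}{\texttt{($\vartheta$-hexagon)}} of Lemma \ref{thetainverse}, respectively. For 5(c), one uses a mixed coherence analogous to \hyperlink{bvt-pentagon}{\texttt{($\beta\vartheta$-pentagon)}}: it is obtained by pasting \hyperlink{b-pentagon}{\texttt{($\beta$-pentagon)}} with naturality squares for $\beta$ and the definition of $\vartheta$, in the same style as the proof of Lemma \ref{thetainverse}(2).

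The main obstacle, and the only point where real work remains, will be task (a): the bookkeeping on the ordering bijections $\sigma$. Each doubly nested composite such as $(\tilde f \circ_i \tilde g)\circ_j \tilde h$ produces by iterated use of \eqref{sigma_skeletal} a bijection from $[n+m+k-2]$ onto the disjoint union of entry sets, and one must check that the bijections produced on the two sides of every coherence diagram agree. This is a purely combinatorial matter of splicing one ordered list into another (first in slot $i$, then in slot $j$, versus the reverse), and it is routine but fiddly; attention is needed for 3(b), 5(b), 5(c), 5(d), where the shift in the index $j$ relative to the skeleton plays a role, as the source and target of $\theta^{i;k}$ carry genuinely different bijections $\sigma$ and $\sigma'$. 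Once these orderings are shown to agree, the equation on the fourth component is the one recalled above, and the proof closes.
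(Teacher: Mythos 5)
Your overall strategy is the paper's: every equation is checked componentwise, the first three slots of the quadruples being pure bookkeeping and the fourth slot reducing to a known identity in ${\EuScript C}(X)$ (Remark \ref{functoriality}, naturality of $\vartheta$, Lemma \ref{thetainverse}, and the coherence axioms of Definition \ref{cat}). Items 1--4 and 5(a) are handled exactly as in the paper.

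There is, however, a genuine gap in your treatment of 5(b) and 5(c): you have in effect swapped them, and in doing so you lose the one axiom that 5(b) actually requires. Equation 5(b) is the mixed \emph{hexagon} (three arrows on each side); \hyperlink{bvt-pentagon}{\texttt{($\beta\vartheta$-pentagon)}} is a \emph{pentagon} (three arrows against two), so it cannot be the fourth component of 5(b) --- the shapes do not match. The correct source for 5(b) is \hyperlink{bg-decagon}{{\tt ($\beta\gamma$-decagon)}} with the top and bottom composites abbreviated as instances of $\vartheta$ (Remark \ref{thetaappears}(2)); this is an independent coherence axiom of Definition \ref{cat} and is \emph{not} derivable from \hyperlink{b-pentagon}{{\tt ($\beta$-pentagon)}}, naturality squares and the definition \eqref{theta} of $\vartheta$, which is essentially what your fallback argument for the "remaining mixed coherence" offers. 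Conversely, 5(c) is precisely \hyperlink{bvt-pentagon}{\texttt{($\beta\vartheta$-pentagon)}}, i.e.\ Lemma \ref{thetainverse}(2), already proved in the paper, so no new pasting argument needs to be constructed for it. As written, your proof leaves 5(b) without a valid justification; restoring the decagon (via Remark \ref{thetaappears}(2)) for 5(b) and citing Lemma \ref{thetainverse}(2) for 5(c) closes the gap. Your remaining remarks --- that the real residual work is the combinatorial agreement of the ordering bijections produced by \eqref{sigma_skeletal} on the two sides --- are a fair observation that the paper leaves implicit.
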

\begin{proof}
The first two groups of equations and the equation 3(a) follow from the appropriate properties of ${\EuScript C}$ (Remark \ref{functoriality}), 3(b) holds by the naturality of $\vartheta$, 4(a)   by the analogous equations for ${\EuScript C}$, 4(b)   by Lemma \ref{thetainverse}.(1), 5(a)   by \hyperlink{b-pentagon}{{\tt ($\beta$-\texttt{pentagon})}}, 5(b) by \hyperlink{bg-decagon}{{\tt{($\beta \gamma$-decagon)}}} and Remark \ref{thetaappears}(2),   5(c)  by Lemma \ref{thetainverse}(2), and   5(d) by Lemma \ref{thetainverse}(3).
\end{proof}
\subsubsection{``Skeletalization'' of the syntax ${\tt r\underline{T}}^{+}_{\underline{\EuScript C}}$} 
In order to be in the position to  apply the coherence result of $\cite{dp}$, we introduce
 a  ``skeletalization'' of the syntax ${\tt r\underline{T}}^{+}_{\underline{\EuScript C}}$. \\[0.1cm]
\indent Let ${\cal T}$ be  an unrooted tree. Suppose that ${\it FV}({\cal T})=X$ and let $x\in X$. For a corolla $c\in {\it Cor}({\cal T})$, such that $|{\tt in}_{({\cal T},x)}(c)|=n$ (see the end of \S \ref{rooting}), we define the set of   {\em skeletalizations of} $c$ {\em (relative to  ${\cal T}$ and  $x$)} as
 $$\Sigma_{({\cal T},x)}(c)={\bf Bij}[\,[n], {\tt in}_{({\cal T},x)}(c)\,].$$ We set $$\Sigma({\cal T},x)=\prod_{c\in {\it Cor}({\cal T})}\Sigma_{({\cal T},x)}(c).$$ We shall  denote the elements of $\Sigma({\cal T},x)$ with $\overrightarrow{\sigma}$.
\begin{rem}
 Notice that $\overrightarrow{\sigma_1}\in \Sigma({\cal T}_1,x)$ and $\overrightarrow{\sigma_2}\in \Sigma({\cal T}_2,y)$ determine  ``by concatenation''  an element of $\overrightarrow{\sigma}\in\Sigma(\{{\cal T}_1\,(zy)\,{\cal T}_2\},x)$, and that, symmetrically, any $\overrightarrow{\sigma}\in \Sigma(\{{\cal T}_1\,(zy)\,{\cal T}_2\},x)$ can  be ``split'' into $\overrightarrow{\sigma_1}\in \Sigma({\cal T}_1,x)$ and $\overrightarrow{\sigma_2}\in \Sigma({\cal T}_2,y)$; we shall write
$\overrightarrow{\sigma}=\overrightarrow{\sigma_1\!\cdot\!\sigma_2}$. 
\end{rem}
\indent The {\em skeletalization of the syntax} ${\tt r\underline{T}}^{+}_{\underline{\EuScript C}}$ is the syntax ${\tt _{sk}r\underline{T}}^{+}_{\underline{\EuScript C}}$, obtained as follows.\\
\indent  The objects terms of ${\tt _{sk}{r\underline{T}}}^{+}_{\underline{\EuScript C}}$  are  quadruples  $({\cal T},x,\overrightarrow{\sigma},w)$, typed by the   rule
 \begin{center}
\mybox{$\displaystyle\frac{{\cal T}\in {\tt \underline{T}}^{+}_{\underline{\EuScript C}}(X)\quad x\in X\quad\overrightarrow{\sigma}\in \Sigma{({\cal T},x)}\quad w\in A({\cal T},x)}{({\cal T},x,\overrightarrow{\sigma},w):X\backslash\{x\}}$} \end{center}

The arrow terms of ${\tt _{sk}{r\underline{T}}}^{+}_{\underline{\EuScript C}}$ are obtained from raw terms  
\begin{center}\mybox{
$\displaystyle \chi::=\left\{\begin{array}{l}
1_{({\cal T},x,\overrightarrow{\sigma},w)}\,\,|\,\,\beta^{z;y}_{({\cal T}_1,x,\overrightarrow{\sigma_1},w_1),({\cal T}_2,\underline{z},\overrightarrow{\sigma_2},w_2),({\cal T}_3,{\underline{y}},\overrightarrow{\sigma_3},w_3)}\,\,|\,\,  {\beta_{({\cal T}_1,x,\overrightarrow{\sigma_1},w_1),({\cal T}_2,\underline{z},\overrightarrow{\sigma_2},w_2),({\cal T}_3,{\underline{y}},\overrightarrow{\sigma_3},w_3)}^{z;y\enspace ^{-1}}}   \\[0.4cm]  
\theta^{z;y}_{({\cal T}_1,x,\overrightarrow{\sigma_1},w_1),({\cal T}_2,\underline{z},\overrightarrow{\sigma_2},w_2),({\cal T}_3,{\underline{y}},\overrightarrow{\sigma_3},w_3)}\,\,|\,\, \chi\circ\chi\,\,|\,\, \chi\,{_z\bo_y}\,\chi\end{array} \right. $}\end{center}
by specifying typing rules such as:
\begin{center}
\mybox{ 
\begin{tabular}{c}\\[-0.3cm]

{\small $\displaystyle\frac{\substack{{\cal T}=\{\{{\cal T}_1\,(z\underline z)\,{\cal T}_2\}\,(y\underline{y})\,{\cal T}_3\}\quad y\in {\it FV}({\cal T}_2)\quad x\in X\cap {\it FV}({\cal T}_1)\\[0.1cm] \overrightarrow{\sigma_1}\in\Sigma({\cal T}_1,x)\quad \overrightarrow{\sigma_2}\in\Sigma({\cal T}_2,\underline{z})\quad \overrightarrow{\sigma_3}\in\Sigma({\cal T}_3,\underline{y})}}{\beta^{z;y}_{({\cal T}_1,x,\overrightarrow{\sigma_1},w_1),({\cal T}_2,\underline{z},\overrightarrow{\sigma_2},w_2),({\cal T}_3,{\underline{y}},\overrightarrow{\sigma_3},w_3)}:({\cal T},x,\overrightarrow{\sigma_1\!\cdot\!\sigma_2\!\cdot\!\sigma_3},(w_1w_2)w_3\,)\rightarrow ({\cal T},x,\overrightarrow{\sigma_1\!\cdot\!\sigma_2\!\cdot\!\sigma_3},w_1(w_2w_3)\,)}$}\\[0.9cm]

{\small $\displaystyle\frac{\chi_1:({\cal T}_1,x,\overrightarrow{\sigma_1},w_1)\rightarrow ({\cal T}_1,x,\overrightarrow{\sigma_1},{w'}_1)\enspace \chi_2:({\cal T}_2,y,\overrightarrow{\sigma_2},w_2)\rightarrow ({\cal T}_2,y,\overrightarrow{\sigma_2},{w'}_2)\enspace z\in{\it FV}({\cal T}_1) \enspace z\neq x}{\chi_1 \, {_z\bo_y}\, \chi_2: (\{{\cal T}_1\,(zy)\,{\cal T}_2\},x,\overrightarrow{\sigma_1\!\cdot\!\sigma_2},w_1w_2) \rightarrow (\{{\cal T}_1\,(zy)\,{\cal T}_2\},x,\overrightarrow{\sigma_1\!\cdot\!\sigma_2},{w'}_1{w'}_2)}$} 
\end{tabular}
 }
\end{center}
 \subsubsection{The interpretation of ${\tt _{sk}{r\underline{T}}}^{+}_{\underline{\EuScript C}}$ in  ${\EuScript O}_{\EuScript C}$}
In order to define the interpretation of ${\tt _{sk}{r\underline{T}}}^{+}_{\underline{\EuScript C}}$ in ${\EuScript O}_{\EuScript C}$, we first need to ``order the inputs'' of unrooted trees figuring in object terms $({\cal T},x,\overrightarrow{\sigma},w)$ of ${\tt _{sk}{r\underline{T}}}^{+}_{\underline{\EuScript C}}$.

For an unrooted tree ${\cal T}$, a variable $x\in {\it FV}({\cal T})$ and  an element $\overrightarrow{\sigma}=(\sigma_1,\dots,\sigma_n)\in \Sigma_{({\cal T},x)}$,  the {total order} $$\sigma:[\,|{\tt in}_{({\cal T},x)}({\cal T})|\,]\rightarrow {\tt in}_{({\cal T},x)}({\cal T})$$ {on the set of inputs of} ${\cal T}$ {(relative to $x$) induced by} $\overrightarrow{\sigma}$ is defined as follows:
\begin{itemize}
\item[$\diamond$] if $({\cal T},x)=(\{a(x_1,\dots,x_n);{\it id}_{X}\},x_i)$, then  $\sigma=\overrightarrow{\sigma}$,
\item[$\diamond$] if $({\cal T},x)=(\{{\cal T}_1\,(zy)\,{\cal T}_2\},x)$, $x\in {\it FV}({\cal T}_1)$, $|{\tt in}_{({\cal T}_1,x)}({\cal T}_1)|=n$,  $|{\tt in}_{({\cal T}_2,y)}({\cal T}_1)|=m$, $\sigma_1:[n]\rightarrow {\tt in}_{({\cal T}_1,x)}({\cal T}_1)$ is the total order induced by $\overrightarrow{\sigma_1}\in\Sigma_{({\cal T}_1,x)}$, $\sigma_2:[m]\rightarrow {\tt in}_{({\cal T}_2,y)}({\cal T}_2)$ is the total order induced by $\overrightarrow{\sigma_2}\in\Sigma_{({\cal T}_2,y)}$ and $\sigma_1(i)=z$,   then $$\sigma:[n+m-1]\rightarrow {\it FV}({\cal T})\backslash\{x\}$$ is defined by \eqref{sigma_skeletal}.
\end{itemize}

\indent  The interpretation function $$\lceil-\rceil_X^{\tt sk}:{\tt _{sk}{r\underline{T}}}^{+}_{\underline{\EuScript C}}(X)\rightarrow {\EuScript O}_{\EuScript C}(|X|)$$ is defined recursively in the obvious way, e.g.
$$\begin{array}{c}
\lceil\,(\{a(x_1,\dots,x_n);{\it id}_{X}\},x_i,\vec{\sigma},\underline{a})\,\rceil^{\tt sk}_{X\backslash\{x_i\}} = (\{x_1,\dots,x_n\},x_i,\sigma,a)\\[0.25cm]
\lceil(\{{\cal T}_1\,(zy)\,{\cal T}_2\},x,\overrightarrow{\sigma_1\!\cdot\!\sigma_2},w_1w_2)\rceil^{\tt sk}_{X\backslash\{x\}} =\lceil({\cal T}_1,x,\overrightarrow{\sigma_1},w_1)\rceil^{\tt sk}_{X_1\backslash\{x\}}{\circ_{\sigma_1^{-1}(z)}}\,\lceil({\cal T}_2,y,\overrightarrow{\sigma_2},w_2)\rceil^{\tt sk}_{X_2\backslash\{y\}}\\[0.25cm]
\lceil\chi_1 \,{_z\bo_y}\, \chi_2\rceil^{\tt sk}_{X\backslash\{x\}}=\lceil \chi_1 \rceil^{\tt sk}_{X_1\backslash\{x\}}\circ_{\sigma_1^{-1}(z)}\lceil \chi_2 \rceil^{\tt sk}_{X_2\backslash\{y\}}\;,
\end{array}$$ 
where it is assumed that every total order $\sigma$ (resp. $\sigma_i$) is induced by $\overrightarrow{\sigma}$ (resp. $\overrightarrow{\sigma_i}$).
\subsubsection{The third reduction}
In what follows, we shall denote with ${\tt r\underline{T}}_{\underline{\EuScript C}}^+(X, x, {\cal T})$ the subclass   of ${\tt r\underline{T}}_{\underline{\EuScript C}}^+(X)$ determined by the rooted tree $({\cal T},x)$ (i.e.,  by the object terms whose first two components are given by $({\cal T},x)$   and by the arrow terms among them). 
 We define the family of third reduction functions
$${\tt Red_3}(X,x,{\cal T},\overrightarrow{\sigma}):{\tt r\underline{T}}_{\underline{\EuScript C}}^+(X,x, {\cal T})\rightarrow {\tt _{sk}{r\underline{T}_{\underline{\EuScript C}}}}^+(X),$$ where  $x\in X$, ${\cal T}$ is an unrooted tree such that ${\it FV}({\cal T})=X$ and $\overrightarrow{\sigma}\in \Sigma_{({\cal T},x)}$, as follows.\\[0.1cm]
\indent For  object terms of ${\tt r\underline{T}}_{\underline{\EuScript C}}^+(X)$, we set$${\tt Red_3}(X,x,{\cal T},\overrightarrow{\sigma})(({\cal T},x,w))=({\cal T},x,\overrightarrow{\sigma},w).$$ For an arrow term  $\chi$ of ${\tt r\underline{T}}_{\underline{\EuScript C}}^+(X)$, ${\tt Red_3}(X,x,{\cal T},\overrightarrow{\sigma})(\chi)$ is defined recursively in the obvious way, e.g.:  
\begin{itemize}
\item[$\diamond$]  ${\tt Red_3}(X,x,\{\{{\cal T}_1\,(z\underline z)\,{\cal T}_2\}\,(y\underline{y})\,{\cal T}_3\},\overrightarrow{\sigma_1\!\cdot\!\sigma_2\!\cdot\!\sigma_3})(\beta^{z;y}_{({\cal T}_1,x,w_1),({\cal T}_2,\underline{z},w_2),({\cal T}_3,\underline{y},w_3)})=$\\[0.1cm]
\phantom{aaaaaaaaaaaaaaaaaaaaaaaaaaaaaaaaaaaaaa}$\beta^{\sigma_1^{-1}(z);\sigma_2^{-1}(y)}_{({\cal T}_1,x,\overrightarrow{\sigma_1},w_1),({\cal T}_2,\underline{z},\overrightarrow{\sigma_2},w_2),({\cal T}_3,\underline{y},\overrightarrow{\sigma_3},w_3)}$,
\item[$\diamond$]  if $\chi=\chi_1\, {_z\bo_y}\, \chi_2$, where $\chi_1:({\cal T}_1,x,w_1)\rightarrow ({\cal T}_1,x,{w'}_1)$ and $\chi_2:({\cal T}_2,y,w_2)\rightarrow ({\cal T}_2,y,{w'}_2)$, and if $\overrightarrow{\sigma_1}\in \Sigma_{({\cal T}_1,x)}$ and $\overrightarrow{\sigma_2}\in \Sigma_{({\cal T}_2,y)}$,  then    $${\tt Red_3}(X,x,\{{\cal T}_1\,(zy)\,{\cal T}_2\},\overrightarrow{\sigma_1\!\cdot\!\sigma_2})(\chi_1\,{_z\bo_y}\,\chi_2)=\phantom{aaaaaaasssssssssaaassssaa}$$ $$\phantom{aaaaassssaa}{\tt Red_3}(X_1,x,{\cal T}_1,\overrightarrow{\sigma_1})(\chi_1)\,{_{\sigma^{-1}_1(z)}\bo_{\sigma^{-1}_2(y)}}\,{\tt Red_3}(X_2,y,{\cal T}_2,\overrightarrow{\sigma_2})(\chi_2).$$
\end{itemize}
\begin{rem}
The type of the third reduction ${\tt Red_3}(X,x,{\cal T},\overrightarrow{\sigma})(\chi)$ of an arrow term $\chi:({\cal T},x,u)\rightarrow ({\cal T},x,v)$ is $${\tt Red_3}(X,x,{\cal T},\overrightarrow{\sigma})(\chi):{\tt Red_3}(X,x,{\cal T},\overrightarrow{\sigma})(({\cal T},x,u))\rightarrow {\tt Red_3}(X,x,{\cal T},\overrightarrow{\sigma})(({\cal T},x,v)).$$ Therefore, the third reduction of a pair of arrow terms of the {same type} in ${\tt r\underline{T}}^+_{\underline{\EuScript C}}(X)$ is a pair of arrow terms of the {same type} in ${\tt _{sk}{r\underline{T}_{\underline{\EuScript C}}}}^+(X)$.
\end{rem}
\begin{rem}
As opposed to second reduction functions ${\tt Red_2}(X,x)$ (see Remark \ref{notasection}), each third reduction function ${\tt Red_3}(X,x,{\cal T},\overrightarrow{\sigma})$ is a section of the function $F^{\tt sk}_{\underline{\EuScript C}}(X) :  {\tt _{sk}{r\underline{T}_{\underline{\EuScript C}}}}^+(X)\rightarrow {\tt r\underline{T}}_{\underline{\EuScript C}}^+(X,x, {\cal T})$,  defined in the obvious way by forgetting the skeletalization data.
\end{rem}
\begin{thm}\label{t3} For an arbitrary object term $({\cal T},x,w)$  and an arbitrary arrow term $\chi$ of ${\tt r\underline{T}}_{\underline{\EuScript C}}^+(X)$, the equalities  $$\lceil {\tt Red_3}(X,x,{\cal T},\overrightarrow{\sigma})(({\cal T},x,w)) \rceil^{\tt sk}_{X\backslash\{x\}}=(X,x,\sigma, \lceil ({\cal T},x,w) \rceil_X)$$ and  
$$\lceil {\tt Red_3}(X,x,{\cal T},\overrightarrow{\sigma})(\chi)\rceil^{\tt sk}_{X\backslash\{x\}}=(X,x,\sigma,\lceil \chi \rceil_X),$$ in which 
 the total order $\sigma$ is induced from $\overrightarrow{\sigma}$, 
  hold in ${\EuScript O}_{\EuScript C}(|X\backslash\{x\}|)$.
\end{thm}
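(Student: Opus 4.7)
The plan is to prove the two equalities by simultaneous structural induction, the first on the object term $({\cal T},x,w)$ and the second on the arrow term $\chi$, exploiting the fact that ${\tt Red_3}$, $\lceil-\rceil^{\tt sk}$ and $\lceil-\rceil$ are all defined by recursion on exactly the same tree decomposition $\{{\cal T}_1\,(zy)\,{\cal T}_2\}$, $w=w_1w_2$, with the splitting $\overrightarrow{\sigma}=\overrightarrow{\sigma_1\!\cdot\!\sigma_2}$. Before running the induction I would record a small bookkeeping lemma that says: if $\sigma_1,\sigma_2,\sigma$ are the total orders induced from $\overrightarrow{\sigma_1},\overrightarrow{\sigma_2},\overrightarrow{\sigma}$ as in the definitions of \S2.6, then $\sigma$ is obtained from $\sigma_1$ and $\sigma_2$ by the formula \eqref{sigma_skeletal} at position $i=\sigma_1^{-1}(z)$. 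This is a direct unfolding of the two recursive definitions and is really the content of the whole theorem.

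For the object-term equality, the base case $({\cal T},x,w)=(\{a(x_1,\dots,x_n);{\it id}_X\},x_i,\underline{a})$ reduces both sides to $(X,x_i,\sigma,a)$. In the induction step with $({\cal T},x,w)=(\{{\cal T}_1\,(zy)\,{\cal T}_2\},x,w_1w_2)$, unfolding $\lceil-\rceil^{\tt sk}$ on the left gives $\lceil({\cal T}_1,x,\overrightarrow{\sigma_1},w_1)\rceil^{\tt sk}\circ_{\sigma_1^{-1}(z)}\lceil({\cal T}_2,y,\overrightarrow{\sigma_2},w_2)\rceil^{\tt sk}$, which by the inductive hypothesis equals $(X_1,x,\sigma_1,\lceil({\cal T}_1,x,w_1)\rceil_{X_1})\circ_{\sigma_1^{-1}(z)}(X_2,y,\sigma_2,\lceil({\cal T}_2,y,w_2)\rceil_{X_2})$; by the bookkeeping lemma and the definition of $\circ_i$ in ${\EuScript O}_{\EuScript C}$, this is exactly $(X,x,\sigma,\lceil({\cal T},x,w)\rceil_X)$.

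For the arrow-term equality I treat each constructor separately. The identity case follows from the object-term statement. The cases $\beta^{z;y}$ and $\beta^{z;y\,^{-1}}$ match on the nose once one observes that $\beta^{\sigma_1^{-1}(z);\sigma_2^{-1}(y)}$ in ${\EuScript O}_{\EuScript C}$ is defined to be the quadruple whose $f$-component is exactly $\beta^{z,\underline{z};y,\underline{y}}_{-,-,-}$ in ${\EuScript C}$, with the skeletal indices $\sigma_1^{-1}(z)$ and $\sigma_2^{-1}(y)$ supplied by the definition of ${\tt Red_3}$. The case $\theta^{z;y}$ is analogous but invokes the definition of $\theta$ in ${\EuScript O}_{\EuScript C}$ in terms of $\vartheta$ of ${\EuScript C}$, together with the fact that $\lceil-\rceil$ is precisely the $\beta\vartheta$-interpretation (so that $\theta$ on the syntactic side is interpreted by $\vartheta$). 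The sequential and partial composition cases follow from the functoriality of $\circ_i$ and of ordinary composition in the category ${\EuScript O}_{\EuScript C}(n)$, combined with the inductive hypotheses.

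The main obstacle is the index bookkeeping: matching the position at which the operadic composition $\circ_i$ is performed in ${\EuScript O}_{\EuScript C}$ with the label $\sigma_1^{-1}(z)$ assigned by the third reduction to the grafting half-edge $z$, and verifying that the concatenation $\overrightarrow{\sigma_1\!\cdot\!\sigma_2}$ of skeletalisations induces, via \eqref{sigma_skeletal}, the same total order $\sigma$ that the skeletal composition produces. Once this bookkeeping is in place, each inductive step collapses to an application of the corresponding defining equation of ${\EuScript O}_{\EuScript C}$, and no coherence information beyond the definitions of the $\beta$ and $\theta$ components of ${\EuScript O}_{\EuScript C}$ is needed.
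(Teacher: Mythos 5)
Your proposal is correct and follows essentially the same route as the paper, which proves the first equality by induction on the (proof of) $({\cal T},x)$-admissibility of $w$ (i.e.\ on the tree decomposition you use) and the second by structural induction on $\chi$; the paper leaves the details to the reader, and your bookkeeping lemma identifying the induced total order with the one produced by \eqref{sigma_skeletal} at position $\sigma_1^{-1}(z)$ is precisely the content that makes each inductive step collapse to a defining equation of ${\EuScript O}_{\EuScript C}$.
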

\begin{proof}  Easy,  by induction on the proof of the $({\cal T},x)$-admissibility of $w$ (for the first equality), and   by induction on the structure of $\chi$ (for the second equality). 
\end{proof}
The following result is a direct consequence of Theorem \ref{t3}.
\begin{cor}\label{vazno3}
For     arrow terms $\chi_1$ and $\chi_2$ of the same type in ${\tt r\underline{T}}_{\underline{\EuScript C}}^+(X)$, the equality  $\lceil\chi_1\rceil_X=\lceil\chi_1\rceil_X$ follows from the equality $\lceil {\tt Red_3}(X,x,{\cal T},\overrightarrow{\sigma})(\chi_1)\rceil^{\tt sk}_{X\backslash\{x\}}=\lceil {\tt Red_3}(X,x,{\cal T},\overrightarrow{\sigma})(\chi_2)\rceil^{\tt sk}_{X\backslash\{x\}}$.
\end{cor}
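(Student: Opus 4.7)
The plan is to derive this directly from Theorem \ref{t3}, which was stated precisely to make the third reduction ``faithful'' in the sense we need. Since $\chi_1$ and $\chi_2$ are assumed to be arrow terms of the same type in ${\tt r\underline{T}}_{\underline{\EuScript C}}^+(X)$, their sources (and targets) share the same underlying rooted tree $({\cal T},x)$, so for any fixed choice of $\overrightarrow{\sigma}\in\Sigma({\cal T},x)$ both ${\tt Red_3}(X,x,{\cal T},\overrightarrow{\sigma})(\chi_1)$ and ${\tt Red_3}(X,x,{\cal T},\overrightarrow{\sigma})(\chi_2)$ are well-defined arrow terms of ${\tt _{sk}{r\underline{T}_{\underline{\EuScript C}}}}^+(X)$ of the same type.

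Applying the second equality of Theorem \ref{t3} to each $\chi_i$, we obtain
$$\lceil {\tt Red_3}(X,x,{\cal T},\overrightarrow{\sigma})(\chi_i)\rceil^{\tt sk}_{X\backslash\{x\}}=(X,x,\sigma,\lceil\chi_i\rceil_X),\qquad i=1,2,$$
where $\sigma$ is the total order on $X\backslash\{x\}$ induced by $\overrightarrow{\sigma}$. The hypothesis asserts that the two left-hand sides coincide, so the two right-hand quadruples coincide in ${\EuScript O}_{\EuScript C}(|X\backslash\{x\}|)$. Their first three components $(X,x,\sigma)$ agree by construction, and since morphisms of ${\EuScript O}_{\EuScript C}$ with distinct ambient data $(X,x,\sigma)$ live in disjoint hom-sets, equality of the quadruples reduces to equality of their fourth components, yielding $\lceil\chi_1\rceil_X=\lceil\chi_2\rceil_X$ in ${\EuScript C}(X)$. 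There is no real obstacle: the work has already been done in defining ${\tt Red_3}$ and in proving Theorem \ref{t3}, and the corollary is merely the observation that the interpretation in the skeletal setting records the non-skeletal interpretation as its fourth coordinate, so equalities of interpretations transfer back along ${\tt Red_3}$.
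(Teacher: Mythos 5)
Your argument is exactly the one the paper intends: the corollary is stated there as "a direct consequence of Theorem \ref{t3}", and your proposal simply spells out that consequence by applying the second equality of that theorem to each $\chi_i$ and comparing fourth components of the resulting quadruples. Correct, and essentially identical to the paper's (implicit) proof.
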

 
\subsection{The proof of the coherence theorem}\label{cohe}
We finally assemble the three reductions, by using the two invariance properties common to all three reductions:  by reducing a pair  of arrow terms of the same type, 
\begin{itemize}
\item[1.]  the result is always   a pair of arrow terms of the same type, and
\item[2.] the equality of interpretations of the two resulting  arrow terms implies the equality of the interpretations of the respective starting  arrow terms.
\end{itemize}
{\it Proof of Theorem \ref{coherence-theorem}.} 
By Proposition \ref{t1} (first reduction), it is enough to prove the equality $$[{\tt Red_1}(\Phi)]_X=[{\tt Red_1}(\Psi)]_X.$$ By Lemma \ref{delta} and Lemma \ref{int_delta}, the problem translates to showing that  $$\lfloor\Delta_X^{-1}({\tt Red_1}(\Phi))\rfloor_X=\lfloor\Delta_X^{-1}({\tt Red_1}(\Psi))\rfloor_X.$$ By Corollary \ref{roknucuse} (second reduction), this equality follows from the equality $$\lceil {\tt Red_2}(X,x)(\Delta_X^{-1}({\tt Red_1}(\Phi)))\rceil_{X}=\lceil {\tt Red_2}(X,x)(\Delta_X^{-1}({\tt Red_1}(\Psi)))\rceil_{X},$$ where $x\in X$ is arbitrary. By Corollary \ref{vazno3} (third reduction), the above equality holds if, in ${\EuScript O}_{\EuScript C}$, we have  $$\lceil {\tt Red_3}(X,x,{\cal T},\overrightarrow{\sigma})({\tt Red_2}(X,x)(\Delta_X^{-1}({\tt Red_1}(\Phi))))\rceil^{\tt sk}_{X\backslash\{x\}}=\phantom{aaaaaaaaaaaaaaaaaaaaaaaaaa}$$ $$\phantom{aaaaaaaaaaaaaaaaaaaaaaaaaa}\lceil {\tt Red_3}(X,x,{\cal T},\overrightarrow{\sigma})({\tt Red_2}(X,x)(\Delta_X^{-1}({\tt Red_1}(\Psi))))\rceil^{\tt sk}_{X\backslash\{x\}},$$

\noindent where ${\cal T}$ is the unrooted tree figuring in $\Delta_X^{-1}({\tt Red_1}(W_s(\Phi)))$. Finally, the last equality holds by the coherence of ${\EuScript O}_{\EuScript C}$, established in \cite{dp}. $\quad\quad\quad\quad\quad\quad\quad\quad\quad\quad\quad\quad\quad\quad\quad\quad\quad\quad\quad\quad\quad\quad\blacksquare$
\section{Relaxing the equivariance and unit law}\label{s3}
In this section, we extend the notion of categorified entries-only cyclic operad, established in Section \ref{s2}, by additionally relaxing the equivariance axiom  \hyperlink{(EQ)}{\texttt{(EQ)}}, as well as by including units and relaxing the unit axiom. We do this in two stages: starting from Definition \ref{cat} (which does not incorporate units), we first relax \hyperlink{(EQ)}{\texttt{(EQ)}}, and then we endow the resulting structure with units and with two isomorphisms that account for the relaxed unit law. (Clearly, the unit law can also be relaxed while keeping the equivariance  strict; this notion of categorified cyclic operad can easily be deduced from ours.)
\subsection{Relaxing the equivariance}\label{eq_relax}
For bijections $\sigma_1:X'\rightarrow X$, $\sigma_2:Y'\rightarrow Y$ and $\sigma=\sigma_1|^{X\backslash\{x\}}\cup \sigma_2|^{Y\backslash\{y\}}$, operations $f\in{\EuScript C}(X)$ and $g\in {\EuScript C}(Y)$, and elements $x\in X$, $y\in Y$, $x'\in X$ and $y'\in Y$, such that $\sigma_1^{-1}(x)=x'$ and $\sigma^{-1}_2(y)=y'$,  \hyperlink{(EQ)}{\texttt{(EQ)}} is replaced by the existence of an isomorphim $\varepsilon$, called the {\em equivariance isomorphism}, whose respective components
$$\varepsilon^{x,y;x',y'}_{f,g;\sigma}:(f {_x\circ_y} \, g)^{\sigma}\rightarrow f^{\sigma_1}{_{{ x'}}\circ_{y'}}\, g^{\sigma_2}$$ 

\noindent
are natural in $f$ and $g$
 and are subject to the following coherence conditions:\\[0.2cm]
\indent   - \hypertarget{behex}{{\tt ($\beta\varepsilon$-\texttt{hexagon})}}
\begin{center}
\begin{tikzpicture}
\node(a) at (0,0) {\footnotesize $((f {_x\circ_{\underline{x}}} \, g)\, {_y\circ_{\underline{y}}} \, h)^{\sigma}$};
\node(b) at (4.5,0) {\footnotesize  $(f {_x\circ_{\underline{x}}} \, g)^{\sigma_1} {_{y'}\circ_{\underline{y'}}} \, h^{\sigma_2}$};
\node(c) at (11,0) {\footnotesize  $(f^{\sigma_{11}} {_{x'}\circ_{\underline{x'}}} \, g^{{\sigma_{12}}}) {_{y}\circ_{\underline{y'}}} \, h^{\sigma_2}$};
\node(1) at (0,-2) {\footnotesize  $(f {_x\circ_{\underline{x}}} \, (g\, {_y\circ_{\underline{y}}} \, h))^{\sigma}$};
\node(2) at (4.5,-2) {\footnotesize   $f^{\sigma_{11}} {_{x'}\circ_{\underline{x'}}} \, (g\, {_y\circ_{\underline{y}}} \, h)^{\sigma_3}$};
\node(3) at (11,-2) {\footnotesize   $f^{\sigma_{11}} {_{x'}\circ_{\underline{x'}}} \, (g^{{\sigma_{12}}} {_{y'}\circ_{\underline{y'}}} \, h^{\sigma_2})$};
\draw[->] (a)--(b) node [midway,above] {\scriptsize $\varepsilon^{y,\underline{y};y',\underline{y'}}_{f {_x\circ_{\underline{x}}} \, g,h;\sigma}$};
\draw[->] (b)--(c) node [midway,above] {\scriptsize $\varepsilon^{x,\underline{x};x',\underline{x'}}_{f,g;\sigma_1}\,{_{y'}\circ_{\underline{y'}}} \, 1_{h^{\sigma_2}}$};
\draw[->] (a)--(1) node [midway,left] {\scriptsize $(\beta^{x,\underline{x};y,\underline{y}}_{f,g,h})^{\sigma}$};
\draw[->] (1)--(2) node [midway,above] {\scriptsize $\varepsilon^{x,\underline{x};x',\underline{x'}}_{f , g{_y\circ_{\underline{y}}}h;\sigma}$};
\draw[->] (2)--(3) node [midway,above] {\scriptsize $1_{f^{\sigma_{11}}}\,{_{x'}\circ_{\underline{x'}}}\,\varepsilon^{y,\underline{y};y',\underline{y'}}_{g, h;\sigma_3}$};
\draw[->] (c)--(3) node [midway,right] {\scriptsize $\beta^{x,\underline{x};y,\underline{y}}_{f^{\sigma_{11}},g^{\sigma_{12}},h^{\sigma_{2}}}$};
\end{tikzpicture}
\end{center}

\indent   - \hypertarget{gehex}{{\tt ($\gamma\varepsilon$-\texttt{square})}}
\begin{center}
\begin{tikzpicture}
\node(a) at (0,0) {\footnotesize $(f {_x\circ_{y}} \, g)^{\sigma}$};
\node(b) at (4,0) {\footnotesize  $f^{\sigma_1} {_{x'}\circ_{y'}} \, g^{\sigma_2}$};
\node(1) at (0,-2) {\footnotesize  $(g\, {_y\circ_{x}} \, f)^{\sigma}$};
\node(2) at (4,-2) {\footnotesize   $g^{\sigma_2}{_{y'}\circ_{x'}} \, f^{\sigma_1} $};
\draw[->] (a)--(b) node [midway,above] {\scriptsize $\varepsilon^{x,y;x',{y'}}_{f ,g;\sigma}$};
\draw[->] (a)--(1) node [midway,left] {\scriptsize $(\gamma^{x,y}_{f,g})^{\sigma}$};
\draw[->] (1)--(2) node [midway,above] {\scriptsize $\varepsilon^{y,x;y',x'}_{g,f;\sigma}$};
\draw[->] (b)--(2) node [midway,right] {\scriptsize $\gamma^{x',y'}_{f^{\sigma_{1}},g^{\sigma_{2}}}$};
\end{tikzpicture}
\end{center}
The coherence conditions \hyperlink{behex}{{\tt ($\beta\varepsilon$-\texttt{hexagon})}} and \hyperlink{gehex}{{\tt ($\gamma\varepsilon$-\texttt{hexagon})}} replace the conditions  \hyperlink{bs}{{\tt ($\beta\sigma$)}} and  \hyperlink{gs}{{\tt ($\gamma\sigma$)}}  from Definition \ref{cat}, respectively. Finally, the naturality of the equivariance isomorphism replaces the condition   \hyperlink{(EQ-mor)}{{\tt(EQ-mor)}}.  \\[0.2cm]
\indent The   coherence theorem for this notion of categorified entries-only cyclic operad has the same  statement as the original coherence theorem (see \S \ref{coht}), except that the definition of the interpretation function $[[-]]:{\tt Free}_{\underline{\EuScript C}}\rightarrow {\EuScript C}(X)$ is adapted in the following way: we set $$[[{\varepsilon_4}^{x,y;x',y'}_{{\cal W}_1,{\cal W}_2;\sigma}]]_{X}=\varepsilon^{x,y;x',y'}_{[[{\cal W}_1]]_{X_1},[[{\cal W}_2]]_{X_2};\sigma}\quad \mbox{ and } \quad [[{\varepsilon_4}^{x,y;x',y'\,^{_{-1}}}_{{\cal W}_1,{\cal W}_2;\sigma}]]_{X}=\varepsilon^{x,y;x',y'\enspace ^{_{-1}}}_{[[{\cal W}_1]]_{X_1},[[{\cal W}_2]]_{X_2};\sigma}.$$

 The coherence proof   differs from the original coherence proof only in the first reduction. More precisely, although the    first reduction function will be defined in the same way as before, Lemma \ref{eq_int_obj} and Lemma \ref{kh}, and therefore also Proposition \ref{t1}, which relied on strict equivariance, will no longer hold. 
 Instead, we shall proceed like in the second reduction. 
 In order to take  relaxed equivariance into account,  we declare the syntax of {\em the first reduction arrow terms}: 
\begin{center}\vspace{0.1cm}\mybox{
$\displaystyle \Xi::=\left\{\begin{array}{l}
1_{\cal W}\\[0.25cm]  
 {\varepsilon_1}_{\underline{a}}^{\sigma}\,\,|\,\, {\varepsilon_1}_{\underline{a}}^{\sigma \, ^ {_{-1}}}\,\,|\,\, {{\varepsilon}_2}_{\cal W} \,\,|\,\, {{\varepsilon}_2}^{_{-1}}_{\cal W} \,\,|\,\, {\varepsilon_3}_{\cal W}^{\sigma,\tau}\,\,|\,\,{\varepsilon_3}_{\cal W}^{\sigma,\tau\,^{_{-1}}}\,\,|\,\,{\varepsilon_4}^{x,y;x',y'}_{{\cal W}_1,{\cal W}_2;\sigma}\,\,|\,\, {\varepsilon_4}^{x,y;x',y'\,^{_{-1}}}_{{\cal W}_1,{\cal W}_2;\sigma}\\[0.25cm]
\Phi\circ\Phi\,\,|\,\, \Phi \,{_x\bo_y}\,  \Phi,\end{array} \right. $}\vspace{0.1cm}\end{center} 
to which we assign types in the same way as   in  ${\tt Free}_{\underline{\EuScript C}}$. 
By the following two constructions, we express the reduction algorithm on object terms of ${\tt Free}_{\underline{\EuScript C}}$ from \S \ref{fr1} and the equivalence relation $\equiv$ from \S \ref{aux},  in terms of first reduction arrow terms.  
 
\paragraph{The reduction algorithm.} Observe first that, for an arbitrary object term ${\cal W}$ of ${\tt Free}_{\underline{\EuScript C}}$ and a  fixed ${\tt Red_1}({\cal W})$, there exists an arrow term  $\varepsilon_{\cal W}:{\cal W}\rightarrow {\tt Red_1}({\cal W})$ that belongs to $\Xi$ -- such an arrow term is induced from the   algorithm  from \S \ref{fr1}, as the algorithm becomes deterministic by fixing ${\tt Red_1}({\cal W})$. Note that such an arrow term is  not  unique, since, for example,  for $\underline{a}:X$, $\underline{b}:Y$, $x\in X$, $y\in Y$ and bijections $\sigma$, $\sigma_1$ and $\sigma_2$ which are the  identities on $X\backslash\{x\}\cup Y\backslash\{y\}$, $X$ and $Y$, respectively, the    reduction  
$$(\underline{a} \, {_x\bo_y}\, \underline{b})^{{\it id}_{X\backslash\{x\}\cup Y\backslash \{y\}}}\leadsto \underline{a} \, {_x\bo_y}\, \underline{b}$$ can be expressed as  $${\varepsilon_2}_{\underline{a} \, {_x\bo_y}\, \underline{b}},\quad ({\varepsilon_2}_{\underline{a}} \,{_x\bo_y}\, {\varepsilon_2}_{\underline{b}})\circ{\varepsilon_4}^{x,y;x,y}_{\underline{a},\underline{b},{\it id}_{X\backslash\{x\}\cup Y\backslash\{y\}}},\quad \mbox{and} \quad ({\varepsilon_1}^{{\it id}_X}_{\underline{a}} \,{_x\bo_y}\, {\varepsilon_1}^{{\it id}_Y}_{\underline{b}})\circ{\varepsilon_4}^{x,y;x,y}_{\underline{a},\underline{b},{\it id}_{X\backslash\{x\}\cup Y\backslash\{y\}}}.$$
 Nevertheless, we shall suppose that for each object term ${\cal W}$ of ${\tt Free}_{\underline{\EuScript C}}$ and a  fixed ${\tt Red_1}({\cal W})$, a choice of $\varepsilon_{\cal W}:{\cal W}\rightarrow {\tt Red_1}({\cal W})$ has also been fixed - for example, we can prioritize $\varepsilon_2$. We shall refer to $\varepsilon_{\cal W}$ as the first reduction arrow term of ${\cal W}$.

 \paragraph{The equivalence relation $\equiv$.} Suppose  that $U={W}_1\,{_x\bo_y}\,{W}_2$ and $V= {W}_1[\underline{a^{\tau_1}}/\underline{a}] \,{_{x'}\bo_{y'}}\,{W}_2[\underline{b^{\tau_2}}/\underline{b}]$ and suppose that $U\equiv V$ is obtained as in  \S \ref{aux}. We define  the arrow term $$\epsilon_{U,V}:{W}_1\,{_x\bo_y}\,{W}_2\rightarrow {W}_1[\underline{a^{\tau_1}}/\underline{a}] \,{_{x'}\bo_{y'}}\,{W}_2[\underline{b^{\tau_2}}/\underline{b}]$$  as follows\footnote{Attention: the symbol $\epsilon$ used in the specification of the arrow term $\epsilon_{U,V}$ is different than $\varepsilon$.}.  We  note that the codomain of $${\varepsilon_4}^{x,y;x',y'}_{W_1,W_2,{\it id}_{X\backslash\{x\}\cup Y\backslash\{y\}}}\circ {\varepsilon_2}_{{W}_1{_x\bo_y}{W}_2}^{-1}$$ is  of the form $W_1^{\sigma_1} \,{_{x'}\bo_{y'}}\,W_2^{\sigma_2 }$, with  $W_1^{\sigma_1}\leadsto {W}_1[\underline{a^{\tau_1}}/\underline{a}]$ and $W_2^{\sigma_2}\leadsto {W}_2[\underline{b^{\tau_2}}/\underline{b}]$. Then we set 
 $$\epsilon_{U,V}=
 (\varepsilon_{W_1^{\sigma_1}} \,{_{x'}\bo_{y'}}\,\varepsilon_{W_2^{\sigma_2}})\circ
 {\varepsilon_4}^{x,y;x',y'}_{W_1,W_2,{\it id}_{X\backslash\{x\}\cup Y\backslash\{y\}}}\circ {\varepsilon_2}_{{W}_1{_x\bo_y}{W}_2}^{-1}\:,$$
 where  $\varepsilon_{W_1^{\sigma_1}}$ and $\varepsilon_{W_2^{\sigma_2}}$ are the first reduction arrow terms of $W^{{\sigma}_1}_1$ and $W^{{\sigma}_2}_2$, tailored to the respective normal forms ${W}_1[\underline{a^{\tau_1}}/\underline{a}]$ and ${W}_2[\underline{b^{\tau_2}}/\underline{b}]$.
We say that $\epsilon_{U,V}$ is the witness of the equivalence $U\equiv V$. By the inductive character of the definition of $\equiv$, the definition of witnesses extends easily to  arbitrary pairs $U,V$ such that $U\equiv V$.  Note that, having fixed the first reduction arrow terms of all object terms of  ${\tt Free}_{\underline{\EuScript C}}$, the witness $\epsilon_{U,V}$ of an equivalence $U\equiv V$ is unique.  
\\[0.1cm]
\indent Proposition \ref{t1} of the first reduction is then adapted as follows:
\begin{thm}\label{t4}
 For an arbitrary   arrow term   $\Phi:{\cal U}\rightarrow {\cal V}$ of ${\tt Free}_{\underline{\EuScript C}}$,  the following equality of interpretations holds: $$[[\epsilon_{{\tt Red_1}({\cal V}),V}]]_X\circ[[\varepsilon_{\cal V}]]_X\circ[[\Phi]]_X=[{\tt Red_1}(\Phi)]_X\circ [[\varepsilon_{\cal U}]]_X,$$ 
  where  $V$ is the target   of ${\tt Red_1}(\Phi)$,
 $\epsilon: {\tt Red_1}({\cal V})\rightarrow V$ is the witness  of the equivalence ${\tt Red_1}({\cal V})\equiv V$, and $\varepsilon_{\cal V}$ and $\varepsilon_{\cal U}$ are the first reduction arrow terms of ${\cal V}$ and ${\cal U}$, respectively. 
\end{thm}
The soundness of the first reduction in the coherence proof   follows directly by Theorem \ref{t4}:
\begin{cor}\label{cor4}
For arrow terms $\Phi_1$ and $\Phi_2$ of the same type in ${\tt Free}_{\underline{\EuScript C}}$, the equality $[[\Phi_1]]_X=[[\Phi_2]]_X$ follows from the equality $[{\tt Red}_1(\Phi_1)]_X=[{\tt Red}_1(\Phi_2)]_X$.
\end{cor}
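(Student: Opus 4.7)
The corollary is essentially a one-line consequence of Theorem \ref{t4}; the plan is to extract from the latter both identities $[[\varepsilon(\Phi_i,\varepsilon_{\cal U})]]_X\circ [[\Phi_i]]_X=[{\tt Red}_1(\Phi_i)]_X\circ [[\varepsilon_{\cal U}]]_X$ for $i=1,2$, recognise that the prefactor $[[\varepsilon(\Phi_i,\varepsilon_{\cal U})]]_X$ is in fact independent of $i$ and is an isomorphism in ${\EuScript C}(X)$, and then cancel it on the left.

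First I would invoke Lemma \ref{vazno}: since $\Phi_1,\Phi_2:{\cal U}\rightarrow {\cal V}$ share a type in ${\tt Free}_{\underline{\EuScript C}}$, their first reductions ${\tt Red}_1(\Phi_1)$ and ${\tt Red}_1(\Phi_2)$ share a type in $\underline{\tt Free}_{\underline{\EuScript C}}$, so in particular their common target is some single $V$. The ``moreover'' clause of Theorem \ref{t4} then asserts that $\varepsilon(\Phi_i,\varepsilon_{\cal U})=\epsilon\circ\varepsilon_{\cal V}$, where $\epsilon:{\tt Red}_1({\cal V})\rightarrow V$ is the witness of ${\tt Red}_1({\cal V})\equiv V$ and depends only on the pair $({\cal V},V)$. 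Hence this arrow term is the same for $i=1$ and $i=2$; call it $\xi:{\cal V}\rightarrow V$, so that Theorem \ref{t4} supplies $[[\xi]]_X\circ [[\Phi_i]]_X=[{\tt Red}_1(\Phi_i)]_X\circ [[\varepsilon_{\cal U}]]_X$ for both indices.

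The hypothesis $[{\tt Red}_1(\Phi_1)]_X=[{\tt Red}_1(\Phi_2)]_X$ makes the right-hand sides coincide, so $[[\xi]]_X\circ [[\Phi_1]]_X=[[\xi]]_X\circ [[\Phi_2]]_X$. To finish, I would observe that $[[\xi]]_X$ is an isomorphism in ${\EuScript C}(X)$: unpacking the definition, $\xi$ is built out of the $\varepsilon_i$-constructors and their formal inverses, each of which is interpreted either as an identity or as the equivariance isomorphism $\varepsilon$ introduced in \S\ref{eq_relax}; composites of isomorphisms remain isomorphisms. Cancelling $[[\xi]]_X$ on the left yields $[[\Phi_1]]_X=[[\Phi_2]]_X$.

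The one point that deserves attention is the strictly syntactic equality $\varepsilon(\Phi_1,\varepsilon_{\cal U})=\varepsilon(\Phi_2,\varepsilon_{\cal U})$ (needed so that a single $[[\xi]]_X$ appears on both sides); but the stated formula reduces this arrow term to data determined entirely by $({\cal V},V)$ and by the fixed choice $\varepsilon_{\cal V}$, with no dependence on $\Phi_i$. Thus no real obstacle arises, and the corollary follows at once from Theorem \ref{t4} together with Lemma \ref{vazno}.
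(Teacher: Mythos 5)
Your proposal is correct and is exactly the argument the paper intends: the corollary is stated as following "directly by Theorem \ref{t4}", and your write-up simply supplies the omitted details — Lemma \ref{vazno} to get a common target $V$, the "moreover" clause to see that $\varepsilon(\Phi_1,\varepsilon_{\cal U})=\varepsilon(\Phi_2,\varepsilon_{\cal U})$, and invertibility of its interpretation (a composite of identities and components of the equivariance isomorphism) to cancel on the left. No gap.
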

 
We can  weaken even further the axioms by turning the action of symmetric groups into a pseudo-action. In addition to $\varepsilon$, one requires the existence of natural isomorphisms $${\varepsilon_2}_{f}:f^{\it id}\rightarrow f \quad \mbox{ and }\quad {\varepsilon_3}^{\sigma,\tau}_{f}:(f^{\sigma})^{\tau}\rightarrow f^{\sigma\circ\tau},$$ satisfying the following coherence conditions:
$$\begin{array}{lll}

\varepsilon_3\circ\varepsilon_3 = \varepsilon_3\circ (\varepsilon_3)^\upsilon && (\mbox{starting from}\; ((f^\sigma)^\tau)^\upsilon)\\
\varepsilon_3 =(\varepsilon_2)^\sigma  && (\mbox{starting from}\; (f^{\it id})^\sigma)\\
\varepsilon_3 =\varepsilon_2  && (\mbox{starting from}\; (f^\sigma)^{\it id}),\\
\end{array}$$
 which are the well-known 
coherence conditions for pseudo-functors (see e.g. \cite[Definition 7.5.1]{borceux}).  The treatment above can be repeated without difficulty with ${\varepsilon_2}_{\cal W}$ and ${\varepsilon_3}^{\sigma,\tau}_{\cal W}$ now interpreted as these stipulated coherent isomorphisms.
\subsection{Adding the units and relaxing the unit law}
For categorified entries-only cyclic operads with units,  for each two-elements set $\{x,y\}$, we require the existence of a distinguished operation ${\it id}_{x,y}\in {\EuScript C}(\{x,y\})$, called the {\em identity} or {\em unit  indexed by $\{x,y\}$}. It is understood that ${\it id}_{y,x}={\it id}_{x,y}$. In accordance with this convention, we also require that ${\it id}^{\sigma_1}_{x,y}={\it id}^{\sigma_2}_{x,y}$, where $\sigma_1,\sigma_2:\{u,v\}\rightarrow\{x,y\}$ are defined by $\sigma_1(x)=u$, $\sigma_1(y)=v$, $\sigma_2(x)=v$ and $\sigma_2(y)=u$. Finally, we endow the structure with  the family of natural isomorphisms  $$\iota_{(-),{\it id}_{y,z}}^{x,y} :(-)\, {_x\circ _y} \, {\it id}_{y,z}\rightarrow (-)^{\sigma},$$ where $\sigma$ renames $x$ to $z$, and $$\nu^{u,v}_{x,y}: (-)^{\sigma}\circ{\bf 1}_{\{x,y\}} \rightarrow {\bf 1}_{\{u,v\}},$$ where $\sigma(x)=u$ and $\sigma(y)=v$ and the functor ${\bf 1}_{\{x,y\}}:{\bf 1}\rightarrow {\EuScript C}(\{x,y\})$ ``picks out'' the unit indexed by $\{x,y\}$, whose respective components 
$$\iota^{x,y}_{f,{\it id}_{y,z}}:f\, {_x\circ _y} \, {\it id}_{y,z}\rightarrow f^{\sigma}\quad\quad \mbox{and}\quad\quad \nu^{u,v}_{x,y}:{\it id}^{\sigma}_{x,y}\rightarrow {\it id}_{u,v}$$   satisfy the following coherence conditions: \\[0.2cm]

\noindent\begin{tabular}{    m{8cm}   m{8cm}  }
-  \hypertarget{begaihex}{{\tt ($\beta\gamma\iota\varepsilon$-\texttt{hexagon-1})}} & - \hypertarget{bedgaihex}{{\tt ($\beta\gamma\iota\varepsilon$-\texttt{hexagon-2})}}
\\
\resizebox{7.45cm}{!}{\begin{tikzpicture}
\node(a) at (0,0) {\footnotesize $({\it id}_{x,z}\, {_x\circ _{\underline{x}}} \, f)\, {_y\circ _{\underline{y}}} \, g$};
\node(b) at (4.5,0) {\footnotesize  ${\it id}_{x,z}\, {_x\circ _{\underline{x}}} \, (f\, {_y\circ _{\underline{y}}} \, g) $};
\node(1) at (0,-2.25) {\footnotesize  $(f\, {_{\underline{x}}\circ _{{x}}} \, {\it id}_{x,z})\, {_y\circ _{\underline{y}}} \, g$};
\node(3) at (0,-4.5) {\footnotesize  $f^{\sigma}\, {_y\circ _{\underline{y}}} \, g$};
\node(2) at (4.5,-2.25) {\footnotesize   $(f\, {_y\circ _{\underline{y}}} \, g) \, {_{\underline{x}}\circ _{x}}  \, {\it id}_{x,z}$};
\node(4) at (4.5,-4.5) {\footnotesize   $(f\, {_y\circ _{\underline{y}}} \, g)^{\tau}$};
\draw[->] (a)--(b) node [midway,above] {\scriptsize $\beta^{x,\underline{x};y,\underline{y}}_{{\it id}_{x,z},f,g}$};
\draw[->] (a)--(1) node [midway,left] {\scriptsize $\gamma^{x,\underline{x}}_{{\it id}_{x,z}, f}\, {_y\circ _{\underline{y}}} \, 1_{g}$};
\draw[->] (1)--(3) node [midway,left] {\scriptsize $\iota^{x,\underline{x}}_{f,{\it id}_{x,z}}\, {_y\circ _{\underline{y}}} \, 1_{g}$};
\draw[->] (4)--(3) node [midway,above] {\scriptsize ${\varepsilon^{y,\underline{y};y,\underline{y}}_{f,g;\tau}}$};
\draw[->] (b)--(2) node [midway,right] {\scriptsize $\gamma^{x,\underline{x}}_{{\it id}_{x,z},f\, {_y\circ _{\underline{y}}} \, g}$};
\draw[->] (2)--(4) node [midway,right] {\scriptsize $\iota^{\underline{x},x}_{f\, {_y\circ _{\underline{y}}} \, g,{\it id}_{x,z}}$};
\end{tikzpicture}} &  
\resizebox{7.6cm}{!}{\begin{tikzpicture}
\node(a) at (0,0) {\footnotesize $(f\, {_x\circ _{\underline{x}}} \, {\it id}_{\underline{x},y})\, {_y\circ _{\underline{y}}} \, g  $};
\node(b) at (4.5,0) {\footnotesize  $f\, {_x\circ _{\underline{x}}} \, ({\it id}_{\underline{x},y}\, {_y\circ _{\underline{y}}} \, g ) $};
\node(1) at (0,-2.25) {\footnotesize  $f^{\sigma} {_y\circ _{\underline{y}}} \, g$};
\node(3) at (0,-4.5) {\footnotesize  $f {_x\circ _{\underline{y}}} \, g$};
\node(4) at (4.5,-4.5) {\footnotesize  $f {_x\circ _{\underline{x}}} \, g^{\tau}$};
\node(2) at (4.5,-2.25) {\footnotesize   $f\, {_x\circ _{\underline{x}}} \,(g \, {_{\underline{y}}\circ _{y}} \, {\it id}_{\underline{x},y})$};
\draw[->] (a)--(b) node [midway,above] {\scriptsize $\beta^{x,\underline{x};y,\underline{y}}_{f,{\it id}_{\underline{x},y},g}$};
\draw[->] (3)--(4) node [midway,above] {\scriptsize $\varepsilon^{x,\underline{y};x,\underline{x}}_{f,g;{\it id}}$};
\draw[->] (a)--(1) node [midway,left] {\scriptsize $\iota^{x,\underline{x}}_{f,{\it id}_{\underline{x},y}}\, {_y\circ _{\underline{y}}} \, 1_{g}$};
\draw[->] (1)--(3) node [midway,left] {\scriptsize ${\varepsilon^{x,\underline{y};y,\underline{y}}_{f,g;{\it id}}}^{-1}$};
\draw[->] (b)--(2) node [midway,right] {\scriptsize $1_{f}\, {_x\circ _{\underline{x}}} \, \gamma^{y,\underline{y}}_{{\it id}_{\underline{x},y},g}$};
\draw[->] (2)--(4) node [midway,right] {\scriptsize $1_{f}\, {_x\circ _{\underline{x}}} \, \iota^{\underline{y},y}_{g,{\it id}_{\underline{x},y}}$};
\end{tikzpicture}}\end{tabular}

\vspace{0.3cm}
\noindent \begin{tabular}{    m{8cm}   m{8cm}  }
  - \hypertarget{begaihex}{{\tt ($\beta\iota\varepsilon$-\texttt{square})}} &  - \hypertarget{begaihdddex}{{\tt ($\gamma\iota$-\texttt{square})}}   \\  

\resizebox{7.5cm}{!}{\begin{tikzpicture}
\node(a) at (0,0) {\footnotesize $(f\, {_x\circ _{\underline{x}}} \, g)\, {_y\circ _{\underline{y}}} \, {\it id}_{\underline{y},z} $};
\node(b) at (4.5,0) {\footnotesize  $f\, {_x\circ _{\underline{x}}} \, (g\, {_y\circ _{\underline{y}}} \, {\it id}_{\underline{y},z}) $};
\node(1) at (0,-2.5) {\footnotesize  $(f\, {_x\circ _{\underline{x}}} \, g)^{\sigma}$};
\node(2) at (4.5,-2.5) {\footnotesize   $f\, {_x\circ _{\underline{x}}} \,g^{\tau}$};
\draw[->] (a)--(b) node [midway,above] {\scriptsize $\beta^{x,\underline{x};y,\underline{y}}_{f,g,{\it id}_{\underline{y},z}}$};
\draw[->] (a)--(1) node [midway,left] {\scriptsize $\iota^{y,\underline{y}}_{f\, {_x\circ _{\underline{x}}} \, g,{\it id_{\underline{y},z}}}$};
\draw[->] (1)--(2) node [midway,above] {\scriptsize ${\varepsilon^{x,\underline{x};x,\underline{x}}_{f,g;\sigma}}$};
\draw[->] (b)--(2) node [midway,right] {\scriptsize $1_{f}\, {_x\circ _{\underline{x}}} \, \iota^{y,\underline{y}}_{g,{\it id}_{\underline{y},z}}$};
\end{tikzpicture}} &  
\resizebox{7.5cm}{!}{\begin{tikzpicture}
\node(a) at (0,0) {\footnotesize $(f\, {_x\circ _{\underline{x}}} \, {\it id}_{\underline{x},y})\, {_y\circ _{\underline{y}}} \, g  $};
\node(b) at (4.5,0) {\footnotesize  $g \, {_{\underline{y}}\circ _{y}} \,  (f\, {_x\circ _{\underline{x}}} \, {\it id}_{\underline{x},y})$};
\node(1) at (0,-2.5) {\footnotesize  $f^{\sigma} {_y\circ _{\underline{y}}} \, g $};
\node(2) at (4.5,-2.5) {\footnotesize   $g \,{_{\underline{y}}\circ _{y}} \, f^{\sigma} $};
\draw[->] (a)--(b) node [midway,above] {\scriptsize $\gamma^{y,\underline{y}}_{f {_x\circ _{\underline{x}}} {\it id}_{\underline{x},y},g}$};
\draw[->] (a)--(1) node [midway,left] {\scriptsize $\iota^{y,\underline{y}}_{f\, {_x\circ _{\underline{x}}} \, g,{\it id_{\underline{y},z}}}$};
\draw[->] (1)--(2) node [midway,above] {\scriptsize $\gamma^{y,\underline{y}}_{f^{\sigma},g}$};
\draw[->] (b)--(2) node [midway,right] {\scriptsize $1_{g}\, {_{\underline{y}}\circ _{y}} \, \iota^{x,\underline{x}}_{f,{\it id}_{\underline{x},y}}$};
\end{tikzpicture} }
\end{tabular}

\vspace{0.3cm}
\noindent \begin{tabular}{    m{8cm}   m{8cm}  }
 - \hypertarget{begaihdddexxx}{{\tt ($\varepsilon\iota$-\texttt{square})}} & \indent  - \hypertarget{begaihex}{{\tt ($\iota\varepsilon\nu$-\texttt{square})}} \\
\resizebox{8.25cm}{!}{
\begin{tikzpicture}
\node (b') at (5,-9) {\footnotesize $({f}\, {_z\circ_u}\, {\it id}_{u,q})\,{_x\circ_{\underline{x}}}\, g^{\kappa_2}$};
\node (j) at (0,-9) {\footnotesize $(({f}\, {_z\circ_u}\, {\it id}_{u,q})\,{_x\circ_{\underline{x}}}\, g)^{\tau}$};
\node (k) at (0,-11.5) {\footnotesize $({f}^{\kappa_1}\,{_x\circ_{\underline{x}}}\, g)^{\tau}$};
\node (kk) at (5,-11.5) {\footnotesize ${f}^{\kappa_1}{_x\circ_{\underline{x}}}\, g^{\kappa_2}$};
\draw [->]   (j)--(b') node [midway,above] {\scriptsize $\varepsilon^{x,\underline{x};x,\underline{x}}_{{{f}\! {_z\circ_u}\!{\it id}_{u,q}},g;\tau}$};
\draw [->]   (b')--(kk) node [midway,left] {\scriptsize $\iota^{z,u}_{f,{\it id}_{u,q}}\,{_x\circ_{\underline{x}}}\, 1_{g^{\kappa_2}}$};
\draw [->] (j)--(k) node [midway,left] {\scriptsize $(\iota^{z,u}_{f,{\it id}_{u,q}} \,{_x\circ_{\underline{x}}}\, 1_{g})^{\tau} $};
\draw [->] (k)--(kk) node [midway,above] {\scriptsize ${\varepsilon}^{x,\underline{x};x,\underline{x}}_{f^{\kappa_1},g;\tau}$};
\end{tikzpicture}}
  & \resizebox{7cm}{!}{\begin{tikzpicture}
\node(a) at (0,0) {\footnotesize $(f\, {_x\circ _{y}} \, {\it id}_{y,z})^{\sigma}$};
\node(b) at (4.5,0) {\footnotesize  $f^{\sigma_1}\, {_{x'}\circ _{y'}} \, {\it id}_{y,z}^{\sigma_2} $};
\node(1) at (0,-2.5) {\footnotesize  $f^{\tau}$};
\node(2) at (4.5,-2.5) {\footnotesize   $f^{\sigma_1}\, {_{x'}\circ _{y'}} \, {\it id}_{y',\sigma_2^{-1}(z)}$};
\draw[->] (a)--(b) node [midway,above] {\scriptsize $\varepsilon^{x,y;x',y'}_{f,{\it id}_{y,z};\sigma}$};
\draw[->] (a)--(1) node [midway,left] {\scriptsize $(\iota^{x,y}_{f,{\it id}_{y,z}})^{\sigma}$};
\draw[->] (2)--(1) node [midway,above] {\scriptsize $\iota^{x,y}_{f^{\sigma_1},{\it id}_{y',\sigma_2^{-1}(z)}}$};
\draw[->] (b)--(2) node [midway,left] {\scriptsize $1_{f^{\sigma_1}}\, {_{x'}\circ _{y'}} \, \nu^{y',\sigma_2^{-1}(z)}_{y,z}$};
\end{tikzpicture}}
\end{tabular}

\vspace{0.3cm}
\noindent \begin{tabular}{    m{8cm}   m{8cm}  }
 - \hypertarget{gainupe}{{\tt ($\gamma\iota\nu$-\texttt{pentagon})}} & - \hypertarget{gainudpe}{{\tt ($\iota\nu$-\texttt{pentagon})}} \\
\resizebox{7.5cm}{!}{\begin{tikzpicture}
\node(a) at (0,0) {\footnotesize $  {\it id}_{{x},y}\, {_y\circ _{z}} \,   {\it id}_{{z},u}  $};
\node(b) at (4.5,0) {\footnotesize  $ {\it id}_{{z},u}\, {_z\circ _{y}} \,   {\it id}_{{x},y} $};
\node(1) at (0,-2.5) {\footnotesize  ${\it id}_{{x},y}^{\sigma}$};
\node(3) at (2.25,-4.1) {\footnotesize  ${\it id}_{{x},u}$};
\node(2) at (4.5,-2.5) {\footnotesize   $ {\it id}_{{z},u}^{\tau}$};
\draw[->] (a)--(b) node [midway,above] {\scriptsize $\gamma^{y,z}_{ {\it id}_{{x},y}, {\it id}_{{z},u} }$};
\draw[->] (a)--(1) node [midway,left] {\scriptsize $\iota^{y,z}_{{\it id}_{{x},y},{\it id}_{z,u}}$};
\draw[->] (1)--(3) node [midway,left,yshift=-0.1cm] {\scriptsize $\nu^{x,u}_{x,y}$};
\draw[->] (b)--(2) node [midway,right] {\scriptsize $\iota^{z,y}_{{\it id}_{{z},u},{\it id}_{x,y}}$};
\draw[->] (2)--(3) node [midway,right,yshift=-0.1cm] {\scriptsize $\nu^{z,u}_{x,u}$};
\end{tikzpicture}} & \resizebox{7.5cm}{!}{
\begin{tikzpicture}
\node(a) at (0,0) {\footnotesize $  {\it id}^{\sigma}_{{x},y}\, {_u\circ _{z}} \,   {\it id}_{{z},v}  $};
\node(b) at (4.5,0) {\footnotesize  $ {\it id}_{x,u}\, {_u\circ _{z}} \,   {\it id}_{{z},v}   $};
\node(1) at (0,-2.5) {\footnotesize  ${\it id}_{{x},y}^{\sigma\circ\tau}$};
\node(3) at (2.25,-4.1) {\footnotesize  ${\it id}_{{x},v}$};
\node(2) at (4.5,-2.5) {\footnotesize   $ {\it id}_{{x},u}^{\kappa}$};
\draw[->] (a)--(b) node [midway,above] {\scriptsize $\nu^{x,u}_{x,y} \, {_u\circ _{z}} \,   1_{{\it id}_{{z},v}} $};
\draw[->] (a)--(1) node [midway,left] {\scriptsize $\iota^{u,z}_{{\it id}^{\sigma}_{{x},y},{\it id}_{z,v}}$};
\draw[->] (1)--(3) node [midway,left,yshift=-0.1cm] {\scriptsize $\nu^{x,v}_{x,y}$};
\draw[->] (b)--(2) node [midway,right] {\scriptsize $\iota^{z,y}_{{\it id}_{{z},u},{\it id}_{x,y}}$};
\draw[->] (2)--(3) node [midway,right,yshift=-0.1cm] {\scriptsize $\nu^{x,v}_{x,u}$};
\end{tikzpicture}}
\end{tabular}

\vspace{0.3cm}
\noindent \begin{tabular}{   c }
  - \hypertarget{bek}{{\tt ($\nu\sigma$)}} \enspace $  (\nu^{u,v}_{x,y})^{\sigma}=\nu^{p,q}_{x,y}$, where $\sigma$ renames $u$ to $p$ and $v$ to $q$,   
\end{tabular}

 \vspace{0.3cm}
\noindent \begin{tabular}{   c }
   - \hypertarget{begaihex}{{\tt ($\nu$-\texttt{involution})}}  \enspace $\nu^{u,v}_{x,y}=\nu^{v,u}_{x,y}$.
\end{tabular}
 \vspace{0.3cm}

\indent The syntax ${\tt uFree}_{\underline{\EuScript C}}$, necessary for formulating the coherence theorem for this notion of categorified entries-only cyclic operad, is obtained  by extending  the syntax ${\tt Free}_{\underline{\EuScript C}}$ with object and arrow terms that formalize units and the corresponding isomorphisms. Accordingly, the interpretation $_{\tt u}[[-]]_X:{\tt uFree}_{\underline{\EuScript C}}(X)\rightarrow {\EuScript C}(X)$   is obtained by extending naturally the interpretation $[[-]]_X:{\tt Free}_{\underline{\EuScript C}}(X)\rightarrow {\EuScript C}(X)$. The statement of the coherence theorem is then the same as in \S \ref{eq_relax}, except that it concerns the extended interpretation function $_{\tt u}[[-]]_X$. \\[0.1cm]
\indent The proof of the coherence theorem now has {four} reductions: the last three are the same  reductions as in \S \ref{eq_relax}, while the first reduction, which we call the  reduction zero for consistency reasons,  eliminates units, as one might expect.\\[0.1cm]
\indent In the sequel, we shall write $\underline{\it id}_{x,y}$ for the object term of ${\tt uFree}_{\underline{\EuScript C}}$ which   encodes  the unit ${\it id}_{x,y}$.\\[0.1cm]
\indent Let ${\tt uFree}^{\it id}_{\underline{\EuScript C}}$ be the part of the syntax   ${\tt uFree}_{\underline{\EuScript C}}$  determined by taking for object terms only  the instances of $\underline{\it id}_{x,y}$, for $x,y\in V$. Observe that this leaves only the instances of $1_{\underline{\it id}_{x,y}}$ as arrow terms of ${\tt uFree}^{\it id}_{\underline{\EuScript C}}$. The reduction zero is materialized by the reduction function
 $${\tt Red_0}: {\tt uFree}_{\underline{\EuScript C}}\rightarrow {\tt Free}_{\underline{\EuScript C}}\cup {\tt uFree}^{\it id}_{\underline{\EuScript C}},$$ 
which systematically ``eliminates the units'' contained in object terms of ${\tt uFree}_{\underline{\EuScript C}}$, in order to reach either an object term of ${\tt Free}_{\underline{\EuScript C}}$ (if the starting object term contains at least one parameter of $P_{\underline{\EuScript C}}$), or an object term of ${\tt uFree}^{\it id}_{\underline{\EuScript C}}$ (if the starting object term contains no parameters of $P_{\underline{\EuScript C}}$).  Here is the formal definition:
\begin{itemize}
\item[$\diamond$] ${\tt Red_0}(\underline{\it id}_{x,y})=\underline{\it id}_{x,y}$, for all $x,y\in V$,
\item[$\diamond$]    ${\tt Red_0}(\underline{a})=\underline{a}$, for all $a\in P_{\underline{\EuScript C}}$,
\item[$\diamond$] ${\tt Red_0}({\cal W}_1\,{_x\bo_y}\,{\cal W}_2)=\begin{cases}
\underline{\it id}_{u,v}  & {\small \mbox{${\cal W}_1=\underline{\it id}_{x,u}$, ${\cal W}_2=\underline{\it id}_{y,v}$}} \\
{\tt Red_0}({\cal W}_1^{\sigma}) & {\small \mbox{${\cal W}_1\not\in {\tt uFree}^{\it id}_{\underline{\EuScript C}}$, ${\cal W}_2=\underline{\it id}_{y,v}$}}\\
 {\tt Red_0}({\cal W}_2^{\tau}) & {\small \mbox{${\cal W}_1=\underline{\it id}_{x,u}$, ${\cal W}_2\not\in {\tt uFree}^{\it id}_{\underline{\EuScript C}}$}}\\
{\tt Red_0}({\cal W}_1) \,{_x\bo_y}\, {\tt Red_0}({\cal W}_2) & {\small \mbox{${\cal W}_1, {\cal W}_2\not\in {\tt uFree}^{\it id}_{\underline{\EuScript C}}$}}\\
\end{cases}$ \\[0.2cm]
where, in the second case,    $\sigma$ renames $x$ to $v$, and, in the third case,  $\tau$ renames $y$ to $u$.
\item[$\diamond$] ${\tt Red_0}({\cal W}^{\sigma})={\tt Red_0}({\cal W})^{\sigma}$, if ${\cal W}\not\in {\tt uFree}^{\it id}_{\underline{\EuScript C}}$; otherwise, assuming that ${\cal W}=\underline{\it id}_{x,y}$ and that $\sigma:\{x,y\}\rightarrow \{u,v\}$,  ${\tt Red_0}({\underline{\it id}_{x,y}}^{\sigma})=\underline{\it id}_{u,v}$.
\end{itemize}

\indent The  definition of ${\tt Red_0}$ on arrow terms involves a  case analysis relative to the result of the reduction on appropriate  object terms. For
${\tt Red_0}(\beta^{x,\underline{x};y,\underline{y}}_{{\cal W}_1,{\cal W}_2,{\cal W}_3})$,
here are some of the possibilities:
\begin{itemize}
\item if ${\tt Red_0}({\cal W}_i)\not\in{\tt uFree}^{\it id}_{\underline{\EuScript C}}$, $i=1,2,3$, then   ${\tt Red_0}(\beta^{x,\underline{x};y,\underline{y}}_{{\cal W}_1,{\cal W}_2,{\cal W}_3})=\beta^{x,\underline{x};y,\underline{y}}_{{\tt Red_0}({\cal W}_1),{\tt Red_0}({\cal W}_2),{\tt Red_0}({\cal W}_3)},$ 
\item if ${\tt Red_0}({\cal W}_1),{\tt Red_0}({\cal W}_2)\not\in {\tt uFree}^{\it id}_{\underline{\EuScript C}}$ and    ${\tt Red_0}({\cal W}_3) =\underline{\it id}_{\underline{y},v}$, then  ${\tt Red_0}(\beta^{x,\underline{x};y,\underline{y}}_{{\cal W}_1,{\cal W}_2,{\cal W}_3})=\linebreak ({\varepsilon_2}_{{\tt Red_0}({\cal W}_1)}\,{_x\bo_{\underline{x}}}\, 1_{{\tt Red_0}({\cal W}_2)^{\tau}}) \circ {\varepsilon_4}^{x,\underline{x};x,\underline{x}}_{{\tt Red_0}({\cal W}_1),{\tt Red_0}({\cal W}_2);\sigma},$ 
where $\sigma$ and $\tau$ rename $y$ to $v$,
\item if ${\cal W}_1\not\in {\tt uFree}^{\it id}_{\underline{\EuScript C}}$,    ${\cal W}_2=\underline{\it id}_{\underline{x},y}$ and ${\cal W}_3 =\underline{\it id}_{\underline{y},v}$, then ${\tt Red_0}(\beta^{x,\underline{x};y,\underline{y}}_{{\cal W}_1,{\cal W}_2,{\cal W}_3})={\varepsilon_3}^{\tau,\sigma}_{{\tt Red_0}({\cal W}_1)},$ where $\sigma$  renames $y$ to $v$ and $\tau$ renames $x$ to $y$,
\item if ${\tt Red_0}({\cal W}_1)=\underline{\it id}_{x,u}$, ${\tt Red_0}({\cal W}_2)=\underline{\it id}_{\underline{x},y}$ and ${\tt Red_0}({\cal W}_3)=\underline{\it id}_{\underline{y},v}$, then ${\tt Red_0}(\beta^{x,\underline{x};y,\underline{y}}_{{\cal W}_1,{\cal W}_2,{\cal W}_3})=1_{\underline{\it id}_{u,v}}.$
\end{itemize}
The definition of  ${\tt Red_0}$ on the remaining arrow terms is obtained analogously. In particular, ${\tt Red_0}(\iota^{x,y}_{{\cal W},\underline{\it id}_{y,z}})=1_{{\tt Red_0}({\cal W} _x\bo_y {\it id}_{y,z})}$ and ${\tt Red_0}(\nu^{u,v}_{x,y})=1_{{\it id}_{u,v}}$. 
\begin{rem} \label{either-id-or-red} 
Note that, for an arrow-term $\Phi:{\cal U}\rightarrow{\cal V}$ of ${\tt uFree}_{\underline{\EuScript C}}$, we have either that  
${\tt Red}_0({\cal U}),\linebreak {\tt Red}_0({\cal V})\in {\tt Free}_{\underline{\EuScript C}}$, 
or that ${\tt Red}_0({\cal U})=\underline{\it id}_{x,y}={\tt Red}_0({\cal V})$ and ${\tt Red}_0(\Phi)= 1_{\underline{\it id}_{x,y}}$ (for some $x,y$).
\end{rem}
\begin{thm}
For arrow terms $\Phi$ and $\Psi$ of the same type in ${\tt uFree}_{\underline{\EuScript C}}$,  the equality ${_{\tt u}[[\Phi]]}_{X}={_{\tt u}[[\Psi]]}_{X}$ follows from the equality ${[[{\tt Red_0}(\Phi)]]_X}={[[{\tt Red_0}(\Psi)]]_X}$.
\end{thm}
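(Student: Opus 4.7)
The plan is to mimic the strategy used for Theorem \ref{t4}: exhibit, for each object term ${\cal W}$ of ${\tt uFree}_{\underline{\EuScript C}}$, a canonical \emph{witness} arrow term $\lambda_{\cal W} : {\cal W}\to {\tt Red}_0({\cal W})$ whose interpretation is an isomorphism, and then establish, by induction on the structure of $\Phi$, an intertwining equation
\[
_{\tt u}[[\lambda_{\cal V}]]_X \,\circ\, _{\tt u}[[\Phi]]_X \;=\; _{\tt u}[[{\tt Red}_0(\Phi)]]_X \,\circ\, _{\tt u}[[\lambda_{\cal U}]]_X,
\]
where the right-hand side coincides with $[[{\tt Red}_0(\Phi)]]_X\circ {_{\tt u}[[\lambda_{\cal U}]]_X}$ thanks to Remark \ref{either-id-or-red}. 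Given such an intertwining, if $[[{\tt Red}_0(\Phi)]]_X=[[{\tt Red}_0(\Psi)]]_X$, then the two equations (one for $\Phi$ and one for $\Psi$), which share the same source ${\cal U}$, produce the common composite on the right-hand side; postcomposing with $({_{\tt u}[[\lambda_{\cal V}]]_X})^{-1}$ yields $_{\tt u}[[\Phi]]_X={_{\tt u}[[\Psi]]_X}$.

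First I would define $\lambda_{\cal W}$ by recursion, following the case split in the definition of ${\tt Red}_0$ on object terms: set $\lambda_{\underline{a}}=1_{\underline{a}}$, $\lambda_{\underline{\it id}_{x,y}}=1_{\underline{\it id}_{x,y}}$, and for the binary case, use $\iota$ (composed with $\gamma$ to bring the ``unit factor'' on the right, when it appears on the left), followed by an $\varepsilon$-witness (as in \S\ref{eq_relax}) that propagates the renaming induced by the deletion of the unit. For ${\cal W}={\cal W}^\sigma$, propagate using $\varepsilon_4$, $\varepsilon_2$, $\varepsilon_3$ and, when ${\cal W}\in {\tt uFree}^{\it id}_{\underline{\EuScript C}}$, an instance of $\nu$. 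A straightforward induction shows that $_{\tt u}[[\lambda_{\cal W}]]_X$ is an isomorphism, since each building block is one.

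Then I would prove the intertwining equation by induction on the structure of $\Phi$. The cases where $\Phi$ is an identity, a $\beta$, a $\gamma$, or a composition with neither side hitting a unit reduce to the naturality of the corresponding isomorphisms, which is already part of Definition \ref{cat} and the naturality of $\varepsilon$ from \S\ref{eq_relax}. The interesting cases are those where ${\tt Red}_0$ eliminates one or more units; these correspond exactly to the mixed coherence axioms \hyperlink{begaihex}{{\tt($\beta\iota\varepsilon$-square)}}, \hyperlink{begaihdddex}{{\tt($\gamma\iota$-square)}}, \hyperlink{begaihdddexxx}{{\tt($\varepsilon\iota$-square)}}, \hyperlink{begaihex}{{\tt($\iota\varepsilon\nu$-square)}}, \hyperlink{gainupe}{{\tt($\gamma\iota\nu$-pentagon)}}, \hyperlink{gainudpe}{{\tt($\iota\nu$-pentagon)}}, \hyperlink{behex}{{\tt($\beta\varepsilon$-hexagon)}}, as well as the two $\beta\gamma\iota\varepsilon$-hexagons; and to \hyperlink{bek}{{\tt($\nu\sigma$)}} and $\nu$-involution for the degenerate cases involving units under renaming. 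For $\Phi=\beta^{x,\underline{x};y,\underline{y}}_{{\cal W}_1,{\cal W}_2,{\cal W}_3}$ with ${\cal W}_3=\underline{\it id}_{\underline{y},v}$, for instance, the intertwining square is precisely \hyperlink{begaihex}{{\tt($\beta\iota\varepsilon$-square)}} (pasted with naturality squares of $\iota$ along $\lambda_{{\cal W}_1}$ and $\lambda_{{\cal W}_2}$); for $\Phi=\iota^{x,y}_{{\cal W}_1,\underline{\it id}_{y,z}}$ with ${\cal W}_1^\sigma$ itself collapsing further under a unit, one uses \hyperlink{begaihex}{{\tt($\iota\varepsilon\nu$-square)}} to commute $\iota$ past $\nu$; and the two ``total collapse'' cases (two units met by $\beta$ or $\gamma$) use \hyperlink{gainupe}{{\tt($\gamma\iota\nu$-pentagon)}} and \hyperlink{gainudpe}{{\tt($\iota\nu$-pentagon)}}.

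The main obstacle is the sheer combinatorial bookkeeping of the case analysis, especially when $\Phi=\Phi_1 \,{_x\bo_y}\,\Phi_2$ with different behaviours of ${\tt Red}_0$ on each side (one side may remain non-unit, the other may collapse to a unit or disappear), since then the witness $\lambda_{{\cal W}_1\bo{\cal W}_2}$ has a nontrivial shape and the intertwining rests on pasting several coherence diagrams. Once all these cases are verified, concluding is immediate: the equality $[[{\tt Red}_0(\Phi)]]_X=[[{\tt Red}_0(\Psi)]]_X$ gives
\[
_{\tt u}[[\lambda_{\cal V}]]_X\circ{_{\tt u}[[\Phi]]_X} \;=\; _{\tt u}[[\lambda_{\cal V}]]_X\circ{_{\tt u}[[\Psi]]_X},
\]
and cancelling the iso $_{\tt u}[[\lambda_{\cal V}]]_X$ finishes the proof.
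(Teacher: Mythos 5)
Your proposal follows essentially the same route as the paper: both define witness arrow terms ${\cal U}\to{\tt Red}_0({\cal U})$ (the paper calls them $\eta_{\cal U}$), prove by induction on $\Phi$ an intertwining square whose commutation in the unit-eliminating cases rests on the mixed coherence conditions ({\tt($\beta\iota\varepsilon$-square)}, {\tt($\varepsilon\iota$-square)}, etc.), and conclude by cancelling the invertible witness on the target. The only cosmetic difference is that you make the final cancellation step and the case inventory explicit, whereas the paper illustrates a single representative case and leaves the rest to the reader.
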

\begin{proof}
The proof architecture is the same as for  Corollary \ref{roknucuse}  and Corollary \ref{cor4}: for all object terms ${\cal U}$ of ${\tt uFree}_{\underline{\EuScript C}}$, one should define arrow terms $\eta_{\cal U}:{\cal U}\rightarrow {\tt Red_0}({\cal U})$ and exhibit commuting diagrams of either of the following two  forms (cf. Remark \ref{either-id-or-red}):
\begin{center}
\begin{tikzpicture}
\node (a) at (0,0) {\footnotesize $_{\tt u}[[{\cal U}]]$};
\node (b) at (3.2,0) {\footnotesize $_{\tt u}[[{\cal V}]]$};
\node (c) at (3.2,-2) {\footnotesize $_{\tt u}[[\underline{\it id}_{x,y}]]$};
\node (d) at (0,-2) {\footnotesize $_{\tt u}[[\underline{\it id}_{x,y}]]$};
\draw [->] (a)--(b) node [midway,above] {\scriptsize $_{\tt u}[[\Phi]]$};
\draw [->]   (b)--(c) node [midway,right] {\scriptsize $_{\tt u}[[\eta_{\cal V}]]$};
\draw [->] (a)--(d) node [midway,left] {\scriptsize $_{\tt u}[[\eta_{\cal U}]]$};
\draw [->] (d)--(c) node [midway,below] {\scriptsize $1_{{\it id}_{x,y}}$};
\end{tikzpicture}
\quad\quad\quad\quad
\begin{tikzpicture}
\node (a) at (0,0) {\footnotesize $_{\tt u}[[{\cal U}]]$};
\node (b) at (3.2,0) {\footnotesize $_{\tt u}[[{\cal V}]]$};
\node (c) at (3.2,-2) {\footnotesize $[[{\tt Red_0}({\cal V})]]$};
\node (d) at (0,-2) {\footnotesize $[[{\tt Red_0}({\cal U})]]$};
\draw [->] (a)--(b) node [midway,above] {\scriptsize $_{\tt u}[[\Phi]]$};
\draw [->]   (b)--(c) node [midway,right] {\scriptsize $_{\tt u}[[\eta_{\cal V}]]$};
\draw [->] (a)--(d) node [midway,left] {\scriptsize $_{\tt u}[[\eta_{\cal U}]]$};
\draw [->] (d)--(c) node [midway,below] {\scriptsize $[[{\tt Red_0}(\Phi)]]$};
\end{tikzpicture}
\end{center}
We shall only
illustrate the proof in the second case, by considering  one instance of the case $\Phi=\beta^{x,\underline{x};y,\underline{y}}_{{\cal W}_1,{\cal W}_2,{\cal W}_3}$, with  ${\cal W}_1=\underline{a}\, {_z\bo_u}\, (\underline{\it id}_{u,v}\, {_v\bo_p}\, \underline{\it id}_{p,q})$,  ${\cal W}_2=\underline{b}$ and  ${\cal W}_3=\underline{\it id}_{\underline{y},z}$.
The corresponding diagram is then the (outer part of the) diagram below:
 \begin{center}
\begin{tikzpicture}
\node (a) at (0,0) {\footnotesize $(({a}\, {_z\circ_u}\, ({\it id}_{u,v}\, {_v\circ_p}\, {\it id}_{p,q}))\,{_x\circ_{\underline{x}}}\, b)\,{_y\circ_{\underline{y}}}\,{\it id}_{\underline{y},z}$};
\node (b) at (10,0) {\footnotesize $({a}\, {_z\circ_u}\, ({\it id}_{u,v}\, {_v\circ_p}\, {\it id}_{p,q}))\,{_x\circ_{\underline{x}}}\, (b\,{_y\circ_{\underline{y}}}\,{\it id}_{\underline{y},z})$};
\node (b') at (10,-8.5) {\footnotesize $({a}\, {_z\circ_u}\, {\it id}_{u,q})\,{_x\circ_{\underline{x}}}\, b^{\kappa_2}$};
\node (c) at (10,-3) {\footnotesize $({a}\, {_z\circ_u}\, {\it id}_{u,v}^{\sigma})\,{_x\circ_{\underline{x}}}\, (b\,{_y\circ_{\underline{y}}}\,{\it id}_{\underline{y},z})$};
\node (d) at (0,-3) {\footnotesize $(({a}\, {_z\circ_u}\, {\it id}_{u,v}^{\sigma})\,{_x\circ_{\underline{x}}}\, b)\,{_y\circ_{\underline{y}}}\,{\it id}_{\underline{y},z}$};
\node (e) at (0,-6) {\footnotesize $(({a}\, {_z\circ_u}\, {\it id}_{u,q})\,{_x\circ_{\underline{x}}}\, b)\,{_y\circ_{\underline{y}}}\,{\it id}_{\underline{y},z}$};
\node (j) at (0,-8.5) {\footnotesize $(({a}\, {_z\circ_u}\, {\it id}_{u,q})\,{_x\circ_{\underline{x}}}\, b)^{\tau}$};
\node (k) at (0,-11) {\footnotesize $({a}^{\kappa_1}\,{_x\circ_{\underline{x}}}\, b)^{\tau}$};
\node (kk) at (10,-11) {\footnotesize ${a}^{\kappa_1}{_x\circ_{\underline{x}}}\, b^{\kappa_2}$};
\node (f) at (10,-6) {\footnotesize $({a}\, {_z\circ_u}\, {\it id}_{u,q})\,{_x\circ_{\underline{x}}}\, (b\,{_y\circ_{\underline{y}}}\,{\it id}_{\underline{y},z})$};
\draw [->] (a)--(b) node [midway,above] {\scriptsize $\beta^{x,\underline{x};y,\underline{y}}_{{a}\, {_z\circ_u}\, ({\it id}_{u,v}\, {_v\circ_p}\, {\it id}_{p,q}),b,{\it id}_{\underline{y},z}}$};
\draw [->]   (b)--(c) node [midway,left,yshift=-0.1cm] {\scriptsize $(1_{a}\, {_z\circ_u}\, \iota_{{\it id}_{u,v},{\it id}_{p,q}} )\,{_x\circ_{\underline{x}}}\, (1_{b \,{_y\circ_{\underline{y}}}\, {\it id}_{\underline{y},z}})$};
\draw [->]   (f)--(b') node [midway,left,xshift=-0.3cm] {\scriptsize $1_{{a} {_z\circ_u}{\it id}_{u,q}}\,{_x\circ_{\underline{x}}}\, \iota^{y,\underline{y}}_{b,{\it id}_{\underline{y},z}}$};
\draw [->]   (j)--(b') node [midway,above] {\scriptsize $\varepsilon^{x,\underline{x};x,\underline{x}}_{{{a} {_z\circ_u}{\it id}_{u,q}},b;\tau}$};
\draw [->]   (b')--(kk) node [midway,left,xshift=-0.3cm] {\scriptsize $\iota^{z,u}_{a,{\it id}_{u,q}}\,{_x\circ_{\underline{x}}}\, 1_{b^{\kappa_2}}$};
\draw [->] (a)--(d) node [midway,right,yshift=0.4cm] {\scriptsize $((1_{a}\, {_z\circ_u}\, \iota_{{\it id}_{u,v},{\it id}_{p,q}} )\,{_x\circ_{\underline{x}}}\, 1_b) \,{_y\circ_{\underline{y}}}\, 1_{{\it id}_{\underline{y},z}}$};
\draw [->] (d)--(e) node [midway,right,yshift=0.4cm] {\scriptsize $((1_{a}\, {_z\circ_u}\, \nu^{u,q}_{u,v} )\,{_x\circ_{\underline{x}}}\, 1_b) \,{_y\circ_{\underline{y}}}\, 1_{{\it id}_{\underline{y},z}}$};
\draw [->] (e)--(j) node [midway,right] {\scriptsize $\iota^{y,\underline{y}}_{({a}\, {_z\circ_u}\, {\it id}_{u,q})\,{_x\circ_{\underline{x}}}\, b,{\it id}_{\underline{y},z}} $};
\draw [->] (j)--(k) node [midway,right] {\scriptsize $(\iota^{z,u}_{a,{\it id}_{u,q}} \,{_x\circ_{\underline{x}}}\, 1_{b})^{\tau} $};
\draw [->] (k)--(kk) node [midway,above] {\scriptsize ${\varepsilon}^{x,\underline{x};x,\underline{x}}_{a^{\kappa_1},b;\tau}$};
\draw [->] (c)--(f) node [midway,left,yshift=-0.1cm] {\scriptsize $(1_{a}\, {_z\circ_u}\, \nu^{u,q}_{u,v} )\,{_x\circ_{\underline{x}}}\, 1_{b \,{_y\circ_{\underline{y}}}\, {\it id}_{\underline{y},z}}$};
\draw [->] (d)--(c) node [midway,above] {\scriptsize $\beta^{x,\underline{x};y,\underline{y}}_{{a}\, {_z\circ_u}\, {\it id}_{u,v}^{\sigma},b,{\it id}_{\underline{y},z}}$};
\draw [->] (e)--(f) node [midway,above] {\scriptsize $\beta^{x,\underline{x};y,\underline{y}}_{{a}\, {_z\circ_u}\, {\it id}_{u,q},b,{\it id}_{\underline{y},z}}$};
\end{tikzpicture}
\end{center}

\noindent which is shown to commute by using the naturality of $\beta$ two times (the top two squares), \hyperlink{begaihex}{{\tt ($\beta\iota\varepsilon$-\texttt{square})}} (the third square) and  \hyperlink{begaihdddexxx}{{\tt ($\varepsilon\iota$-\texttt{square})}} (the bottom square).   
\end{proof}
\section{Alternative presentations of categorified cylic operads}\label{s4}
In this section, we indicate how to recast the definition of categorified cyclic operads of Section \ref{s2} in three alternative frameworks: { exchangeable-output non-skeletal}, { exchangeable-output skeletal} and { entries-only skeletal}.
\subsection{Categorified exchangeable-output  cyclic operads}
In \cite[Theorem 2]{mo}, the equivalence between Definition \ref{entriesonly} and Definition \ref{exoutput} has been worked out in detail. We shall ``categorify'' this equivalence and  translate Definition \ref{cat} to the exchange\-able-output formalism,  thus  synthesising a definition of  non-skeletal  categorified  exchangeable-output   cyclic operads. The skeletal version of this definition is then obtained  by ``categorifying'' the equivalence between non-skeletal and skeletal operads (\cite[Theorem 1.61]{mss}), extended naturally so that it also includes the cyclic structure.
\subsubsection{Non-skeletal  categorified  exchangeable-output   cyclic operads}
 The categorification of Definition \ref{exoutput} is made
 by enriching the structure of a {categorified non-skeletal symmetric operad} ${\EuScript O}$ by {endofunctors} $D_x:{\EuScript O}(X)\rightarrow {\EuScript O}(X)$, whose properties need to be such that the equivalence of \cite[Theorem 2]{mo} is not violated in the weakened setting. In other words, the decision about  whether some axiom of $D_x$ should be weakened or not  must respect the weakening made in passing from entries-only cyclic operads to their categorified version. \\[0.1cm]
\indent   Before we give  the resulting definition, we  adapt the  definition of categorified   { non-symmetric}, {skeletal} operads of \cite{dp} into the definition of { symmetric}, {non-skeletal}  categorified operads. As we did in Section \ref{s2},  we  keep the equivariance  axiom   strict.
\begin{defn}\label{p}
A {\em non-skeletal categorified symmetric operad} is a functor ${\EuScript O}:{\bf Bij}^{\it op}\rightarrow {\bf Cat}$, together with 
\begin{itemize}
\item a family of bifunctors $$\circ_x: {\EuScript O}(X)\times {\EuScript O}(Y)\rightarrow {\EuScript O}(X\backslash\{x\}\cup Y),$$ indexed by arbitrary non-empty finite sets $X$ and $Y$ and element $x\in X$ such that $X\backslash\{x\}\cap Y=\emptyset$, subject to the equivariance axiom: \\[0.1cm]
\indent \hypertarget{[EQ]}{{\texttt{[EQ]}}} for bijections $\sigma_1:X'\rightarrow X$ and $\sigma_2:Y'\rightarrow Y$, $$f^{\sigma_1}\circ_{\sigma_1^{-1}(x) }g^{\sigma_2}=(f\circ_x g)^{\sigma},$$ \phantom{\indent \texttt{[EQ]}} where $\sigma=\sigma_1|^{X\backslash\{x\}}\cup \sigma_2$,
\item two natural isomorphisms, $\beta$ and $\theta$, called {\em sequential associator} and {\em parallel associator}, respectively, whose respective components $$\beta^{x;y}_{f,g,h}:(f\circ_x g)\circ_y h\rightarrow f\circ_x (g\circ_y h)\quad \mbox{ and }\quad \theta^{x;y}_{f,g,h}:(f\circ_x g)\circ_y h\rightarrow (f\circ_y h)\circ_x g\,,$$ are natural in $f$, $g$ and $h$, and are subject to the following coherence conditions:
\begin{itemize}
\item \hypertarget{[b-pentagon]}{{\texttt{[$\beta$-pentagon]}}}  $(1_{{f}}\circ_{x} \beta^{y;z}_{{g},{h},{k}})\circ\beta^{x;z}_{{f},{g}\circ_y {h}, {k}}\circ (\beta^{x;y}_{{f},{g},{h}}\circ_z 1_{{k}})=\beta^{x;y}_{{f},{g},{h}\circ_z {k}}\circ\beta^{y;z}_{{f}\circ_x {g},{h},{k}}$,\\[-0.25cm]
\item \hypertarget{[bt-hexagon]}{{\texttt{[$\beta\theta$-hexagon]}}} $(1_{{f}}\circ_x\theta^{y;z}_{{g},{h},{k}})\circ\beta^{x;z}_{{f},{g}\circ_y {h}, {k}}\circ(\beta^{x;y}_{{f},{g},{h}}\circ_{z}1_{k})=\beta^{x;y}_{{f},{g}\circ_z {h},{k}}\circ(\beta^{x;z}_{{f},{g},{k}}\circ_y 1_{{h}})\circ \theta^{y;z}_{{f}\circ_x  {g}, {h},{k}}$,\\[-0.25cm]
\item  \hypertarget{[bt-pentagon]}{{\texttt{[$\beta\theta$-pentagon]}}}  $\theta^{x;z}_{{f},{g}\circ_y {{h}},{k}}\circ(\beta^{x;y}_{{f},{g},{h}}\circ_z 1_{{k}})=\beta^{x;y}_{{f}\circ_z {k},{g},{h}}\circ(\theta^{x;z}_{{f},{g},{k}}\circ_{y}1_{{h}})\circ\theta^{y;z}_{{f}\circ_x  {g},{h},{k}}$,\\[-0.25cm]
\item  \hypertarget{[t-hexagon]}{{\texttt{[$\theta$-hexagon]}}} $\theta^{x;y}_{{f}\circ_z {k}, {g}, {h}}\circ(\theta^{x;z}_{{f},{g},{k}}\circ_{y}1_{{h}})\circ\theta^{y;z}_{{f}\circ_x {g}, {h},{k}}=(\theta^{y;z}_{{f},{h},{k}}\circ_{x}1_{{g}})\circ\theta^{x;z}_{{f}\circ_y {h},{g},{k}}\circ(\theta^{x;y}_{{f},{g},{h}}\circ_z 1_{{k}})$,\\[-0.25cm]
\item \hypertarget{t-involution}{{\texttt{[$\theta$-involution]}}} $\theta^{y;x}_{{f},{h},{g}}\circ \theta^{x;y}_{{f},{g},{h}}=1_{({f}\circ_x\,{g})\circ_y {h}}$,\\[-0.25cm]
\item \hypertarget{[bs]}{{\texttt{[$\beta\sigma$]}}} if the equality $((f\circ_x g)\circ_y h)^{\sigma}=(f^{\sigma_1}\circ_{x'}g^{\sigma_2})\circ_{y'} h^{\sigma_3}$ holds by  \hyperlink{[EQ]}{{\texttt{[EQ]}}}, then \linebreak
\phantom{\texttt{[$\beta\sigma$]}} $(\beta^{x;y}_{f,g,h})^{\sigma}=\beta^{x';y'}_{f^{\sigma_1},g^{\sigma_2},h^{\sigma_3}}$,\\[-0.25cm]
\item \hypertarget{[ts]}{{\texttt{[$\theta\sigma$]}}} if the equality $((f\circ_x g)\circ_y h)^{\sigma}=(f^{\sigma_1}\circ_{x'}g^{\sigma_2})\circ_{y'} h^{\sigma_3}$ holds by  \hyperlink{[EQ]}{{\texttt{[EQ]}}}, then \linebreak
\phantom{\texttt{[$\theta\sigma$]}} $(\theta^{x;y}_{f,g,h})^{\sigma}=\theta^{x';y'}_{f^{\sigma_1},g^{\sigma_2},h^{\sigma_3}}$,\\[-0.25cm]
\item  \hypertarget{[EQ-mor]}{{\texttt{[EQ-mor]}}} if the equality $(f\circ_x g)^{\sigma}=f^{\sigma_1}\circ_{x'}g^{\sigma_2}$ holds by  \hyperlink{[EQ]}{{\texttt{[EQ]}}}, and if $\varphi:f\rightarrow f'$ and\linebreak
\phantom{\texttt{[EQ-mor]}} $\psi:g\rightarrow g'$, then $(\varphi\circ_x\psi)^{\sigma}=\varphi^{\sigma_1}\circ_{x'}\psi^{\sigma_2}$.\hfill $\square$
\end{itemize}
\end{itemize}
\end{defn}
 
\indent We now introduce the categorification of Definition \ref{exoutput}. Below, for $f\in {\EuScript O}(X)$, $x\in X$ and $y\not\in X\backslash\{x\}$, we write $D^{\EuScript O}_{xy}(f)$ for $D^{\EuScript O}_x(f)^{\sigma}$, where $\sigma$ renames $x$ to $y$. 
\begin{defn}\label{ex_out_cat_non-skeletal}
A {\em  non-skeletal  categorified  exchangeable-output   cyclic operad} is a non-skeletal categorified symmetric operad ${\EuScript O}$, together with 
\begin{itemize}
\item a family of endofunctors $$D_x:{\EuScript O}(X)\rightarrow {\EuScript O}(X),$$ indexed by arbitrary finite sets $X$ and elements $x\in X$, which are subject to the following axioms, in which $f$ and $g$ denote operadic operations  and $\varphi$ and $\psi$   morphisms between operadic operations:\\[0.2cm]
\indent  \hypertarget{[DIN]}{\texttt{[DIN]}} $D_{x}(D_{x}(f))=f$ and $D_{x}(D_{x}(\varphi))=\varphi$,\\[0.2cm]
\indent \hypertarget{[DEQ]}{\texttt{[DEQ]}} $ D_x(f)^{\sigma} = D_{\sigma^{-1}(x)}(f^{\sigma})$ and $ D_x(\varphi)^{\sigma} = D_{\sigma^{-1}(x)}(\varphi^{\sigma})$, where $\sigma:Y\rightarrow X$ is a  bijection,\\[0.2cm] 
\hypertarget{[DEX]}{\texttt{[DEX]}} $D_{x}(f)^{\sigma}=D_x(D_y(f))$ and $D_{x}(\varphi)^{\sigma}=D_x(D_y(\varphi))$, where $\sigma:X\rightarrow X$  exchanges $x$ \linebreak 
\phantom{\texttt{[DEX]}} and $y$, \\[0.2cm]
\indent  \hypertarget{[DC1]}{{\texttt{[DC1]}}} $D_{y}(f\circ_{x}g)=D_{y}(f)\circ_{x}g$ and $D_{y}(\varphi\circ_{x}\psi)=D_{y}(\varphi)\circ_{x}\psi $ , where   $y\in X\backslash\{x\}$, \\[0.2cm]
\indent \hypertarget{[Db]}{{\tt{[$D\beta$]}}} $D_z(\beta^{x;y}_{f,g,h})=\beta^{x;y}_{D_{z}(f),g,h}$, where $f\in {\EuScript O}(X)$, $g\in {\EuScript O}(y)$, $h\in {\EuScript O}(Z)$, $x,z\in X$   and $y\in Y$,  \\[0.2cm]
\indent \hypertarget{[Dt]}{{\tt{[$D\theta$]}}} $D_z(\theta^{x;y}_{f,g,h})=\theta^{x;y}_{D_{z}(f),g,h}$, where $f\in {\EuScript O}(X)$, $g\in {\EuScript O}(y)$, $h\in {\EuScript O}(Z)$ and $x,y,z\in X$,  

\item a natural isomorphism $\alpha$, called the {\em exchange}, whose components $$\alpha^{y,x;v}_{f,g}:D_{y}(f\circ_x g)\rightarrow D_{yv}(g)\circ_v D_{xy}(f),$$ are natural in $f$ and $g$, and are subject to the following coherence conditions: \\[0.1cm]
  - \hypertarget{abt-square}{{\tt{[$\alpha\beta\theta$-square]}}} for $f\in {\EuScript O}(X)$, $g\in {\EuScript O}(y)$, $h\in {\EuScript O}(Z)$, $x\in X$   and $y,z\in Y$, the following  \linebreak
\phantom{ii} diagram commutes
 \begin{center}
\begin{tikzpicture}
    \node (E) at (-3,0) {\small $D_z((f\circ_x g)\circ_y h)$};
    \node (F) at (-3,-2) {\small $D_z(f\circ_x (g\circ_y h))$};
    \node (dF) at (1.95,-2) {\small $D_{zv}(g\circ_y h)\circ_v D_{xz}(f)$};
    \node (A) at (0,0) {\small $D_{z}(f\circ_x g)\circ_y h$};
    \node (As) at (6,0) {\small $(D_{zv}(g)\circ_v D_{xz}(f))\circ_y h$};
    \node (Asubtr) at (6,-2) {\small $(D_{zv}(g)\circ_y h)\circ_v  D_{xz}(f)$};
\draw[double equal sign distance] (E)--(A) node[midway, above] {};
\draw [->] (A)--(As) node[midway, above] {\footnotesize $\alpha^{z,x;v}_{f,g} \circ_y 1_{h}$};
\draw [->] (F)--(dF) node[midway, above] {\footnotesize $\alpha^{z,x;v}_{f,g\circ_y h}$};
\draw[->] (E)--(F) node[midway, left] {\footnotesize $D_z(\beta^{x;y}_{f,g,h})$};
\draw[double equal sign distance] (dF)--(Asubtr) node[midway, above] {};
\draw[->] (As)--(Asubtr) node[midway,right] {\footnotesize $\theta^{v;y}_{D_{zv}(g),D_{xz}(f),h}$};
 \end{tikzpicture}
\end{center}
  -  \hypertarget{ab-hexagon}{{\tt{[$\alpha\beta$-hexagon]}}} for $f\in {\EuScript O}(X)$, $g\in {\EuScript O}(y)$, $h\in {\EuScript O}(Z)$, $x\in X$, $y\in Y$ and $z\in Z$, the following \linebreak \phantom{ii} diagram commutes
\begin{center}
\begin{tikzpicture}
    \node (E) at (-3.3,0) {\small $D_z((f\circ_x g)\circ_y h)$};
    \node (F) at (-4,-1.75) {\small $D_z(f\circ_x(g\circ_y h))$};
    \node (A) at (3.3,0) {\small $D_{zv}(h)\circ_{v}D_{yz}(f\circ_x g)$};
    \node (Asubt) at (-4,-3) {\small $D_{zv}(g\circ_y h)\circ_v D_{xz}(f)$};
    \node (Asubtr) at (3.3,-4.65) {\small $(D_{zv}(h)\circ_v D_{yv}(g))\circ_v {D_{xz}(f)}$};
 \node (Adsubtr) at (-3.3,-4.65) {\small $(D_{zv}(h)\circ_v D_{yz}(g))^{\sigma}\circ_v {D_{xz}(f)}$};
    \node (P4) at (4,-1.75) {\small $D_{zv}(h)\circ_{v} (D_{yv}(g)\circ_v {D_{xy}(f)})^{\sigma}$};
 \node (P5) at (4,-3) {\small $D_{zv}(h)\circ_{v} (D_{yv}(g)\circ_v {D_{xz}(f)}) $};
\draw[->] (E)--(A) node[midway, above] {\footnotesize $\alpha^{z,y;v}_{f\circ_x g,h}$};
\draw[->] (E)--(F) node[midway, left] {\footnotesize $D_z(\beta^{x;y}_{f,g,h})$};
\draw[->] (F)--(Asubt) node[midway, left] {\footnotesize $\alpha^{z,x;v}_{f,g\circ_y h}$};
\draw[->] (Asubt)--(Adsubtr) node[midway, left] {\footnotesize $(\alpha^{z,y;v}_{g,h})^{\sigma}\circ_v 1_{D_{xz}(f)}$};
\draw[->] (P5)--(Asubtr) node[midway,right] {\footnotesize $\beta^{v;x\, ^{_{-1}}}_{D_{zv}(h),D_{yz}(g),D_{xz}(f)}$};
\draw[->] (A)--(P4) node[midway, right] {\footnotesize $1_{D_{zv}(h)}\circ_{v} (\alpha^{y,x;v}_{f,g})^{\tau}$};
\draw [double equal sign distance] (P4)--(P5) node[midway, above] {};
\draw [double equal sign distance] (Adsubtr)--(Asubtr) node[midway, above] {};
   \end{tikzpicture}
\end{center}

\indent \phantom{a} where $\sigma$ renames $z$ to $v$ and $\tau$  renames $y$ to $z$,

  - \hypertarget{[Da]}{{\tt{[$D\alpha$]}}} $D_z(\alpha^{z,x;v}_{f,g})=\alpha^{z,v;u}_{D_{zv}(g),D_{xz}(f)}$, where $f\in {\EuScript O}(X)$, $g\in {\EuScript O}(Y)$ and $z\in Y$,\\[0.25cm]
  - \hypertarget{[as]}{{\tt{[$\alpha\sigma$]}}} if  the equality $(f\circ_x g)^{\sigma}=f^{\sigma_1}\circ_{\sigma_1^{-1}(x)} g^{\sigma_2}$ holds by  \hyperlink{[EQ]}{{\texttt{[EQ]}}}, then $$(\alpha^{z,x;v}_{f,g})^{\sigma}=\alpha^{\sigma^{-1}(z),\sigma_1^{-1}(x);w}_{f^{\sigma_1},g^{\sigma_2}},$$ \phantom{a} where $v\not\in X\backslash\{x\}\cup Y\backslash\{z\}$ and $w\not\in \sigma^{-1}[X\backslash\{x\}\cup Y\backslash\{z\}]$ are arbitrary variables.  \hfill$\square$
\end{itemize} 
\end{defn}

By comparing Definition \ref{ex_out_cat_non-skeletal} with Definition \ref{exoutput}, one sees that the only axiom of $D_{x}$ from Definition \ref{exoutput} that got weakened is  \hyperlink{[DC2]}{\texttt{[DC2]}}. Indeed, the proof of {\cite[Theorem 2]{mo}}  testifies that all the axioms of $D_{x}$, except \hyperlink{[DC2]}{\texttt{[DC2]}}, are proved by the functoriality and the equivariance of the corresponding entries-only cyclic operad, while the proof of \hyperlink{[DC2]}{\texttt{[DC2]}} requires the axiom \hyperlink{(CO)}{{\tt{(CO)}}}. Therefore, since  \hyperlink{(CO)}{{\tt{(CO)}}} gets weakened in passing from cyclic operads to categorified cyclic operads, \hyperlink{[DC2]}{\texttt{[DC2]}} has to be weakened too.  Henceforth, we shall restrict ourselves to constant-free cyclic operads (as required by \cite[Theorem 2]{mo}).
\begin{thm}\label{t5}
Definition \ref{cat} and Definition \ref{ex_out_cat_non-skeletal} are equivalent definitions of categorified cyclic operads.
\end{thm}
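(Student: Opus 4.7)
The plan is to categorify the equivalence of \cite[Theorem 2]{mo}, by giving two mutually quasi-inverse constructions at the level of categorified structures, and then carefully translating the relaxed axioms and coherence conditions of one framework to those of the other.

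Starting from a categorified entries-only cyclic operad ${\EuScript C}$, I would define the associated exchangeable-output structure ${\EuScript O}_{\EuScript C}$ by setting, for each finite set $X$, ${\EuScript O}_{\EuScript C}(X) = {\EuScript C}(X\cup\{*_X\})$, where $*_X$ is a fresh symbol denoting the ``output''. The operadic composition $f\circ_x g$ is then realized by the entries-only composition ${_x\circ_{*_Y}}$ in ${\EuScript C}$, between $f\in {\EuScript C}(X\cup\{*_X\})$ and $g\in {\EuScript C}(Y\cup\{*_Y\})$, with $*_X$ kept as the output of the result. The action $D_x$ is provided by the functorial action of the bijection that exchanges $x$ with $*_X$, so that axioms \hyperlink{[DIN]}{\texttt{[DIN]}}, \hyperlink{[DEQ]}{\texttt{[DEQ]}}, \hyperlink{[DEX]}{\texttt{[DEX]}} and \hyperlink{[DC1]}{\texttt{[DC1]}} follow from the functoriality and equivariance of ${\EuScript C}$. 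The sequential associator $\beta$ of ${\EuScript O}_{\EuScript C}$ is inherited directly from the associator $\beta$ of ${\EuScript C}$; the parallel associator $\theta$ is transported from the derived isomorphism $\vartheta$ of~\eqref{theta}; and the exchange isomorphism $\alpha$ is built from the commutator $\gamma$ (pre- and post-composed with functorial renamings that swap the relevant entry with the output marker).

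Conversely, starting from a categorified exchangeable-output cyclic operad ${\EuScript O}$, I would define ${\EuScript C}_{\EuScript O}$ by a ``quotient'' construction that forgets the distinguished output: objects of ${\EuScript C}_{\EuScript O}(X)$ are equivalence classes of pairs $(x,f)$ with $x\in X$ and $f\in {\EuScript O}(X\setminus\{x\})$, where $(x,f)\sim (y, D_{xy}(f))$; the well-definedness of this identification is ensured by \hyperlink{[DIN]}{\texttt{[DIN]}}--\hyperlink{[DEX]}{\texttt{[DEX]}} and it lifts to morphisms thanks to the functoriality of $D_x$. The entries-only composition $f\,{_x\circ_y}\,g$ is obtained by first applying $D_y$ to $g$ (making $y$ the output) and then composing operadically along $x$. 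The associator $\beta$ of ${\EuScript C}_{\EuScript O}$ is derived from the operadic $\beta$ and, when the composition ``turns around'', from $\theta$ together with the $D$-actions; the commutator $\gamma$ is built from $\alpha$ and $D_x$, following exactly the rewriting pattern that the relaxed axiom \hyperlink{[DC2]}{\texttt{[DC2]}} encodes in the non-categorified case.

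The two passages are mutually quasi-inverse by the same computation as in \cite[Theorem 2]{mo}, supplemented by a parallel check on morphisms and natural isomorphisms. The main obstacle will be the verification that the coherence conditions match: one must show that, under the above translation, \hyperlink{b-pentagon}{{\tt($\beta$-pentagon)}}, \hyperlink{bg-hexagon}{{\tt($\beta\gamma$-hexagon)}}, \hyperlink{bg-decagon}{{\tt($\beta\gamma$-decagon)}} and \hyperlink{g-involution}{{\tt($\gamma$-involution)}} are equivalent to the operadic coherences \hyperlink{[b-pentagon]}{{\texttt{[$\beta$-pentagon]}}}, \hyperlink{[bt-hexagon]}{{\texttt{[$\beta\theta$-hexagon]}}}, \hyperlink{[bt-pentagon]}{{\texttt{[$\beta\theta$-pentagon]}}}, \hyperlink{[t-hexagon]}{{\texttt{[$\theta$-hexagon]}}}, \hyperlink{t-involution}{{\texttt{[$\theta$-involution]}}} together with the new conditions \hyperlink{abt-square}{{\tt[$\alpha\beta\theta$-square]}}, \hyperlink{ab-hexagon}{{\tt[$\alpha\beta$-hexagon]}} and \hyperlink{[Da]}{{\tt[$D\alpha$]}}. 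Much of this work is already accomplished on the ${\EuScript C}$-side by Lemma~\ref{thetainverse}: the identities \hyperlink{vt-involution}{{\texttt{($\vartheta$-involution)}}}, \hyperlink{bvt-pentagon}{{\texttt{($\beta\vartheta$-pentagon)}}} and \hyperlink{vt-hexagon}{{\texttt{($\vartheta$-hexagon)}}} are precisely what the operadic $\theta$-coherences require under the translation, so the remaining task reduces to matching the $\alpha$-involving diagrams with appropriate composites of \hyperlink{bg-hexagon}{{\tt($\beta\gamma$-hexagon)}} and \hyperlink{bg-decagon}{{\tt($\beta\gamma$-decagon)}}, pasted together with naturality squares for $\beta$ and $\gamma$ and with the equivariance conditions \hyperlink{bs}{{\tt($\beta\sigma$)}}, \hyperlink{gs}{{\tt($\gamma\sigma$)}} and \hyperlink{(EQ-mor)}{{\tt(EQ-mor)}}, which are needed to absorb the bookkeeping renamings that swap entries with output markers. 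This final verification, though case-heavy, is mechanical once the dictionary is set up, and it completes the equivalence.
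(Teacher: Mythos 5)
Your proposal is correct and follows essentially the same route as the paper: the proof of Theorem \ref{t5} is obtained by categorifying the equivalence of \cite[Theorem 2]{mo}, with the $\theta$-coherences of Definition \ref{ex_out_cat_non-skeletal} supplied exactly by Lemma \ref{thetainverse}, the $[\alpha\beta\theta$-square$]$ by the definition \eqref{theta} of $\vartheta$, and the remaining $\alpha$- and $D$-conditions by {\tt ($\beta\gamma$-\texttt{hexagon})}, {\tt ($\gamma$-\texttt{involution})} and the $\sigma$-conditions, which is precisely the dictionary recorded in the paper's correspondence table.
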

\begin{proof} The proof follows by ``categorifying'' the proof of  \cite[Theorem 2]{mo}.  In the table below, we show how  the coherence conditions of the entries-only definition  imply   the coherence conditions of the exchangeable-output definition, which, in particular, reveals the correspondence between the canonical isomorphisms of the two structures.

\begin{center}
{\small  \begin{tabular}{lr}  
    \toprule
   \textsc{entries-only} \quad\quad\quad\quad\quad\quad\quad\quad\quad $\Rightarrow$  &  \textsc{exchangeable-output}  \\
    \midrule
 \hyperlink{b-pentagon}{{\tt ($\beta$-\texttt{pentagon})}} &  \hyperlink{[b-pentagon]}{{\texttt{[$\beta$-pentagon]}}} \\[0.1cm]
   \hyperlink{bg-decagon}{\texttt{($\beta \gamma$-decagon)}} &   \hyperlink{[bt-hexagon]}{\texttt{[$\beta\theta$-hexagon]}}    \\[0.1cm]
\hyperlink{bvt-pentagon}{\texttt{($\beta\vartheta$-pentagon)}} (Lemma  \ref{thetainverse}.2) &  \hyperlink{[bt-pentagon]}{\texttt{[$\beta\theta$-pentagon]}}      \\[0.1cm]
\hyperlink{vt-hexagon}{\texttt{($\vartheta$-hexagon)}} (Lemma  \ref{thetainverse}.3) &   \hyperlink{[t-hexagon]}{{\texttt{[$\theta$-hexagon]}}}       \\[0.1cm]
\hyperlink{vt-involution}{\texttt{($\vartheta$-involution)}} (Lemma \ref{thetainverse}.1)  & \hyperlink{[t-involution]}{{\texttt{[$\theta$-involution]}}} \\[0.1cm]
 \hyperlink{bs}{\texttt{($\beta\sigma$)}} & \hyperlink{[bs]}{{\texttt{[$\beta\sigma$]}}} \\[0.1cm]
\hyperlink{bs}{\texttt{($\beta\sigma$)}},  \hyperlink{gs}{\texttt{($\gamma\sigma$)}}, \hyperlink{(EQ-mor)}{\texttt{(EQ-mor)}} & \hyperlink{[ts]}{{\texttt{[$\theta\sigma$]}}} \\[0.1cm]
\hyperlink{(EQ-mor)}{\texttt{(EQ-mor)}} & \hyperlink{[EQ-mor]}{{\texttt{[EQ-mor]}}} \\[0.1cm]
Remark \ref{eqdis}.5, \hyperlink{(EQ)}{\texttt{(EQ)}} & \hyperlink{[DIN]}{\texttt{[DIN]}},   \hyperlink{[DEQ]}{\texttt{[DEQ]}},   \hyperlink{[DEX]}{\texttt{[DEX]}},   \hyperlink{[DC1]}{\texttt{[DC1]}} \\[0.1cm]
\hyperlink{bs}{\texttt{($\beta\sigma$)}} &  \hyperlink{[Db]}{\tt{[$D\beta$]}} \\[0.1cm ]
\hyperlink{bs}{\texttt{($\beta\sigma$)}},  \hyperlink{gs}{\texttt{($\gamma\sigma$)}}, \hyperlink{(EQ-mor)}{\texttt{(EQ-mor)}} &  \hyperlink{[Dt]}{\tt{[$D\theta$]}} \\[0.1cm]
 \eqref{theta} & \hyperlink{abt-square}{{\tt{[$\alpha\beta\theta$-square]}}} \\[0.1cm]
\hyperlink{bg-hexagon}{\texttt{($\beta \gamma$-hexagon)}}, \hyperlink{(EQ)}{\texttt{(EQ)}}, \hyperlink{bs}{\texttt{($\beta\sigma$)}},  \hyperlink{gs}{\texttt{($\gamma\sigma$)}}, \hyperlink{(EQ-mor)}{\texttt{(EQ-mor)}} & \hyperlink{ab-hexagon}{{\tt{[$\alpha\beta$-hexagon]}}} \\[0.1cm]
\hyperlink{gs}{\texttt{($\gamma\sigma$)}}, \hyperlink{g-involution}{{\tt ($\gamma$-{\texttt{involution}})}} & \hyperlink{[Da]}{{\tt{[$D\alpha$]}}}\\[0.1cm]
\hyperlink{gs}{\texttt{($\gamma\sigma$)}} & \hyperlink{[as]}{{\tt{[$\alpha\sigma$]}}}\\[0.1cm]
 \bottomrule
  \end{tabular} }
\end{center}
\end{proof}
\begin{rem} It is easily seen that relaxing other axioms  of Definition \ref{exoutput} (besides \hyperlink{[DC2]}{\texttt{[DC2]}}) would correspond, in the entries-only formalism, to relaxing the equivariance and the action of the symmetric groups  (cf. Section \ref{eq_relax}).
\end{rem}
\subsubsection{Skeletal categorified  exchangeable-output    cyclic operads}\label{aaaa}

The categorification of  skeletal  exchangeable-output  cyclic operads (\cite[Proposition 42]{opsprops}) is obtained by  ``categorifying'' the equivalence of non-skeletal and skeletal operads (\cite[Theorem 1.61]{mss}), extended naturally so that it also includes the endofunctors $D_x:{\EuScript O}(X)\rightarrow {\EuScript O}(X)$ (for  non-skeletal operads) and $D_i:{\EuScript O}(n)\rightarrow {\EuScript O}(n)$ (for skeletal operads).  \\[0.1cm]
\indent     
It would be tempting to say that we could have replaced ``non-skeletal'' with ``skeletal'' throughout the paper, adjusting the proofs. But this is not the case: as we explain in the remark below, non-skeletality turns out to be crucial for the rewriting involved in our
proof of coherence in the presence of symmetries in Section \ref{s2}.
\begin{rem}\label{skeletal_problem}
In the non-skeletal setting of  (cyclic) operads, an action of the symmetric group can always be ``pushed" from the composite of two operations   to the operations themselves, by directing  \hyperlink{(EQ)}{\texttt{(EQ)}} from right to left.
 This was essential for the  first reduction made in \S \ref{frst}. For the skeletal setting of (cyclic) operads, this distribution of actions of the symmetric group doesn't work in general, as pointed to us  by Petri\' c.  For example, let ${\EuScript O}:\Sigma^{\it op}\rightarrow {\bf Set}$ be a (skeletal) operad and let $f,g\in {\EuScript O}(2)$. Consider the term $(f\circ_2 g)^{\sigma}$, where $\sigma:[3]\rightarrow[3]$  is defined by $\sigma(1)=2$, $\sigma(2)=1$  and $\sigma(3)=3$.  Clearly, it is not possible to distribute $\sigma$ on $f$ and $g$ in ${\EuScript O}(3)$.

\end{rem}
Remark \ref{skeletal_problem} shows that the part of our coherence proof technique that eliminates symmetries would not work in the skeletal setting. Nevertheless, the coherence in the skeletal setting holds by reduction to the non-skeletal setting, followed by all the reductions of Section \ref{s2}.
\subsection{Skeletal categorified entries-only cyclic operads}
We give below a definition of skeletal entries-only cyclic operads. Definition \ref{co_skeletal} is a variation on a similar definition which appears in  the unpublished manuscript \cite{djms}. We shall omit its equivalence with the non-skeletal entries-only definition  (whose categorification is  Definition \ref{cat} here), which parallels the equivalence between skeletal and non-skeletal versions of cyclic operads in the exchangeable-output setting that we exploited above.   Again, this definition can be categorified following the same approach as before. As an illustration, we shall describe  the translation   of \hyperlink{bg-hexagon}{{\tt ($\beta \gamma$-\texttt{hexagon})}} to the skeletal framework. 

In the remainder of this section, as well as throughout Section \ref{s5}, we shall denote  the set of   permutations on the set $[n]=\{1,\dots,n\}$ with ${\mathbb S}_{n}$. We shall use the notation ${\tt c}{\mathbb S}_{n}$ for the set of cyclic permutations on $[n]$.
 
 \subsubsection{Skeletal entries-only cyclic operads}

\begin{defn}\label{co_skeletal}
A  {\em skeletal entries-only cyclic operad} is a functor ${\EuScript C}:{\bf \Sigma}^{op}\rightarrow {\bf Set}$,    together with a family of functions
$${{_{i}\circ_{j}}}:{\EuScript C}(m)\times {\EuScript C}(n)\rightarrow {\EuScript C}(m+n-2) ,$$
defined for arbitrary $m,n>1$, $1\leq i\leq m$ and $1\leq j\leq n$.
These data must satisfy the  axioms given below.  \\[0.2cm]
{\em Sequential associativity.} For $f\in {\EuScript C}(m)$, $g\in {\EuScript C}(n)$, $h\in {\EuScript C}(p)$, $1\leq i\leq m$, $1\leq j\leq n$, $i\leq k\leq i+n-2$ and $1\leq l\leq p$, the following   equality holds:\\[0.25cm]
\indent \hypertarget{sA1}{\texttt{(sA1)}} $(f\, {_{i}\circ_{j}}\,\, g)\,\,{_{k}\circ_l}\, h =  \begin{cases} 
    f\,{{_{i}\circ_{j}}}\, (g\,{{_{k-i+j+1}\circ_{l}}}\, h),  &  i\leq k\leq i+n-j-1\\
    f\,{{_{i}\circ_{j+p-1}}}\, (g\,{{_{k-i-n+j+1}\circ_{l}}}\, h),   &  i+n-j\leq k\leq i+n-2 
   \end{cases}.
$  \\[0.25cm]
{\em Commutativity.} For  $f\in{\EuScript C}(m)$, $g\in {\EuScript C}(n)$, $1\leq i\leq m$ and $1\leq j\leq n$,  the following equality holds:\\[0.25cm]
\indent \hypertarget{sCO}{\texttt{(sCO)}} $f\, {_i\circ_j} \,\, g=(g\, {_j\circ_i} \,\, f)^{\sigma}$, \\[0.25cm] 
where $\sigma\in {\tt c}{\mathbb S}_{m+n-2}$ is   determined by $\sigma(1)=j+m-i$.\\[0.25cm]
{\em Equivariance.} For $\sigma_1\in{\mathbb S}_m$, $\sigma_2\in{\mathbb S}_n$,  $f\in{\EuScript C}(m)$, $g\in {\EuScript C}(n)$, $1\leq i\leq m$ and $1\leq j\leq n$, the following equality holds:\\[0.25cm] 
\indent \hypertarget{sEQ}{\texttt{(sEQ)}} $f^{\sigma_1}\,\,{_{{ {\sigma_1^{-1}}(i)}}\circ_{\sigma_2^{-1}(j)}}\,\, g^{\sigma_2}=(f\, {_i\circ_j} \, g)^{\sigma}$,\\[0.25cm]
where    $\sigma\in {\mathbb{S}}_{m+n-2}$ is defined by $$\sigma=\kappa^{m,n}_{\sigma_1^{-1}(i),\sigma_2^{-1}(j)}\circ (\sigma_1|_{[m]\backslash\{i\}}+\sigma_2|_{[n]\backslash\{j\}})\circ({\kappa^{m,n}_{i,j}})^{-1},$$ where
$$\kappa^{m,n}_{i,j}:[m]\backslash\{i\}+[n]\backslash\{j\}\rightarrow [m+n-2]$$
 is defined by
$$\kappa^{m,n}_{i,j}(k)= \begin{cases} 
    k,   &  1\leq k \leq  i-1 \\
  k+n-2, &  i+1\leq k \leq m \\
   \end{cases}$$ 
for $k\in [m]\backslash\{i\}$, and by 

 $$\kappa^{m,n}_{i,j}(l)= \begin{cases} 
    l+i+n-j-1, &  1\leq l \leq j-1 \\
    l+i-j-1,  &  j+1\leq l \leq n\\
   \end{cases}$$
for  $l\in [n]\backslash\{j\}$.
 \hfill$\square$
\end{defn}
\indent The bijection $\kappa^{m,n}_{i,j}$ provides a cyclic order of  the entries of the composite $f\, {_i\circ_j} \, g$ from the cyclic orders of the entries of  $f$ and $g$. 
The correspondence given by $\kappa^{m,n}_{i,j}$ can be read from the picture below: 
\begin{center}
\resizebox{6.25cm}{!}{\begin{tikzpicture}
\node (F) [circle, draw=black,inner sep=0.5mm,minimum size=0.6cm] at (0,0) {$f$};
\node (d1) [circle, draw=none,inner sep=0.5mm] at (0,1.4) {\footnotesize $\cdots$};
\node (d2) [circle, draw=none,inner sep=0.5mm] at (0,-1.4) {\footnotesize $\cdots$};
\node (i) [circle, draw=none,inner sep=0.5mm] at (1.75,0.2) {\tiny\color{red} $i$};
\node (j) [circle, draw=none,inner sep=0.5mm] at (2.25,0.2) {\tiny\color{blue} $j$};
\node (i-1) [circle, draw=none,inner sep=0.5mm] at (0.6,-0.35) {\tiny\color{red} $i\!-\!1$};
\node (j-1) [circle, draw=none,inner sep=0.5mm] at (3.4,0.35) {\tiny\color{blue} $j\!-\!1$};
\node (i+1) [circle, draw=none,inner sep=0.5mm] at (0.6,0.35) {\tiny\color{red} $i\!+\!1$};
\node (j+1) [circle, draw=none,inner sep=0.5mm] at (3.4,-0.35) {\tiny\color{blue} $j\!+\!1$};
\node (1) [circle, draw=none,inner sep=0.5mm] at (-0.15,-0.45) {\tiny\color{red} $1$};
\node (1') [circle, draw=none,inner sep=0.5mm] at (4.15,0.45) {\tiny\color{blue} $1$};
\node (m) [circle, draw=none,inner sep=0.5mm] at (-0.15,0.45) {\tiny\color{red} $m$};
\node (n) [circle, draw=none,inner sep=0.5mm] at (4.15,-0.45) {\tiny\color{blue} $n$};
\node (d3) [circle, draw=none,inner sep=0.5mm] at (4,1.4) {\footnotesize $\cdots$};
\node (d4) [circle, draw=none,inner sep=0.5mm] at (4,-1.4) {\footnotesize $\cdots$};
\node (F1) [rectangle, draw=none,inner sep=0.3mm] at (1,1.4) {\footnotesize $\enspace i\!+\!n\!-\!1$};
\node (F2) [rectangle, draw=none,inner sep=0.3mm] at (-1,1.4) {\footnotesize $m\!+\!n\!-\!2\enspace $};
\node (F3) [rectangle, draw=none,inner sep=0.3mm] at (-1,-1.4) {\footnotesize $1$};
\node (F4) [rectangle, draw=none,inner sep=0.3mm] at (1,-1.4) {\footnotesize $i\!-\!1$};
\node (G1) [rectangle, draw=none,inner sep=0.3mm] at (5,1.4) {\footnotesize $\enspace i\!+\!n\!-\!j$};
\node (G2) [rectangle, draw=none,inner sep=0.3mm] at (3.1,1.4) {\footnotesize $i\!+\!n\!-\!2\enspace $};
\node (G3) [rectangle, draw=none,inner sep=0.3mm] at (3.1,-1.4) {\footnotesize $i$};
\node (G4) [rectangle, draw=none,inner sep=0.3mm] at (5,-1.4) {\footnotesize $\enspace\quad i\!+\!n\!-\!j\!-\!1$};
\node (G) [circle, draw=black,inner sep=0.5mm,minimum size=0.6cm] at (4,0) {$g$};
\draw (F)--(G);
\draw (F)--(F1);
\draw (F)--(F2);
\draw (F)--(F3);
\draw (F)--(F4);
\draw (G)--(G1);
\draw (G)--(G2);
\draw (G)--(G3);
\draw (G)--(G4);
\draw (2,-0.15)--(2,0.15);
 \draw [dashed] (2,0) ellipse (2.85cm and 0.95cm);
\end{tikzpicture}}
\end{center}
\subsubsection{Skeletal version of {{\tt ($\beta \gamma$-\texttt{hexagon})}}} \label{s-hexa}
Suppose that, in Definition \ref{co_skeletal}, the category {\bf Set} has been replaced by {\bf Cat}, the functions ${_i\circ_j}$ by the appropriate bifunctors  and axioms \hyperlink{sA1}{\texttt{(sA1)}} and \hyperlink{sCO}{\texttt{(sCO)}} by the appropriate  isomorphisms.

\indent Let $f\in {\EuScript C}(m)$, $g\in {\EuScript C}(n)$, $h\in {\EuScript C}(p)$, $1\leq i\leq m$, $1\leq j\leq n$, $1\leq l\leq p$ and   $i\leq k\leq i+n-j-1$ (the case $i+n-j\leq k\leq i+n-2$ is treated analogously). Thanks to the equivariance axiom \hyperlink{sEQ}{\texttt{(sEQ)}}, \hyperlink{bg-hexagon}{{\tt ($\beta \gamma$-\texttt{hexagon})}} gets translated into the following diagram:\\[0.2cm]
\indent - \hypertarget{gfgf}{{\tt {(s-$\beta \gamma$-\texttt{hexagon})}}}
\begin{center}
\begin{tikzpicture}
    \node (E) at (0,0) {\small $(f\,{_i\circ _{j}}\,g)\,{_k\circ _{{l}}}\,h$};
    \node (G) at (4.65,0) {\small $f\,{_i\circ _{{j}}}\,(g\,{_{k-i+j+1}\circ _{{l}}}\,h)$};
\node (Gb) at (4.65,-3.8) {\small $(h\,{_{l}\circ _{\tau_2(k)}}\,(g\,{_{j}\circ _{i}}\,f))^{\kappa}$};
    \node (F) at (11,0) {\small $((g\,{_{k-i+j+1}\circ _{l}}\,h)\,{_{j}\circ _{i}}\,f)^{\sigma_1}$};
    \node (A) at (0,-1.9) {\small $(g\,{_{j}\circ _{i}}\,f)^{\tau_2}\,{_k\circ _{l}}\,h$};
    \node (Asubt) at (0,-3.8) {\small $(h\,{_{l}\circ _k}\,(g\,{_{j}\circ _{i}}\,f)^{\tau_2})^{\sigma_2}$};
 \node (Asubt1) at (11,-3.8) {\small $((h\,{_{{l}}\circ _{k-i+j+1}}\,g)\,{_{\tau_1(j)}\circ _{i}}\,f)^{\kappa}$};
    \node (P4) at (11,-1.9) {\small $((h\,{_{l}\circ _{k-i+j+1}}\,g)^{\tau_1}\,{_{j}\circ _{i}}\,f)^{\sigma_1}$};
    \draw[->] (E)--(G) node [midway,above] {\scriptsize     $\beta^{i,j;k,l}_{f,g,h}$};
    \draw[->] (G)--(F) node [midway,above] {\scriptsize    $\gamma^{i,j}_{f,g{_{k-i+j+1}\circ _{l}}h}$};
    \draw[->] (E)--(A) node [midway,left] {\scriptsize    $\gamma^{i,j}_{f,g}\,{_k\circ _{l}}\,{1_h}$};
 \draw[->] (F)--(P4) node [midway,right] {\scriptsize    $(\gamma^{k-i+j+1,l}_{g,h}\,{_{j}\circ _{i}}\,{1_f})^{\sigma_1}$};
    \draw[->] (A)--(Asubt) node [midway,left]  {\scriptsize    $\gamma^{k,l}_{(g{_{j}\circ _{i}}f)^{\tau_2},h}$};
 \draw[double equal sign distance] (Asubt)--(Gb) node [midway,left]  {};
    \draw[double equal sign distance] (P4)--(Asubt1) node [midway,right] {};
\draw[->] (Asubt1)--(Gb) node [midway,above] {\scriptsize    $(\beta^{l,k-i+j+1;\tau_1(j),i}_{h,g,f})^{\kappa}$};
   \end{tikzpicture}
\end{center}
in which
\begin{itemize}
\item $\sigma_1\in {\tt c}{\mathbb S}_{m+n+p-3}$ is determined by $\sigma_1(1)=j+m-i$,
\item $\tau_1\in {\tt c}{\mathbb S}_{n+p-2}$ is determined by $\tau_1(1)=l+n-k+i-j-1$,
\item $\tau_2\in {\tt c}{\mathbb S}_{m+n-2}$ is determined by $\tau_2(1)=j+m-i$, 
\item $\sigma_2\in {\tt c}{\mathbb S}_{m+n+p-3}$ is determined by $\sigma(1)=l+m+n-2-k$, and
\item $\kappa\in{\tt c}{\mathbb S}_{m+n+p-3}$ is defined by $\kappa = \kappa^{p,m+n-2}_{l,k}\circ({\it id}_{[p]\backslash\{l\}}+\tau_2|_{[m+n-2]\backslash\{\tau_2(k)\}})\circ\kappa^{p,m+n-2}_{l,\tau_2(k)}\circ\sigma_2$,   the composition defining $\kappa$ being equal to   $\kappa^{p+n-2,m}_{j,i}\circ(\tau_1|_{[p+n-2]\backslash\{\tau_1(j)\}}+{\it id}_{[m]\backslash\{i\}})\circ\kappa^{p+n-2,m}_{\tau_1(j),i}\circ\sigma_1.
$
\end{itemize}

Similarly, we can obtain the skeletal versions of the other coherence diagrams of Definition \ref{cat}, and  arrive to a formal full definition of skeletal entries-only categorified cyclic operad. We omit the details.

\section{Categorified cyclic operads ``in nature''}\label{s5}
In this section, we first give an example of a categorified entries-only cyclic operad  obtained by the principal categorification (Section \ref{s2}), in the form of an easy generalization
of the structure of profunctors of B\' enabou \cite{benabou2}. We then show how to exploit the coherence conditions of categorified entries-only cyclic operads for which all the axioms have been relaxed (Section \ref{s3}) in proving that the Feynman category for cyclic operads, introduced by Kaufmann and Ward  in  \cite{kauf}, admits an odd version.
\subsection{Generalized profunctors as categorified entries-only cyclic operads} 
In this example, the categorified  entries-only cyclic operad  structure arises because the cartesian product of sets (which figures in the definition of the composition of profunctors, and a fortiori of generalized profunctors) is neither associative nor commutative on the nose.  On the other hand  equivariance  holds strictly. It is more natural to work here in a skeletal setting.

\subsubsection{${\bf D}^{n}$-profunctors}
Recall that, for small categories ${\bf C}$ and ${\bf D}$, a {profunctor from ${\bf C}$ to ${\bf D}$} is a functor $F:{\bf D}^{\it op}\times {\bf C}\rightarrow {\bf Set}$, denoted usually by $F:{\bf C} \tobar {\bf D}$.
The composition of profunctors $F:{\bf C} \tobar {\bf D}$ and $G:{\bf D} \tobar {\bf E}$ is a profunctor $G\circ F:{\bf C} \tobar {\bf E}$ defined by $$G\circ F=\int^{d}F(d,-)\times G(-,d).$$
Small categories, profunctors (with the above composition) and natural transformations yield the bicategory  ${\bf Prof}$  of profunctors. Profunctors first appeared in the work \cite{benabou2} of  B\' enabou, under the name of { distributors}.

\indent Let ${\bf D}$ be a small category equipped with an isomorphism  $(-)^{\ast}:{\bf D}^{\it op}\rightarrow {\bf D}$, called a duality hereafter. 
Given that additional structure, a profunctor $F:{\bf D}^{\it op}\times {\bf D}\rightarrow {\bf Set}$ is canonically isomorphic to the profunctor $F'\circ ((-)^{\ast}\times 1_{D})$, where $F':{\bf D}\times {\bf D}\rightarrow {\bf Set}$ is defined by $F'(x,y)=F(x^{\ast},y)$. Therefore, in the presence of a duality, a profunctor $F:{\bf D}^{\it op}\times {\bf D}\rightarrow {\bf Set}$ can be  considered as a functor $F:{\bf D}\times {\bf D}\rightarrow {\bf Set}$. We shall call a functor of this type a ${\bf D}^{2}$-{\em profunctor}. More generally, we shall call a functor $F:{\bf D}^{n}\rightarrow {\bf Set}$, where $n\geq 2$, a ${\bf D}^{n}$-{\em profunctor}. Let us denote the functor category $[{\bf D}^{n},{\bf Set}]$ with ${\EuScript C}(n)$.\\[0.1cm]
\indent 
We  define the { partial compositions} on ${\bf D}^{n}$-profunctors as the family of bifunctors $${{_{i}\circ_{j}}}:{\EuScript C}(m)\times {\EuScript C}(n)\rightarrow {\EuScript C}(m+n-2),$$ defined for arbitrary $m,n>1$, $1\leq i\leq m$ and $1\leq j\leq n$, as follows: for $F\in {\EuScript C}(m)$ and $G\in {\EuScript C}(n)$,  
$$(F\,{{_{i}\circ_{j}}}\, G) (y_1,\dots,y_{m+n-2})=$$ 
$$\int^{u,v} F(x_1,\dots,x_{i-1},u,x_{i},\dots ,x_{m-1})\times G(x_{m},\dots,x_{m+j-2},v,x_{m+j-1},\dots,x_{m+n-2})\times {\bf D}[u,v^{\ast}],$$
where the definition of  $x_i$,   $1\leq i\leq m+n-2$, can be read from the following picture:
\begin{center}
\begin{tikzpicture}
\node (F) [circle, draw=black,inner sep=0.5mm] at (0,0) {$F$};
\node (d1) [circle, draw=none,inner sep=0.5mm] at (0,1.525) {\footnotesize $\cdots$};
\node (d2) [circle, draw=none,inner sep=0.5mm] at (0,-1.525) {\footnotesize $\cdots$};
\node (i) [circle, draw=none,inner sep=0.5mm] at (1.75,0.2) {\tiny $i$};
\node (j) [circle, draw=none,inner sep=0.5mm] at (2.25,0.2) {\tiny $j$};
\node (d3) [circle, draw=none,inner sep=0.5mm] at (4,1.525) {\footnotesize $\cdots$};
\node (d4) [circle, draw=none,inner sep=0.5mm] at (4,-1.525) {\footnotesize $\cdots$};
\node (F1) [rectangle, draw=none,inner sep=0.3mm] at (0.9,1.65) {\footnotesize $y_{i+n-1}$};
\node (F2) [rectangle, draw=none,inner sep=0.3mm] at (-0.9,1.65) {\footnotesize $y_{m+n-2}$};
\node (F3) [rectangle, draw=none,inner sep=0.3mm] at (-0.9,-1.65) {\footnotesize $y_1$};
\node (F3') [rectangle, draw=none,inner sep=0.3mm] at (-0.9,-1.4) {\footnotesize\color{red} $x_1$};
\node (F1') [rectangle, draw=none,inner sep=0.3mm] at (0.9,1.4) {\footnotesize\color{red} $x_i$};
\node (1) [rectangle, draw=none,inner sep=0.3mm] at (-0.8,-1) {\tiny $1$};
\node (1) [rectangle, draw=none,inner sep=0.3mm] at (-0.5,-0.45) {\tiny $1$};
\node (1) [rectangle, draw=none,inner sep=0.3mm] at (0.65,-0.45) {\tiny $i\!-\!1$};
\node (1) [rectangle, draw=none,inner sep=0.3mm] at (1.05,-1.15) {\tiny $i\!-\!1$};
\node (i) [rectangle, draw=none,inner sep=0.3mm] at (0.7,0.45) {\tiny $i\!+\!1$};
\node (i) [rectangle, draw=none,inner sep=0.3mm] at (1.25,1.15) {\tiny $i\!+\!n\!-\!1$};
\node (i) [rectangle, draw=none,inner sep=0.3mm] at (-0.55,0.45) {\tiny $m$};
\node (i) [rectangle, draw=none,inner sep=0.3mm] at (-1.25,1) {\tiny $m\!+\!n\!-\!2$};
\node (j) [rectangle, draw=none,inner sep=0.3mm] at (4.5,0.45) {\tiny $1$};
\node (j) [rectangle, draw=none,inner sep=0.3mm] at (5.2,1) {\tiny $i\!+\!n\!-\!j$};
\node (j) [rectangle, draw=none,inner sep=0.3mm] at (3.3,0.45) {\tiny $j\!-\!1$};
\node (j) [rectangle, draw=none,inner sep=0.3mm] at (2.7,1.15) {\tiny $i\!+\!n\!-\!2$};
\node (j) [rectangle, draw=none,inner sep=0.3mm] at (4.5,-0.45) {\tiny $n$};
\node (j) [rectangle, draw=none,inner sep=0.3mm] at (5.35,-1) {\tiny $i\!+\!n\!-\!j\!-\!1$};
\node (j) [rectangle, draw=none,inner sep=0.3mm] at (3.4,-0.45) {\tiny $j\!+\!1$};
\node (j) [rectangle, draw=none,inner sep=0.3mm] at (3.1,-1.15) {\tiny $i$};
\node (F2') [rectangle, draw=none,inner sep=0.3mm] at (-0.9,1.4) {\footnotesize\color{red}  $x_{m-1}$};
\node (F4) [rectangle, draw=none,inner sep=0.3mm] at (0.9,-1.65) {\footnotesize $y_{i-1}$};
\node (F4') [rectangle, draw=none,inner sep=0.3mm] at (0.9,-1.4) {\footnotesize\color{red} $x_{i-1}$};
\node (G1) [rectangle, draw=none,inner sep=0.3mm] at (4.9,1.65) {\footnotesize $y_{i+n-j}$};
\node (G2) [rectangle, draw=none,inner sep=0.3mm] at (3.1,1.65) {\footnotesize $y_{i+n-2}$};
\node (G3) [rectangle, draw=none,inner sep=0.3mm] at (3.1,-1.65) {\footnotesize $y_i$};
\node (G4) [rectangle, draw=none,inner sep=0.3mm] at (4.9,-1.65) {\footnotesize $\enspace y_{i+n-j-1}$};
\node (G1') [rectangle, draw=none,inner sep=0.3mm] at (4.9,1.4) {\footnotesize\color{blue} $x_m$};
\node (G2') [rectangle, draw=none,inner sep=0.3mm] at (3.1,1.4) {\footnotesize\color{blue}  $x_{m+j-2}$};
\node (G3') [rectangle, draw=none,inner sep=0.3mm] at (3.1,-1.4) {\footnotesize\color{blue} $x_{m+j-1}$};
\node (G4') [rectangle, draw=none,inner sep=0.3mm] at (4.9,-1.4) {\footnotesize\color{blue} $\enspace x_{m+n-2}$};
\node (G) [circle, draw=black,inner sep=0.5mm] at (4,0) {$G$};
\draw (F)--(G);
\draw (F)--(F1');
\draw (F)--(F2');
\draw (F)--(F3');
\draw (F)--(F4');
\draw (G)--(G1');
\draw (G)--(G2');
\draw (G)--(G3');
\draw (G)--(G4');
\draw (2,-0.15)--(2,0.15);
 \draw [dashed] (2,0) ellipse (4.5cm and 0.95cm);
\end{tikzpicture}
\end{center}
Formally, we have that  $x_i=y_{({\tau_{i,j}^{m,n}})_{y_1,\dots,y_{m+n-2}}(i)}$, where
$$(\tau^{m,n}_{i,j})_{y_1,\dots,y_{m+n-2}}: [m+n-2] \rightarrow [m+n-2]$$
 is the bijection defined by $$(\tau^{m,n}_{i,j})_{y_1,\dots,y_{m+n-2}}(k)= \begin{cases} 
     k,   &  1\leq k \leq  i-1 \\
      {k+n-1}, &  i\leq k \leq m-1 \\
     {k+n-m+i-j},   &   m\leq k \leq m+j-2 \\
  {k-m-j+i+1},   &   m+j-1\leq k \leq m+n-2. \\
   \end{cases}$$ 
\subsubsection{${\bf D}^{n}$-profunctors as categorified cyclic operads} \label{profunctor-categorified}
We  next give evidence that ${\bf D}^{n}$-profunctors, together with the family ${{_{i}\circ_{j}}}$, carry the structure of a skeletal categorified entries-only  cyclic operad.\footnote{${\bf D}^{n}$-profunctors actually carry the structure of a skeletal categorified entries-only  cyclic operad { with units (and relaxed unit law)},  the unit ${\it id}\in {\EuScript C}(2)$ being given by  ${\bf D}[-^{\ast},-]: {\bf D}\times {\bf D}\rightarrow {\bf Set}$.} 

\medskip
The family $\{{\EuScript C}(n)\}_{n\geq 2}$ extends to a functor ${\EuScript C}:{\bf{\Sigma}}^{\it op}\rightarrow {\bf Cat}$, as follows.
For $\sigma\in{\mathbb S}_{n}$ and $F:{\bf D}^{n}\rightarrow {\bf Set}$, the profunctor $F^\sigma$
is defined by $F^\sigma(y_1,\dots,y_n)=F(y_{\sigma^{-1}(1)},\dots,y_{\sigma^{-1}(n)})$. We define analogously the action of $\sigma$ on natural transformations.  
\smallskip

We next prove that the equivariance axiom \hyperlink{sEQ}{\texttt{(sEQ)}} 
holds on the nose.
\begin{lem}
For $\sigma_1\in{\mathbb S}_m$, $\sigma_2\in{\mathbb S}_n$,  $F\in{\EuScript C}(m)$, $G\in {\EuScript C}(n)$, $1\leq i\leq m$, $1\leq j\leq n$
 and $\sigma \in{\mathbb S}_{m+n-2}$ defined via $\sigma_1$ and $\sigma_2$ as in  \hyperlink{sEQ}{\em\texttt{(sEQ)}}, the following equality holds: 
\begin{equation}\label{eq_profunctors}(F^{\sigma_1}\,\,{_{{ {\sigma_1^{-1}}(i)}}\circ_{\sigma_2^{-1}(j)}}\,\, G^{\sigma_2})(y_1,\dots,y_{m+n-2})=(F\, {_i\circ_j} \, G)^{\sigma}(y_1,\dots,y_{m+n-2}).\end{equation}  
\end{lem}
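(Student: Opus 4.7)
My plan is to verify the equality by expanding both sides to coends indexed over ${\bf D}\times{\bf D}$ and showing that, after relabelling, the integrands coincide. The computation is entirely bookkeeping: once the right indexing bijections are identified, the equality becomes tautological, since no non-trivial identification between coend variables is required (the coend variables $u,v$ are dummy).

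First, I expand the left-hand side. By definition of $_k\!\circ_l$ (with $k=\sigma_1^{-1}(i)$, $l=\sigma_2^{-1}(j)$),
$$(F^{\sigma_1}\,{_{k}\circ_{l}}\,G^{\sigma_2})(y_1,\dots,y_{m+n-2})=\int^{u,v}F^{\sigma_1}(\ldots,u,\ldots)\times G^{\sigma_2}(\ldots,v,\ldots)\times {\bf D}[u,v^\ast],$$
where the inputs of $F^{\sigma_1}$ are obtained from $(\tau^{m,n}_{k,l})_{y_1,\ldots,y_{m+n-2}}$ applied to $[1,\ldots,m-1]$ with $u$ inserted in position $k$, and similarly for $G^{\sigma_2}$. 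Using Lemma~\ref{funct}, I can rewrite $F^{\sigma_1}(z_1,\ldots,z_m)=F(z_{\sigma_1^{-1}(1)},\ldots,z_{\sigma_1^{-1}(m)})$, so $F^{\sigma_1}$ evaluated on its inputs becomes $F$ evaluated at a reindexed tuple, where $u$ now appears in position $\sigma_1(k)=i$, and the remaining $m-1$ slots are filled by values $y_{\alpha_1},\ldots,y_{\alpha_{m-1}}$ for some indexing function $\alpha$ that depends on $\tau^{m,n}_{k,l}$ and $\sigma_1$. An analogous rewrite applies to $G^{\sigma_2}$.

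Next, I expand the right-hand side. By the definition of the group action,
$$(F\,{_i\circ_j}\,G)^\sigma(y_1,\dots,y_{m+n-2})=(F\,{_i\circ_j}\,G)(y_{\sigma^{-1}(1)},\dots,y_{\sigma^{-1}(m+n-2)}),$$
and then, applying the definition of partial composition, this becomes
$$\int^{u,v}F(\ldots,u,\ldots)\times G(\ldots,v,\ldots)\times {\bf D}[u,v^\ast],$$
where the inputs of $F$ are obtained by applying $\tau^{m,n}_{i,j}$ to the tuple $(y_{\sigma^{-1}(1)},\ldots,y_{\sigma^{-1}(m+n-2)})$ (with $u$ in position $i$), and similarly for $G$ (with $v$ in position $j$).

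The remaining work is to compare indexings. The indices of $F$ on the LHS are $y_{\alpha_p}$ with $\alpha_p$ given by $\tau^{m,n}_{k,l}\circ\sigma_1^{-1}$ restricted to $[m]\setminus\{i\}$ (under the identifications above). The indices of $F$ on the RHS are $y_{\sigma^{-1}(\tau^{m,n}_{i,j}(p))}$ for $p\in[m]\setminus\{i\}$. The definition of $\sigma$ in \hyperlink{sEQ}{\texttt{(sEQ)}} is precisely
$$\sigma=(\kappa^{m,n}_{i,j})^{-1}\circ(\sigma_1|_{[m]\backslash\{i\}}+\sigma_2|_{[n]\backslash\{j\}})\circ\kappa^{m,n}_{\sigma_1^{-1}(i),\sigma_2^{-1}(j)},$$
and since the bijections $\tau^{m,n}_{i,j}$ and $\kappa^{m,n}_{i,j}$ encode the same partition of $[m+n-2]$ into the ``$F$-part'' and the ``$G$-part'' (only with targets in $[m]\setminus\{i\}+[n]\setminus\{j\}$ versus $\{y_1,\ldots,y_{m+n-2}\}$), I can conclude by a direct check that, for $p\in[m]\setminus\{i\}$, the equality $\tau^{m,n}_{k,l}\circ\sigma_1^{-1}(p)=\sigma^{-1}\circ\tau^{m,n}_{i,j}(p)$ holds, and symmetrically for $G$ with $\sigma_2,j$ in place of $\sigma_1,i$.

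The main obstacle is not conceptual but notational: untangling the four bijections $\tau^{m,n}_{i,j}$, $\tau^{m,n}_{k,l}$, $\kappa^{m,n}_{i,j}$, $\kappa^{m,n}_{\sigma_1^{-1}(i),\sigma_2^{-1}(j)}$ and checking the case distinctions (above or below the ``cut'' between $F$- and $G$-positions) cleanly. I would do this by splitting $[m+n-2]$ into the four intervals appearing in the definition of $\kappa^{m,n}_{i,j}$ and verifying the identity on each interval separately. Once this is done, the coends on both sides agree integrand-wise, and the equality follows.
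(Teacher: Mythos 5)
Your proposal is correct and follows essentially the same route as the paper: both reduce the coend equality to a compatibility between the indexing bijections $\tau^{m,n}_{i,j}$, $\tau^{m,n}_{\sigma_1^{-1}(i),\sigma_2^{-1}(j)}$ and the $\kappa^{m,n}$'s appearing in the definition of $\sigma$ in \hyperlink{sEQ}{\texttt{(sEQ)}}, to be checked interval by interval. The paper merely packages this check as the single factorisation $\kappa^{m,n}_{i,j}=\pi^{m,n}_{i,j}\circ (\tau^{m,n}_{i,j})^{-1}_{y_1,\dots,y_{m+n-2}}\circ (\delta^{m,n})_{y_1,\dots,y_{m+n-2}}$, where $\delta^{m,n}$ and $\pi^{m,n}_{i,j}$ are auxiliary bijections making precise the distinction (which you leave implicit) between the index permutation underlying $\tau^{m,n}_{i,j}$ and its evaluation on the tuple $(y_1,\dots,y_{m+n-2})$.
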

\begin{proof}  
Let    $$(\delta^{m,n})_{y_1,\dots,y_{m+n-2}}:[m+n-2]\rightarrow \{y_1,\dots,y_{m+n-2}\}$$  and $$\pi^{m,n}_{i,j}: [m]\backslash\{i\}+[n]\backslash\{j\}\rightarrow [m+n-2]$$ be  functions defined by $(\delta^{m,n})_{y_1,\dots,y_m}(i)=y_i$, for all $1\leq i\leq m+n-2$, and 
$$\pi^{m,n}_{i,j}(k)= \begin{cases} 
     k,  &  1\leq k \leq  i-1 \\
     k-1, &  i+1\leq k \leq m \\
   \end{cases}$$ 
for $k\in [m]\backslash\{i\}$, and 

$$\pi^{m,n}_{i,j}(l)= \begin{cases} 
    l+m-1,   &   1\leq l \leq j-1 \\
  l+m-2,   &   j+1\leq l \leq n \\
   \end{cases}$$ 
for $l\in [n]\backslash\{j\}$, respectively. The equality \eqref{eq_profunctors} then follows by the definition of (the inverse of) $\sigma$, combined with the equality $$\kappa^{m,n}_{i,j}=\pi^{m,n}_{i,j}\circ (\tau^{m,n}_{i,j})^{-1}_{y_1,\dots,y_{m+n-1}}\circ (\delta^{m,n})_{y_1,\dots,y_{m+n-2}}.$$

\vspace{-0.8cm}
\end{proof}

For $F\in {\EuScript C}(m)$,  $G\in {\EuScript C}(n)$,  $1\leq i\leq m$ and $1\leq j\leq n$, we exhibit  $$\gamma_{F,G}^{i,j}: F\,{{_{i}\circ_{j}}}\, G \rightarrow (G\,{{_{j}\circ_{i}}}\, F)^{\sigma},$$ where $\sigma\in {\tt c}{\mathbb S}_{m+n-2}$ is  determined by $\sigma(1)=j+m-i$, as follows:
  $$(\gamma^{i,j}_{F,G})_{y_1,\dots,y_{m+n-2}}([(u,v,a,b,f)]_{\sim})=[(v,u,b,a,f^{\ast})]_{\sim}.$$ 
  It is easily seen that this correspondence is well-defined and bijective.

\medskip
We continue with sequential associativity. Let $F\in{\EuScript C}(m)$, $G\in {\EuScript C}(n)$, $H\in {\EuScript C}(p)$, $1\leq i\leq m$, $1\leq j\leq n$, $i\leq k\leq i+n-2$ and $1\leq l\leq p$. According to Definition \ref{co_skeletal}, we distinguish two cases:
\begin{itemize}
\item   $i\leq k\leq i+n-j-1$,  in which case we define a natural isomorphism 
$$\beta^{i,j;k,l}_{F,G,H}:(F\,{{_{i}\circ_{j}}}\, G)\,{{_{k}\circ_{l}}}\, H\rightarrow F\,{{_{i}\circ_{j}}}\, (G\,{{_{k-i+j+1}\circ_{l}}}\, H),$$
\item   $i+n-j\leq k\leq i+n-2$, in which case we define a natural isomorphism 
$$\beta^{i,j;k,l}_{F,G,H}:(F\,{{_{i}\circ_{j}}}\, G)\,{{_{k}\circ_{l}}}\, H\rightarrow F\,{{_{i}\circ_{j+p-1}}}\, (G\,{{_{k-i-n+j+1}\circ_{l}}}\, H).$$
\end{itemize}
\noindent
In both cases,
 the corresponding components  are defined by 
 $$(\beta^{i,j;k,l}_{F,G,H})_{y_1,\dots,y_{m+n+p-3}}([(u_1,v_1,(u_2,v_2,a,b,f),c,g)]_{\sim})=[(u_2,v_2,a,(u_1,v_1,b,c,g),f)]_{\sim}.$$
 
Again, this correspondence $(\beta^{i,j;k,l}_{F,G,H})_{y_1,\dots,y_{m+n+p-3}}$ is a bijection.
 
An easy diagram chase shows that $\beta^{i,j;k,l}_{F,G,H}$ and $\gamma^{i,j}_{F,G}$ satisfy  \hyperlink{gfgf}{\tt (s-$\beta \gamma$-\texttt{hexagon})}.
One would check the other (skeletal versions of) the coherence diagram similarly.

\medskip
Alternatively, at the price of using a slightly less standard presentation, but with the benefit of a simpler definition of composition, we can reformulate the whole example in our main  non-skeletal setting, as follows.  For each set $X$, considered as a discrete category, let ${\bf D}^X$ be the category of $X$ diagrams in ${\bf D}$, whose objects $\mathbb{X}$,  $\mathbb{Y},\ldots$ are thus named tuples of objects of ${\bf D}$. We set
${\EuScript C}(X)=[{\bf D}^X,{\bf Set}]$. In this setting, composition is defined straightforwardly:
$$(F\,{{_{x}\circ_{y}}}\, G) (\mathbb{Z})=
\int^{u,v} F(\mathbb{Z'}[x\leftarrow u])\times G(\mathbb{Z}''[y\leftarrow v])\times {\bf D}[u,v^{\ast}],$$
where $\mathbb{Z}'$ (resp. $\mathbb{Z}''$) is  the restriction of $\mathbb{Z}$ to $X\backslash\{x\}$ (resp. $Y\backslash\{y\}$), and where, say,  $\mathbb{Z}'[x\leftarrow u]$ is the extension of $\mathbb{Z}'$ to $X$ that maps $x$ to $u$. We leave it to the reader to check that all conditions of Definition \ref{cat} are verified.

\subsection{Feynman category for anticyclic operads}
This section is a development around Kaufmann and Ward's Feynman categories and their odd versions, introduced in \cite{kauf}. Feynman categories are monoidal categories with some additional structure, whose representations are operad-like notions.  

Our goal  is to illustrate the utility of categorified entries-only cyclic operads in  proving that the Feynman category 
for cyclic operads admits an odd version, which is, in turn, precisely  the Feynman category for anticyclic operads.  In \cite{kauf}, the authors gave a sketch of  the   underlying 2-categorical constructions. We recall here the relevant definitions, in particular, that of an anticyclic operad and of an ordered presentation. The latter embodies a coherence condition, which we make explicit.  We complete the work in  \cite{kauf} by proving this coherence condition in the cyclic case, as a consequence of our coherence result for categorified cyclic operads.

\subsubsection{Anticyclic operads}
Anticyclic operads were defined by Getzler and Kapranov in \cite[\S 2.10]{Getzler:1994pn},  as operads  with simultaneous composition equipped with an action of the cycles $\tau_n$ whose compatibility with operadic composition differs from the one of cyclic operads only in signs involved in the equations. In \cite{chapoton}, Chapoton gave and equivalent definition, based on operads with partial composition.   The entries-only version of his structure follows  from the equivalence of \cite[Theorem 2]{mo}. It is obtained from Definition \ref{entriesonly} by replacing the axiom \hyperlink{CO}{\tt{(CO)}} by the equality $$f\,{_x\circ_y}\, g = - g \,{_y\circ_x}\, f .$$ All other axioms  remain the same.

\subsubsection{Ordered presentations of categories} \label{ordered-presentation-section}
We begin by recasting the notion of a (small) category presented by generators and relations, in the language of polygraphs (or computads)~\cite{burroni,street} which we now briefly introduce. A {\em 1-polygraph} is given by sets $\Sigma_0$ and $\Sigma_1$ (the $0$-cells and the $1$-generators), and a source and a target map  from $\Sigma_1$ to $\Sigma_0$. A 1-polygraph freely generates a category whose objects are the $0$-cells and whose  collection  $\Sigma_1^*$ of morphisms   consists of the ``well-typed'' sequences of $1$-generators.  A {\em 2-polygraph} is given by 
a 1-polygraph plus a set  $\Sigma_2$  of  $2$-generators, together 
with a source and a target map  from $\Sigma_2$ to $\Sigma_1^*$, such  that the source (resp. the target) of the source of any generating $2$-cell is the source (resp. the target) of its target. A 2-polygraph freely generates a 2-category  with objects and 1-morphisms   as above,   whose collection  $\Sigma_2^*$ of $2$-morphisms  consists of the pasting diagrams (called ``polygons'' in \cite{kauf}) that can be built out from the generating $2$-cells (and their inverses).

In the language of polygraphs, a category   presented by generators and relations   can be defined as a quotient of a $2$-polygraph $\Sigma$ whose $0$-cells are the objects of the presented category, whose $1$-generators are its generators, and whose $2$-generators are the relations $s=t$, represented as (invertible) $2$-cells from $s$ to $t$.  This polygraph generates a $2$-category $\Sigma^*$, as indicated above, from which we can extract the category
$\pcat{\Sigma}$
whose objects are the   $0$-cells, and whose morphisms are the equivalence classes of $1$-cells for the relation defined by $s\equiv t$ if and only if there exists a $2$-cell in $\Sigma_2^*$  whose source and target are respectively $s$ and $t$.   We refer to  the survey \cite{Guiraud-Malbos}  and to \cite{Curien-Mimram} for more details on polygraphs, and on presentations of categories, respectively.

We now recall the notion of ordered presentation  \cite[Definition 5.2.3]{kauf}, expressed in the language of polygraphs. An ordered polygraph is a 2-polygraph $\Sigma$  equipped with a map $\nu:\Sigma_2\rightarrow\{+,-\}$, in such a way that the following {\em sign-coherence} condition holds: the sign assignment $\nu$ extends multiplicatively to every $2$-cell  in $\Sigma_2^*$ (giving the same  sign to a $2$-generator  and to its inverse).  We require that every two parallel $2$-cells    receive the same sign, so that this sign can actually be attributed unambiguously to every pair of parallel $1$-cells.  A presentation is called ordered when its associated $2$-polygraph can be ordered.

  From an ordered presentation $(\Sigma,\nu)$, we can extract the $ {\bf Ab}$-enriched category 
$\pcat{(\Sigma,\nu)}^{\it odd}$,  whose objects are  the  $0$-cells $A,B,\ldots$, and whose   homsets  are defined as follows: $\pcat{(\Sigma,\nu)}^{\it odd}[A,B]$  is   the abelian group freely generated by  all parallel $1$-cells with $A$ as source and $B$ as target, quotiented   by the subgroup generated by  $s-t$  (resp.  $s+t$) for all pairs $(s,t)$ of sign $+$ (resp. of sign $-$).

\subsubsection{Feynman category for cyclic operads}
The definition of the Feynman category for cyclic operads uses the formalism of graphs introduced in \S \ref{section_trees}, adapted by forgetting the decorations of corollas. Therefore, in what follows, by a graph ${\cal G}$, we shall mean a set of  corollas of the form $\bullet(x,y,z,\ldots)$, where $\bullet$ stands for ``unlabelled corolla'',   together with an involution $\kappa$ on the set $V({\cal G})$ of all half-edges of ${\cal G}$.
  
An aggregate is  a  totally disconnected graph, i.e., a graph whose involution is the identity. We define morphisms between aggregates as follows:
every pair $({\cal G},\tau)$ of a  graph ${\cal G}$ and a bijection $\tau$ from some set $Y$ to ${\it FV}({\cal G})$ gives rise to a morphism from  the aggregate formed by the corollas of ${\cal G}$   (obtained by forgetting the involution $\kappa$  that specifies the edges of ${\cal G}$) to  an aggregate which has as many corollas as there are connected components in ${\cal G}$, each corolla being of the form $\bullet(\tau^{-1}(x),\tau^{-1}(y),\tau^{-1}(z),\ldots)$, where $x,y,z,\ldots$ are the fixpoints of $\kappa$ 
in the corresponding connected component. Furthermore, all morphisms are defined in this way.
\begin{example}
For
$$ {\cal G}=\{\bullet(x,y,z)\:,\:\bullet(\underline{x},u,v) \:,\:\bullet(a,b,c);\kappa\},$$ where $\kappa(x)=\underline{x}$ and $\kappa(w)=w$ for all  $w\in\{y,z,u,v,a,b,c\}$, and $\tau:\{y',z',u',v',a',b',c'\}\rightarrow \{y,z,u,v,a,b,c\}$ defined by $\tau(w')=w$ for all $w\in\{y,z,u,v,a,b,c\}$, we have $$({\cal G},\tau): \{\bullet(x,y,z)\:,\:\bullet(\underline{x},u,v) \:,\:\bullet(a,b,c)\}\rightarrow \{\bullet(y',z',u',v') \:,\:\bullet(a',b',c')\}.\vspace{-0.6cm}$$ \hfill$\square$
\end{example}

The composition of morphisms of aggregates is given by $({\cal G}',\sigma') \circ ({\cal G},\sigma)=({\cal G}'', \sigma\circ\sigma')$, 
where ${\cal G}''$ is obtained by inserting at every node  of ${\cal G}' $  the corresponding connected  component of ${\cal G}$, using $\sigma$ to connect its free half-edges with those of the corolla that it replaces.

Aggregates and their morphisms  form a monoidal category, the tensor product being the juxtaposition of aggregates and graphs. We are interested in its monoidal subcategory 
${\it Cyc}$  which has  aggregates as objects  and whose morphisms are pairs $(F,\sigma)$, where $F$ is a forest, i.e., a disjoint union of unrooted trees.

\smallskip
The monoidal category ${\it Cyc}$ admits the following presentation.  We take  aggregates  as  $0$-cells.  The $1$-generators are  of  two kinds:
\begin{itemize}
\item[a)] { edge contractions}
 \begin{center}
\resizebox{10cm}{!}{\begin{tabular}{c c c}
\begin{tikzpicture}
 \node (d1) [circle,minimum size=2mm,inner sep=0.1mm,fill=none,draw=none] at (-1.25,0) {$\dots$};
 \node (d2) [circle,minimum size=2mm,inner sep=0.1mm,fill=none,draw=none] at (3.25,0) {$\dots$};
  \node (a) [circle,minimum size=2mm,inner sep=0.1mm,fill=black,draw=black] at (0,0) {};
\node (x) [circle, minimum size=1.5mm,inner sep=0.1mm,fill=none,draw=none] at (0.65,-0.375) {\footnotesize $x$};
 \node (b) [circle,minimum size=2mm,inner sep=0.1mm,fill=black,draw=black] at (2,0) {};
\node (y) [circle, minimum size=1.5mm,inner sep=0.1mm,fill=none,draw=none] at (1.35,-0.375) {\footnotesize $y$};
\node (z) [circle, minimum size=1.5mm,inner sep=0.1mm,fill=none,draw=none] at (1.35,-0.8) {};
 \draw (a) -- (0,0.75);
 \draw (a) -- (-0.55,-0.55);
 \draw (a) -- (0.55,-0.55);
 \draw (b) -- (2.55,0.55);
 \draw (b) -- (1.45,0.55);
 \draw (b) -- (1.45,-0.5);
 \draw (b) -- (2.55,-0.5);
\end{tikzpicture}
&
\enspace
\begin{tikzpicture}
 \node at (1,0.5) {$_x\circ_y$};
 \node at (1,-0.4) {};
\draw[|->] (0.1,0.3)--(1.9,0.3);

\end{tikzpicture}
\enspace &

\begin{tikzpicture} 
 \node (d1) [circle,minimum size=2mm,inner sep=0.1mm,fill=none,draw=none] at (-1.25,0) {$\dots$};
 \node (d2) [circle,minimum size=2mm,inner sep=0.1mm,fill=none,draw=none] at (1.25,0) {$\dots$};
\node (a) [circle, minimum size=2mm,inner sep=0.1mm,fill=black,draw=black] at (0,0) {};
\node (z) [circle, minimum size=1.5mm,inner sep=0.1mm,fill=none,draw=none] at (0,-0.8) {};
 \draw (a) -- (0,0.75);
 \draw (a) -- (-0.45,-0.65);
 \draw (a) -- (0.45,-0.65);
\draw (a) -- (-0.75,0.2);
\draw (a) -- (0.75,0.2);
\node (z) [circle, minimum size=1.5mm,inner sep=0.1mm,fill=none,draw=none] at (1.35,-0.6) {};
\end{tikzpicture}
\end{tabular}}
\end{center}
where  $_x\circ_y$ stands for  $(F,{\it id})$, with \,$F=$\, \resizebox{1.5cm}{!}{\begin{tikzpicture}
 \node (a) [circle, minimum size=2mm,inner sep=0.1mm,fill=black,draw=black] at (0.55,-0.2) {};
 \node (b) [circle, minimum size=2mm,inner sep=0.1mm,fill=black,draw=black] at (1.45,-0.2) {};
\draw (a)--(b);
\draw (a)--(0.15,0.1);
\draw (a)--(0.15,-0.5);
\draw (b)--(1.45,0.2);
\draw (b)--(1.45,-0.6);
\draw (b)--(1.9,-0.2);
\node (x) [circle, minimum size=1.5mm,inner sep=0.1mm,fill=none,draw=none] at (0.85,-0.35) {\tiny $x$};
\node (y) [circle, minimum size=1.5mm,inner sep=0.1mm,fill=none,draw=none] at (1.15,-0.35) {\tiny $y$};
\draw (1,-0.1)--(1,-0.3);
\end{tikzpicture}}\, , and
\item[b)] {relabelling} isomorphisms  
\begin{center}
\resizebox{8.5cm}{!}{\begin{tabular}{c c c}
\begin{tikzpicture}
 \node (d1) [circle,minimum size=2mm,inner sep=0.1mm,fill=none,draw=none] at (-1.25,0) {$\dots$};
 \node (d2) [circle,minimum size=2mm,inner sep=0.1mm,fill=none,draw=none] at (1.25,0) {$\dots$};
  \node (a) [circle, minimum size=2mm,inner sep=0.1mm,fill=black,draw=black] at (0,0) {};
\node (x) [circle, minimum size=1.5mm,inner sep=0.1mm,fill=none,draw=none] at (0.65,-0.375) {\footnotesize $x$};
\node (z) [circle, minimum size=1.5mm,inner sep=0.1mm,fill=none,draw=none] at (0.65,-0.6) {};
 \draw (a) -- (0,0.75);
 \draw (a) -- (-0.55,-0.55);
 \draw (a) -- (0.55,-0.55);
\end{tikzpicture}
&
\enspace\begin{tikzpicture}
 \node at (1,0.5) {$(C,{\sigma})$};
 \node at (1,-0.2) {};
\draw[|->] (0.1,0.3)--(1.9,0.3);

\end{tikzpicture} \enspace&
\begin{tikzpicture}
 \node (d1) [circle,minimum size=2mm,inner sep=0.1mm,fill=none,draw=none] at (-1.25,0) {$\dots$};
 \node (d2) [circle,minimum size=2mm,inner sep=0.1mm,fill=none,draw=none] at (1.25,0) {$\dots$};
  \node (a) [circle, minimum size=2mm,inner sep=0.1mm,fill=black,draw=black] at (0,0) {};
\node (x) [rectangle, minimum size=1.5mm,inner sep=0.1mm,fill=none,draw=none] at (0.9,-0.375) {\tiny $\sigma_1^{\!-1}\!(x)$};
\node (z) [circle, minimum size=1.5mm,inner sep=0.1mm,fill=none,draw=none] at (0.9,-0.6) {};
 \draw (a) -- (0,0.75);
 \draw (a) -- (-0.55,-0.55);
 \draw (a) -- (0.55,-0.55);
\end{tikzpicture}
\end{tabular}}
\end{center}
where $C$ is the domain corolla. We shall write $\sigma$ for $(C,\sigma)$.
\end{itemize}
The 2-generators express that:
\begin{itemize}
\item[1.]the  two possible edge contractions of the same edge are equal, i.e.,  ${_x\circ_y}={_y\circ_x}$,
\item[2.] { edge  contractions commute}, i.e., ${_x\circ_y}\, {_z\circ_u}= {_z\circ_u}\, {_x\circ_y}$ (with $y,z$ attached to the same corolla),
\item[3.] relabellings commute with edge contractions, i.e., ${_{\sigma_1^{-1}(x)}\circ_{\sigma_2^{-1}(y)}} (\sigma_1\times \sigma_2)=\sigma {_x\circ_y} $, where $\sigma$ is induced in the obvious way from $\sigma_1$ and $\sigma_2$, and
\item[4.] the obvious ``action'' laws regarding labellings, forcing the 1-generator ${\it id}$ to be the identity and  the composition of two $1$-generators $\tau$ and $\sigma$ to be the $1$-generator $\sigma\circ\tau$, hold.
\item[5.] 1-generators acting on disjoint parts of an aggregate commute.
\end{itemize}

\noindent
The $2$-generators corresponding to (1)-(3)  look like this:
\begin{center}
\begin{tikzpicture}
\draw [->]   (0,0) to[out=40,in=140] (2.5,0) node [midway,above,xshift=1.25cm,yshift=0.375cm] {\scriptsize     ${_x\circ_y}$};
\draw [->]   (0,-0.1) to[out=-40,in=220] (2.5,-0.1) node [midway,below,xshift=1.25cm,yshift=-0.5cm] {\scriptsize     ${_y\circ_x}$};
\end{tikzpicture}\quad\quad\quad
\begin{tikzpicture}
\draw [->] (0,0)--(1.5,0.6) node [midway,above] {\scriptsize     ${_x\circ_y}$};
\draw [->] (1.6,0.6)--(3.1,0) node [midway,above] {\scriptsize     ${_z\circ_u}$};
\draw [->] (0,-0.1)--(1.5,-0.7) node [midway,below] {\scriptsize     ${_z\circ_u}$};
\draw [->] (1.6,-0.7)--(3.1,-0.1) node [midway,below] {\scriptsize     ${_x\circ_y}$};
\end{tikzpicture}\quad\quad\quad
\begin{tikzpicture}
\draw [->] (0,0)--(1.5,0.6) node [midway,above,xshift=-0.3cm] {\scriptsize     $\sigma_1\times\sigma_2$};
\draw [->] (1.6,0.6)--(3.1,0) node [midway,above,xshift=0.5cm] {\scriptsize     ${_{\sigma_1^{-1}(x)}\circ_{\sigma_2^{-1}(y)}}$};
\draw [->] (0,-0.1)--(1.5,-0.7) node [midway,below] {\scriptsize     ${_x\circ_y}$};
\draw [->] (1.6,-0.7)--(3.1,-0.1) node [midway,below] {\scriptsize     $\sigma$};
\end{tikzpicture}
\end{center}

Let us call $\Sigma_{\it Cyc}$ the 2-polygraph described above.
It follows from the results of \cite{co} (see Remark 2.10 in that paper, in particular) that the category presented by  $\Sigma_{\it Cyc}$ is isomorphic to ${\it Cyc}$, and that
 the representations  (in ${\bf Set}$) of ${\it Cyc}$ (endowed with its Feynman category structure) are exactly  the cyclic operads of Definition \ref{entriesonly}.  
  
We are now ready to state our result.
 
\begin{thm}\label{fy} $\Sigma_{\it Cyc}$ can be turned into an  ordered presentation  by assigning  $-$ to the relation ${_x\circ_y}= {_y\circ_x}$ and $+$ to all other relations.
\end{thm}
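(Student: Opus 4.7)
The sign-coherence condition we must verify says that for any two parallel $2$-cells $\alpha, \beta : s \Rightarrow t$ in the free $2$-category $\Sigma_{\it Cyc}^*$, we have $\nu(\alpha)=\nu(\beta)$, where $\nu$ is extended multiplicatively. My strategy is to produce a concrete categorified entries-only cyclic operad $\EuScript{C}^{\it odd}$ with \emph{all} defining axioms relaxed (as in Section \ref{s3}), whose category of operations records only a sign, and then to appeal to the coherence theorem for such operads.

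The first step is to construct $\EuScript{C}^{\it odd}\colon{\bf Bij}^{\it op}\to{\bf Cat}$ with $\EuScript{C}^{\it odd}(X)$ the category having a single object $\ast_X$ and automorphism group $\{+1,-1\}$ (under multiplication), and with $\EuScript{C}^{\it odd}(\sigma)$ the identity. The partial composition bifunctors are trivial on objects and multiply signs on morphisms. I assign $\gamma^{x,y}\mapsto -1$ and all of $\beta$, $\varepsilon$, $\iota$, $\nu$ to $+1$. I then check that this assignment really defines a categorified entries-only cyclic operad: since all interpretations live in the abelian group $\{\pm 1\}$, every coherence diagram from Definition \ref{cat} and Section \ref{s3} reduces to a parity count on the number of $\gamma$-factors along each path. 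For instance, \hyperlink{g-involution}{{\tt ($\gamma$-involution)}} gives $(-1)^2=1$; \hyperlink{bg-hexagon}{{\tt ($\beta\gamma$-hexagon)}} contains one $\gamma$ on one side and two $\gamma$'s composed with a $\beta$ on the other (so parities match); the \hyperlink{bg-decagon}{{\tt ($\beta\gamma$-decagon)}} has the analogous matching parity after inspection; and the mixed hexagons/squares of Section \ref{s3} involve $\gamma$'s only in $\gamma\varepsilon$-square and the unit pentagons, where again the count is even on both sides. This is nothing but the known consistency of the anticyclic theory at the level of signs.

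The second step is to set up a dictionary between $2$-cells of $\Sigma_{\it Cyc}^*$ and canonical arrows in ${\tt uFree}_{\underline{\EuScript{C}^{\it odd}}}$. A composable string of $1$-generators in $\Sigma_{\it Cyc}$ — edge contractions and relabellings — applied to a fixed aggregate of unlabelled corollas (viewed as parameters in $P_{\underline{\EuScript{C}^{\it odd}}}$) assembles into an object term built from $_x\bo_y$ and the $\varepsilon_i$ constructors. Each of the five families of generating $2$-cells is then recognised as one of the defining isomorphisms of the categorified structure: ${_x\circ_y}={_y\circ_x}$ as $\gamma$; commutation of disjoint edge contractions as bifunctoriality of $_x\bo_y$; commutation of relabelling with contraction as $\varepsilon$; the action laws as $\varepsilon_2$ and $\varepsilon_3$; and commutation of $1$-generators acting on disjoint parts of an aggregate as naturality of partial composition. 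Under this dictionary the interpretation in $\EuScript{C}^{\it odd}$ of a $2$-cell $\alpha$ of $\Sigma_{\it Cyc}^*$ is exactly the scalar $\nu(\alpha)$.

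Given $\alpha,\beta:s\Rightarrow t$ parallel, the second step produces two parallel arrow terms of ${\tt uFree}_{\underline{\EuScript{C}^{\it odd}}}$. By the coherence theorem for fully relaxed categorified entries-only cyclic operads (the extension of Theorem \ref{coherence-theorem} described in Section \ref{s3}), their interpretations in $\EuScript{C}^{\it odd}$ are equal, so $\nu(\alpha)=\nu(\beta)$, which is exactly sign-coherence. The main obstacle will be making the dictionary of the second step precise: the Feynman category $\Sigma_{\it Cyc}$ is built from ``many-to-one'' forest morphisms on unlabelled corollas, whereas ${\tt uFree}_{\underline{\EuScript{C}^{\it odd}}}$ manipulates binary partial compositions and symmetric-group actions on labelled operations, so one must carefully track which $2$-generator of $\Sigma_{\it Cyc}^*$ corresponds to which canonical isomorphism (and verify that passing between the two presentations preserves both the source/target and the assigned sign). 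Once this is in place, the remainder of the argument is a straightforward parity verification followed by an appeal to coherence.
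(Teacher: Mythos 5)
Your strategy is in substance the one the paper uses: both arguments rest on the coherence theorem for categorified entries-only cyclic operads with relaxed equivariance, combined with a parity count of commutators in each coherence diagram. The paper phrases this by taking the coherence diagrams as $3$-generators filling the space between any two parallel $2$-cells of $\Sigma_{\it Cyc}^*$ and observing that each diagram carries an even number of $\gamma$'s; you phrase it by building a $\{\pm1\}$-valued model ${\EuScript C}^{\it odd}$ and interpreting $2$-cells there. These are equivalent packagings, and your ``dictionary'' is exactly the identification the paper makes implicitly. Two slips in your execution, though. First, in {\tt ($\beta\gamma$-\texttt{hexagon})} both sides contain \emph{two} instances of $\gamma$, not one versus two as you write; as stated your count has mismatched parities, and if it were accurate your model would not exist and the theorem would be false, so this is not a cosmetic point. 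Second, the $2$-generator expressing commutation of edge contractions that share a corolla corresponds to the associator $\beta$ (sequential associativity), not to bifunctoriality of ${_x\bo_y}$; the disjoint commutation is the separate generator (5).

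The genuine gap is your inclusion of units. You work in ${\tt uFree}_{\underline{{\EuScript C}^{\it odd}}}$ and assert that the unit coherences are parity-even on both sides; this fails for {\tt ($\gamma\iota\nu$-\texttt{pentagon})}, whose two paths from ${\it id}_{x,y}\,{_y\circ_z}\,{\it id}_{z,u}$ to ${\it id}_{x,u}$ contain one and zero commutators respectively. In your sign model this forces $-1=+1$, so the assignment $\gamma\mapsto -1$, $\iota,\nu\mapsto +1$ does \emph{not} define a unital categorified cyclic operad (this reflects the familiar fact that a unital anticyclic operad forces $2\,{\it id}=0$). The argument survives only because $\Sigma_{\it Cyc}$ as presented has no unit $1$-generators: you must run the construction in the unitless syntax ${\tt Free}_{\underline{{\EuScript C}^{\it odd}}}$ with relaxed equivariance as in \S\ref{eq_relax}, which is precisely the scope of the paper's proof. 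With that restriction, and the hexagon count corrected, your model-theoretic rephrasing goes through.
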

\begin{proof}
The coherence diagrams of categorified entries-only cyclic operads with relaxed equivariance  (and pseudo-actions) can be taken as $3$-generators in a $3$-polygraph $\Sigma'_{\it Cyc}$ extending $\Sigma_{\it Cyc}$.  In this setting, we can read
 our coherence theorem  as saying that the space between any parallel $2$-cells can be filled with a combination of  these $3$-generators (and this is where we need the isomorphisms $\varepsilon, \varepsilon_2,\varepsilon_3$ of Section  \ref{eq_relax} as explicit witnesses, or polygons).  This can be made formal by building a categorified cyclic operad ${\EuScript C}$ out of   $\Sigma'_{\it Cyc}$: one defines ${\EuScript C}(\set{x_1,\ldots,x_n})$ as the category whose objects are the  $1$-cells with  the corolla $\bullet(x_1,\ldots,x_n)$ as codomain and whose morphisms are the $2$-cells between them.
 
 It follows that it suffices to check sign coherence  for  the pairs of cells forming the border of the $3$-generators.   This in turn can be checked easily by reading the sequential associator, the commutator, the  equivariance and pseudo-action isomorphisms as 
$$\beta^{x,\underline{x};y,\underline{y}}_{f,g,h}:(f\,{_x\circ _{\underline{x}}}\,g)\,{_y\circ _{\underline{y}}}\,h\rightarrow f\,{_x\circ _{\underline{x}}}\,(g\,{_y\circ _{\underline{y}}}\,h),\quad  \gamma^{x,y}_{f,g}:f\,{_x\circ _y}\,g\rightarrow -g\,{_y\circ _x}\,f,$$
 $$\varepsilon^{x,y;x',y'}_{f,g;\sigma}:(f {_x\circ_y} \, g)^{\sigma}\rightarrow f^{\sigma_1}{_{{ x'}}\circ_{y'}}\, g^{\sigma_2},
 \quad {\varepsilon_2}_f: f^{\it id}\rightarrow f, \quad  {\varepsilon_3}_f^{\sigma,\tau}:(f^\sigma)^\tau\rightarrow f^{\sigma\circ\tau}.$$
and by verifying that  adding the sign information to the   coherence diagrams leaves them well-defined. This follows from the observation that each of these diagrams 
involves an even number of instances of the commutator.  
\end{proof}
Thus we can define  an ${\bf Ab}$-enriched Feynman category $\pcat{({\it Cyc},\nu)}^{\it odd}$, where $\nu$ is the sign assignment from Theorem \ref{fy}, whose representations are  anti-cyclic operads.  \\[0.1cm]
\indent  Knowing that odd versions of Feynman categories arise in this way allows us to benefit from general constructions.  Indeed,
the importance of Feynman categories which admit an odd version  is reflected in the fact that they also admit the bar and cobar costructions, as well as the Feynman transform. For details, we refer to \cite[\S 7.4]{kauf}.

\section*{Conclusion and further study}
An overview of the categorifications  established    in this paper is given in the table below.
 \begin{center}
{\footnotesize \begin{tabular}{rccc}  
    \toprule
  &  &   \multicolumn{2}{c}{{\textsc{{Categorified cyclic operads}}} }   \\
  \midrule
&  & {\textsc{{Entries-only}}} &  \textsc{{Exchangeable-output }}    \\
\cline{3-4}\\[-0.12cm]
    {\textsc{{Definitions}}}        & & \small{ Definition \ref{cat}}  & \small{Definition \ref{ex_out_cat_non-skeletal}}      \\[0.2cm]
     \textsc{{Coherence proof}}   & &\small{ \S \ref{cohe}}  & \small{Theorem \ref{t5}}       \\
    \bottomrule
  \end{tabular} }
\end{center}

\indent As future work, we hope to use the categorification methods of this paper  in order to categorify  other variations of cyclic operads, primarily    non-symmetric cyclic operads of \cite{cgr} and \cite{mm}, as well as  modular operads of \cite{mm}. Finally, inspired by the combinatorial approach to operadic polytopes made   in \cite{DP-HP} and further developed in  \cite{cyc2},  we hope to characterize polytopes which describe coherences and higher coherences of categorified cyclic operads.
\bibliographystyle{alpha}

\begin{thebibliography}{0000000}
\bibitem[B73]{benabou2} J. B\' enabou, {\em Les distributeurs}, Universit\' e Catholique de Louvain, Institut de Math\' ematique Pure et Appliqu\' ee, rapport 33 (1973).
\bibitem[B94]{borceux} F. Borceux, Handbook of categorical algebra I: basic category theory, Cambridge University Press (1994).
\bibitem[B93]{burroni} A.Burroni, Higher-dimensional word problems with applications to equational logic, Theoretical Computer Science 115 (1), 43–62  (1993).
\bibitem[C06]{chapoton} F. Chapoton, {\em Anticyclic operads and
Auslander-Reiten translation},  \href{http://irma.math.unistra.fr/~chapoton/Documents/Trieste_2006.pdf}{slides} (2006);
\bibitem[CGR14]{cgr} E. Cheng, N. Gurski, E. Riehl, {\em Cyclic multicategories, multivariable adjunctions and mates}, Journal of K-Theory 13 (2), 337-396, (2014).
\bibitem[CM17]{Curien-Mimram} P.-L. Curien, S. Mimram, {\em Coherent presentations of monoidal categories}, Logical Methods in Computer Science  13 (3) (2017).
\bibitem[CO17]{co}  P. -L. Curien, J. Obradovi\' c, {\em A formal language for cyclic operads},  Higher Structures 1 (1), 22–55 (2017).
\bibitem[COI18]{cyc2}  P. -L. Curien, J. Obradovic, J. Ivanovic, {\em Syntactic aspects of hypergraph polytopes}, Journal of Homotopy and Related Structures (to appear) (2018).
\bibitem[DS03]{street1} B. Day , R. Street, {\em Lax monoids, pseudo-operads and convolution}, Contemporary Mathematics, Vol. 318,  75-96 (2003).
\bibitem[DV15]{val} M. Dehling, B. Vallette, {\em Symmetric homotopy theory for operads}, \href{https://arxiv.org/abs/1503.02701}{arXiv:1503.02701} (2015).
\bibitem[DP11]{DP-HP}  K. Do\v sen, Z. Petri\'c, Hypergraph polytopes, Topology and its Applications 158,  1405--1444 (2011).
\bibitem[DP15]{dp} K. Do\v sen, Z. Petri\' c, {\em Weak Cat-operads}, Logical Methods in Computer Science 11 (1), 1-23  (2015).
\bibitem[DJMS16]{djms} M. Doubek, B. Jur\v co, M. Markl, I. Sachs, {\em Algebraic structure of String
Field theory} (2016) (unpublished manuscript).
\bibitem[GK95]{Getzler:1994pn}
   E. Getzler, M. Kapranov, {\em Cyclic operads and cyclic homology}, Geom., Top., and Phys. for Raoul Bott, International Press, Cambridge, MA, 167-201  (1995).
\bibitem[GM16]{Guiraud-Malbos} Y. Guiraud, P. Malbos, {\em Polygraphs of finite derivation type}, Mathematical Structures in Computer Science  28 (2), 155-201 (2018). 
\bibitem[KW17]{kauf} R. M. Kaufmann, B. C. Ward, Feynman categories, Ast\' erisque (Soci\' et\' e Math\' ematique de France), Num\' ero 387 (2017).
\bibitem[K05] {kelly} G. M. Kelly, {\em On the operads of J. P. May}, Repr. Theory Appl. Categ. 13, 1-13 (2005).
\bibitem[LV12]{lv}  J. -L. Loday, B. Vallette, {\em Algebraic operads}, Springer (2012).
\bibitem[ML98]{maclane} S. Mac Lane. {\em Categories for the Working Mathematician},   Springer-Verlag, expanded second edition (1998).
\bibitem[M08]{opsprops} M. Markl, {\em Operads and PROPs},  Elsevier, Handbook for Algebra,
Vol. 5, 87-140 (2008).
\bibitem[M16]{mm} M. Markl,  {\em Modular envelopes, OSFT and nonsymmetric (non-$\Sigma$) modular operads},  J. Noncommut. Geom. 10, 775-809 (2016).
\bibitem[MSS02]{mss} M. Markl, S. Schnider, J. Stasheff, {Operads in Algebra, Topology and Physics},  American  Mathematical  Society, Providence (2002) .
\bibitem[O17]{mo}  J. Obradovi\' c, {\em Monoid-like definitions of cyclic operads},   Theory and Applications of Categories
32 (12),    396-436 (2017).
\bibitem[S76]{street} R. Street, Limits indexed by category-valued 2-functors, Journal of Pure and Appliel Algebra 8 (2),  149–181 (1976).
\end{thebibliography}

\end{document}